\journal{}
\theoremstyle{plain}
\newtheorem{theorem}{Theorem}[section]
\newtheorem{lemma}[theorem]{Lemma}
\newtheorem{remark}{Remark}[section]
\newcommand{\bm}{\boldsymbol}
\newcommand{\Grad}[1]{\nabla #1}
\def\bx{\bm{x}}
\def\e{{\rm{e}}}
\numberwithin{equation}{section}
\begin{document}
\graphicspath{{figures_sq/}{fig_ArbiD/}{figures_Dbnd/}}
\begin{frontmatter}

\title{Computing optimal partition problems via Lagrange multiplier approach}

\author[a]{Qing Cheng\corref{cor1}}
\ead{qingcheng@tongji.edu.cn}
\author[b,c]{Jing Guo}
\ead{jingguo@cuhk.edu.cn}
\author[b,c]{Dong Wang}
\ead{wangdong@cuhk.edu.cn}

\cortext[cor1]{Corresponding author}

\address[a]{Department of Mathematics, Tongji University, Shanghai 200092, China  $\&$ Key Laboratory of Intelligent Computing and Applications (Tongji University), Ministry of Education, China}
\address[b]{School of Science and Engineering, The Chinese University of Hong Kong, Shenzhen, Guangdong 518172, China}
\address[c]{Shenzhen International Center for Industrial and Applied Mathematics, Shenzhen Research Institute of Big Data, Guangdong 518172, China}

	\begin{abstract}
		In this paper, we consider numerical approximations for the optimal partition problem using Lagrange multipliers. By rewriting it into   constrained gradient flows, three and four steps numerical schemes based on the Lagrange multiplier approach \cite{ChSh22,ChSh_II22} are proposed to solve the constrained gradient system.  Numerical schemes  proposed for the constrained gradient flows satisfy the nice  properties of orthogonality-preserving, norm-preserving, positivity-preserving and energy dissipating. The proposed schemes are very efficient  in which only linear  Poisson equations are solved at each time step.  Extensive numerical results in 2D and 3D for optimal partition problem are presented to validate the effectiveness and accuracy of the proposed numerical schemes. 
	\end{abstract}
	
	\begin{keyword}
		Lagrange multiplier approach; optimal partition problem; gradient flow
	\end{keyword}
 \end{frontmatter}

	\section{Introduction}
	In this paper, we consider an optimal partition problem \cite{Wang22,WangOs19,BucBut98}  involving a bounded and smooth geometric domain $\Omega \in \mathbb{R}^d$, minimizing the functional $E=\int_{\Omega}\frac 12 \sum\limits_{i=1}^k |\Grad u_i|^2 d\bx$ and subject to physical constraints over admissible set. This constrained minimization problem is  governed by the following system
	\begin{equation}\label{optimal:1}
		\begin{split}
			&\min \int_{\Omega}\frac 12 \sum\limits_{i=1}^k |\Grad u_i|^2 d\bx,\\
			&u_i u_j =0, \quad \forall i\neq j,\quad i,j\in[k],\\
			& u_i\ge 0,\quad i\in[k],\\
			&\|u_i\|=1, \quad i\in[k].
		\end{split}
	\end{equation}
	Here, the positive integer $k$ in \eqref{optimal:1} represents the number of partition domains, $[k]={1,2,\ldots, k}$ and $\|\cdot\|$ denotes the $l^2$ norm. The suitable boundary conditions for \eqref{optimal:1} shall be  periodic, Dirichlet, or Neumann conditions.  In recent years, the problem of minimizing a functional over physical constrained set has received much  attention due to its huge applications in the study of multicomponent Bose-Einstein condensates \cite{Bao04,BaoDu04,ChangLin04,liu2021normalized}, Schrodinger equations \cite{antoine2021scalar}, vesicle membrane equations \cite{WaJuDu16}, anisotropically conducting materials \cite{RosWan09,RosWan13}  and systems with large interactions \cite{ConTerVer02,CyBaHo05,CyHo08}.

	In past decades,  there exist some theoretical results  for optimal partition problem \eqref{optimal:1}.  The proof of existence results  was considered  in \cite{BucBut18,CafLin07,ConterVer03}, with regularity presented in \cite{HelfThom09} and consistency in \cite{OsRe17}. The asymptotic behavior of \eqref{optimal:1} as $k\rightarrow\infty$ has been explored in \cite{CafLin07,BouOud10,NoelHelfVial10}.

	The optimal partition problem \eqref{optimal:1} has also attracted much interest in developing physical constraint-preserving algorithms.  Since  the original minimization problem \eqref{optimal:1} satisfies three physical constraints, then a highly desirable property of numerical algorithms for the optimal partition problem \eqref{optimal:1}  should preserve physical constraints: (1) norm conserving, (2) orthogonality-preserving, and (3) positivity-preserving, which  turns out to be very challenging.  There exist some important numerical approaches to solve various  constrained optimization in current research and applications,  such as Riemannian optimization methods \cite{YinZh22}, penalty methods \cite{JiWa20,WaJuDu16}, projection techniques \cite{AnGaSu21,WangOs19,WaGaE01}, and Lagrange multipliers approach \cite{CaLin08,ChSh22,ChSh_II22,ChSh23}.
	For the optimal partition problem  \eqref{optimal:1}, two popular approaches to enforce constraints   include:
	\begin{itemize}
		\item Penalty approach: 
		To solve the optimal partition problem numerically, some reformulations have been proposed in \cite{DuLin08,Wang22,Bo18,BoVe16}. In \cite{DuLin08}, a penalty term
		\[F^\varepsilon(u)=\frac{1}{4\varepsilon^2}\sum\limits_{j=1,j\ne i}^k u_i^2 u_j^2,\]
		was introduced to the objective functional in \eqref{optimal:1} to penalize the non-overlapping of functions $u_i$ and $u_j$ for $i\neq j$. A semi-implicit operator splitting scheme was designed to solve the resulting nonlinear norm-preserving gradient flow system. The diffusion-generated method presented in \cite{WangOs19} also diffuses in the first step, as in \cite{DuLin08}, but uses a projection to satisfy the disjoint constraints. In \cite{Wang22}, Wang further reformulated the optimal partition problem as a bilevel optimization
		
		\[\min\limits_{\varphi}\min\limits_{u} \frac{k}{\tau}-\sum\limits_{\ell\in[k]}\frac{1}{\tau}\int_{\Omega}\varphi_\ell |\e^{\frac{\tau}{2}}u_\ell|^2\, d\bx,\]
		and proposed an unconditionally stable iterative approximation scheme that is easy to implement and more efficient than existing schemes derived from level set methods \cite{ChuLe21}, optimization approaches \cite{BouOud10}, etc.
		
		\item Lagrange multiplier approach: Introduce Lagrange multipliers to enforce exactly the constraints \cite{cheng2018multiple,cheng2020global}. The main advantage is that the constraints can be satisfied exactly, while its drawback is that it may lead to difficulty in solving nonlinear systems at each time step. In this paper, by introducing Lagrange multipliers to enforce the constraint of orthogonality and positivity, we obtain a  constrained gradient flow which leads to a new approach to construct numerical algorithms for the original minimization problem \eqref{optimal:1}. 
		
	\end{itemize}

	The goal of this paper is  to develop  efficient numerical algorithms for solving the minimization problem \eqref{optimal:1} with multiple constraints. Instead of introducing  penalty terms, we introduce Lagrange multipliers to enforce the constraints of orthogonality, positivity, $L^2$ norm conservation exactly in the gradient flow \eqref{norm:1}-\eqref{norm:5}.  To be more specific, we introduce the well-known Karush-Kuhn-Tucker (KKT) conditions \eqref{norm:4}  to enforce the positivity and obtain an expanded system. The  constraints of orthogonality \eqref{norm:2} and norm conservation \eqref{norm:3} are also enforced exactly by different Lagrange multipliers. To avoid solving constrained  problems at each time step, we adopt in this paper an operator-splitting approach to develop efficient and accurate algorithms for the gradient flow.  We propose two types  of operator-splitting schemes to for \eqref{norm:1}-\eqref{norm:5}. Our numerical schemes enjoy the following advantages:
	\begin{itemize}
		\item  It enjoys low computational cost: we  only need to solve several linear  Poisson equations at each time step for the first type of numerical schemes and a nonlinear algebraic equation plus several linear Poisson equations for the second type of numerical schemes ;
		
		\item   It inherits good structure-preserving properties:  both types of numerical schemes  can satisfy the nice properties of positivity-preserving, norm conserving and orthogonality-preserving  exactly at each time step; 
		
		\item   It has good stability property: the second type of numerical schemes  can be unconditionally energy stable and also satisfy a  discrete energy dissipative law. 
	\end{itemize}

	The remainder of this paper is organized as follows. In Section \ref{sec:gradflow}, we rewrite the optimal partition problem into a gradient flow. In Section \ref{sec:num_sch}, we develop several  efficient  algorithms to solve the constrained gradient flow. In Section \ref{subsec:num_result}, we  apply our numerical schemes to optimal partition problems with different parts and  present numerical experiments in 2D and 3D for our numerical algorithms  proposed in Section \ref{sec:num_sch}. Finally,  we give a concluding remark in Section~\ref{sec:con}. 
	
	\section{Gradient flow}\label{sec:gradflow}
	To solve constrained minimization problem \eqref{optimal:1}, the key idea of our approach is to rewrite the minimization problem \eqref{optimal:1} into the  gradient flow \eqref{gradient_sys} . Hence, we shall  develop several classes of numerical schemes based on the gradient flow inspired by the recently Lagrange multiplier approach \cite{ChSh22,ChSh_II22}.  By introducing Lagrange multipliers,  the constrained minimization problem \eqref{optimal:1} has  the following  gradient structure  taking  on the form
	\begin{eqnarray} \label{gradient_sys}
		&&\partial_t u_i(\bx,t) =\Delta u_i(\bx,t) +\xi_i(t)u_i(\bx,t) + \sum\limits_{j=1,j\ne i}^k\eta_{ij}(\bx,t)u_j(\bx,t)+\lambda_i(\bx,t) ,\label{norm:1}\\
		&&u_i u_j =0, \quad \forall i\neq j\in [k],\label{norm:2}\\[.5em]
		&&\|u_i\|=1, \quad i\in [k],\label{norm:3}\\[.5em]
		&& u_i\ge 0,\quad  \lambda_i \ge 0,\quad \lambda_i u_i=0,\quad     i\in [k],\label{norm:4}
	\end{eqnarray}
	along with   the initial condition
	\begin{equation}\label{norm:5}
		u_i(0,\bx)  = u_i^0(\bx), \quad \forall \bx \in \Omega.
	\end{equation}
	In system \eqref{norm:1}-\eqref{norm:5},  orthogonal constraints in \eqref{norm:2}  are enforced by Lagrange multipliers $\eta_{ij}(\bx,t)$. Lagrange multipliers $\xi_i(t)$ with $i=1,2,\cdots, k$ are introduced to satisfy the property of $L^2$ norm-preserving. Equation \eqref{norm:4} is the  Karush–Kuhn–Tucker(KKT) conditions \cite{ChSh22,ChSh_II22} where $\lambda_i$ for $i=1,2,\cdots, k$ are time spatial Lagrange multipliers for the constraint of positivity-preserving.  In \eqref{norm:2}, there are \(k(k-1)/2\) constraints enforcing the orthogonality, and thus \(k(k-1)/2\) corresponding Lagrange multipliers \(\eta_{ij}\) are introduced in \eqref{norm:1}.

	\begin{remark}
		Both the Lagrange multipliers  $\eta_{ij}(\bx,t)$ and $\lambda_i(\bx,t)$ $i,j =1,2,\cdots, k$ depend on space $\bx$ and time $t$, since the constraints in \eqref{norm:2} and \eqref{norm:4} are defined pointwisely in space \cite{ChSh22,ChSh_II22}. While the  physical constraints \eqref{norm:3} are defined globally, then the scalar auxiliary variables $\xi_i(t)$  are  introduced  in \eqref{norm:1} for $i=1,2,\cdots, k$, see \cite{cheng2018multiple,cheng2020global}.  
	\end{remark}
	
	In fact, we can show that the constrained  system \eqref{norm:1}-\eqref{norm:5} is a gradient flow and  satisfies the following energy dissipative law.
	\begin{theorem}
	For any integer  $k\ge 1$, the constrained system \eqref{norm:1}-\eqref{norm:5} is energy dissipative, in the sense that
		\begin{equation}
			\frac{d}{dt}E(t) = -\sum\limits_{i=1}^k\|\partial_t u_i(\bx,t)\|^2,
		\end{equation}
		where the energy is defined by
		\begin{equation}
			E(t) = \sum\limits_{i=1}^k \int_{\Omega}\frac 12|\Grad u_i|^2 d\bx. 
		\end{equation}
	\end{theorem}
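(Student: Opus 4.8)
The plan is to differentiate the energy $E(t)$ directly along solutions of the constrained system and use the equation \eqref{norm:1} together with the constraints \eqref{norm:2}--\eqref{norm:4} to eliminate every term involving a Lagrange multiplier. First I would compute
\begin{equation}
\frac{d}{dt}E(t) = \sum_{i=1}^k \int_\Omega \Grad u_i \cdot \Grad (\partial_t u_i)\, d\bx = -\sum_{i=1}^k \int_\Omega \Delta u_i\, \partial_t u_i\, d\bx,
\end{equation}
using integration by parts and the fact that the admissible boundary conditions (periodic, Dirichlet, or Neumann) kill the boundary term. Then I would substitute $\Delta u_i = \partial_t u_i - \xi_i u_i - \sum_{j\ne i}\eta_{ij}u_j - \lambda_i$ from \eqref{norm:1}, obtaining
\begin{equation}
\frac{d}{dt}E(t) = -\sum_{i=1}^k \|\partial_t u_i\|^2 + \sum_{i=1}^k \xi_i \int_\Omega u_i\, \partial_t u_i\, d\bx + \sum_{i=1}^k \sum_{j\ne i}\int_\Omega \eta_{ij} u_j\, \partial_t u_i\, d\bx + \sum_{i=1}^k \int_\Omega \lambda_i\, \partial_t u_i\, d\bx.
\end{equation}

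The remaining work is to show each of the last three sums vanishes. For the $\xi_i$ term, differentiating the norm constraint \eqref{norm:3}, $\|u_i\|^2 = 1$, in time gives $\int_\Omega u_i\,\partial_t u_i\, d\bx = 0$, so that sum is zero. For the $\lambda_i$ term, I would differentiate the complementarity condition $\lambda_i u_i = 0$ from \eqref{norm:4}; this gives $\lambda_i \partial_t u_i + u_i \partial_t \lambda_i = 0$ pointwise, and combining with $\lambda_i u_i = 0$ itself (so that on the support of $\lambda_i$ one has $u_i=0$, hence $\partial_t u_i$ must be handled via the product rule carefully) yields $\lambda_i \partial_t u_i = 0$ pointwise, hence $\int_\Omega \lambda_i \partial_t u_i\, d\bx = 0$. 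For the orthogonality term, differentiating $u_i u_j = 0$ from \eqref{norm:2} gives $u_j \partial_t u_i + u_i \partial_t u_j = 0$ pointwise for each pair $i \ne j$; since $\eta_{ij}$ is the multiplier for the pair, I would pair up the $(i,j)$ and $(j,i)$ contributions (noting $\eta_{ij}=\eta_{ji}$ as both enforce the single constraint $u_iu_j=0$) and write $\int_\Omega \eta_{ij}(u_j \partial_t u_i + u_i \partial_t u_j)\, d\bx = 0$, so the double sum collapses to zero.

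The main obstacle is the rigorous justification of the pointwise identities $\lambda_i \partial_t u_i = 0$ and $u_j\partial_t u_i + u_i \partial_t u_j = 0$ from differentiating the pointwise constraints $\lambda_i u_i = 0$ and $u_i u_j = 0$: strictly speaking this requires enough regularity of $u_i$ and $\lambda_i$ in $t$, and for the complementarity condition one must be slightly careful because $u_i \geq 0$ with $u_i = 0$ on $\mathrm{supp}\,\lambda_i$ means $\partial_t u_i$ need not vanish there, yet the one-sided constraint forces the relevant product to vanish (this is the standard KKT-along-flow argument, as in \cite{ChSh22,ChSh_II22}). I would treat these at the formal level consistent with the rest of the paper, invoking the pointwise-in-space interpretation of the constraints noted in the Remark, and then assemble the three vanishing sums to conclude $\frac{d}{dt}E(t) = -\sum_{i=1}^k \|\partial_t u_i(\bx,t)\|^2$, which is exactly the claimed dissipation law; the sign shows $E$ is nonincreasing, confirming the gradient-flow structure.
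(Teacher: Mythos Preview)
Your proposal is correct and follows essentially the same approach as the paper: test \eqref{norm:1} against $\partial_t u_i$, integrate by parts, and eliminate each Lagrange-multiplier contribution by differentiating the corresponding constraint (\eqref{norm:3} for $\xi_i$, the complementarity in \eqref{norm:4} for $\lambda_i$, and \eqref{norm:2} together with the symmetry $\eta_{ij}=\eta_{ji}$ for the orthogonality term). The paper presents the same steps at the same formal level of rigor you describe.
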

	\begin{proof}
	For any  $ i\in[k]$, taking  inner product of \eqref{norm:1} with $-\partial_t u_i(\bx,t)$ and taking integration by parts, we obtain
		\begin{equation}\label{n:diss:1}
			-\|\partial_t u_i(\bx,t)\|^2 = -\frac{d}{dt}\int_{\Omega}\frac 12|\Grad u_i|^2 d\bx - (\lambda_i,\partial_t u_i)-  \left(\sum\limits_{j=1,j\ne i}^k\eta_{ij}(\bx,t)u_j,   \partial_t u_i\right),
		\end{equation}
		where we used the equality from \eqref{norm:3}
		\begin{equation}
			(\partial_t u_i, u_i)=0.
		\end{equation}
		From the KKT-condition \eqref{norm:4}, we have $\lambda_i u_i=0$ for any  $i\in[k]$ which implies that 
		\begin{equation}\label{eq}
			\frac{d}{dt}(\lambda_i u_i)=\lambda_i\partial_t u_i + \partial_t\lambda_i  u_i =0. 
		\end{equation}
		If $\lambda_i=0$, then we have  $(\lambda_i,\partial_t u_i)=0$. Similarly, if $\lambda_i\neq 0$, then from KKT-condition \eqref{norm:4}, we have $u_i=0$ which also implies $(\lambda_i,\partial_t u_i)=0$ from \eqref{eq}. 
		
		From orthogonal constraints $u_i u_j=0$ if $i \neq j$, we have
		\begin{equation}\label{eq:2}
			u_j\partial_t u_i  + u_i\partial_t u_j = 0, \quad \mbox{if} \quad  i\neq j. 
		\end{equation}
		Combining \eqref{eq:2} with $\eta_{ij}=\eta_{ji}$,  we can  derive that
		\begin{equation}
			\sum\limits_{i=1}^k\left(\sum\limits_{j=1,j\ne i}^k\eta_{ij}(\bx,t)u_j,   \partial_t u_i\right) = 0. 
		\end{equation}
		Combining all results above, the equation \eqref{n:diss:1} can be rewritten into 
		\begin{equation}\label{n:diss:2}
			-\sum\limits_{i=1}^k\|\partial_t u_i(\bx,t)\|^2 = \frac{d}{dt}\int_{\Omega}\sum\limits_{i=1}^k\frac 12|\Grad u_i|^2 d\bx. 
		\end{equation}
		
	\end{proof}

	\section{Numerical approximation}\label{sec:num_sch}
	In this section, we focus on numerical approximations for the gradient flow system \eqref{norm:1}-\eqref{norm:5}. Denote $\tau$ as  the time step, and $t_n=n\tau$ for $n=0,1,2,\cdots, \frac{T}{\tau}$ where $T$ is the final computational time. Two different types of  operator splitting schemes are proposed to solve  \eqref{norm:1}-\eqref{norm:5} efficiently.
	\subsection{A four-step operator splitting scheme}
	We now present efficient numerical approximations for the system \eqref{norm:1}-\eqref{norm:5} employing an operator-splitting method. Starting with a given state $u_i^n=u_i(\bx,t_n)$ for $ i\in[k]$, we calculate the next time step $u_i^{n+1}$ through the following four steps:
	
	{\bf Step-1:} (Diffusion step) Given $u_i^n$ for $ i\in[k]$, we  solve the  heat equation
	\begin{eqnarray} \label{diff:1}
		\partial_t u_i(\bx,t) =\Delta u_i(\bx,t),\quad \bx\in \Omega, \quad t\in(t_n, t_{n+1}),
	\end{eqnarray}
	with  $u_i(\bx,t_n)=u_i^n(\bx)$. The solution at $t_{n+1}$ can be expressed exactly  by, see \cite{Wang22,WangOs19}
	\begin{equation*}
		\tilde u^{n+1}_i = \e^{\tau\Delta} u^n_i,\quad i\in[k].
	\end{equation*}

	{\bf Step-2:} (Positivity-preserving  step) Given $\tilde u_i^{n+1}$ for $ i\in[k]$,
	motivated from  the following system 
		\begin{equation}\label{step2:eq}
			\begin{split}
				&\partial_t u_i(\bx,t)=\lambda_i^{n+1},\quad \bx\in \Omega, \quad t\in(t_n, t_{n+1}),\\[.5em]
				&u_i(\bx,t_{n})=\tilde u_i^{n+1}(\bx),\quad \bx\in \Omega,\\[.5em]
				& \lambda_i(\bx,t) \ge0;\; u_i(\bx,t) \ge 0;\;\lambda_i(\bx,t)u_i(\bx,t)=0,\quad i\in[k],
			\end{split}
		\end{equation}
		at $t=t_{n+1}$, see \cite{ChSh22,ChSh_II22},  we  solve
	\begin{equation}\label{app:1:step:3}
		\begin{split}
			&\frac{\bar u_i^{n+1}-\tilde u_i^{n+1}}{\tau}=\lambda_i^{n+1}, \quad \quad i\in[k],\\[.5em]
			&\lambda_i^{n+1} \ge0 ;\;\bar u_i^{n+1} \ge 0;\;\lambda_i^{n+1}\bar u_i^{n+1}=0,\quad i\in[k],
		\end{split}
	\end{equation}
	at $t=t_{n+1}$. 
	
	{\bf Step-3:} (Orthogonality-preserving  step)  Given $\bar u_i^{n+1}$, and apply the forward Euler method to 
	\begin{equation}\label{app:1:projection:1}
		\begin{split}
			&	\partial_t u_i(\bx,t)=\sum\limits_{j\in [k],j\ne i} \eta_{ij}(\bx,t)u_j(\bx,t),\quad \bx\in \Omega, \quad t\in(t_n, t_{n+1}),\\[.5em]
			&u_i (\bx,t_n)=\bar u_i^{n+1}(\bx), \quad u_i(\bx,t)u_j(\bx,t)=0, \quad i\in[k],\quad \forall j\in [k], \quad j\ne i, 
		\end{split}
	\end{equation}
	and solve $\hat u_i^{n+1}$ from
	\begin{equation}\label{disj_ForwardEuler}
		\begin{split}
			\frac{\hat u_i^{n+1}-\bar u_i^{n+1}}{\tau}&=\sum\limits_{j\in [k],j\ne i}\eta_{ij}^{n+1}\bar u_j^{n+1},\quad i\in [k],\\[.5em]
			\hat u_i^{n+1}\ge 0,\quad  \hat u_i^{n+1}\hat u_j^{n+1}&=0, \quad i\in[k],\quad \forall j\in [k], \quad j\ne i.
		\end{split}
	\end{equation}
	
	{\bf Step-4:} (Norm-preserving step) Given $\hat u_i^{n+1}$,  apply the   backward Euler method to 
	\begin{equation}\label{projection:step:4}
		\begin{split}
			&	\partial_t u_i(\bx,t)=\xi_i(t)u_i(\bx,t),\quad \bx\in \Omega, \quad t\in(t_n, t_{n+1}),\\
			&u_i(\bx,t_n)=\hat u_i^{n+1}(\bx),\quad \|u_i(\bx,t)\|=1,\quad i\in [k],
		\end{split}
	\end{equation}
	see also in  \cite{ChSh23}   and   solve $u_i^{n+1}$ from
	\begin{equation}\label{pro:eq}
		\begin{split}
			&	\frac{u_i^{n+1}-\hat u_i^{n+1}}{\tau}=\xi_i^{n+1} u_i^{n+1},\quad \bx\in \Omega, \quad t\in(t_n, t_{n+1}),\\
			&\|u_i^{n+1}\|=1,\quad i\in [k].
		\end{split}
	\end{equation}
	
	Rewriting \eqref{pro:eq} into $(1-\tau \xi_i^{n+1}) u_i^{n+1}=\hat u_i^{n+1}$ and applying $\|u_i^{n+1}\|=1$, we can solve  $\xi_i^{n+1} = \frac{1-\|\hat u_i^{n+1}\|}{\tau} $ and $	u_i^{n+1}=\frac{\hat u_i^{n+1}}{\|\hat u_i^{n+1}\|}$, $i\in[k]$.  Actually, we can find that the norm-preserving step  can be regarded as a  projection step, see Remark \ref{re:o}. Taking integral for the first equation of \eqref{step2:eq} in time, we  find that  the solution at $t=t_{n+1}$ should be 
		\[\bar u_i^{n+1}=\int_{t_n}^{t_{n+1}}\lambda_i^{n+1}\, dt+\tilde u_i^{n+1}(\bx). \]
		Applying the KKT  condition
		\[\lambda_i(\bx,t) \ge0;\;\bar u_i(\bx,t) \ge 0;\;\lambda_i(\bx,t)\bar u_i(\bx,t)=0,\quad i\in[k],\]
		we can  easily derive  that 
		$\bar u_i^{n+1}=\max(\tilde u_i^{n+1},0)$, $i\in [k]$.  
	
	\begin{remark}\label{re:o}
		The norm-preserving step is equivalent to  the following optimization problem
		\begin{equation}\label{opt:0}
			\begin{cases}
				u^{n+1}_i =     \min\limits_{u_i} \int_\Omega ( u_i - \hat{u}_i^{n+1})^2/(2\tau)d\bx, \quad  i \in [k], \\[1 em]
				\| {u}_i^{n+1}\| =1, \quad  i \in [k],\forall j \in [k], j \ne i.
			\end{cases}
		\end{equation}
		From \eqref{opt:0},  we observe that this step can be regarded as projecting $\hat u_i^{n+1}$ into constrained set $\|u\|=1$ in the $L^2$ norm sense. 
	\end{remark}

	\begin{remark}
		Actually,  the the  positvity-preserving step \eqref{app:1:step:3} can be rewritten into the optimization step 
		\begin{equation}\label{opt:1}
			\begin{cases}
				\bar u^{n+1}_i =     \min\limits_{\bar u_i} \int_\Omega (\bar{u}_i - \tilde{u}_i^{n+1})^2/(2\tau)d\bx, \quad  i \in [k], \\[1 em]
				\bar {u}_i^{n+1} \ge 0, \quad  i \in [k],\forall j \in [k], j \ne i.
			\end{cases}
		\end{equation}
		Then the unique solvability of \eqref{app:1:step:3} can be easily obtained since there exist a unique solution for the convex  constrained  optimization problem \eqref{opt:1}. 
		
	\end{remark}

	\begin{remark}
		Similarly, the orthogonality-preserving step \eqref{disj_ForwardEuler} is also equivalent to the following optimization problem
		\begin{equation}
			\begin{cases}
				u_i^{n+1} =  \min\limits_{u_i(\bx)} \int_\Omega ({u}_i(\bx) - \bar{u}_i^{n+1}(\bx))^2/(2\tau)-\sum_{j\in [k],j\ne i} \eta_{ij}^{n+1} \bar{u}_j^{n+1}{u}_i(\bx)\, d\bx, \quad  i \in [k], \\[1 em]
			\quad {u}_i^{n+1} {u}_j^{n+1} = 0, \quad  i \in [k],\forall j \in [k], j \ne i.
			\end{cases}
		\end{equation}
		Since the orthogonal constraint ${u}_i^{n+1} {u}_j^{n+1} = 0$ will not  satisfy the property of convexity,   then there exists multiple solutions for this step.  
		
	\end{remark}

	Next we show how to efficiently solve the scheme proposed above. Obviously, the diffusion step and positivity-preserving step can be solved uniquely.  To present the algorithm, it is necessary to elucidate the solution to \eqref{disj_ForwardEuler} in the third step (Orthogonality-preserving step). The property of  non-convexity of the constraint $u_iu_j=0$ with  $i\ne j$ for  $ i,j \in [k]$  indicates  the existence of multiple solutions for the equation \eqref{disj_ForwardEuler}, in the subsequent lemma,  we shall  illustrate one such solution.                
	\begin{lemma}\label{lem:disj}
		Given $\bar u_i^{n+1}\ge0$, $i\in[k]$,  $\eta_{ij}^{n+1}=\eta_{ji}^{n+1}$, $n\ge 0$, one solution of the scheme \eqref{disj_ForwardEuler}  is 
		\begin{equation}\label{disj_sol}
			\begin{split}
				&\hat u_i^{n+1}=\left\{
				\begin{array}{lcl}
					\frac{(\bar u_i^{n+1})^2-\max\limits_{j\in[k],j\ne i}\left\{(\bar u_j^{n+1})^2\right\}}{\bar{u}_i^{n+1}},  &      & {\mbox{if} \quad \bar u_i^{n+1}> \max\limits_{j\in[k],j\ne i}\{\bar u_j^{n+1}\}},\\[.5em]
					0,   &    & {\mbox{otherwise} }.
				\end{array} \right.
			\end{split}
		\end{equation}
	\end{lemma}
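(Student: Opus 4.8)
The plan is to argue pointwise in $\bx\in\Omega$, since both the update in \eqref{disj_ForwardEuler} and its two constraints are imposed at every point. So I would fix $\bx$, write $\bar u_i:=\bar u_i^{n+1}(\bx)\ge0$, $\hat u_i:=\hat u_i^{n+1}(\bx)$, $\eta_{ij}:=\eta_{ij}^{n+1}(\bx)$, and treat the lemma as a statement about $k$ nonnegative reals. Two things must be shown: (a) the $\hat u_i$ given by \eqref{disj_sol} satisfy $\hat u_i\ge0$ and $\hat u_i\hat u_j=0$ for $i\ne j$; and (b) there exist symmetric multipliers $\eta_{ij}=\eta_{ji}$ realizing these $\hat u_i$ through the forward-Euler step $\hat u_i=\bar u_i+\tau\sum_{j\ne i}\eta_{ij}\bar u_j$. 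Part (b) carries the weight of the proof.

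For (a): at most one index $m$ can satisfy $\bar u_m>\max_{j\ne m}\bar u_j$, because two such indices $i_1\ne i_2$ would give both $\bar u_{i_1}>\bar u_{i_2}$ and $\bar u_{i_2}>\bar u_{i_1}$. If such an $m$ exists then $\bar u_m>\bar u_j\ge0$ for all $j\ne m$ forces $\bar u_m>0$ and $\bar u_m^2>\max_{j\ne m}\bar u_j^2$, so $\hat u_m>0$ and $\hat u_j=0$ for $j\ne m$; otherwise all $\hat u_i=0$. In both cases all $\hat u_i\ge0$ and at most one is nonzero, giving $\hat u_i\hat u_j=0$ when $i\ne j$.

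For (b): setting $A_{ij}=\tau\eta_{ij}$ ($i\ne j$), $A_{ii}=0$, the step reads $\hat u=(I+A)\bar u$, so I need a symmetric, zero-diagonal $A$ with $A\bar u=w$, where $w:=\hat u-\bar u$. If $\bar u=0$, or if the strict maximizer $m$ has $\max_{j\ne m}\bar u_j=0$, then $\hat u=\bar u$, $w=0$ and $\eta\equiv0$ suffices. Otherwise I would pick $m\in\arg\max_i\bar u_i$ (any one in case of a tie), put $s:=\max_{j\ne m}\bar u_j$, $Q:=\sum_{j\ne m}\bar u_j^2$, and begin from $A^{(0)}$ with $A^{(0)}_{im}=A^{(0)}_{mi}=-\bar u_i/\bar u_m$ for $i\ne m$ and all other entries zero; one checks directly that $(A^{(0)}\bar u)_i=-\bar u_i$ for $i\ne m$ (hence $\hat u_i=0$, as required) while $(A^{(0)}\bar u)_m=-Q/\bar u_m$, whereas the target value is $-s^2/\bar u_m$. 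Since $Q\ge s^2$ the deficit $\delta:=(Q-s^2)/\bar u_m\ge0$, and it is zero precisely when at most one $\bar u_j$ with $j\ne m$ is positive; when it is positive I would add to $A^{(0)}$ a rank-two symmetric zero-diagonal matrix $B$ supported on $\{m,j_1,j_2\}$ for two indices $j_1,j_2\ne m$ with $\bar u_{j_1},\bar u_{j_2}>0$, chosen so that $B\bar u$ equals $\delta$ times the $m$-th unit vector and nothing else (e.g.\ $B_{mj_1}=\delta/(2\bar u_{j_1})$, $B_{mj_2}=\delta/(2\bar u_{j_2})$, $B_{j_1j_2}=-\delta\bar u_m/(2\bar u_{j_1}\bar u_{j_2})$), so that $A:=A^{(0)}+B$ satisfies $A\bar u=w$. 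Alternatively, one bypasses the explicit correction by noting that the linear map $A\mapsto A\bar u$ on symmetric zero-diagonal matrices is onto $\mathbb R^k$ as soon as three of the $\bar u_i$ are positive, since its range has orthogonal complement $\{y:\ y_i\bar u_j+y_j\bar u_i=0\ \forall i\ne j\}=\{0\}$ in that case.

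The main obstacle is part (b): the requirement $\eta_{ij}=\eta_{ji}$ turns ``solve $A\bar u=w$'' into a linear problem whose solvability depends on $k$ and on how many of the $\bar u_i$ vanish, so a short case split by the number of positive components of $\bar u$ (or the surjectivity remark above) cannot be avoided. Once a valid symmetric $\eta$ has been produced, verifying that it together with the $\hat u_i$ of \eqref{disj_sol} satisfies \eqref{disj_ForwardEuler} is a routine substitution, which finishes the proof.
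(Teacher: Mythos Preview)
Your argument is correct. Part (a) is immediate, and in part (b) your construction $A=A^{(0)}+B$ does what is needed: the base matrix $A^{(0)}$ with $A^{(0)}_{im}=-\bar u_i/\bar u_m$ already forces $\hat u_i=0$ for $i\ne m$, the deficit $\delta=(Q-s^2)/\bar u_m$ in the $m$-th coordinate is nonnegative, and whenever $\delta>0$ the inequality $Q>s^2$ guarantees at least two further positive components, so your rank-two correction $B$ is well defined and delivers $B\bar u=\delta e_m$. The tie case is absorbed into the same formula since then $s=\bar u_m$ and the target $-s^2/\bar u_m$ coincides with $-\bar u_m$. Your surjectivity remark is also valid and gives a clean alternative.

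The route differs from the paper's. The paper proceeds by explicit case analysis: it writes out the two equations for $k=2$, solves for $\eta_{12}$, then writes out the three equations for $k=3$, exhibits a particular choice of $(\eta_{mp},\eta_{mq},\eta_{pq})$ in closed form, and verifies by substitution; for $k\ge 4$ it simply asserts that the same pattern continues and omits the details. Your approach instead treats all $k$ at once by reformulating the forward-Euler step as the linear problem $A\bar u=\hat u-\bar u$ on symmetric zero-diagonal matrices and producing a solution by a base-plus-correction ansatz. What you gain is uniformity and rigor for general $k$ (no appeal to ``similarly''); what the paper's approach offers is fully explicit multiplier formulas for $k=2,3$, which can be useful for implementation or for seeing the structure in small cases. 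If you want to connect the two, note that for $k=3$ your $A^{(0)}+B$ (with $j_1=p$, $j_2=q$) reduces, after simplification, to exactly the multipliers the paper writes down.
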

	
	\begin{proof}
For \( k = 2 \), using \(\eta_{12}^{n+1} = \eta_{21}^{n+1}\), we have
\begin{align}
    \label{4step:dis_p2_u1}
    \hat{u}_1^{n+1} = \bar{u}_1^{n+1} + \tau \eta_{12}^{n+1} \bar{u}_2^{n+1}, \\
    \label{4step:dis_p2_u2}
    \hat{u}_2^{n+1} = \bar{u}_2^{n+1} + \tau \eta_{12}^{n+1} \bar{u}_1^{n+1}.
\end{align}
Since \(\hat{u}_1^{n+1} \hat{u}_2^{n+1} = 0\), we derive
\begin{equation}\label{Sysk2Cons}
    (\bar{u}_1^{n+1} + \tau \eta_{12}^{n+1} \bar{u}_2^{n+1})(\bar{u}_2^{n+1} + \tau \eta_{12}^{n+1} \bar{u}_1^{n+1}) = 0.
\end{equation}

\textbf{Case 1:} If \(\bar{u}_1^{n+1} \bar{u}_2^{n+1} = 0\), we set \(\eta_{12}^{n+1} = 0\) and get
\[
\hat{u}_1^{n+1} = \bar{u}_1^{n+1}, \quad \hat{u}_2^{n+1} = \bar{u}_2^{n+1},
\]
which satisfies the orthogonal condition and agrees with \eqref{disj_sol}.

\textbf{Case 2:} If \(\bar{u}_1^{n+1} \bar{u}_2^{n+1} \ne 0\), \(\bar{u}_1^{n+1} \ge \bar{u}_2^{n+1} > 0\), combining \(\hat{u}_1^{n+1} \ge 0\), we obtain
\begin{equation}\label{Sysk2}
\left\{
\begin{array}{l}
    \bar{u}_1^{n+1} + \tau \eta_{12}^{n+1} \bar{u}_2^{n+1} \ge 0, \\[.5em]
    \bar{u}_2^{n+1} + \tau \eta_{12}^{n+1} \bar{u}_1^{n+1} = 0.
\end{array}
\right.
\end{equation}
The unique solution of \eqref{Sysk2} is \(\eta_{12}^{n+1} = -\frac{\bar{u}_2^{n+1}}{\tau \bar{u}_1^{n+1}}\). Using \eqref{4step:dis_p2_u1} and \eqref{4step:dis_p2_u2}, the solution of \eqref{disj_ForwardEuler} for \(k = 2\) can be expressed as
\[
\hat{u}_1^{n+1} = \frac{(\bar{u}_1^{n+1})^2 - (\bar{u}_2^{n+1})^2}{\bar{u}_1^{n+1}}, \quad \hat{u}_2^{n+1} = 0,
\]
for \(\bar{u}_1^{n+1} \ge \bar{u}_2^{n+1}\). Similarly, for \(0 < \bar{u}_1^{n+1} \le \bar{u}_2^{n+1}\), the solution of \eqref{disj_ForwardEuler} with \(k = 2\) is
\[
\hat{u}_1^{n+1} = 0, \quad \hat{u}_2^{n+1} = \frac{(\bar{u}_2^{n+1})^2 - (\bar{u}_1^{n+1})^2}{\bar{u}_2^{n+1}}.
\]
Therefore, the solution of \eqref{disj_ForwardEuler} for the case of \(\bar{u}_1^{n+1} \bar{u}_2^{n+1} \ne 0\) can be written as
\begin{equation}\label{sol:al:1}
\left(\hat{u}_1^{n+1}, \hat{u}_2^{n+1}\right) = \left\{
\begin{array}{lcl}
\left(\frac{(\bar{u}_1^{n+1})^2 - (\bar{u}_2^{n+1})^2}{\bar{u}_1^{n+1}}, 0\right), & \text{if} & \bar{u}_1^{n+1} \ge \bar{u}_2^{n+1}, \\[1em]
\left(0, \frac{(\bar{u}_2^{n+1})^2 - (\bar{u}_1^{n+1})^2}{\bar{u}_2^{n+1}}\right), & \text{if} & \bar{u}_1^{n+1} \le \bar{u}_2^{n+1},
\end{array}
\right.
\end{equation}
which is also consistent with \eqref{disj_sol}.

For \(k = 3\), we have
\begin{equation}\label{SysAlg1:k3}
\begin{split}
    \hat{u}_1^{n+1} &= \bar{u}_1^{n+1} + \tau \eta_{12}^{n+1} \bar{u}_2^{n+1} + \tau \eta_{13}^{n+1} \bar{u}_3^{n+1}, \\[.5em]
    \hat{u}_2^{n+1} &= \bar{u}_2^{n+1} + \tau \eta_{12}^{n+1} \bar{u}_1^{n+1} + \tau \eta_{23}^{n+1} \bar{u}_3^{n+1}, \\[.5em]
    \hat{u}_3^{n+1} &= \bar{u}_3^{n+1} + \tau \eta_{13}^{n+1} \bar{u}_1^{n+1} + \tau \eta_{23}^{n+1} \bar{u}_2^{n+1}.
\end{split}
\end{equation}

\textbf{Case 1:} If \(\bar{u}_\ell^{n+1} = 0\) for some \(\ell \in [3]\), we set \(\eta_{\ell j}^{n+1} = \eta_{j \ell}^{n+1} = 0\) for \(j \in [3]\), \(j \neq \ell\), which implies \(\hat{u}_\ell^{n+1} = \bar{u}_\ell^{n+1} = 0\). Following a similar derivation as in the case of \(k = 2\), we can find that the solution to \eqref{SysAlg1:k3} for this case corresponds to \eqref{disj_sol}.

\textbf{Case 2:} If for all \(\ell \in [3]\), \(\bar{u}_\ell^{n+1} > 0\), imposing the constraints
\[
\hat{u}_i^{n+1} \hat{u}_j^{n+1} = 0, \quad i \in [3], \quad \forall j \in [3], \quad j \neq i,
\]
on \eqref{SysAlg1:k3}, we obtain
\begin{equation}\label{Sysk3Cons}
\begin{split}
\left(\bar{u}_1^{n+1} + \tau \eta_{12}^{n+1} \bar{u}_2^{n+1} + \tau \eta_{13}^{n+1} \bar{u}_3^{n+1}\right) \left(\bar{u}_2^{n+1} + \tau \eta_{12}^{n+1} \bar{u}_1^{n+1} + \tau \eta_{23}^{n+1} \bar{u}_3^{n+1}\right) &= 0, \\[.5em]
\left(\bar{u}_1^{n+1} + \tau \eta_{12}^{n+1} \bar{u}_2^{n+1} + \tau \eta_{13}^{n+1} \bar{u}_3^{n+1}\right) \left(\bar{u}_3^{n+1} + \tau \eta_{13}^{n+1} \bar{u}_1^{n+1} + \tau \eta_{23}^{n+1} \bar{u}_2^{n+1}\right) &= 0, \\[.5em]
\left(\bar{u}_2^{n+1} + \tau \eta_{12}^{n+1} \bar{u}_1^{n+1} + \tau \eta_{23}^{n+1} \bar{u}_3^{n+1}\right) \left(\bar{u}_3^{n+1} + \tau \eta_{13}^{n+1} \bar{u}_1^{n+1} + \tau \eta_{23}^{n+1} \bar{u}_2^{n+1}\right) &= 0.
\end{split}
\end{equation}
Assuming \(\bar{u}_m^{n+1} \ge \bar{u}_p^{n+1} \ge \bar{u}_q^{n+1}\) for \(m \ne p \ne q \in [3]\), \eqref{Sysk3Cons} can be written as
\begin{equation}\label{Sysk3}
\left\{
\begin{array}{l}
\bar{u}_m^{n+1} + \tau \eta_{mp}^{n+1} \bar{u}_p^{n+1} + \tau \eta_{mq}^{n+1} \bar{u}_q^{n+1} \ge 0, \\[.5em]
\bar{u}_p^{n+1} + \tau \eta_{pm}^{n+1} \bar{u}_m^{n+1} + \tau \eta_{pq}^{n+1} \bar{u}_q^{n+1} = 0, \\[.5em]
\bar{u}_q^{n+1} + \tau \eta_{qm}^{n+1} \bar{u}_m^{n+1} + \tau \eta_{qp}^{n+1} \bar{u}_p^{n+1} = 0.
\end{array}
\right.
\end{equation}
Note that, in contrast to the system \eqref{Sysk2} for \(k = 2\), the system \eqref{Sysk3} admits multiple valid choices for \(\eta_{ij}^{n+1}\). A specific solution to \eqref{Sysk3} is
\begin{equation}\label{Lagdisjk3}
\left\{
\begin{array}{l}
\eta_{mq}^{n+1} = \eta_{qm}^{n+1} = -\frac{\bar{u}_q^{n+1}}{2\tau \bar{u}_m^{n+1}}, \\[.5em]
\eta_{pq}^{n+1} = \eta_{qp}^{n+1} = -\frac{\bar{u}_q^{n+1}}{2\tau \bar{u}_p^{n+1}}, \\[.5em]
\eta_{mp}^{n+1} = \eta_{pm}^{n+1} = -\frac{2(\bar{u}_p^{n+1})^2 - (\bar{u}_q^{n+1})^2}{2\tau \bar{u}_m^{n+1} \bar{u}_p^{n+1}}.
\end{array}
\right.
\end{equation}
Substituting \eqref{Lagdisjk3} into \eqref{SysAlg1:k3}, we obtain the solution \eqref{disj_sol}. 
The solution for the case \(k \ge 4\) can be derived in a manner similar to the case of \(k = 3\). The details are omitted here. Thus, the proof is complete.

	\end{proof}
From Lemma \ref{lem:disj}, we shall  propose  Algorithm \ref{al:1} to  solve  the system \eqref{norm:1}-\eqref{norm:5}.

	\begin{theorem}
	The four-step operator  splitting Algorithm \ref{al:1} for the system \eqref{norm:1}-\eqref{norm:5}  satisfies  the following nice properties 
		\begin{equation}
			u_i^{n+1} \ge 0, \quad \|u_i^{n+1}\|=1, \quad u_i^{n+1}u_j^{n+1}=0,  \quad i \in [k],\; \forall j \in [k],\; j \ne i. 
		\end{equation}
	\end{theorem}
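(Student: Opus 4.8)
The plan is to propagate the three target invariants through the four substeps, establishing or maintaining them in the order positivity $\to$ orthogonality $\to$ normalization, and then collecting the conclusions at the last step.

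\textbf{Steps 1--2.} Since $\tilde u_i^{n+1}=\e^{\tau\Delta}u_i^n$ is the action of the heat semigroup associated with the chosen (periodic, Dirichlet, or Neumann) boundary conditions, it maps nonnegative data to nonnegative data, so $u_i^n\ge 0$ yields $\tilde u_i^{n+1}\ge 0$; in any case the positivity step gives $\bar u_i^{n+1}=\max(\tilde u_i^{n+1},0)\ge 0$ by construction, so $\bar u_i^{n+1}\ge 0$ holds unconditionally. Note that Steps 1--2 in general destroy both the orthogonality \eqref{norm:2} and the normalization \eqref{norm:3}; these are repaired downstream.

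\textbf{Step 3.} Here I would invoke Lemma~\ref{lem:disj}, which provides the explicit representation \eqref{disj_sol} of $\hat u_i^{n+1}$. Two pointwise observations suffice. First, at each $\bx$ the value $\hat u_i^{n+1}(\bx)$ is nonzero only when $\bar u_i^{n+1}(\bx)>\bar u_j^{n+1}(\bx)$ for all $j\ne i$, and this strict-maximum condition can hold for at most one index $i$; hence $\hat u_i^{n+1}(\bx)\hat u_j^{n+1}(\bx)=0$ for all $i\ne j$, i.e. the orthogonality constraint is restored after Step~3. Second, on the set where $\hat u_i^{n+1}$ is nonzero we have $(\bar u_i^{n+1})^2-\max_{j\ne i}(\bar u_j^{n+1})^2>0$ and $\bar u_i^{n+1}>0$, so $\hat u_i^{n+1}=\big((\bar u_i^{n+1})^2-\max_{j\ne i}(\bar u_j^{n+1})^2\big)/\bar u_i^{n+1}\ge 0$; together with the ``otherwise'' branch this gives $\hat u_i^{n+1}\ge 0$. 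Thus positivity and orthogonality both hold after Step~3.

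\textbf{Step 4.} Finally I would use the closed form $u_i^{n+1}=\hat u_i^{n+1}/\|\hat u_i^{n+1}\|$ obtained from \eqref{pro:eq}. Multiplying $\hat u_i^{n+1}$ by the single positive scalar $1/\|\hat u_i^{n+1}\|$ preserves pointwise nonnegativity and preserves the property that the pointwise product $\hat u_i^{n+1}\hat u_j^{n+1}$ vanishes; hence $u_i^{n+1}\ge 0$ and $u_i^{n+1}u_j^{n+1}=0$ for $i\ne j$ are inherited from Step~3, while $\|u_i^{n+1}\|=1$ holds by construction. Collecting the three conclusions completes the proof. The only nontrivial point — and the thing I would flag explicitly as a hypothesis — is the well-definedness of Step~4, namely $\|\hat u_i^{n+1}\|\ne 0$: one needs that after the orthogonalization step no component is identically zero, which can be ensured by arguing that $\bar u_i^{n+1}$ is the strict pointwise maximum on a set of positive measure (equivalently, by a mild condition on the data). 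Everything else is routine bookkeeping once Lemma~\ref{lem:disj} is available.
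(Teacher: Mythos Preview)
The paper does not supply a proof for this theorem; it is stated without proof, and the analogous result for the three-step schemes is explicitly dismissed with ``The property \ldots\ can be easily observed, we omit the proof here.'' Your argument is exactly the natural one the authors have in mind: invoke Lemma~\ref{lem:disj} to read off positivity and pointwise orthogonality of $\hat u_i^{n+1}$ from the explicit formula \eqref{disj_sol}, then observe that the scalar normalization in Step~4 preserves both while enforcing $\|u_i^{n+1}\|=1$. So your approach matches the paper's (implicit) reasoning.

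One point worth noting: you correctly flag the tacit hypothesis $\|\hat u_i^{n+1}\|\ne 0$ needed for Step~4 to be well defined. The paper does not address this at all, so your proposal is in fact more careful than the original on this point.
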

	
	For the four step operator-splitting scheme \eqref{diff:1}-\eqref{pro:eq}, the property of  energy dissipation  can not be  satisfied  unconditionally.  For sufficiently small $\tau$, the  four-step operator-splitting scheme \eqref{diff:1}-\eqref{pro:eq} should satisfy the property of energy dissipation, we leave detailed discussion in future work.

Based on Lemma \ref{lem:disj}, we implement the four step operator-splitting scheme \eqref{diff:1}-\eqref{pro:eq}  as the  Algorithm~\ref{Alg:4step}.

	\begin{algorithm}[H]\label{al:1}
		\begin{algorithmic}
			\State {\bf Input:} {Let $\Omega$ be a given domain, $\tau > 0$,  $N_{\max}$  be the  maximum number of iteration  and $u^0 \in H_0^1(\Omega)$ be the initial condition.} 
			\State {\bf Output:} {$u_i^{n+1}$, $i=1,\cdots, k$.}
			\State Initialize $n = 0$.
			\While{$n< N_{\max}$ \ }
			
			{\bf 1.  Diffusion step.} Compute 
			\begin{equation}\label{Alg1_kPartitionS1}
				\tilde u^{n+1}_i = \e^{\tau\Delta} u^n_i.
			\end{equation}

			{\bf 2. Positivity-preserving step.} Compute
			\begin{equation}\label{Alg1_kPartitionS2}
				\bar{u}_i^{n+1}=\max\{	\tilde{u}_i^{n+1},0\},\quad i=1,\cdots,k.
			\end{equation}
			
			{\bf 3. Orthogonality-preserving step.} 
			\begin{equation}\label{Alg1_kPartitionS3}
				\begin{split}
					&\hat u_i^{n+1}=\left\{
					\begin{array}{lcl}
						\frac{(\bar u_i^{n+1})^2-\max\limits_{j\in[k],j\ne i}\left\{(\bar u_j^{n+1})^2\right\}}{\bar{u}_i^{n+1}},  &      & {\mbox{if} \quad \bar u_i^{n+1}> \max\limits_{j\in[k],j\ne i}\{\bar u_j^{n+1}\}},\\[.5em]
						0,   &    & {\mbox{otherwise} }.
					\end{array} \right.
				\end{split}
			\end{equation}

			{\bf 4. Norm-preserving step.} Update $u_i^{n+1}$ by
			\begin{equation}\label{Alg1_kPartitionS4}
				u_i^{n+1}=\frac{\bar u_i^{n+1}}{\|\bar u_i^{n+1}\|},\quad i=1,\cdots,k.
			\end{equation}
			
			\If{\eqref{stop_cond} is true}
			
			Stop the iteration.
			
			\EndIf

			Set $n = n+1$.

			\EndWhile
		\end{algorithmic}
		\caption{A four-step operator splitting scheme. } \label{Alg:4step}
	\end{algorithm}
	
		In the implementation, we stop the iteration if the partition remains unchanged between two consecutive iterations,  which can be expressed by
		\begin{equation}\label{stop_cond}
			\sum\limits_{i=1}^k\Big\|\psi(u_i^{n+1}-\max\limits_{j\in[k]}\{u_j^{n+1}\})-\psi(u_i^{n}-\max\limits_{j\in[k]}\{u_j^{n}\})\Big\|=0,
		\end{equation}
		where
		\begin{align*}
			\psi(x)=\left\{\begin{array}{lll}
				1,&&x=0,\\
				0, &&x\ne 0.
			\end{array}\right.
		\end{align*}

	\subsection{Three-step operator splitting schemes}
	Actually, we can combine positivity-preserving step with orthogonality-preserving step and propose the following three-step numerical algorithms.
	
	Given $u_i^n$, $i\in[k]$,  $n\ge0$, we now present  two three-step numerical algorithm for  solving  \eqref{norm:1}-\eqref{norm:4}. 
	
	{\bf Step-1:} (Diffusion step) Given $u_i^n$ for $ i\in[k]$, we  solve the  heat equation
	\begin{eqnarray} \label{Alg2_diff:1}
		\partial_t u_i(\bx,t) =\Delta u_i(\bx,t),\quad \bx\in \Omega, \quad t\in(t_n, t_{n+1}),
	\end{eqnarray}
	with  $u_i(\bx,t_n)=u_i^n(\bx)$. The solution at $t_{n+1}$ can be expressed exactly  by
	\begin{equation*}
		\tilde u^{n+1}_i = \e^{\tau\Delta} u^n_i,\quad i\in[k].
	\end{equation*}
	
	{\bf Step-2:} (Positivity-preserving and orthogonality-preserving step) Given $\tilde u_i^{n+1}$ for $ i\in[k]$,
	motivated from  the following system 
	\begin{equation}\label{3step_gradient_disj}
		\begin{split}
			&	\partial_t u_i(\bx,t)=\lambda_i(\bx,t)+\sum\limits_{j\in [k],j\ne i} \eta_{ij}(\bx,t)u_j(\bx,t),\quad \bx\in \Omega, \quad t\in(t_n, t_{n+1}),\\
			&u_i (\bx,t_n)=\tilde u_i^{n+1}(\bx), \quad u_i(\bx,t)u_j(\bx,t)=0, \quad i\in[k],\quad \forall j\in [k], \quad j\ne i.
		\end{split}
	\end{equation}
We solve $\hat u_i^{n+1}$ from
	\begin{equation}\label{3step_disj_ExplicitEuler}
		\begin{split}
			\frac{\hat u_i^{n+1}-\tilde u_i^{n+1}}{\tau}&=\lambda_i^{n+1}+\sum\limits_{j\in [k],j\ne i}\eta_{ij}^{n+1}\tilde u_j^{n+1},\quad i\in [k],\\
			\lambda_i^{n+1}\ge 0,\quad	\lambda_i^{n+1}	\hat u_i^{n+1}=0,\quad  
			&\hat u_i^{n+1}\ge 0,\quad \hat u_i^{n+1}\hat u_j^{n+1}=0, \quad i\in[k],\quad \forall j\in [k], \quad j\ne i.
		\end{split}
	\end{equation}

	{\bf Step-3:} (Norm-preserving step) Given $\hat u_i^{n+1}$ for $ i\in[k]$, we solve $u_i^{n+1}$ from
	\begin{equation}\label{3step_disj_norm}
		u_i^{n+1}=\frac{\hat u_i^{n+1}}{\|\hat u_i^{n+1}\|},\quad i\in[k].
	\end{equation}	
	Since {\bf Step-1} and {\bf Step-3} can be solved efficiently and uniquely,  below, we mainly  show how to efficiently solve \eqref{3step_disj_ExplicitEuler} in   {\bf Step-2}. Similar to four-steps operator-splitting scheme proposed above,  there exist multiple solutions for  the equation \eqref{3step_disj_ExplicitEuler}. Below,  we shall give two solutions for \eqref{3step_disj_ExplicitEuler} and propose two new  types of three-steps algorithms. 
	
	\begin{lemma}
		Given $\bar u_i^{n+1}\ge0$, $i\in[k]$, $k\ge 2$, $\eta_{ij}^{n+1}=\eta_{ji}^{n+1}$, $n\ge 0$, one solution of the scheme  \eqref{3step_disj_ExplicitEuler}  is 
		\begin{equation}\label{Alg2Disj_sol}
			\begin{split}
				&\hat u_i^{n+1}=\left\{
				\begin{array}{lcl}
					\tilde  u_i^{n+1}- \max\limits_{j\in[k],j\ne i}\{\tilde u_j^{n+1}\},	  &    & {\mbox{if} \quad  \tilde  u_i^{n+1}>0 \quad\mbox{and}\quad \tilde  u_i^{n+1}> \max\limits_{j\in[k],j\ne i}\{\tilde u_j^{n+1}\}},\\[.5em]
					0,   &    & {\mbox{otherwise} }.
				\end{array} \right.
			\end{split}
		\end{equation}
	\end{lemma}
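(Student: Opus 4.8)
The plan is to verify directly that the candidate formula \eqref{Alg2Disj_sol} gives a valid solution of the coupled nonlinear system \eqref{3step_disj_ExplicitEuler}, by exhibiting an explicit choice of the Lagrange multipliers $\lambda_i^{n+1}\ge 0$ and $\eta_{ij}^{n+1}=\eta_{ji}^{n+1}$ that is consistent with the update, and then checking each of the four constraint families (the defining equation, $\lambda_i^{n+1}\ge 0$, the complementarity $\lambda_i^{n+1}\hat u_i^{n+1}=0$, the positivity $\hat u_i^{n+1}\ge 0$, and the orthogonality $\hat u_i^{n+1}\hat u_j^{n+1}=0$). The natural strategy, mirroring the proof of Lemma~\ref{lem:disj}, is to fix a point $\bx\in\Omega$, relabel the indices so that $\tilde u_m^{n+1}\ge \tilde u_p^{n+1}\ge\cdots$ at that point, and observe that \eqref{Alg2Disj_sol} makes at most one component nonzero there; orthogonality is then automatic pointwise, and the work reduces to producing multipliers for a single active index.

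First I would split into cases according to the sign and ordering of the $\tilde u_i^{n+1}$ at a given point. If all $\tilde u_i^{n+1}\le 0$, or if the maximum is attained (weakly) by at least two indices, then \eqref{Alg2Disj_sol} forces every $\hat u_i^{n+1}=0$; in this sub-case I would take $\eta_{ij}^{n+1}=0$ for all pairs, which reduces \eqref{3step_disj_ExplicitEuler} to $\hat u_i^{n+1}-\tilde u_i^{n+1}=\tau\lambda_i^{n+1}$, and set $\lambda_i^{n+1}=-\tilde u_i^{n+1}/\tau\ge 0$ (note $\lambda_i^{n+1}\hat u_i^{n+1}=0$ holds since $\hat u_i^{n+1}=0$). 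If instead there is a unique index $m$ with $\tilde u_m^{n+1}>0$ and $\tilde u_m^{n+1}>\tilde u_j^{n+1}$ for all $j\ne m$, then \eqref{Alg2Disj_sol} gives $\hat u_m^{n+1}=\tilde u_m^{n+1}-\max_{j\ne m}\tilde u_j^{n+1}>0$ and $\hat u_j^{n+1}=0$ for $j\ne m$. For this case I would set $\lambda_m^{n+1}=0$ (forced by complementarity since $\hat u_m^{n+1}>0$), which makes the $m$-th equation read
\begin{equation*}
\tilde u_m^{n+1}-\max_{j\ne m}\tilde u_j^{n+1}-\tilde u_m^{n+1}=\tau\sum_{j\ne m}\eta_{mj}^{n+1}\tilde u_j^{n+1},
\end{equation*}
i.e.\ $\sum_{j\ne m}\eta_{mj}^{n+1}\tilde u_j^{n+1}=-\tfrac{1}{\tau}\max_{j\ne m}\tilde u_j^{n+1}$, while each equation with index $i\ne m$ reads $-\tilde u_i^{n+1}=\tau\lambda_i^{n+1}+\tau\sum_{j\ne i}\eta_{ij}^{n+1}\tilde u_j^{n+1}$. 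One concrete balanced choice — analogous to \eqref{Lagdisjk3} — is to let $p$ be the index achieving $\max_{j\ne m}\tilde u_j^{n+1}$, put $\eta_{mp}^{n+1}=\eta_{pm}^{n+1}=-1/\tau$ and all other $\eta$'s zero, then set $\lambda_i^{n+1}$ from the remaining $i\ne m$ equations; I would check these $\lambda_i^{n+1}$ are nonnegative (the diffusion step plus the ordering should guarantee the right signs, possibly after a slightly more symmetric spreading of the $\eta$'s) and satisfy complementarity since $\hat u_i^{n+1}=0$ there.

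Since these multipliers are defined pointwise and the formula \eqref{Alg2Disj_sol} is a pointwise prescription, gluing the pointwise constructions over $\Omega$ yields functions $\hat u_i^{n+1}$, $\lambda_i^{n+1}$, $\eta_{ij}^{n+1}$ satisfying \eqref{3step_disj_ExplicitEuler} (the symmetry $\eta_{ij}^{n+1}=\eta_{ji}^{n+1}$ is built in, and orthogonality holds because at each point at most one component is nonzero). The main obstacle I anticipate is the sign verification for $\lambda_i^{n+1}$ on the inactive indices: one must confirm that the chosen $\eta_{ij}^{n+1}$ leave $\lambda_i^{n+1}=-\tilde u_i^{n+1}/\tau-\sum_{j\ne i}\eta_{ij}^{n+1}\tilde u_j^{n+1}\ge 0$; a naive choice of $\eta$'s can violate this, so the construction of the multipliers must be done with care (for instance by spreading the orthogonalizing correction over all pairs $\{i,m\}$ rather than concentrating it on a single pair, as the $k=3$ case of Lemma~\ref{lem:disj} already illustrates). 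As in Lemma~\ref{lem:disj}, once $k=2$ and $k=3$ are handled explicitly, the general $k$ case follows by the same bookkeeping, and I would state that the details for $k\ge 4$ are omitted.
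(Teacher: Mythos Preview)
Your overall strategy---fix a point, identify the maximizing index $m$, set $\hat u_m^{n+1}=\tilde u_m^{n+1}-\tilde u_p^{n+1}$ and all others zero, then exhibit multipliers---matches the paper's proof. But the concrete multiplier choices you propose do not close the argument, and the paper's key construction is exactly the piece you are missing.

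Your suggestion ``$\eta_{mp}^{n+1}=-1/\tau$ and all other $\eta$'s zero'' fails: for any inactive index $i\notin\{m,p\}$ with $\tilde u_i^{n+1}>0$, the $i$-th equation gives $\lambda_i^{n+1}=-\tilde u_i^{n+1}/\tau<0$. Your fallback of ``spreading over all pairs $\{i,m\}$'' also fails in general: the $m$-th equation forces $\sum_{i\ne m}\eta_{mi}\tilde u_i=-\tilde u_p/\tau$, while $\lambda_i\ge 0$ requires $\eta_{im}\le -\tilde u_i/(\tau\tilde u_m)$; combining these needs $\tilde u_p\tilde u_m\ge\sum_{i\ne m}\tilde u_i^2$, which is false e.g.\ for $(\tilde u_1,\tilde u_2,\tilde u_3)=(3,2,2)$. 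The paper resolves this by also coupling \emph{inactive} pairs: for $i,j\ne m$ it sets $\tau\eta_{ij}^{n+1}=-\max(\tilde u_i/\tilde u_j,\tilde u_j/\tilde u_i)$, so that each term satisfies $-\eta_{ij}^{n+1}\tilde u_j^{n+1}\ge \tilde u_i^{n+1}/\tau$ and hence $\lambda_i^{n+1}=-\tilde u_i^{n+1}/\tau-\sum_{j\ne i}\eta_{ij}^{n+1}\tilde u_j^{n+1}\ge 0$ automatically. This is the missing idea. A smaller slip: in your ``tie at a positive maximum'' sub-case, taking all $\eta$'s zero and $\lambda_i=-\tilde u_i/\tau$ gives $\lambda_i<0$ for the tied indices, so that case must be folded into the main construction rather than handled with trivial multipliers.
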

	
	\begin{proof}
If there exists \(\tilde u_i^{n+1} \le 0\) for some \(i \in [k]\),  choose
\[(\lambda_i^{n+1}, \eta_{ij}^{n+1}) = \left(-\frac{\tilde u_i^{n+1}}{\tau}, 0\right),\] 
for all \(j \in [k]\) with \(j \ne i\). Then, we have \(\hat u_i^{n+1} = 0\), which satisfies the conditions in \eqref{3step_disj_ExplicitEuler} and is consistent with \eqref{Alg2Disj_sol}. It remains to prove that \eqref{3step_disj_ExplicitEuler} admits the solution \eqref{Alg2Disj_sol} when \(\tilde u_i^{n+1} > 0\) for \(i \in [k]\).

We start with the case of \(k=2\). For \(k=2\) in \eqref{3step_disj_ExplicitEuler}, we have
\begin{equation}\label{Sysk2Alg2Cons}
    \begin{split}
        \hat u_1^{n+1} &= \tilde u_1^{n+1} + \tau \lambda_1^{n+1} + \tau \eta_{12}^{n+1} \tilde u_2^{n+1},\\[.5em]
        \hat u_2^{n+1} &= \tilde u_2^{n+1} + \tau \lambda_2^{n+1} + \tau \eta_{12}^{n+1} \tilde u_1^{n+1},\\[.5em]
        \lambda_1^{n+1} &\ge 0, \quad \lambda_1^{n+1} \hat u_1^{n+1} = 0,\\[.5em]
        \lambda_2^{n+1} &\ge 0, \quad \lambda_2^{n+1} \hat u_2^{n+1} = 0,\\[.5em]
        \hat u_1^{n+1} &\ge 0, \quad \hat u_2^{n+1} \ge 0, \quad \hat u_1^{n+1} \hat u_2^{n+1} = 0.
    \end{split}
\end{equation}
If \(\tilde{u}_1^{n+1} \ge \tilde{u}_2^{n+1}\), imposing the conditions
\[
\hat{u}_1^{n+1} \ge 0, \quad \hat{u}_2^{n+1} \ge 0, \quad \hat{u}_1^{n+1} \hat{u}_2^{n+1} = 0,
\]
in \eqref{Sysk2Alg2Cons}, we get
\begin{equation}\label{Sysk2Alg2}
\left\{
\begin{array}{l}
\tilde{u}_1^{n+1} + \tau \lambda_1^{n+1} + \tau \eta_{12}^{n+1} \tilde{u}_2^{n+1} \ge 0, \\[.5em]
\tilde{u}_2^{n+1} + \tau \lambda_2^{n+1} + \tau \eta_{12}^{n+1} \tilde{u}_1^{n+1} = 0, \\[.5em]
\lambda_1^{n+1} \ge 0, \quad \lambda_1^{n+1} \hat{u}_1^{n+1} = 0, \quad \lambda_2^{n+1} \ge 0, \quad \lambda_2^{n+1} \hat{u}_2^{n+1} = 0.
\end{array}
\right.
\end{equation}
For \(\tilde u_1^{n+1} \ge \tilde u_2^{n+1}\), one solution to \eqref{Sysk2Alg2} can be expressed as
\begin{equation}\label{Sysk2Alg2_Lagsol}
\left(\eta_{12}^{n+1}, \lambda_1^{n+1}, \lambda_2^{n+1}\right) = \left(-\frac{1}{\tau}, 0, \frac{\tilde u_1^{n+1} - \tilde u_2^{n+1}}{\tau}\right).
\end{equation}
Substituting \eqref{Sysk2Alg2_Lagsol} into \eqref{Sysk2Alg2Cons} gives
\begin{equation}
\left(\hat u_1^{n+1}, \hat u_2^{n+1}\right) = \left(\tilde u_1^{n+1} - \tilde u_2^{n+1}, 0\right).
\end{equation}
Similarly, for \(\tilde u_1^{n+1} \le \tilde u_2^{n+1}\), we can prove that one solution of \eqref{Sysk2Alg2Cons} is
\begin{equation}
\left(\hat u_1^{n+1}, \hat u_2^{n+1}\right) = \left(0, \tilde u_2^{n+1} - \tilde u_1^{n+1}\right).
\end{equation}

For the case of \(k \ge 3\), let \(\tilde u_m^{n+1} = \max_{j \in [k]} \{\tilde u_j^{n+1}\}\) and let \(q\) be the smallest index such that \(\tilde u_q^{n+1} = \max_{j \in [k], j \ne m} \{\tilde u_j^{n+1}\}\). Applying the conditions
\[ \hat u_i^{n+1} \ge 0, \quad \hat u_i^{n+1} \hat u_j^{n+1} = 0, \quad \forall i \in [k], \quad \forall j \in [k], \quad j \ne i, \]
in \eqref{3step_disj_ExplicitEuler}, we obtain
\begin{equation}\label{Syskge3Alg2}
\left\{
\begin{array}{l}
\tilde{u}_m^{n+1} + \tau \lambda_m^{n+1} + \tau \sum_{j \in [k], j \ne m} \eta_{mj}^{n+1} \tilde{u}_j^{n+1} \ge 0, \\[1em]
\tilde{u}_\ell^{n+1} + \tau \lambda_\ell^{n+1} + \tau \sum_{j \in [k], j \ne \ell} \eta_{\ell j}^{n+1} \tilde{u}_j^{n+1} = 0, \quad \forall \ell \in [k], \quad \ell \ne m, \\[1em]
\lambda_i^{n+1} \ge 0, \quad \lambda_i^{n+1} \hat{u}_i^{n+1} = 0, \quad \forall i \in [k].
\end{array}
\right.
\end{equation}
One solution to \eqref{Syskge3Alg2} can be expressed as
\begin{equation}\label{LagDisAlg2}
\eta_{ij}^{n+1} = -\frac{1}{\tau} \left\{
\begin{array}{lll}
1, & & \text{if } (i,j) = (m,q) \text{ or } (i,j) = (q,m), \\[.5em]
0, & & \text{if } i = m \text{ and } j \ne q \text{ or } j = m \text{ and } i \ne q, \\[.5em]
\max \left( \frac{\tilde u_i^{n+1}}{\tilde u_j^{n+1}}, \frac{\tilde u_j^{n+1}}{\tilde u_i^{n+1}} \right), & & \text{otherwise},
\end{array} \right.
\end{equation}
and
\begin{equation}\label{LagPosAlg2}
\lambda_i^{n+1} = \left\{
\begin{array}{lll}
0, & & \text{if } i=m, \\[.5em]
-\frac{\tilde u_i^{n+1}}{\tau} - \sum_{j \in [k], j \ne i} \eta_{ij}^{n+1} \tilde u_j^{n+1}, & & \text{otherwise}.
\end{array} \right.
\end{equation}
Using the expression of  Lagrange multipliers \(\eta_{ij}^{n+1}\) in \eqref{LagDisAlg2}, we can obtain
\begin{equation*}
 \lambda_i^{n+1} = -\frac{\tilde u_i^{n+1}}{\tau} - \sum_{j \in [k], j \ne i} \eta_{ij}^{n+1} \tilde u_j^{n+1} \ge -\frac{\tilde u_i^{n+1}}{\tau} + \frac{\tilde u_i^{n+1}}{\tau} = 0, \quad \text{for} \quad i \ne m,
\end{equation*}
which implies the nonnegativity of \(\lambda_i^{n+1}\).
Substituting \eqref{LagDisAlg2} and \eqref{LagPosAlg2} into \eqref{3step_disj_ExplicitEuler} yields
\begin{equation*}
\hat u_i^{n+1} = \left\{
\begin{array}{lcl}
\tilde u_i^{n+1} - \max_{j \in [k], j \ne i} \{\tilde u_j^{n+1}\}, & & \text{if } \tilde u_i^{n+1} > 0 \text{ and } \tilde u_i^{n+1} > \max_{j \in [k], j \ne i} \{\tilde u_j^{n+1}\}, \\[.5em]
0, & & \text{otherwise}.
\end{array} \right.
\end{equation*}
The proof is complete.

	\end{proof}

	\begin{remark}
		Similarly, the orthogonality-preserving step \eqref{3step_disj_ExplicitEuler} is also equivalent to the following optimization problem
		\begin{equation}
			\begin{cases}
				u_i^{n+1} =  \min\limits_{u_i(\bx)} \int_\Omega ({u}_i(\bx) - \bar{u}_i^{n+1}(\bx))^2/(2\tau)-\sum_{j\in [k],j\ne i} \eta_{ij}^{n+1} \bar{u}_j^{n+1}{u}_i(\bx)\, d\bx, \quad  i \in [k], \\[1 em]
				{u}_i^{n+1} \ge 0, \quad {u}_i^{n+1} {u}_j^{n+1} = 0, \quad  i \in [k],\forall j \in [k], j \ne i.
			\end{cases}
		\end{equation}
		Since the orthogonal constraint ${u}_i^{n+1} {u}_j^{n+1} = 0$ will not  satisfy the property of convexity,   then there exists multiple solutions for this step.  
		
	\end{remark}

	The steps above are summarized in Algorithm~\ref{Alg:3step_Type1}.
	\begin{algorithm}[ht]
		\begin{algorithmic}
			\State {\bf Input:} {Let $\Omega$ be a given domain, $\tau > 0$,  $N_{\max}$  be the  maximum number of iteration  and $u^0 \in H_0^1(\Omega)$ be the initial condition.} 
			\State {\bf Output:} {$u_i^{n+1}$, $i=1,\cdots, k$.}
			\State Initialize $n = 0$.
			\While{$n< N_{\max}$ \ }
			
			{\bf 1.  Diffusion Step.} Compute 
			\begin{equation}\label{Alg_3stepT1_S1}
				\tilde u^{n+1}_i = \e^{\tau\Delta} u^n_i.
			\end{equation}

			{\bf 2. Positivity and disjoint-preserving step.} 
			\begin{equation}\label{Alg_3stepT1_S2}
				\begin{split}
					&\hat u_i^{n+1}=\left\{
					\begin{array}{lcl}
						\tilde  u_i^{n+1}- \max\limits_{j\in[k],j\ne i}\{\tilde u_j^{n+1}\},	  &    & {\mbox{if} \quad  \tilde  u_i^{n+1}>0 \quad\mbox{and}\quad \tilde  u_i^{n+1}> \max\limits_{j\in[k],j\ne i}\{\tilde u_j^{n+1}\}},\\[.5em]
						0,   &    & {\mbox{otherwise} }.
					\end{array} \right.
				\end{split}
			\end{equation}

			{\bf 3. Norm-preserving step.} Update $u_i^{n+1}$ by
			\begin{equation}\label{Alg_3stepT1_S3}
				u_i^{n+1}=\frac{\hat u_i^{n+1}}{\|\hat u_i^{n+1}\|},\quad i=1,\cdots,k.
			\end{equation}
			
			\If{\eqref{stop_cond} is true}
			
			Stop the iteration.
			
			\EndIf

			Set $n = n+1$.

			\EndWhile
		\end{algorithmic}
		\caption{First type of three-step  numerical algorithm. } \label{Alg:3step_Type1}
	\end{algorithm}

We shall modify the {\bf Step-2} from the first type three-step numerical scheme \eqref{Alg2_diff:1}-\eqref{3step_disj_norm} and propose 
	the following  second type of three-step numerical scheme  for solving \eqref{norm:1}-\eqref{norm:4}.
	
	{\bf Step-1}  and  {\bf Step-3}	 are exactly the same with the first type of the first type three-step numerical scheme \eqref{Alg2_diff:1}-\eqref{3step_disj_norm}. We implement the new step-2 as following:
	
	{\bf Step-2:} (Positivity-preserving and orthogonality-preserving step) Given $\tilde u_i^{n+1}$ for $ i\in[k]$, 
	we solve
	\begin{equation}\label{3step_disj_BackwEuler}
		\begin{split}
			\hat u_i^{n+1}-\tilde u_i^{n+1}=\tau&\lambda_i^{n+1}+\tau\sum\limits_{j\in [k],j\ne i}\eta_{ij}^{n+1}\frac{\hat u_j^{n+1}+\tilde u_j^{n+1}}{2},\quad i\in [k],\\
			\lambda_i^{n+1}\ge 0,\quad	\lambda_i^{n+1}	\hat u_i^{n+1}=0,\quad  
			&\hat u_i^{n+1}\ge 0,\quad \hat u_i^{n+1}\hat u_j^{n+1}=0, \quad i\in[k],\quad \forall j\in [k], \quad j\ne i.
		\end{split}
	\end{equation}

	\begin{lemma}\label{lem:3stepT2sol}
Given $\tilde{u}_i^{n+1}$ for $i \in [k]$, denote $\tilde{u}_m^{n+1} = \max\limits_{j \in [k]} \{\tilde{u}_j^{n+1}\}$ with $m = \min\left\{\arg\max\limits_{j \in [k]} \{\tilde{u}_j^{n+1}\}\right\}$ and $\tilde{u}_q^{n+1} = \max\limits_{j \in [k], j \ne m} \{\tilde{u}_j^{n+1}\}$. Then, for $i \in [k]$, one solution of the scheme \eqref{3step_disj_BackwEuler} is
		\begin{equation}\label{3stepT2_Sol}
			\begin{split}
				&\hat u_i^{n+1}=\left\{
				\begin{array}{lcl}
					\tilde u_m^{n+1}-\left(\tilde u_m^{n+1}\tilde u_q^{n+1}\right)^\frac{1}{2},	  &    & {\mbox{if} \quad i=m},\\[.5em]
					0,   &    & {\mbox{otherwise} }.
				\end{array} \right.
			\end{split}
		\end{equation}
	\end{lemma}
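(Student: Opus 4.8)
The plan is to prove the lemma constructively---to exhibit explicit Lagrange multipliers $\eta_{ij}^{n+1}$ and $\lambda_i^{n+1}$ (taken symmetric in $i,j$, as in Lemma~\ref{lem:disj}) for which \eqref{3step_disj_BackwEuler} is solved by \eqref{3stepT2_Sol}, rather than argue by induction on $k$; this is natural because the candidate \eqref{3stepT2_Sol} has a single nonzero component. Throughout I would use (as holds in Algorithm~\ref{Alg:3step_Type1}, since $\tilde u_i^{n+1}=\e^{\tau\Delta}u_i^n$ with $u_i^n\ge 0$) that $\tilde u_i^{n+1}\ge 0$, so $\tilde u_m^{n+1}\ge\tilde u_q^{n+1}\ge 0$ and $\hat u_m^{n+1}=\tilde u_m^{n+1}-(\tilde u_m^{n+1}\tilde u_q^{n+1})^{1/2}\ge 0$, while $\hat u_i^{n+1}=0$ for $i\ne m$ makes every orthogonality constraint $\hat u_i^{n+1}\hat u_j^{n+1}=0$ hold trivially. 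Substituting $\hat u_j^{n+1}=0$ for $j\ne m$ collapses each averaged term $(\hat u_j^{n+1}+\tilde u_j^{n+1})/2$ with $j\ne m$ to $\tilde u_j^{n+1}/2$, so the $m$-th equation of \eqref{3step_disj_BackwEuler} with $\lambda_m^{n+1}=0$ becomes $\hat u_m^{n+1}-\tilde u_m^{n+1}=\tfrac{\tau}{2}\sum_{j\ne m}\eta_{mj}^{n+1}\tilde u_j^{n+1}$, which is matched by taking $\eta_{mj}^{n+1}=0$ for $j\ne q$ and $\eta_{mq}^{n+1}=\eta_{qm}^{n+1}=-\tfrac{2}{\tau}(\tilde u_m^{n+1}/\tilde u_q^{n+1})^{1/2}$. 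The remaining multipliers I would set as $\eta_{ij}^{n+1}=-\tfrac{2}{\tau}\max\big(\tilde u_i^{n+1}/\tilde u_j^{n+1},\,\tilde u_j^{n+1}/\tilde u_i^{n+1}\big)$ when $i,j\ne m$ and $\tilde u_i^{n+1},\tilde u_j^{n+1}>0$, and $\eta_{ij}^{n+1}=0$ otherwise; the $i$-th equation of \eqref{3step_disj_BackwEuler} for $i\ne m$ then simply \emph{defines} $\lambda_i^{n+1}$.

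What remains is to check the KKT sign and complementarity conditions on $\lambda_i^{n+1}$. Complementarity $\lambda_i^{n+1}\hat u_i^{n+1}=0$ is immediate: $\hat u_i^{n+1}=0$ for $i\ne m$ and $\lambda_m^{n+1}=0$. For $i\ne m,q$ we have $\eta_{im}^{n+1}=0$, hence $\lambda_i^{n+1}=-\tilde u_i^{n+1}/\tau-\tfrac12\sum_{j\ne i,m}\eta_{ij}^{n+1}\tilde u_j^{n+1}$; if $\tilde u_i^{n+1}=0$ all those $\eta$'s vanish and $\lambda_i^{n+1}=0$, whereas if $\tilde u_i^{n+1}>0$ the single term $j=q$ already gives $-\tfrac12\eta_{iq}^{n+1}\tilde u_q^{n+1}=\tfrac1\tau\max(\ldots)\tilde u_q^{n+1}\ge \tilde u_i^{n+1}/\tau$ and every other summand is $\ge 0$, so $\lambda_i^{n+1}\ge 0$. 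For $i=q$, substituting $\hat u_m^{n+1}$ and writing $t=(\tilde u_m^{n+1}/\tilde u_q^{n+1})^{1/2}\ge 1$, the $j=m$ contribution together with $-\tilde u_q^{n+1}/\tau$ simplifies to $\tfrac{\tilde u_q^{n+1}}{\tau}\big(2t^3-t^2-1\big)$, while the leftover terms are again $\ge 0$.

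The main obstacle is therefore the elementary scalar inequality $2t^3-t^2-1\ge 0$ for $t\ge1$: its left-hand side vanishes at $t=1$ and has derivative $2t(3t-1)>0$ on $[1,\infty)$, so it is nondecreasing there and hence nonnegative, yielding $\lambda_q^{n+1}\ge 0$. The only part needing separate bookkeeping is the degenerate configurations: when $\tilde u_q^{n+1}=0$ (at most one positive component) one has $\hat u_m^{n+1}=\tilde u_m^{n+1}$ and may take every $\eta$ touching $m$ to be $0$, which forces $\lambda_i^{n+1}=0$ for $i\ne m$; when $\tilde u_m^{n+1}=\tilde u_q^{n+1}>0$ (a tie at the top) \eqref{3stepT2_Sol} gives $\hat u_i^{n+1}\equiv 0$, $\lambda_m^{n+1}$ is no longer forced to vanish, and one instead uses $\eta_{ij}^{n+1}=-\tfrac{2}{\tau}\max(\tilde u_i^{n+1}/\tilde u_j^{n+1},\tilde u_j^{n+1}/\tilde u_i^{n+1})$ for \emph{all} $i\ne j$ to make each $\lambda_i^{n+1}\ge 0$. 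I would dispatch these briefly, exactly as in the proof of Lemma~\ref{lem:disj}; the case $k=2$ is already subsumed above (the sums over $j\ne i,m$ being empty), and $k\ge3$ needs nothing beyond the formulas written down.
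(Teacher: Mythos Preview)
Your proposal is correct and follows essentially the same constructive route as the paper: exhibit the very same explicit symmetric multipliers
\[
\eta_{mq}^{n+1}=\eta_{qm}^{n+1}=-\tfrac{2}{\tau}\bigl(\tilde u_m^{n+1}/\tilde u_q^{n+1}\bigr)^{1/2},\qquad
\eta_{mj}^{n+1}=0\ (j\ne q),\qquad
\eta_{ij}^{n+1}=-\tfrac{2}{\tau}\max\bigl(\tilde u_i^{n+1}/\tilde u_j^{n+1},\tilde u_j^{n+1}/\tilde u_i^{n+1}\bigr)\ (i,j\ne m),
\]
set $\lambda_m^{n+1}=0$, and let the remaining equations define $\lambda_i^{n+1}$; the paper writes down exactly these formulas in \eqref{Alg3_Algdisj}--\eqref{Alg3_Algpos}. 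The only notable differences are in bookkeeping: the paper treats $k=2$ separately before $k\ge 3$, whereas you handle all $k$ uniformly; and for the nonnegativity of $\lambda_q^{n+1}$ you compute the $j=m$ contribution exactly and reduce to the scalar inequality $2t^3-t^2-1\ge 0$ on $t\ge 1$, while the paper simply bounds one term of the sum below by $\tilde u_i^{n+1}$ (which, for $i=q$, amounts to using $t^3\ge 1$). Your more explicit treatment of the degenerate cases $\tilde u_q^{n+1}=0$ and $\tilde u_m^{n+1}=\tilde u_q^{n+1}$ is a mild improvement in rigor over the paper's terse argument, but the underlying idea is identical.
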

	\begin{proof}
If $\tilde{u}_i^{n+1} \le 0$, we choose 
\[(\lambda_i^{n+1}, \eta_{ij}^{n+1}) = \left( -\tilde{u}_i^{n+1}/\tau, 0 \right),\]
for all $j \in [k]$, $j \ne i$, and get $\hat{u}_i^{n+1} = 0$, which agrees with \eqref{3stepT2_Sol}. Below, we demonstrate that \eqref{3stepT2_Sol} is the solution of \eqref{3step_disj_BackwEuler} for $\tilde{u}_i^{n+1} >0$ for $i \in [k]$. 

For the case of $k = 2$, we have
\begin{equation}\label{Sysk2Alg3Cons}
    \begin{split}
        \hat{u}_1^{n+1} &= \tilde{u}_1^{n+1} + \tau \lambda_1^{n+1} + \tau \eta_{12}^{n+1} \frac{\hat{u}_2^{n+1} + \tilde{u}_2^{n+1}}{2}, \\[.5em]
        \hat{u}_2^{n+1} &= \tilde{u}_2^{n+1} + \tau \lambda_2^{n+1} + \tau \eta_{12}^{n+1} \frac{\hat{u}_1^{n+1} + \tilde{u}_1^{n+1}}{2}, \\[.5em]
        \lambda_1^{n+1} &\ge 0, \quad \lambda_1^{n+1} \hat{u}_1^{n+1} = 0, \\[.5em]
        \lambda_2^{n+1} &\ge 0, \quad \lambda_2^{n+1} \hat{u}_2^{n+1} = 0, \\[.5em]
        \hat{u}_1^{n+1} &\ge 0, \quad \hat{u}_2^{n+1} \ge 0, \quad \hat{u}_1^{n+1} \hat{u}_2^{n+1} = 0.
    \end{split}
\end{equation}
If $\tilde{u}_1^{n+1} \ge \tilde{u}_2^{n+1}>0$, considering the condition
\[ \hat{u}_1^{n+1} \ge 0, \quad \hat{u}_2^{n+1} \ge 0, \quad \hat{u}_1^{n+1} \hat{u}_2^{n+1} = 0, \]
we can set $\hat{u}_1^{n+1} \ge 0$ and $\hat{u}_2^{n+1} = 0$, which implies
\begin{equation*}
\left\{
\begin{array}{l}
    \tilde{u}_1^{n+1} + \tau \lambda_1^{n+1} + \tau \eta_{12}^{n+1} \frac{\tilde{u}_2^{n+1}}{2} \ge 0, \\[.5em]
    \tilde{u}_2^{n+1} + \tau \lambda_2^{n+1} + \tau \eta_{12}^{n+1} \frac{\hat{u}_1^{n+1} + \tilde{u}_1^{n+1}}{2} = 0.
\end{array}
\right.
\end{equation*}
Combining with
\[ \lambda_1^{n+1} \ge 0, \quad \lambda_1^{n+1} \hat{u}_1^{n+1} = 0, \]
we have
\begin{equation*}
\left\{
\begin{array}{l}
    \tilde{u}_1^{n+1} + \tau \eta_{12}^{n+1} \frac{\tilde{u}_2^{n+1}}{2} \ge 0, \\[.5em]
    \tilde{u}_2^{n+1} + \tau \lambda_2^{n+1} + \tau \eta_{12}^{n+1} \frac{\hat{u}_1^{n+1} + \tilde{u}_1^{n+1}}{2} = 0.
\end{array}
\right.
\end{equation*}
One solution of the above system is 
\begin{equation}\label{Sysk2Alg3_Lagsol}
\left(\eta_{12}^{n+1}, \lambda_1^{n+1}, \lambda_2^{n+1}\right) = \left(-\frac{2}{\tau} \left(\frac{\tilde{u}_1^{n+1}}{\tilde{u}_2^{n+1}}\right)^{\frac{1}{2}}, 0, -\tilde{u}_2^{n+1} + \left(\frac{\tilde{u}_1^{n+1}}{\tilde{u}_2^{n+1}}\right)^{\frac{1}{2}} \left(2\tilde{u}_1^{n+1} - \left(\tilde{u}_1^{n+1} \tilde{u}_2^{n+1}\right)^{\frac{1}{2}}\right)\right).
\end{equation}
Substituting \eqref{Sysk2Alg3_Lagsol} into \eqref{Sysk2Alg3Cons} gives 
\begin{equation}
\left(\hat{u}_1^{n+1}, \hat{u}_2^{n+1}\right) = \left(\tilde{u}_1^{n+1} - \left(\tilde{u}_1^{n+1}\tilde{u}_2^{n+1}\right)^{\frac{1}{2}}, 0\right).
\end{equation}
If $0<\tilde{u}_1^{n+1} \le \tilde{u}_2^{n+1}$, following the same way, we obtain
\begin{equation*}
\left(\eta_{12}^{n+1}, \lambda_1^{n+1}, \lambda_2^{n+1}\right) = \left(-\frac{2}{\tau} \left(\frac{\tilde{u}_2^{n+1}}{\tilde{u}_1^{n+1}}\right)^{\frac{1}{2}}, -\tilde{u}_1^{n+1} + \left(\frac{\tilde{u}_2^{n+1}}{\tilde{u}_1^{n+1}}\right)^{\frac{1}{2}} \left(2\tilde{u}_2^{n+1} - \left(\tilde{u}_1^{n+1} \tilde{u}_2^{n+1}\right)^{\frac{1}{2}}\right),0\right),
\end{equation*}
which yields the solution 
\begin{equation*}
\left(\hat{u}_1^{n+1}, \hat{u}_2^{n+1}\right) = \left(0, \tilde{u}_2^{n+1} - \left(\tilde{u}_1^{n+1}\tilde{u}_2^{n+1}\right)^{\frac{1}{2}}\right).
\end{equation*}

Similarly, for the case where \( k \ge 3 \), we can determine the Lagrange multipliers as follows:
\begin{equation}\label{Alg3_Algdisj}
    \begin{split}
        \eta_{ij}^{n+1} = -\frac{2}{\tau} \left\{
        \begin{array}{lll}
            \left(\frac{\tilde{u}_m^{n+1}}{\tilde{u}_q^{n+1}}\right)^{\frac{1}{2}}, & \text{if } (i,j) = (m,q) \text{ or } (i,j) = (q,m), \\[1em]
            0, & \text{if } i = m \text{ and } j \ne q \text{ or } j = m \text{ and } i \ne q, \\[1em]
            \max \left(\frac{\tilde{u}_i^{n+1}}{\tilde{u}_j^{n+1}}, \frac{\tilde{u}_j^{n+1}}{\tilde{u}_i^{n+1}}\right), & \text{otherwise},
        \end{array} \right.
    \end{split}
\end{equation}
and
\begin{equation}\label{Alg3_Algpos}
    \begin{split}
        \lambda_i^{n+1} = \left\{
        \begin{array}{lll}
            0, & \text{if } i = m, \\[.5em]
            -\tilde{u}_i^{n+1} - \frac{\tau}{2} \eta_{im}^{n+1} \left(\tilde{u}_m^{n+1} - \left(\tilde{u}_m^{n+1} \tilde{u}_q^{n+1}\right)^{\frac{1}{2}}\right) - \frac{\tau}{2} \sum_{j \in [k], j \ne i} \eta_{ij}^{n+1} \tilde{u}_j^{n+1}, & \text{otherwise}.
        \end{array} \right.
    \end{split}
\end{equation}
From \eqref{Alg3_Algdisj}, we find that 
\[
\lambda_i^{n+1} \ge -\tilde{u}_i^{n+1} + \tilde{u}_i^{n+1} \ge 0, \quad \text{for} \quad i \ne m,
\]
satisfying the KKT conditions in \eqref{3step_disj_BackwEuler}.
Substituting \eqref{Alg3_Algdisj} and \eqref{Alg3_Algpos} into \eqref{3step_disj_BackwEuler} yields the solution in \eqref{3stepT2_Sol}. Therefore, the proof is concluded.
\end{proof}
	
	\begin{algorithm}[H]
		\begin{algorithmic}
			\State {\bf Input:} {Let $\Omega$ be a given domain, $\tau > 0$,  $N_{\max}$  be the  maximum number of iteration  and $u^0 \in H_0^1(\Omega)$ be the initial condition.} 
			\State {\bf Output:} {$u_i^{n+1}$, $i=1,\cdots, k$.}
			\State Initialize $n = 0$.
			\While{$n< N_{\max}$ \ }
			
			{\bf 1.  Diffusion Step.} Compute 
			\begin{equation}\label{Alg_3stepT2_S1}
				\tilde u^{n+1}_i = \e^{\tau\Delta} u^n_i.
			\end{equation}

			{\bf 2. Orthogonality-preserving step.} 
			\begin{equation}\label{Alg_3stepT2_S2}
				\begin{split}
					&\hat u_i^{n+1}=\left\{
					\begin{array}{lcl}
						\tilde u_i^{n+1}-\left(\tilde u_i^{n+1}\max\limits_{\ell\in[k],\ell\ne i} \{\tilde u_\ell^{n+1}\}\right)^\frac{1}{2},  &    & {\mbox{if} \quad \tilde u_i^{n+1}>0\quad \mbox{and}  \quad i=\min\{\arg \max\limits_{\ell\in[k]} \tilde u_\ell^{n+1}\}},\\[.5em]
						0,   &    & {\mbox{otherwise} }.
					\end{array} \right.
				\end{split}
			\end{equation}
			
			{\bf 3. Norm-preserving step.} Update $u_i^{n+1}$ by
			\begin{equation}\label{Alg_3stepT2_S3}
				u_i^{n+1}=\frac{\hat u_i^{n+1}}{\|\hat u_i^{n+1}\|},\quad i=1,\cdots,k.
			\end{equation}
			
			\If{\eqref{stop_cond} is true}
			
			Stop the iteration.
			
			\EndIf

			Set $n = n+1$.

			\EndWhile
		\end{algorithmic}
		\caption{Second type of three-step numerical algorithm.. } \label{Alg:3step_Type2}
	\end{algorithm}
	Based on Lemma~\ref{lem:3stepT2sol}, we propose the three-step numerical algorithms described above.
\begin{theorem}
The three-step operator splitting Algorithm \ref{Alg:3step_Type1} and Algorithm \ref{Alg:3step_Type2} for the system \eqref{norm:1}-\eqref{norm:5} satisfy the following properties:
\begin{equation}\label{pros}
    u_i^{n+1} \ge 0, \quad \|u_i^{n+1}\| = 1, \quad u_i^{n+1} u_j^{n+1} = 0, \quad i \in [k], \; \forall j \in [k], \; j \ne i.
\end{equation}
\end{theorem}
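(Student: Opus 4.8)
The plan is to verify the three asserted properties in \eqref{pros} directly from the explicit update formulas defining Algorithm~\ref{Alg:3step_Type1} (namely \eqref{Alg_3stepT1_S1}--\eqref{Alg_3stepT1_S3}) and Algorithm~\ref{Alg:3step_Type2} (namely \eqref{Alg_3stepT2_S1}--\eqref{Alg_3stepT2_S3}), treating each algorithm separately but with the same logical skeleton. For each algorithm I would trace the state of the solution through the three steps, recording which constraints become active and confirming they are not destroyed by the subsequent steps.

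For the \textbf{positivity} property, the key observation is that the intermediate quantity $\hat u_i^{n+1}$ produced in Step 2 of either algorithm is nonnegative by construction: in \eqref{Alg_3stepT1_S2} one has $\hat u_i^{n+1}=\tilde u_i^{n+1}-\max_{j\ne i}\{\tilde u_j^{n+1}\}$ precisely on the set where this difference is positive and $\hat u_i^{n+1}=0$ elsewhere, so $\hat u_i^{n+1}\ge 0$ trivially; similarly in \eqref{Alg_3stepT2_S2} the value $\tilde u_m^{n+1}-(\tilde u_m^{n+1}\tilde u_q^{n+1})^{1/2}$ is nonnegative because $\tilde u_m^{n+1}\ge\tilde u_q^{n+1}$ (as $\tilde u_m^{n+1}$ is the maximum), so $\tilde u_m^{n+1}-(\tilde u_m^{n+1}\tilde u_q^{n+1})^{1/2}=\sqrt{\tilde u_m^{n+1}}(\sqrt{\tilde u_m^{n+1}}-\sqrt{\tilde u_q^{n+1}})\ge 0$ when $\tilde u_m^{n+1}>0$ (and it is $0$ otherwise). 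Since Step~3 only rescales $\hat u_i^{n+1}$ by the positive scalar $1/\|\hat u_i^{n+1}\|$, positivity is inherited by $u_i^{n+1}$. One caveat to address: $\|\hat u_i^{n+1}\|$ must be nonzero for the normalization in \eqref{Alg_3stepT1_S3}, \eqref{Alg_3stepT2_S3} to make sense; I would note this is guaranteed generically (and is the standing assumption of these algorithms, inherited from the diffusion step producing strictly positive profiles), and that this is exactly the situation already implicit in Lemma~\ref{lem:disj} and Lemma~\ref{lem:3stepT2sol}.

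For the \textbf{norm} property, $\|u_i^{n+1}\|=\|\hat u_i^{n+1}\|/\|\hat u_i^{n+1}\|=1$ is immediate from \eqref{Alg_3stepT1_S3} / \eqref{Alg_3stepT2_S3}. For the \textbf{orthogonality} property $u_i^{n+1}u_j^{n+1}=0$ for $i\ne j$, it suffices to check $\hat u_i^{n+1}\hat u_j^{n+1}=0$, since normalization by positive scalars preserves the zero set. In Algorithm~\ref{Alg:3step_Type1}, for $i\ne j$ at any fixed point $\bx$ at most one of $\tilde u_i^{n+1}(\bx)$, $\tilde u_j^{n+1}(\bx)$ can strictly exceed the maximum of the other components, so at most one of $\hat u_i^{n+1}(\bx)$, $\hat u_j^{n+1}(\bx)$ is nonzero; hence the product vanishes pointwise. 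In Algorithm~\ref{Alg:3step_Type2} this is even more transparent: $\hat u_i^{n+1}$ is supported only on the index $i=m(\bx)=\min\{\arg\max_\ell \tilde u_\ell^{n+1}(\bx)\}$, which is a single index at each $\bx$, so again at most one component is nonzero pointwise and all cross products vanish. These facts are essentially re-statements of the lemmas already proved, so the argument reduces to invoking Lemma~\ref{lem:disj}-type reasoning (more precisely the explicit solution formulas and the KKT structure established in the proofs of the two lemmas preceding this theorem).

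The main obstacle — really the only nontrivial point — is the pointwise/measure-theoretic bookkeeping of the "$\arg\max$" and the tie-breaking rule $m=\min\{\arg\max\}$: one must be careful that $\hat u_i^{n+1}$ is a well-defined (measurable) function of $\bx$ and that the orthogonality holds for \emph{every} $\bx$, including points where the maximum is attained by more than one index. The tie-breaking convention resolves this cleanly in Algorithm~\ref{Alg:3step_Type2}, and in Algorithm~\ref{Alg:3step_Type1} the strict inequality "$\tilde u_i^{n+1}>\max_{j\ne i}\{\tilde u_j^{n+1}\}$" already forces exclusivity, so the potential difficulty is dispatched by reading the definitions carefully rather than by any substantive computation. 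I would therefore organize the proof as: (i) positivity via the sign of the Step-2 outputs plus positivity of the normalization constant; (ii) norm via the definition of Step~3; (iii) orthogonality via pointwise exclusivity of the supports of $\hat u_i^{n+1}$, invoking the explicit formulas and the preceding lemmas; and conclude that all properties in \eqref{pros} hold.
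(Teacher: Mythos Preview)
Your proposal is correct and follows exactly the approach the paper intends: direct verification from the explicit update formulas in Steps~2 and~3. In fact the paper's own proof consists of a single sentence stating that the property ``can be easily observed'' and omitting all details, so your write-up is strictly more complete than what appears in the paper.
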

\begin{proof}
The property \eqref{pros} can be easily observed, we omit the proof here.
\end{proof}

	\subsection{Energy decaying  scheme}
	The numerical algorithms proposed above do not satisfy the property of energy dissipation. To inherit this property for our gradient systems, it is natural to develop numerical algorithms satisfying a discrete energy dissipation law. The key idea of our numerical algorithms is  to regard  the property of  energy dissipation as a nonlinear global constraint, see \cite{ChSh23}. Then  by  introducing  an extra Lagrange multiplier $\sigma(t)$ which is independent of spatial variables,  the original gradient system can be formulated as follows
\begin{equation}\label{eng_disspSch}
    \begin{split}
        &\partial_t u_i = \Delta u_i + \xi_i u_i + \lambda_i + \sum\limits_{j=1}^{k-1} \eta_j u_j, \quad i \in [k-1], \\[.5em]
        &\partial_t u_k = \Delta u_k + \xi_k u_k + \lambda_k + \sum\limits_{j=2}^{k} \eta_j u_j, \\
        &u_i = \frac{u_i(\bx) + \Psi[u_i, \sigma](\bx,t) \sigma(t)}{\|u_i(\bx) + \Psi[u_i, \sigma](\bx,t) \sigma(t)\|}, \quad i \in [k], \\[.5em]
        &\frac{d}{dt} E(u) = -\sum\limits_{i=1}^k \|\partial_t u_i(\bx, t)\|^2, \\[.5em]
        &u_i u_j = 0, \quad j \in [k], \quad j \ne i, \quad i \in [k], \\[.5em]
        &u_i \ge 0; \quad \lambda_i \ge 0; \quad \lambda_i u_i = 0, \quad i \in [k],
    \end{split}
\end{equation}
	where the energy is defined by $E(u)=\frac 12\int_{\Omega} |\Grad u|^2d\bx$. We need to emphasis that, to preserve positvity and orthogonality,  function $\Psi$ is defined in  a very special form 
	\begin{equation}\label{varp}
		\Psi[u_i, \sigma](\bx,t) = \left\{
		\begin{array}{ll}
			1, & \text{if }\quad u_i(\bx, t) > 0 \quad\text{and }\quad u_i(\bx,t)+\sigma(t) > 0, \\[.5em]
			0, & \text{otherwise}.
		\end{array}
		\right.
	\end{equation}
	Notice that when the Lagrange multiplier $\sigma(t) \equiv 0$, the above system reduces to the original gradient system \eqref{norm:1}-\eqref{norm:5}.

	We propose the following numerical algorithms for the system \eqref{eng_disspSch} :

	{\bf Step-1:} (Diffusion step) Given $u_i^n$ for $i\in[k]$,  solve $\tilde u_i^{n+1}$  from
	\begin{equation}\label{key}
		\partial_t \tilde u_i^{n+1}=\Delta \tilde u_i^{n+1}.
	\end{equation}
	The solution at $t_{n+1}$ can be expressed exactly  by
	\begin{equation*}
		\tilde u^{n+1}_i = \e^{\tau\Delta} u^n_i,\quad i\in[k].
	\end{equation*}
	
	{\bf Step-2:} (Positivity-preserving and orthogonality-preserving step) Given $\tilde u_i^{n+1}$ for $i\in[k]$,  solve $(\bar u_i^{n+1},\lambda_i^{n+1},\eta_i^{n+1})$  from
	\begin{equation}\label{en:step2}
		\begin{split}
			&\frac{\bar u_i^{n+1}-\tilde u_i^{n+1}}{\tau}=\lambda_i^{n+1}+\sum\limits_{j=1}^{k-1}\eta _j ^{n+1} u_j^{n},  \quad  i\in [k-1], \\[.5em]
			&  \bar u_i^{n+1} \bar u_j^{n+1}=0, \quad j\in[k], \quad j\ne i, \quad  i\in [k],\\[.5em]
			&\bar u_i^{n+1} \ge 0;\;\lambda_i^{n+1} \ge 0;\;\lambda_i^{n+1}\bar u_i^{n+1}=0,\quad i\in[k].
		\end{split}
	\end{equation}
	
	{\bf Step-3:} (Norm-preserving step) Given $\bar u_i^{n+1}$ for $ i\in[k]$, we solve $\hat u_i^{n+1}$ from
	\begin{equation}\label{en:step3}
		\hat u_i^{n+1}=\frac{\bar  u_i^{n+1}}{\|\bar  u_i^{n+1}\|},\quad i\in[k].
	\end{equation}	
	
	{\bf Step-4:} (Energy dissipating step) Given $\hat u_i^{n+1}$ for $i\in[k]$,  solve $ u_i^{n+1}$ and  $\sigma^{n+1}$  from
	\begin{equation}\label{S4:eneg}
		\begin{split}
			u_i^{n+1}&= \frac{\hat u_i^{n+1}+\Psi[\hat u_i^{n+1},\sigma^{n+1}]\sigma^{n+1}}{\Vert \hat u_i^{n+1}+\Psi[\hat u_i^{n+1},\sigma^{n+1}]\sigma^{n+1}\Vert },\quad i\in[k],\\ 
			&\frac{E^{n+1}-E^n}{\tau}=-\frac{1}{\tau^2}\sum\limits_{i=1}^k\| u_i^{n+1}- u_i^{n}\|^2.
		\end{split}
	\end{equation}	
	
	\begin{theorem}
		The numerical scheme \eqref{key}-\eqref{S4:eneg}  satisfies the property of positivity-preserving, orthogonality-preserving, i.e. \begin{equation}
			u_i^{n+1} \ge 0 ,\quad u_i^{n+1}u_j^{n+1}=0, \quad i,j \in[k],\quad  j\neq i. 
		\end{equation}
		It is also  energy dissipative with respective to time, in the sense that
		\begin{equation}
			\begin{split}
				\frac{E^{n+1}-E^n}{\tau}=-\frac{1}{\tau^2}\sum\limits_{i=1}^k\| u_i^{n+1}- u_i^{n}\|^2. 
			\end{split}
		\end{equation}	
	\end{theorem}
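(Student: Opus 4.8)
The plan is to verify the three asserted properties by propagating them through the four steps of the scheme \eqref{key}--\eqref{S4:eneg}, treating the discrete energy-dissipation identity as part of the \emph{definition} of Step-4 so that it holds by construction once that step is solvable.

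I would first record that Step-2, equation \eqref{en:step2}, returns $\bar u_i^{n+1}$ satisfying $\bar u_i^{n+1}\ge 0$ and $\bar u_i^{n+1}\bar u_j^{n+1}=0$ for $j\ne i$, these being precisely the constraints imposed there (its solvability follows along the lines of the earlier lemmas). The normalization in Step-3, $\hat u_i^{n+1}=\bar u_i^{n+1}/\|\bar u_i^{n+1}\|$, divides by a positive scalar and hence retains $\hat u_i^{n+1}\ge 0$ and $\hat u_i^{n+1}\hat u_j^{n+1}=0$. The only nontrivial propagation is through Step-4, where the special form \eqref{varp} of $\Psi$ does the work: at a point $\bx$, if $\hat u_i^{n+1}(\bx)=0$ then $\Psi[\hat u_i^{n+1},\sigma^{n+1}](\bx)=0$, so the numerator $\hat u_i^{n+1}(\bx)+\Psi[\hat u_i^{n+1},\sigma^{n+1}](\bx)\,\sigma^{n+1}$ vanishes; while if $\hat u_i^{n+1}(\bx)>0$ the numerator equals either $\hat u_i^{n+1}(\bx)>0$ (when $\Psi=0$) or $\hat u_i^{n+1}(\bx)+\sigma^{n+1}$, which is $>0$ exactly in the case $\Psi=1$. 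Thus the numerator is everywhere nonnegative with support contained in $\operatorname{supp}\hat u_i^{n+1}$, and after dividing by the positive $L^2$ norm we obtain $u_i^{n+1}\ge 0$ and $\operatorname{supp} u_i^{n+1}\subseteq\operatorname{supp}\hat u_i^{n+1}$. Since $\hat u_1^{n+1},\dots,\hat u_k^{n+1}$ have pairwise disjoint supports, so do $u_1^{n+1},\dots,u_k^{n+1}$, which gives $u_i^{n+1}u_j^{n+1}=0$ for $j\ne i$.

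For the energy identity there is nothing further to prove: it is literally the second equation of \eqref{S4:eneg}, hence it holds as soon as Step-4 admits a solution pair $(u_i^{n+1},\sigma^{n+1})$. Consequently the substantive issue — and the part I expect to be the main obstacle — is the solvability of the scalar nonlinear equation for $\sigma^{n+1}$ hidden in \eqref{S4:eneg}. I would handle it by an intermediate-value argument: set $g(\sigma)=\tau^{-1}\big(E^{n+1}(\sigma)-E^n\big)+\tau^{-2}\sum_i\|u_i^{n+1}(\sigma)-u_i^n\|^2$, note that $u_i^{n+1}(0)=\hat u_i^{n+1}$, determine the sign of $g$ at $\sigma=0$ and its behavior for large $|\sigma|$, and invoke continuity to locate a root. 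The delicate point is that, because $\Psi$ is $\{0,1\}$-valued, $\sigma\mapsto u_i^{n+1}(\sigma)$ is only piecewise continuous in $\sigma$, with jumps at the finitely many thresholds $\sigma=-\hat u_i^{n+1}(\bx)$ arising in the discretized problem; one therefore argues on the branch containing $\sigma=0$, or checks that $g$ still changes sign over the admissible range, and selects $\sigma^{n+1}$ accordingly. Granting this, the stated positivity, orthogonality, and energy-dissipation properties all follow, completing the proof.
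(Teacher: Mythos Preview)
Your proposal is correct and follows essentially the same approach as the paper: propagate positivity and pairwise disjoint supports from Step-2 through Step-3, then use the definition \eqref{varp} of $\Psi$ in Step-4 to show that $\hat u_i^{n+1}(\bx)=0$ forces $u_i^{n+1}(\bx)=0$ and that the numerator is otherwise nonnegative, while the energy identity is simply the second equation in \eqref{S4:eneg}. Your discussion of the solvability of the scalar equation for $\sigma^{n+1}$ goes beyond what the paper proves here---the paper explicitly defers unique solvability of \eqref{nonl_Alg2} to future work (see the remark following the algorithms)---so that portion is extra rather than required for the theorem as stated.
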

	\begin{proof}
		We can easily observe that the numerical scheme \eqref{key}-\eqref{S4:eneg} is energy dissipative and preserves the $L^2$ norm from equation \eqref{S4:eneg}.   From the definition of  \eqref{varp}, we can derive $u_i^{n+1}$ is non-negative for $\forall i \in [k]$.  From \eqref{en:step2} and \eqref{en:step2}, we can easily derive $\hat u_i^{n+1}\hat u_j^{n+1}=0$ for $j\ne i$.  Furthermore, if we have $\hat u_i^{n+1}=0$ for $\forall i \in [k]$, then  from \eqref{S4:eneg} and the definition of function $\Psi$ in  \eqref{varp}, then we can  obtain  $u_i^{n+1}=0$ for $\forall i \in [k]$. Then the property of orthogonality-preserving  can also be  satisfied 
		\begin{equation*}
			\quad u_i^{n+1}u_j^{n+1}=0, \quad i,j \in[k],\quad  j\neq i. 
		\end{equation*}
	\end{proof}

	The solutions for the first three steps are presented in Algorithms~\ref{Alg:3step_Type1} and Algorithms~\ref{Alg:3step_Type2}. For the last step,   the solution of   \eqref{S4:eneg} can be obtained by  solving a nonlinear algebraic equation for $\sigma^{n+1}$ taking on the following form:                    
	\begin{equation}\label{nonl_Alg2}
		F(\sigma^{n+1}):=E^{n+1}-E^n+\frac{1}{\tau}\sum\limits_{i=1}^k\| u_i^{n+1}- u_i^{n}\|^2=0,
	\end{equation}
	where $E^{n+1}$ is defined by 
	\begin{align}
		E^{n+1}=\frac 12\sum\limits_{i=1}^k\int_{\Omega}  |\Grad u_i ^{n+1}|^2 d\bx=\frac 12\sum\limits_{i=1}^k\int_{\Omega}  \Bigg |\nabla  \frac{\hat u_i^{n+1}+\Psi[\hat u_i^{n+1},\sigma^{n+1}]\sigma^{n+1}}{\Vert \hat u_i^{n+1}+\Psi[\hat u_i^{n+1},\sigma^{n+1}]\sigma^{n+1}\Vert }\Bigg|^2 d\bx.
	\end{align}
	To solve the  nonlinear algebraic equation \eqref{nonl_Alg2}, we shall use either the Newton's method or 
	the secant method  in the following form
	\begin{equation}\label{sigma_secant}
		\sigma_{s+1}=\sigma_s-\frac{F(\sigma_s)(\sigma_s-\sigma_{s-1})}{F(\sigma_s)-F(\sigma_{s-1})}, 
	\end{equation}
	with an  initial guess of $\sigma_0=-O(\tau^2)$ and $\sigma_1=0$, noticing that the solution $\sigma^{n+1}$ is an approximation to 0, see \cite{cheng2020global}. In Algorithms~\ref{Alg:3step_Type1Diss} and \ref{Alg:3step_Type2Diss}, we present the corresponding pseudocode.
	\begin{algorithm}[H]
		\begin{algorithmic}
			\State {\bf Input:} {Let $\Omega$ be a given domain, $\tau > 0$,  $N_{\max}$  be the  maximum number of iteration  and $u^0 \in H_0^1(\Omega)$ be the initial condition.} 
			\State {\bf Output:} {$u_i^{n+1}$, $i=1,\cdots, k$.}
			\State Set $n = 0$, $\sigma_0=-\tau^2$ and $\sigma_1=0$.
			\While{$n< N_{\max}$ \ }
			
			{\bf 1. } Compute $u^{n+1}_i$ by \eqref{Alg_3stepT1_S1}-\eqref{Alg_3stepT1_S3}. 
			
			{\bf 2. Energy decreasing step.}  Set $s=1$.
			
			\While{$\sum\limits_{i=1}^k\|\nabla u_i^{n+1}\|^2>\sum\limits_{i=1}^k\|\nabla u_i^{n}\|^2$}

			Compute $\sigma_{s+1}$ by \eqref{sigma_secant}.	Let $\hat u_i^{n+1}=u_i^{n+1}$ and recalculate $u_i^{n+1}$ by
			\begin{equation}
				\begin{split}
					u_i^{n+1} = \frac{\hat u_i^{n+1}+\Psi[\hat u_i^{n+1},\sigma_{s+1}]\sigma_{s+1}}{\Vert \hat u_i^{n+1}+\Psi[\hat u_i^{n+1},\sigma_{s+1}]\sigma_{s+1}\Vert }. 
				\end{split}
			\end{equation}

			Set $s=s+1$.
			\EndWhile
			
			\If{\eqref{stop_cond} is true}
			
			Stop the iteration.
			
			\EndIf

			Set $n = n+1$.

			\EndWhile
		\end{algorithmic}
		\caption{An energy decreasing scheme for Algorithm~\ref{Alg:3step_Type1}. }\label{Alg:3step_Type1Diss}
	\end{algorithm}
	
	\begin{algorithm}[H]
		\begin{algorithmic}
			\State {\bf Input:} {Let $\Omega$ be a given domain, $\tau > 0$,  $N_{\max}$  be the  maximum number of iteration  and $u^0 \in H_0^1(\Omega)$ be the initial condition.} 
			\State {\bf Output:} {$u_i^{n+1}$, $i=1,\cdots, k$.}
			\State Set $n = 0$, $\sigma_0=-\tau^2$ and $\sigma_1=0$.
			\While{$n< N_{\max}$ \ }
			
			{\bf 1. } Compute $u^{n+1}_i$ by \eqref{Alg_3stepT2_S1}-\eqref{Alg_3stepT2_S3}. 
			
			{\bf 2. Energy decreasing step.}  Set $s=1$.
			
			\While{$\sum\limits_{i=1}^k\|\nabla u_i^{n+1}\|^2>\sum\limits_{i=1}^k\|\nabla u_i^{n}\|^2$}

			Compute $\sigma_{s+1}$ by \eqref{sigma_secant}.	Let $\hat u_i^{n+1}=u_i^{n+1}$ and recalculate $u_i^{n+1}$ by
			\begin{equation}
				\begin{split}
					u_i^{n+1} = \frac{\hat u_i^{n+1}+\Psi[\hat u_i^{n+1},\sigma_{s+1}]\sigma_{s+1}}{\Vert \hat u_i^{n+1}+\Psi[\hat u_i^{n+1},\sigma_{s+1}]\sigma_{s+1}\Vert }. 
				\end{split}
			\end{equation}

			Set $s=s+1$.
			\EndWhile
			
			\If{\eqref{stop_cond} is true}
			
			Stop the iteration.
			
			\EndIf

			Set $n = n+1$.

			\EndWhile
		\end{algorithmic}
		\caption{An energy decreasing scheme for Algorithm~\ref{Alg:3step_Type2}. } \label{Alg:3step_Type2Diss}
	\end{algorithm}
 	\begin{remark}
		To obtain $\sigma^{n+1}$ from \eqref{nonl_Alg2}, we need to solve a nonlinear algebraic equation. The unique solvability of \eqref{nonl_Alg2} turns out to  be very challenging.  Hence, the unique solvability of  the numerical scheme \eqref{key}-\eqref{S4:eneg}  will be our future work. 
	\end{remark}
	\section{Numerical experiments}	\label{subsec:num_result}
	In this section, we present the numerical results of Algorithms~\ref{Alg:4step}-\ref{Alg:3step_Type2Diss} and test their efficiency for solving \eqref{gradient_sys}-\eqref{norm:4} with the periodic or Dirichlet boundary conditions.  The two-dimensional regular computational domain tested is \([-\pi, \pi]^2\). The initial guess \( u^0 \) for the \( k \)-partition problem \eqref{gradient_sys}-\eqref{norm:4} is generated as follows \cite{Wang22}:
	\begin{enumerate}
		\item Generate \( k \) random points (seeds) \( x_i \) for \( i = 1, \ldots, k \) within the computational domain.
		\item For each seed \( x_i \), compute its corresponding Voronoi cell \( A_i \). Define the initial guess \( u_i^0 \) for each partition as
		\begin{equation}\label{u0_guess}
			u_i^0 = \frac{\chi_{A_i}}{\left( \int_\Omega \chi_{A_i} \, dx \right)^{1/2}}, \quad i = 1, \ldots, k,   
		\end{equation}
		where \( \chi_{A_i} \) denotes the indicator function of the set \( A_i \).
	\end{enumerate}
\subsection{Partitions with periodic boundary conditions}
In this example, the effectiveness and robustness of Algorithms~\ref{Alg:4step}-\ref{Alg:3step_Type2Diss} for solving \eqref{gradient_sys}-\eqref{norm:4} with periodic boundary conditions are evaluated. We employ the fast Fourier transform method in the diffusion step. Partitions are tested on both 2-dimensional and 3-dimensional tori.
\subsubsection{Performance of Algorithms~\ref{Alg:4step}-\ref{Alg:3step_Type2}}
To test robustness, we first implement Algorithm~\ref{Alg:4step} with different time step sizes. The results in Figure~\ref{fig:EvolAlg_4stepdt001} and Figure~\ref{fig:EvolAlg_4stepdt01} reveal that Algorithm~\ref{Alg:4step} is robust against changes in time step size, requiring fewer iterations for larger $\tau$ to achieve a final partition approximation. Figure~\ref{fig:EvolAlg_3stepType1dt001} displays the evolution of partitions for Algorithm~\ref{Alg:3step_Type1}, showing similarities to the results in Figure~\ref{fig:EvolAlg_4stepdt001}. With the same initial conditions, the final partition obtained by Algorithm~\ref{Alg:3step_Type2} is identical, and thus, we omit its presentation here. To demonstrate the efficiency of Algorithms~\ref{Alg:4step}--\ref{Alg:3step_Type2}, we plot the numerical energy in Figure~\ref{fig:ECompAlg1_3}, which shows a rapid decrease in numerical energy during the initial steps. By selecting a large \(\tau\), the proposed algorithms can converge to a small neighborhood of a local minimum within a few dozen iterations.
	
	\begin{figure}[H]
		\centering
		\begin{tabular}{c|c|c|c|c|c|c|c}
			\hline 
			initial & 50 & 100  & 150 & 200  & 250& 300  & 500 \\
			\includegraphics[width = 0.09\textwidth, clip, trim = 4cm 1cm 3cm 1cm]{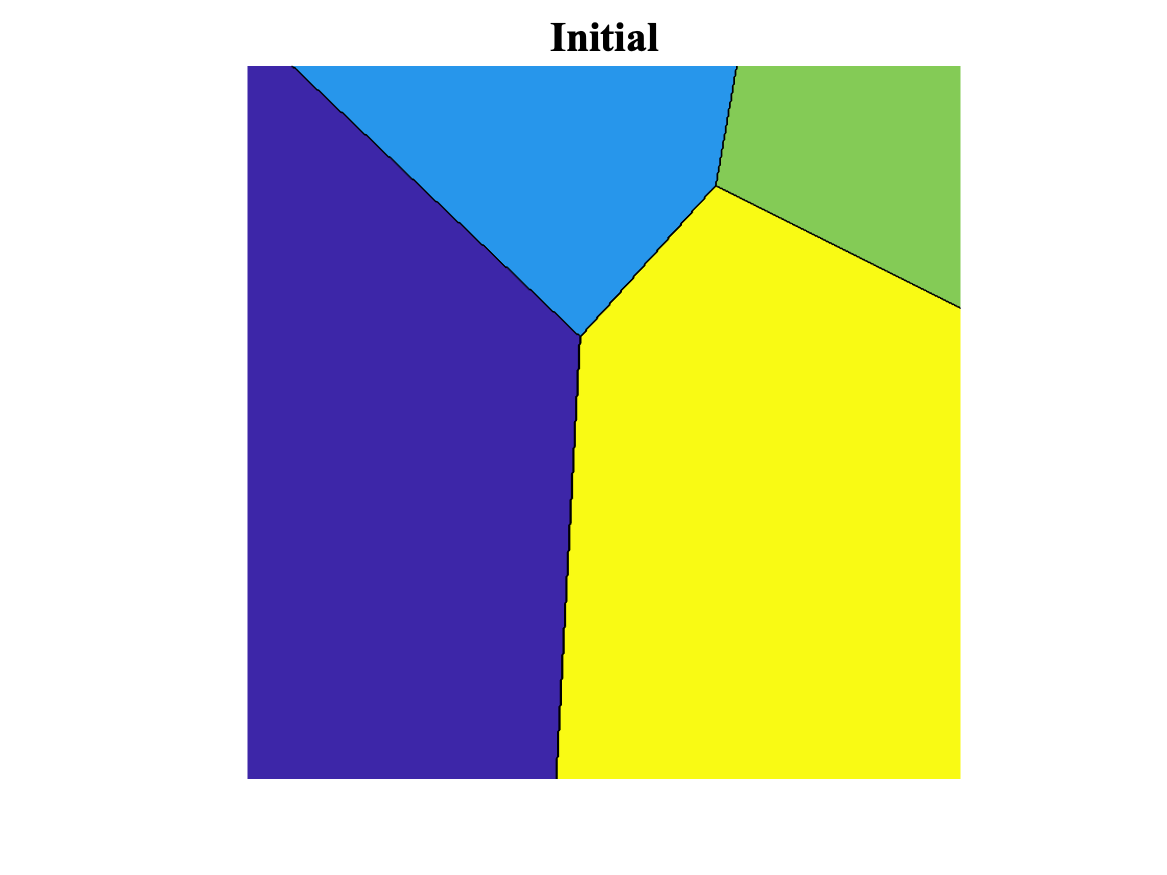}&  
			\includegraphics[width = 0.09\textwidth, clip, trim = 4cm 1cm 3cm 1cm]{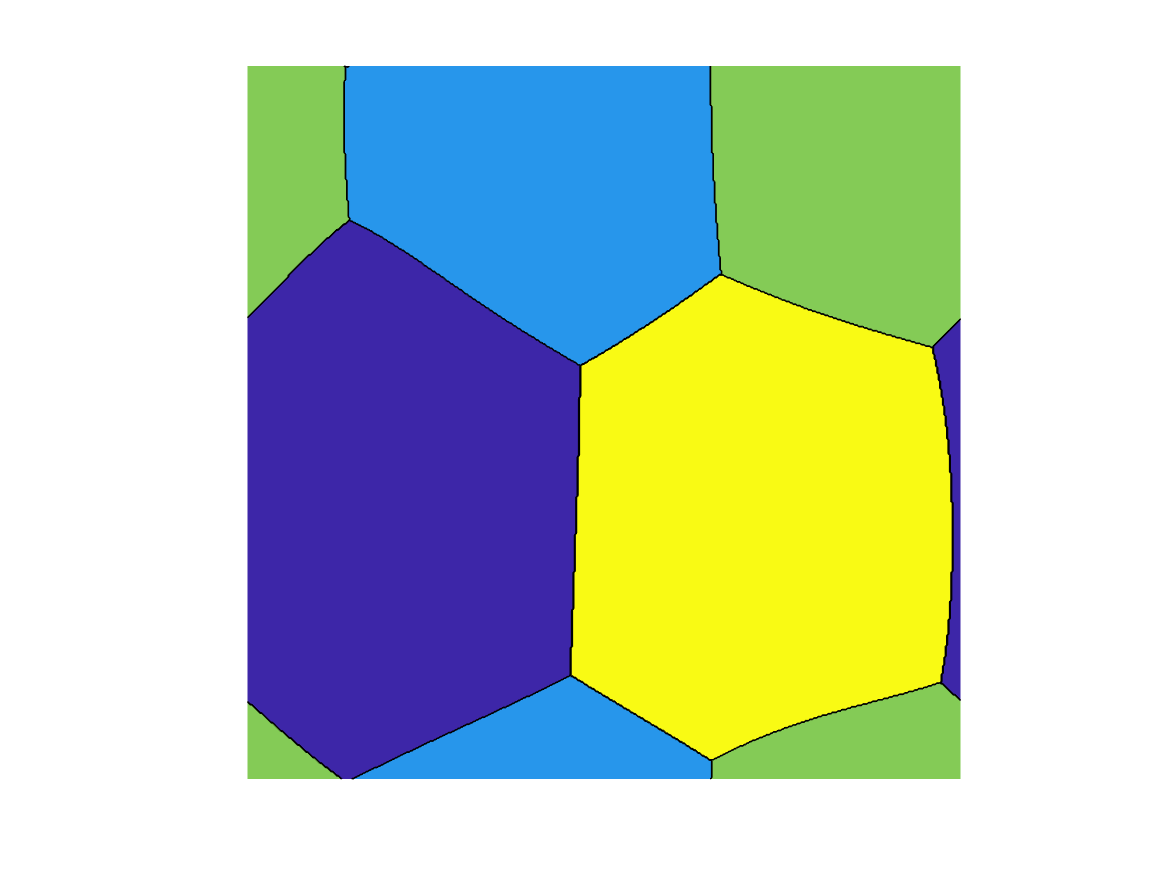}&
			\includegraphics[width = 0.09\textwidth, clip, trim = 4cm 1cm 3cm 1cm]{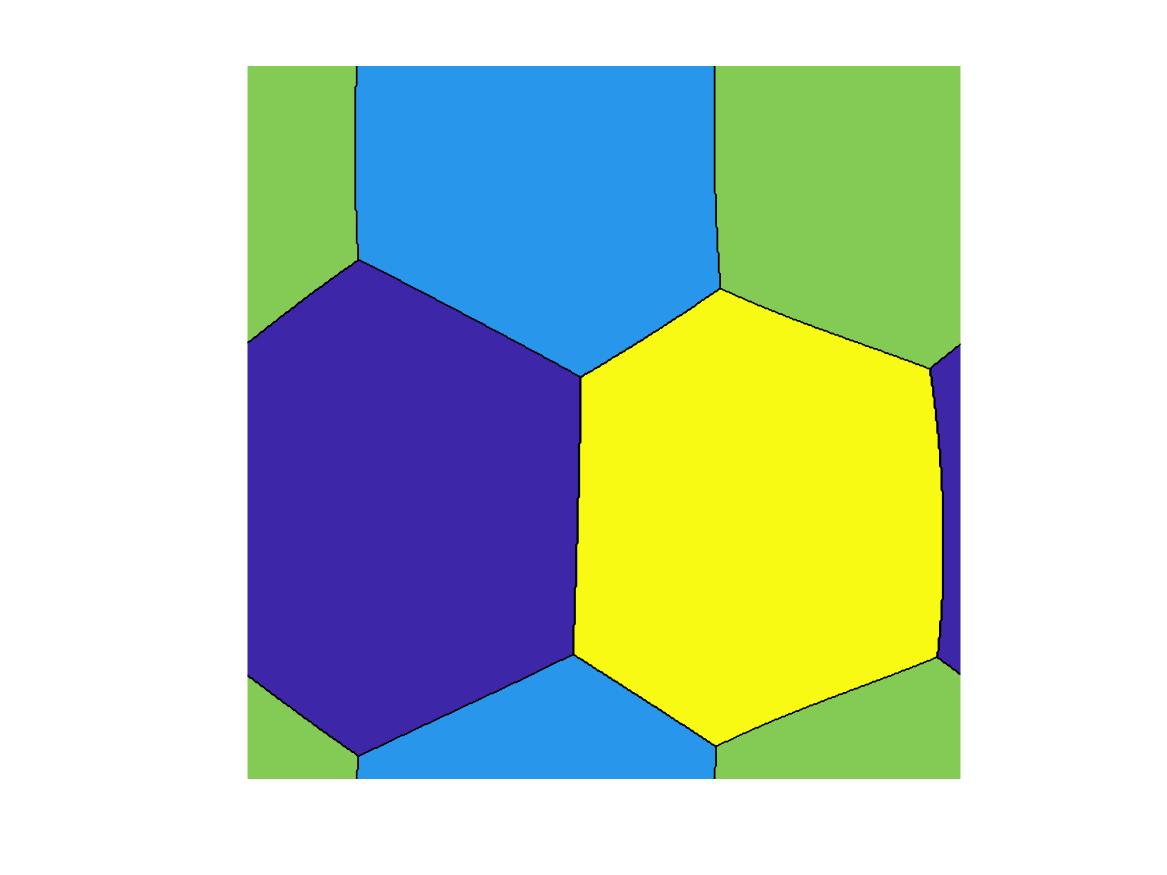}&
			\includegraphics[width = 0.09\textwidth, clip, trim = 4cm 1cm 3cm 1cm]{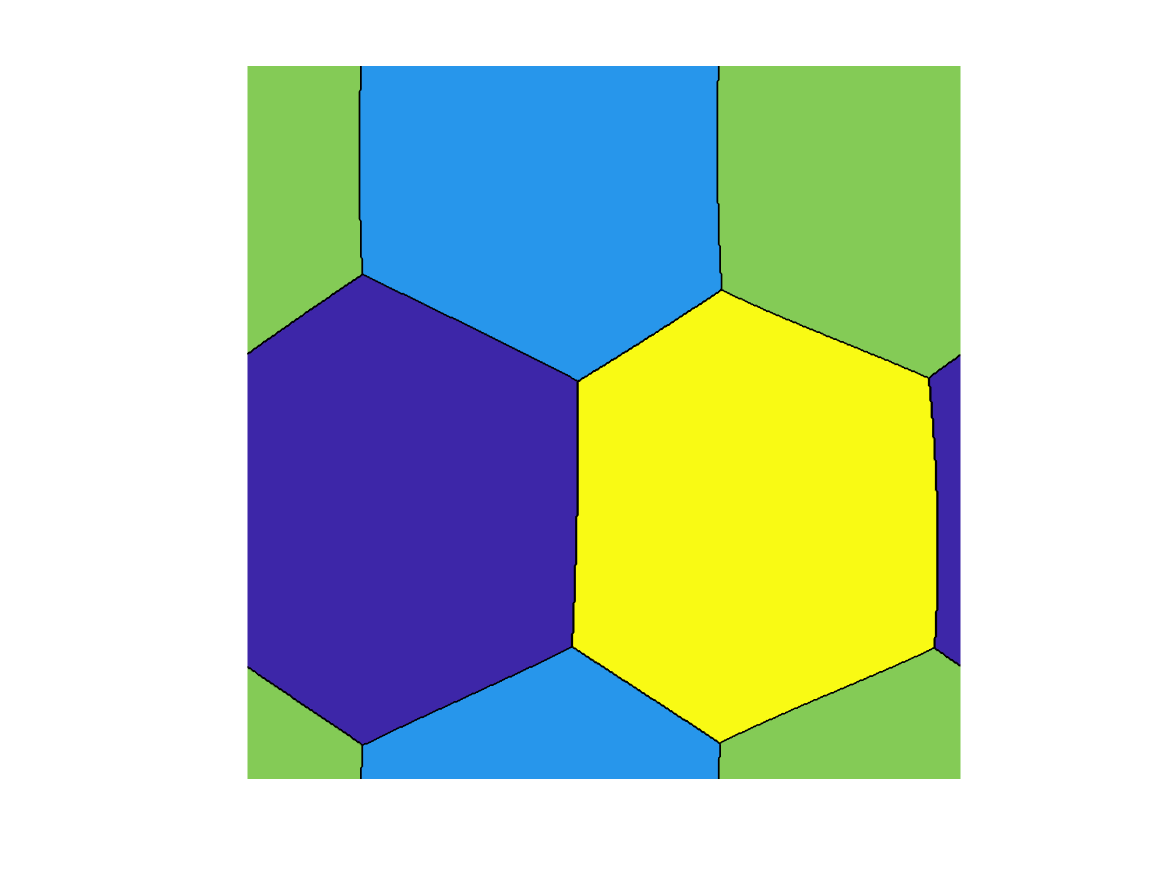}&
			\includegraphics[width = 0.09\textwidth, clip, trim = 4cm 1cm 3cm 1cm]{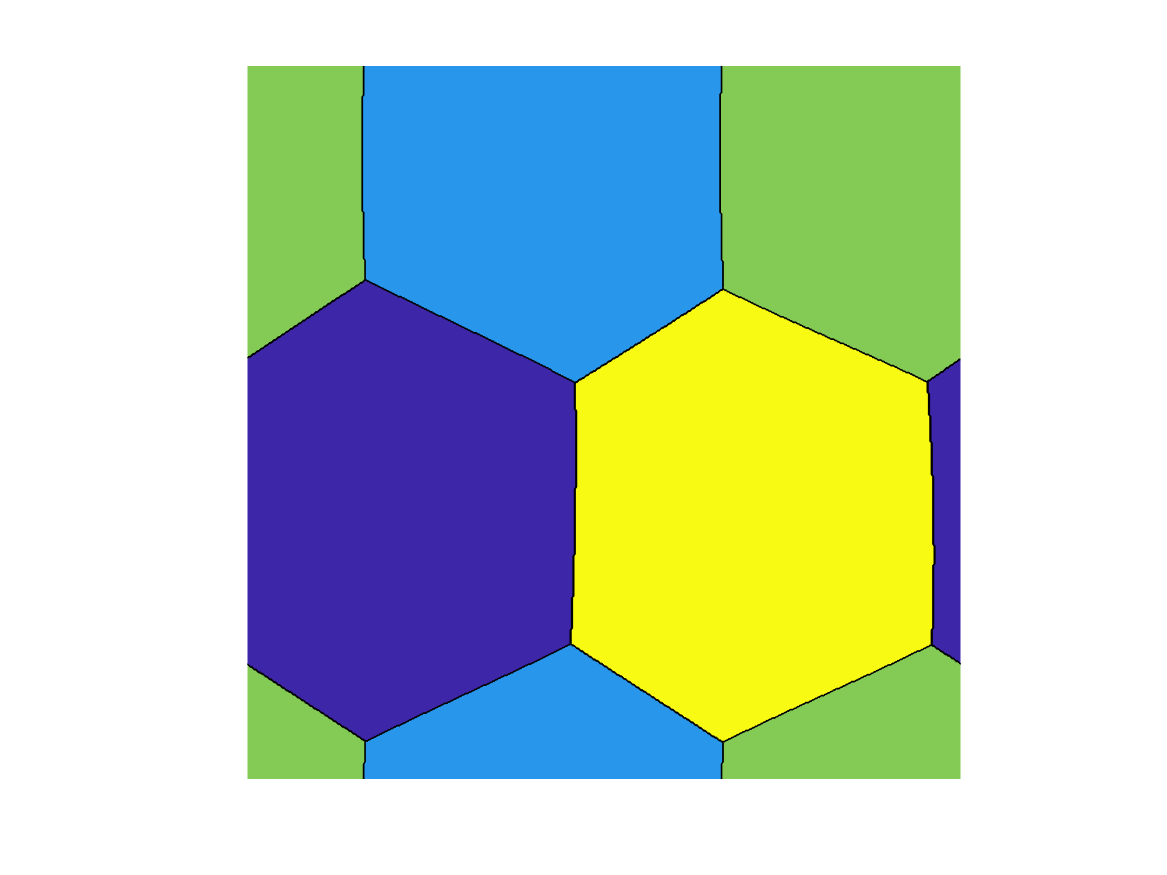}&
			\includegraphics[width = 0.09\textwidth, clip, trim = 4cm 1cm 3cm 1cm]{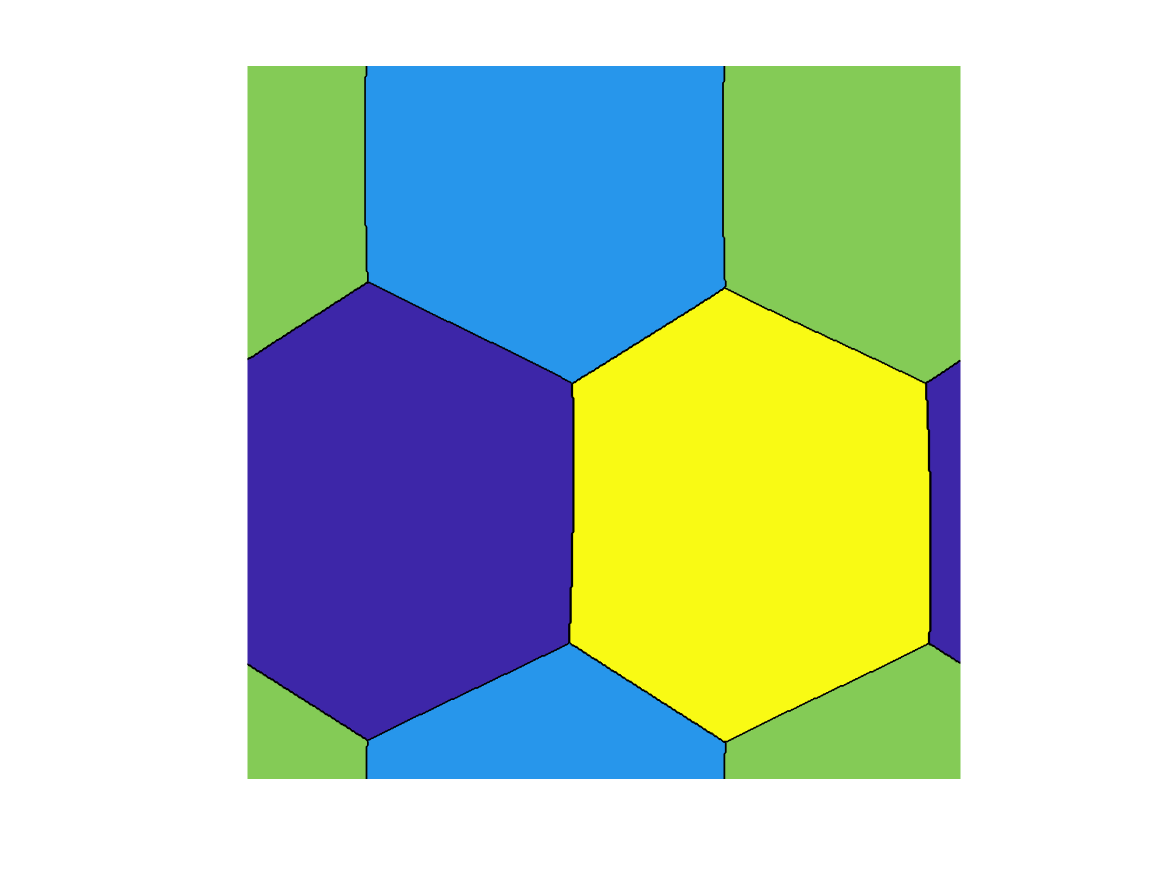}&
			\includegraphics[width = 0.09\textwidth, clip, trim = 4cm 1cm 3cm 1cm]{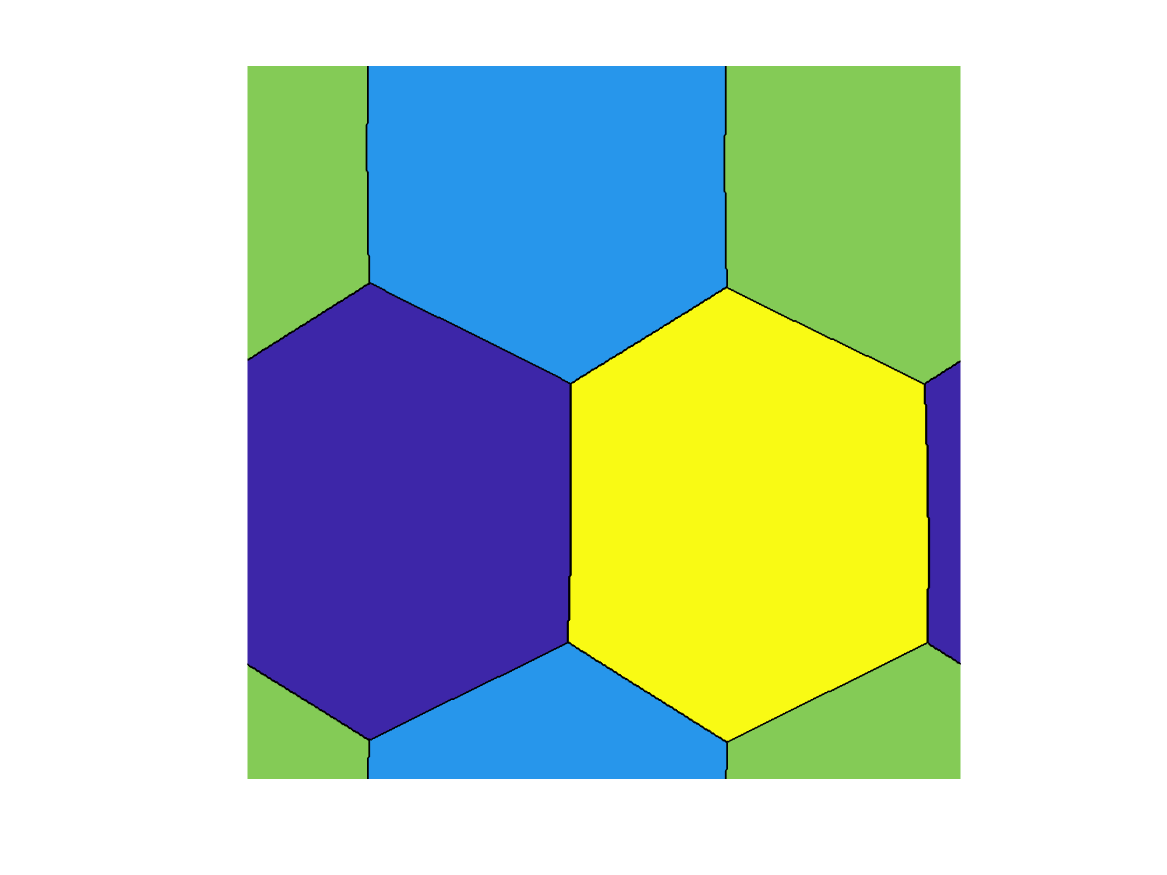}&
			\includegraphics[width = 0.09\textwidth, clip, trim = 4cm 1cm 3cm 1cm]{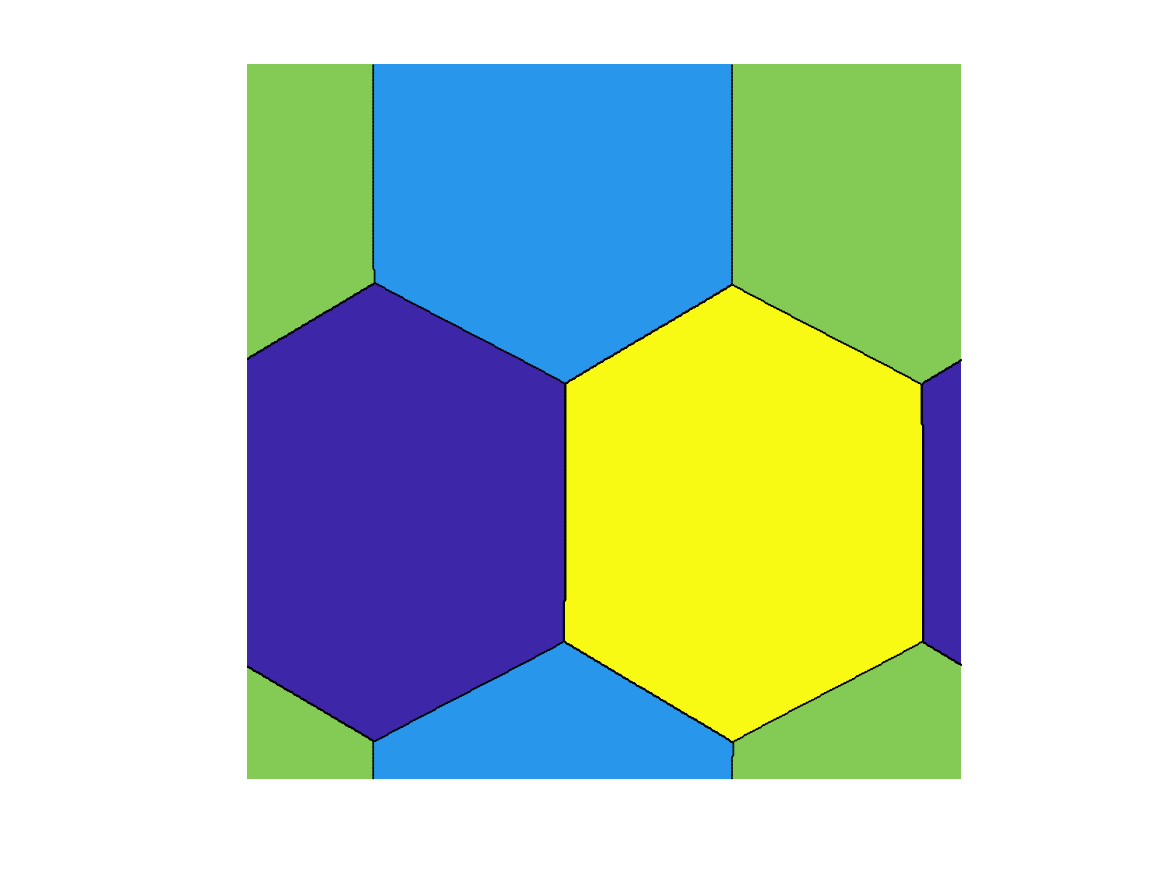} \\ 
			\hline
			\hline 
			initial & 100 & 200  & 300 & 400  & 500& 600  & 700 \\
			\includegraphics[width = 0.09\textwidth, clip, trim = 4cm 1cm 3cm 1cm]{initialm8.png}&  
			\includegraphics[width = 0.09\textwidth, clip, trim = 4cm 1cm 3cm 1cm]{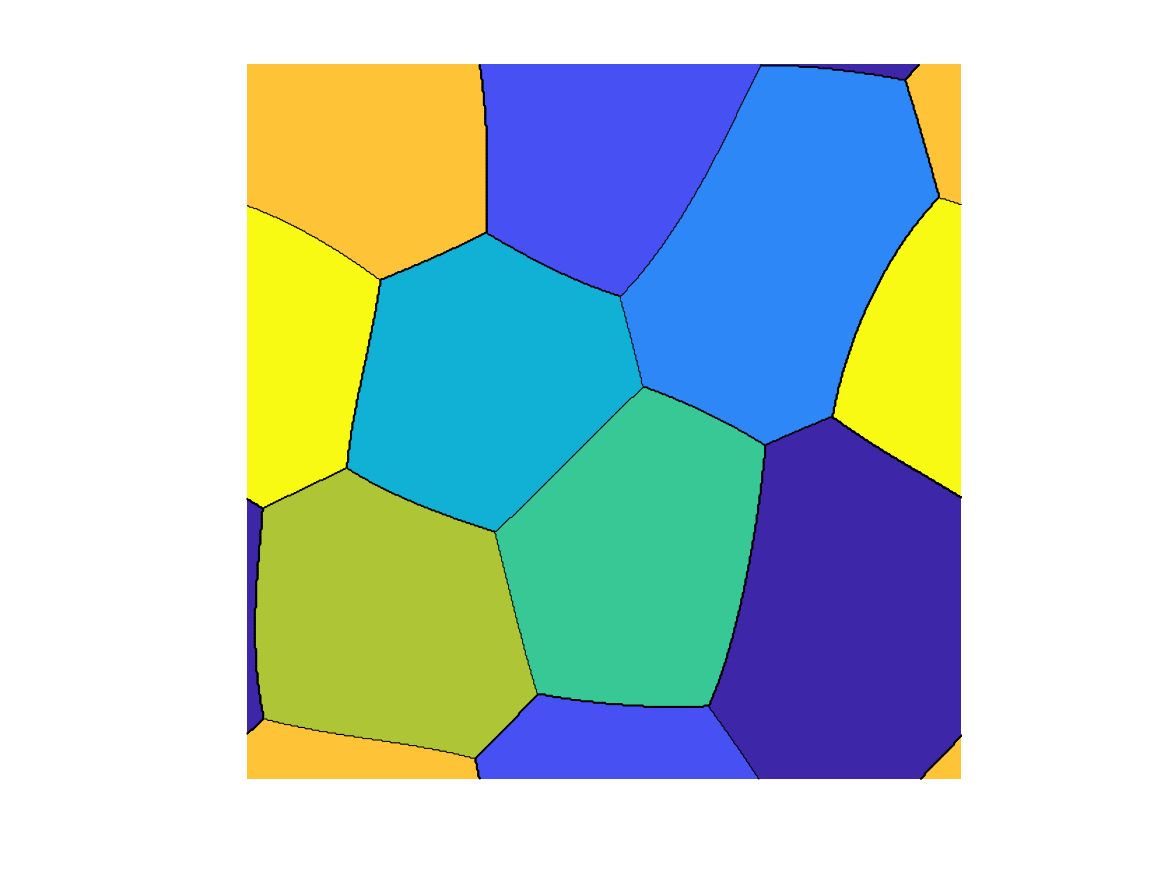}&
			\includegraphics[width = 0.09\textwidth, clip, trim = 4cm 1cm 3cm 1cm]{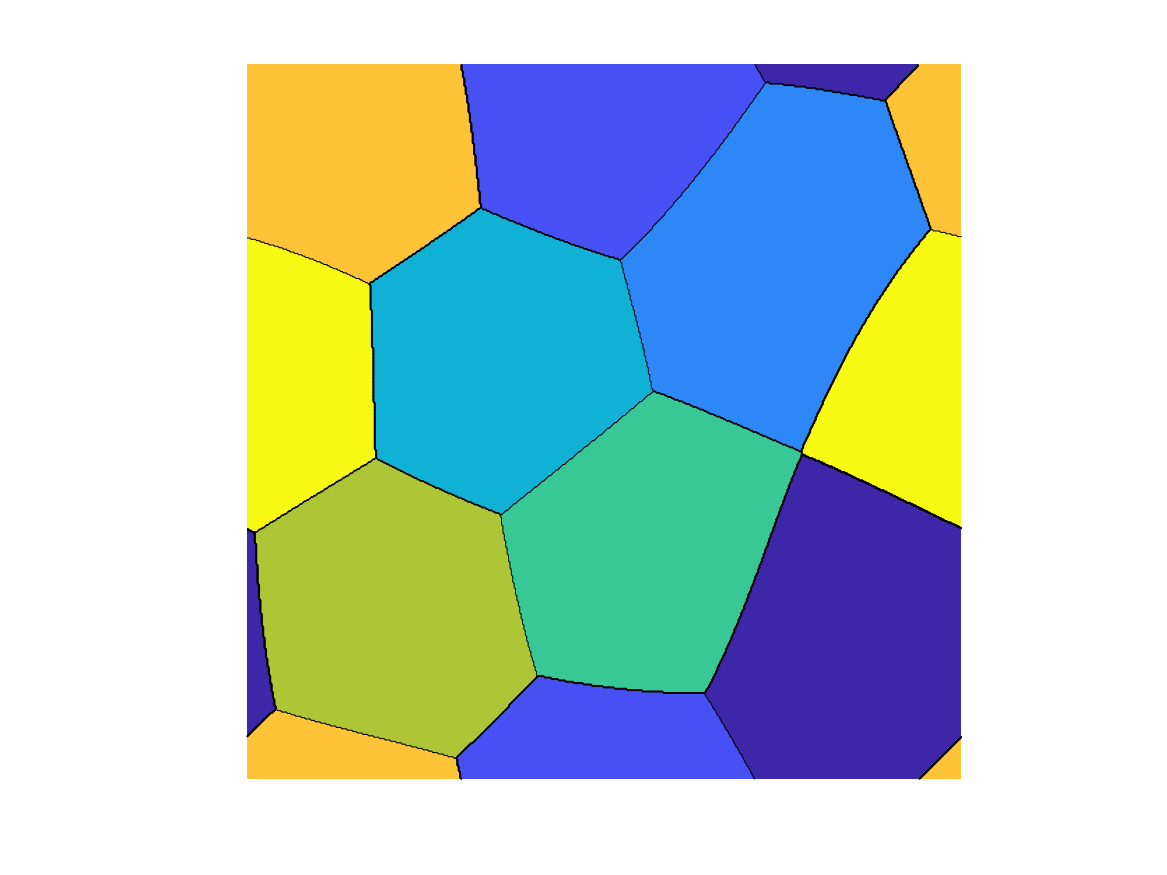}&
			\includegraphics[width = 0.09\textwidth, clip, trim = 4cm 1cm 3cm 1cm]{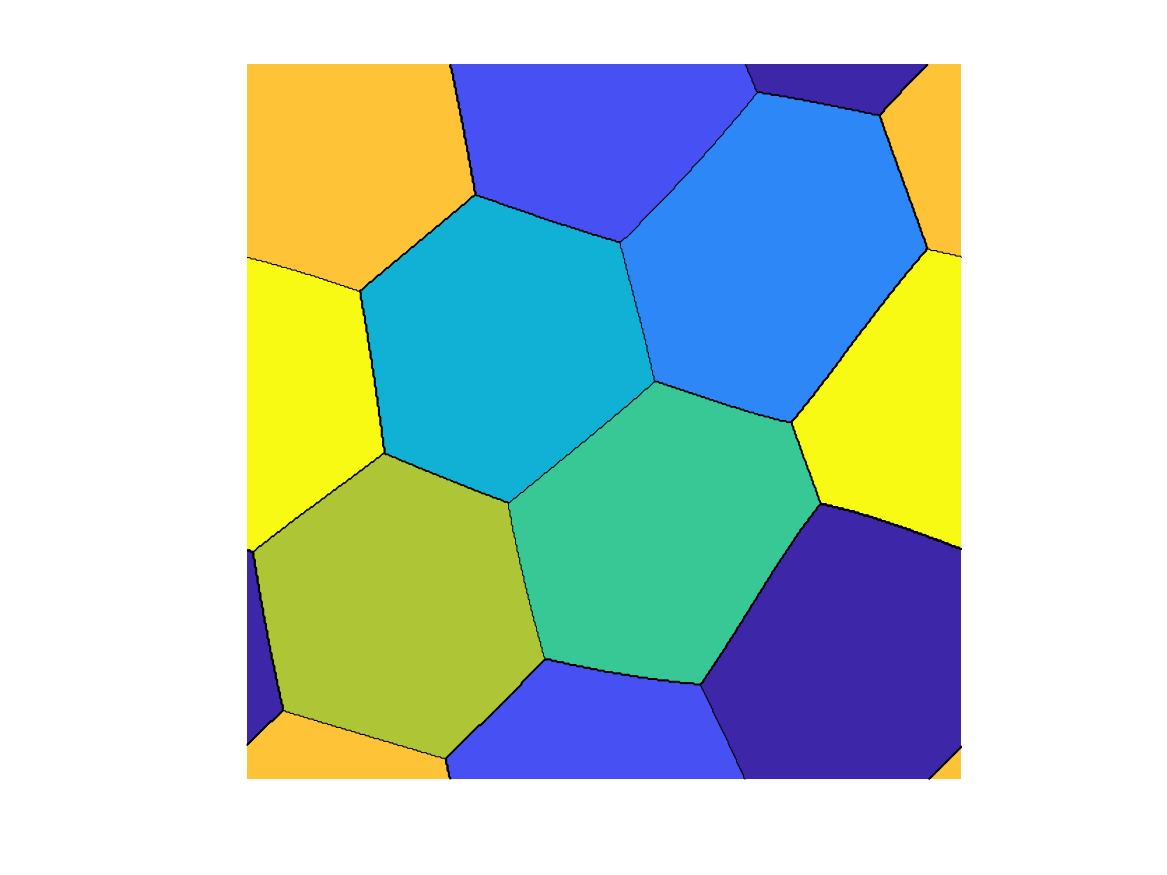}&
			\includegraphics[width = 0.09\textwidth, clip, trim = 4cm 1cm 3cm 1cm]{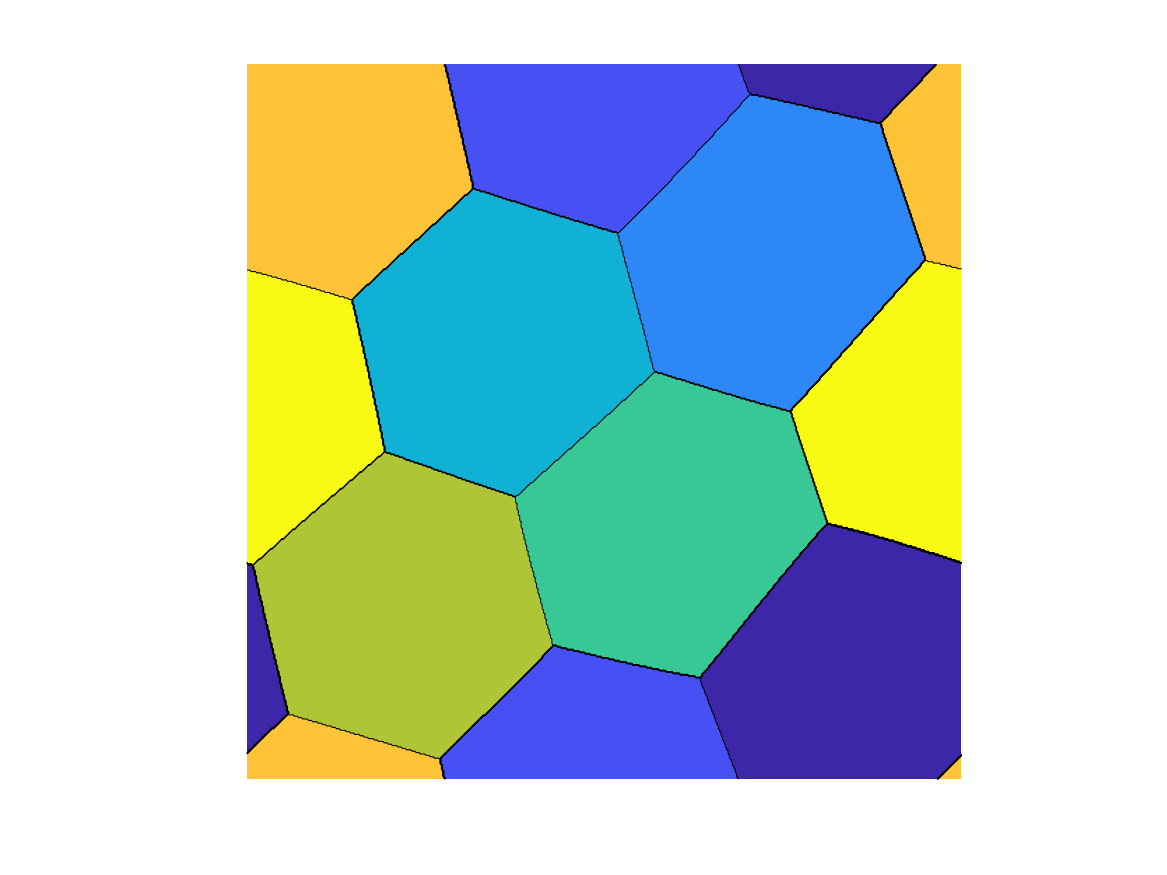}&
			\includegraphics[width = 0.09\textwidth, clip, trim = 4cm 1cm 3cm 1cm]{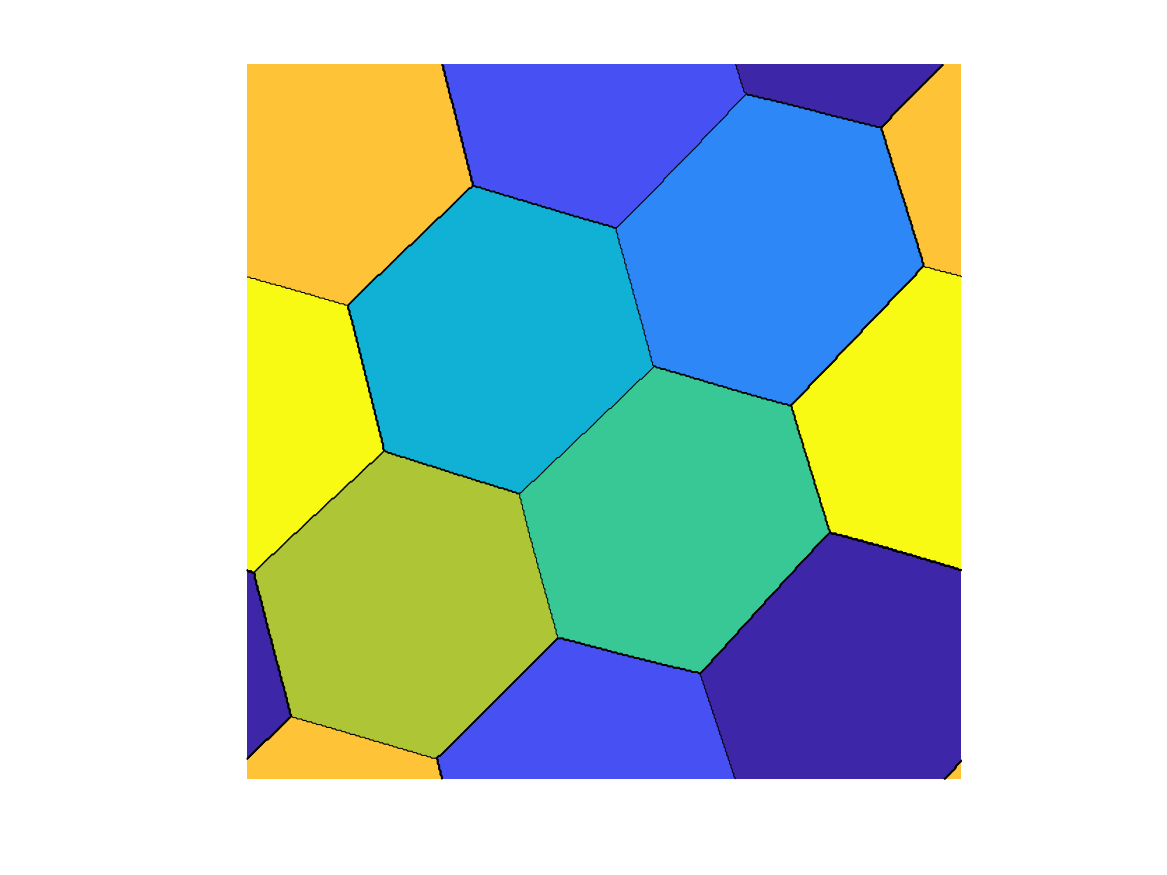}&
			\includegraphics[width = 0.09\textwidth, clip, trim = 4cm 1cm 3cm 1cm]{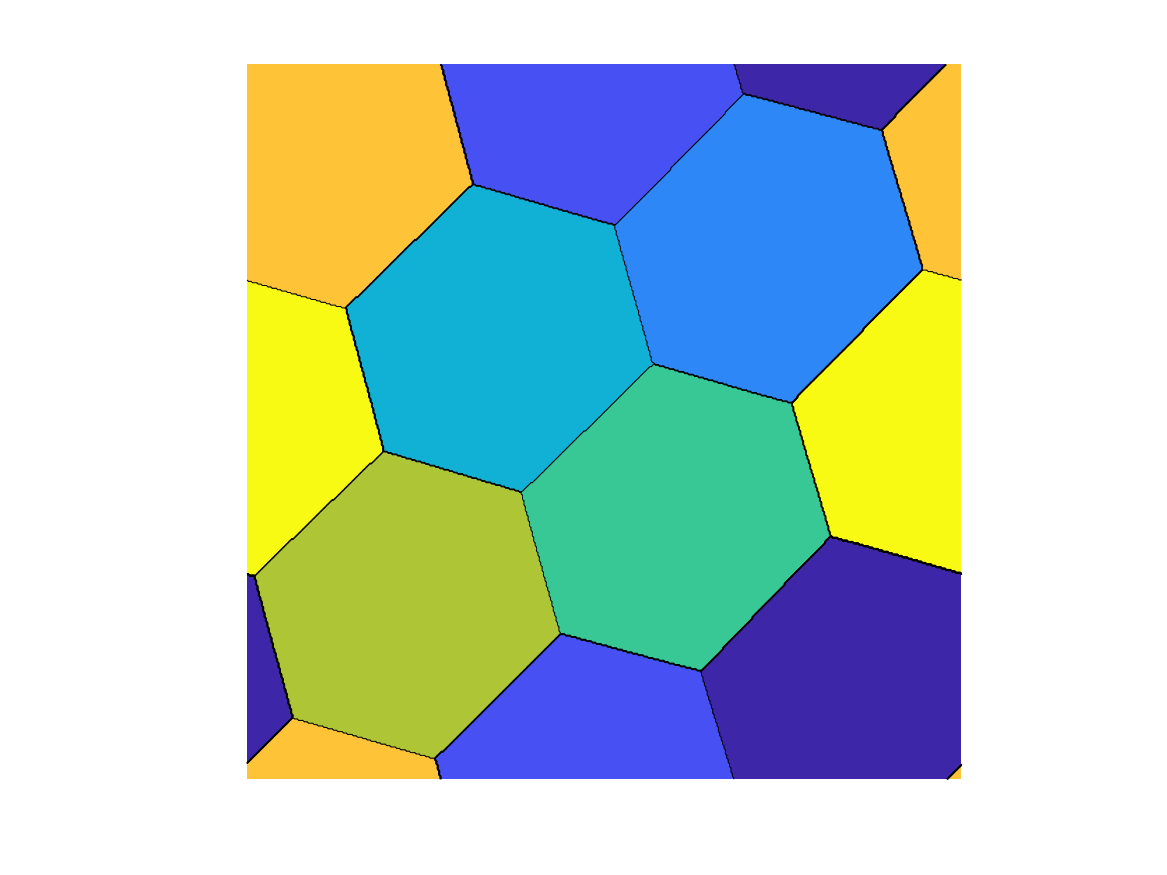}&
			\includegraphics[width = 0.09\textwidth, clip, trim = 4cm 1cm 3cm 1cm]{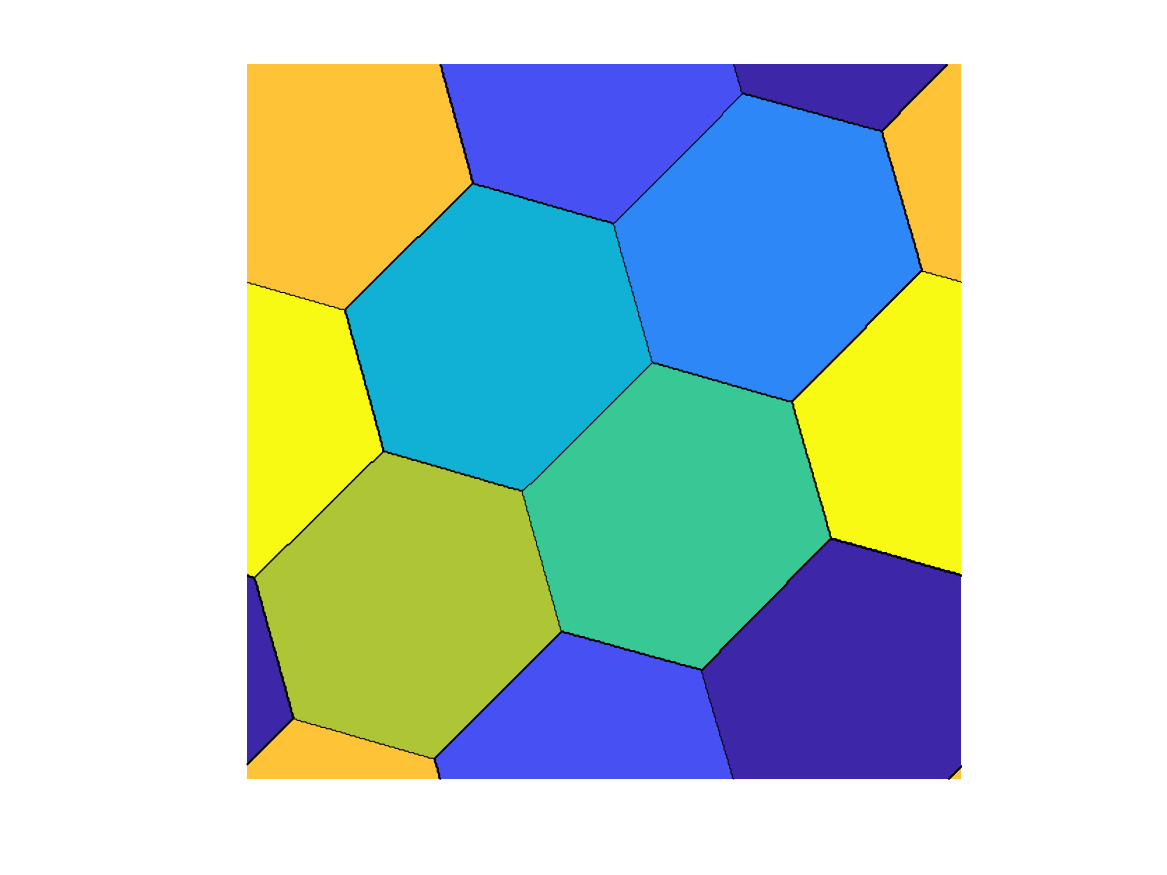} \\ 
			\hline
		\end{tabular}
		\caption{Snapshots of Algorithm~\ref{Alg:4step} at different iterations on a $512\times512$ discretized mesh with a time step size of $\tau=0.01$. The first row corresponds to $k=4$, while the second row corresponds to $k=8$.} \label{fig:EvolAlg_4stepdt001}
	\end{figure}
	\begin{figure}[H]
		\centering
		\begin{tabular}{c|c|c|c|c|c|c|c}
			\hline 
			initial & 5 & 10  & 15 & 20  & 25& 80  & 145 \\
			\includegraphics[width = 0.09\textwidth, clip, trim = 4cm 1cm 3cm 1cm]{figures_sq/initialm4.png}&  
			\includegraphics[width = 0.09\textwidth, clip, trim = 4cm 1cm 3cm 1cm]{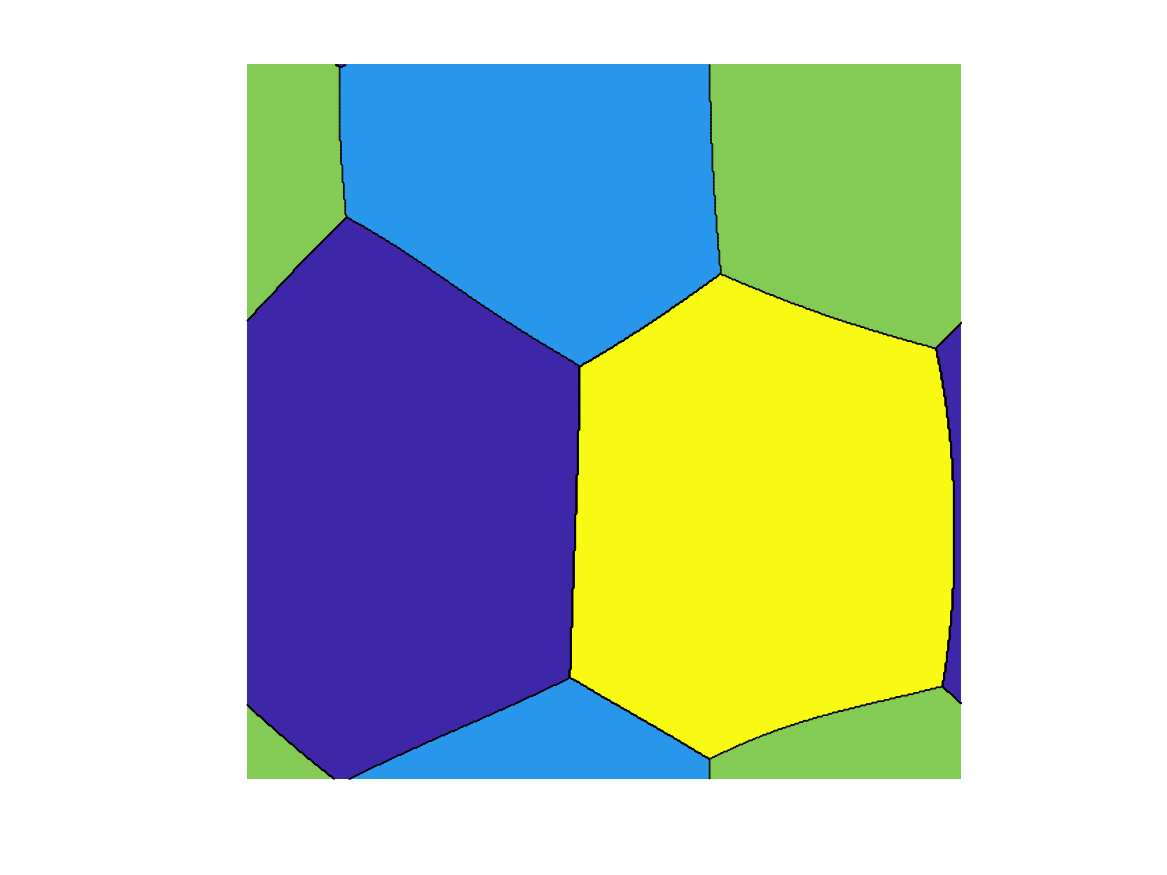}&
			\includegraphics[width = 0.09\textwidth, clip, trim = 4cm 1cm 3cm 1cm]{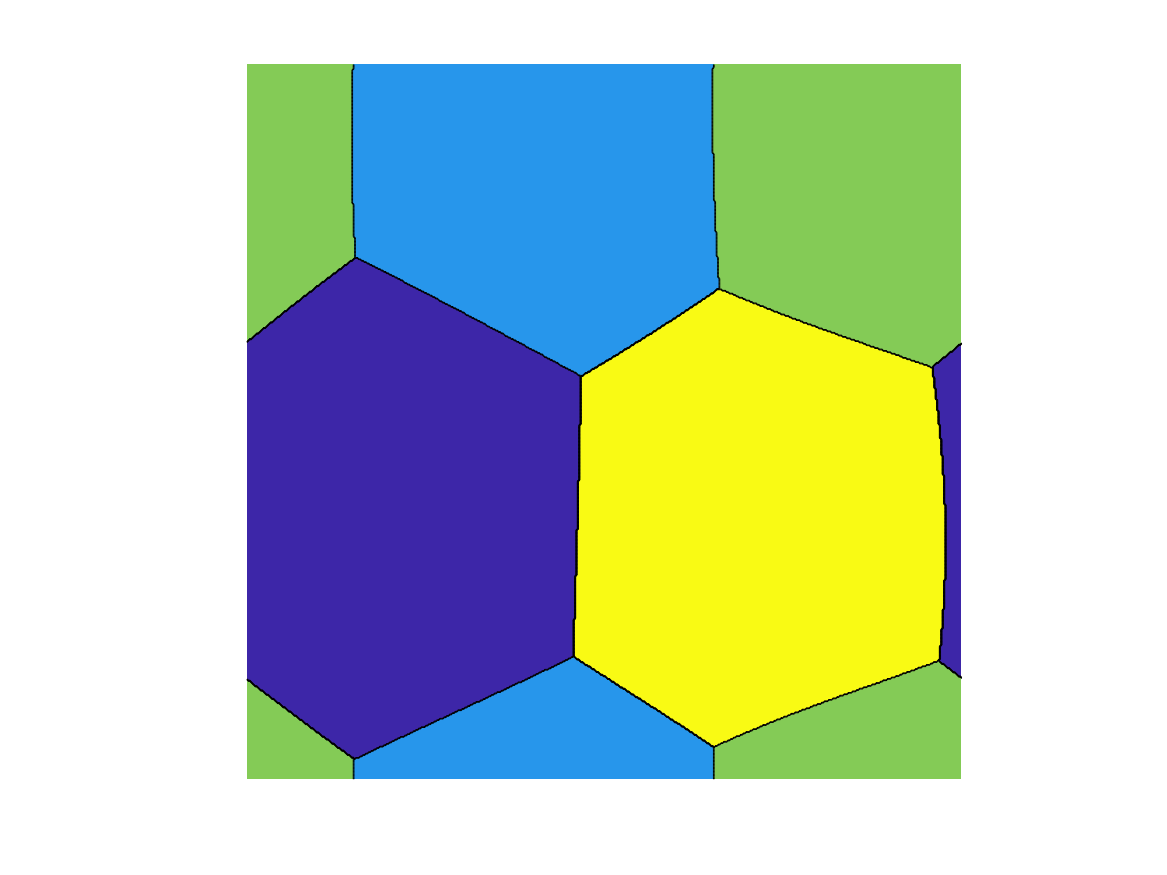}&
			\includegraphics[width = 0.09\textwidth, clip, trim = 4cm 1cm 3cm 1cm]{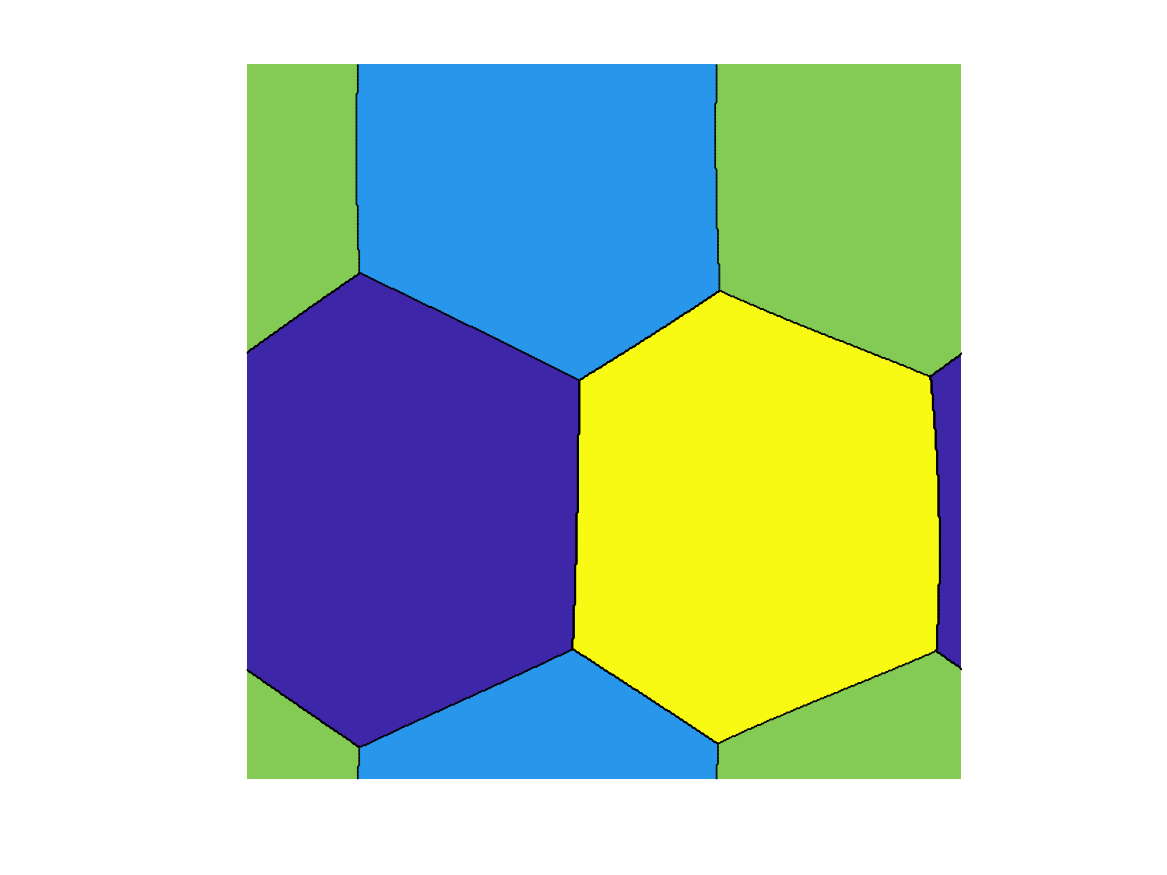}&
			\includegraphics[width = 0.09\textwidth, clip, trim = 4cm 1cm 3cm 1cm]{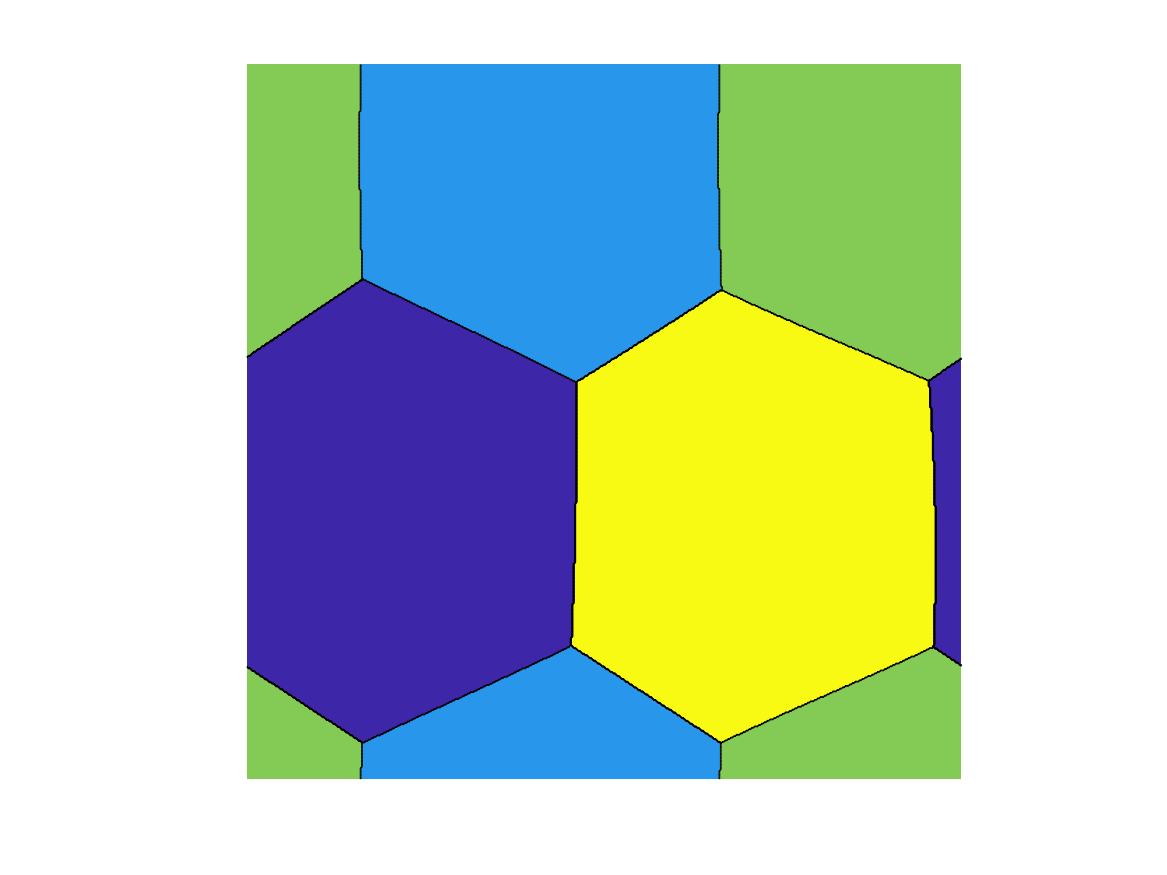}&
			\includegraphics[width = 0.09\textwidth, clip, trim = 4cm 1cm 3cm 1cm]{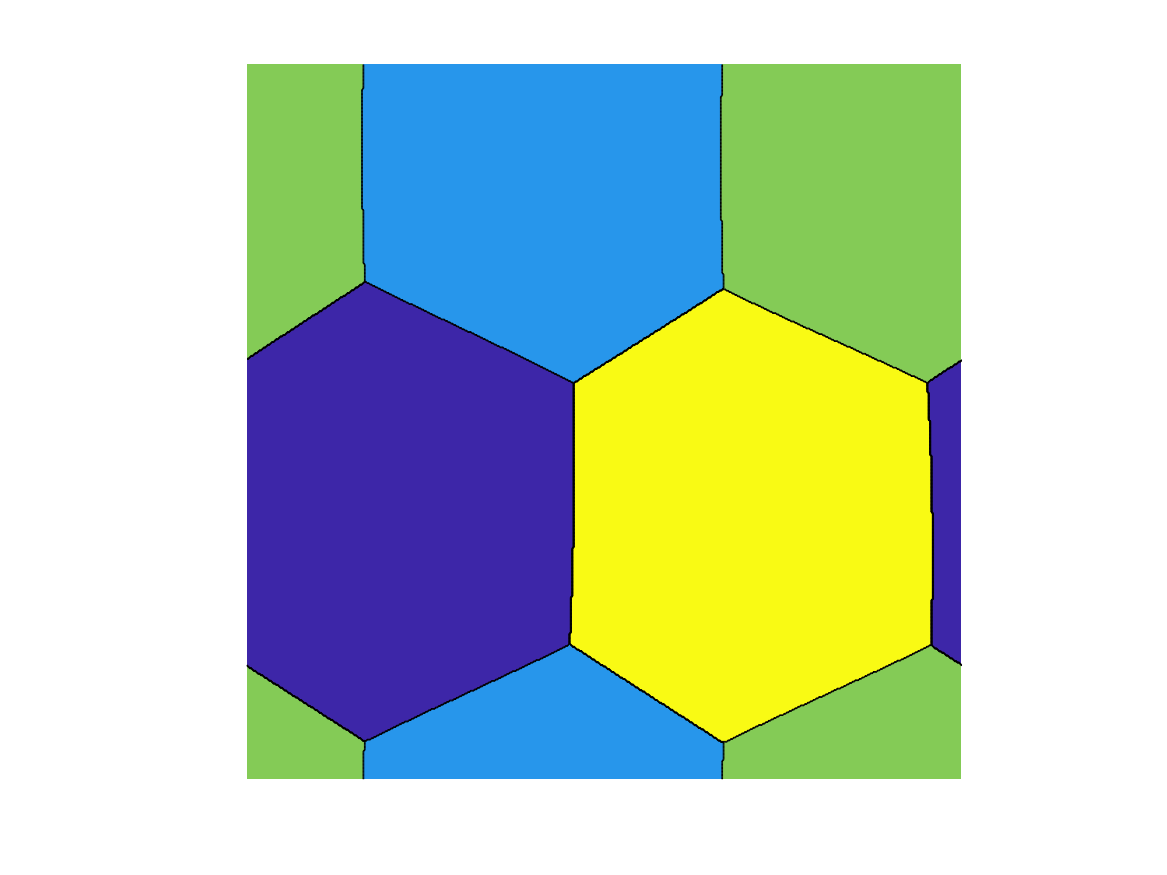}&
			\includegraphics[width = 0.09\textwidth, clip, trim = 4cm 1cm 3cm 1cm]{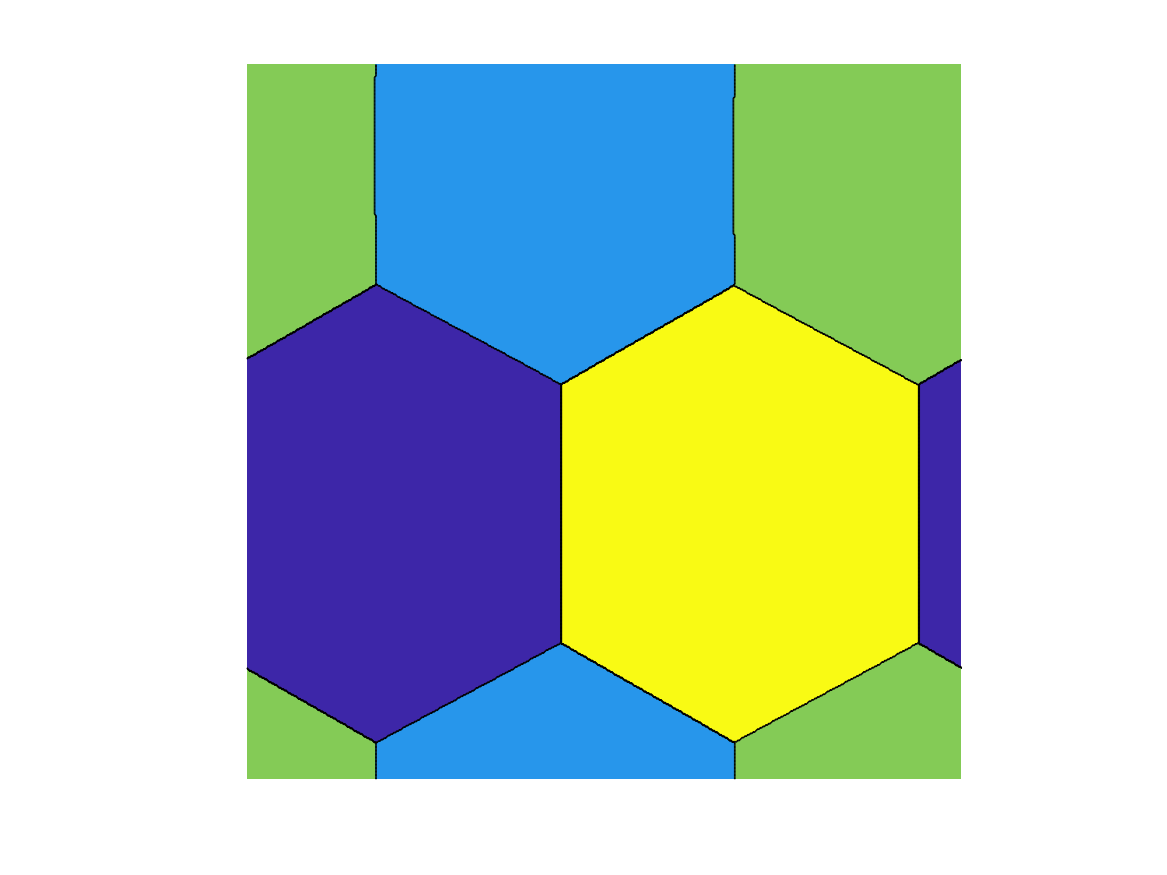}&
			\includegraphics[width = 0.09\textwidth, clip, trim = 4cm 1cm 3cm 1cm]{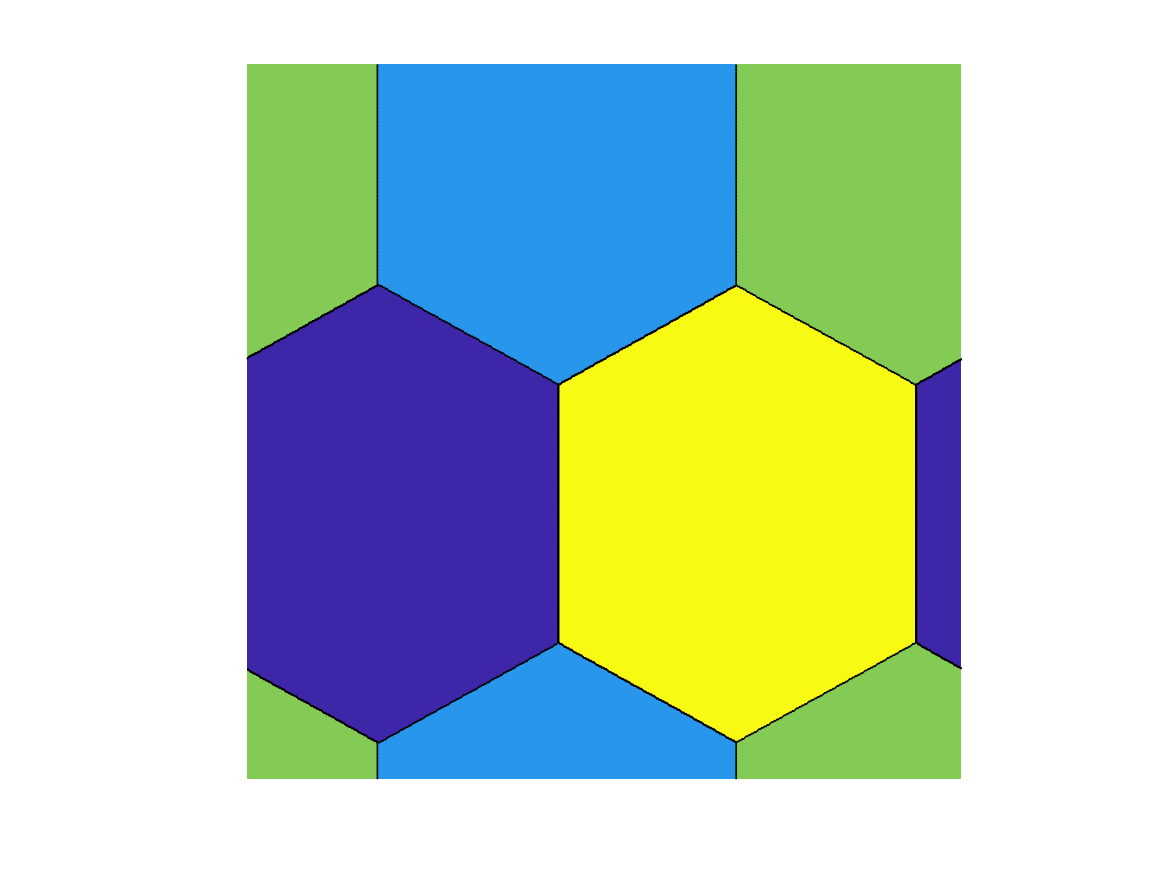} \\ 
			\hline
			\hline 
			initial & 5 & 15  & 25 & 35  & 45& 65  & 120 \\
			\includegraphics[width = 0.09\textwidth, clip, trim = 4cm 1cm 3cm 1cm]{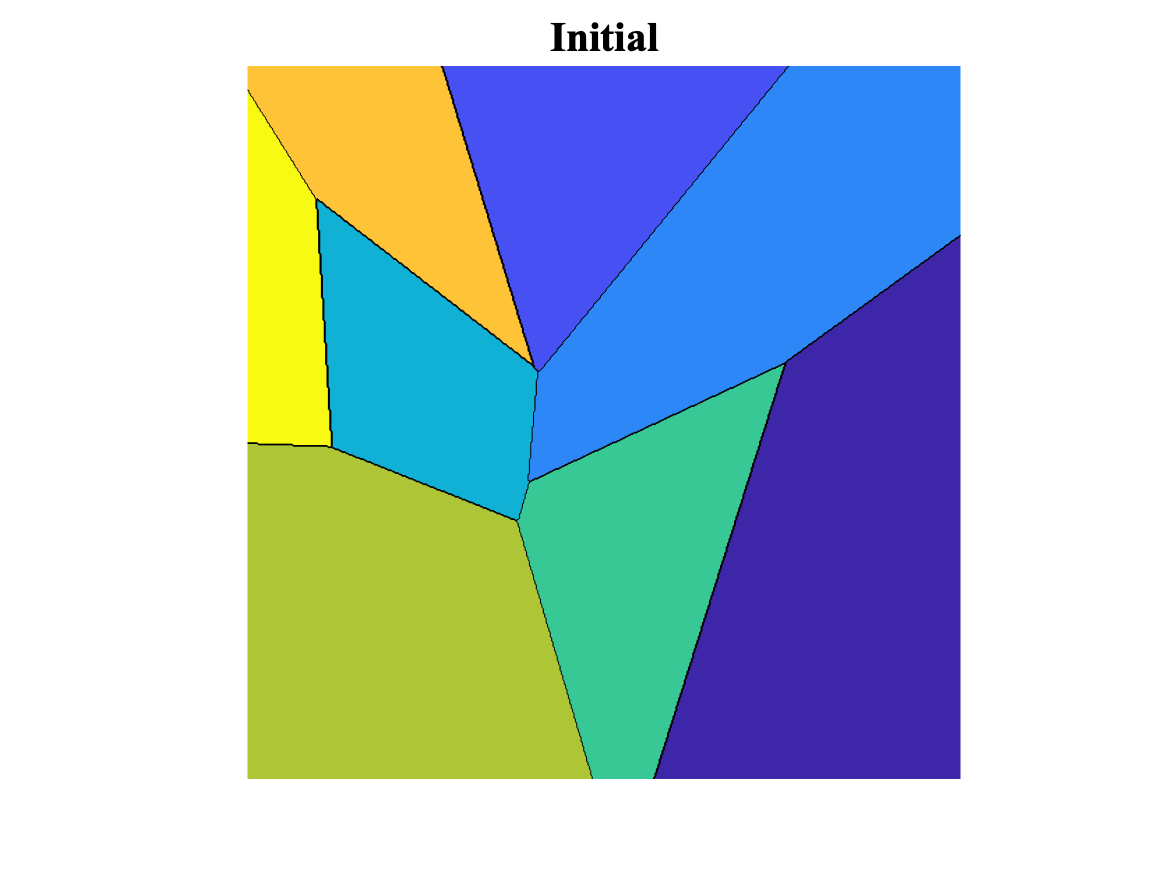}&  
			\includegraphics[width = 0.09\textwidth, clip, trim = 4cm 1cm 3cm 1cm]{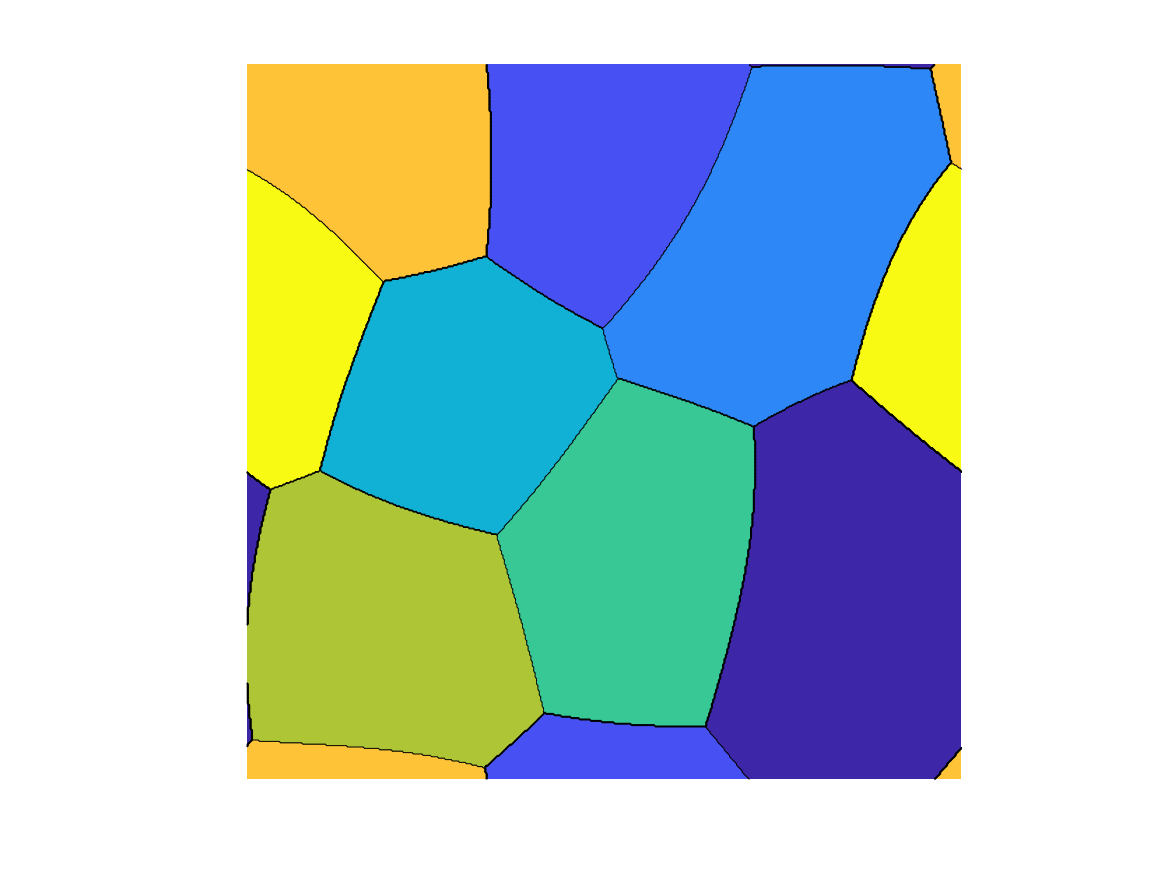}&
			\includegraphics[width = 0.09\textwidth, clip, trim = 4cm 1cm 3cm 1cm]{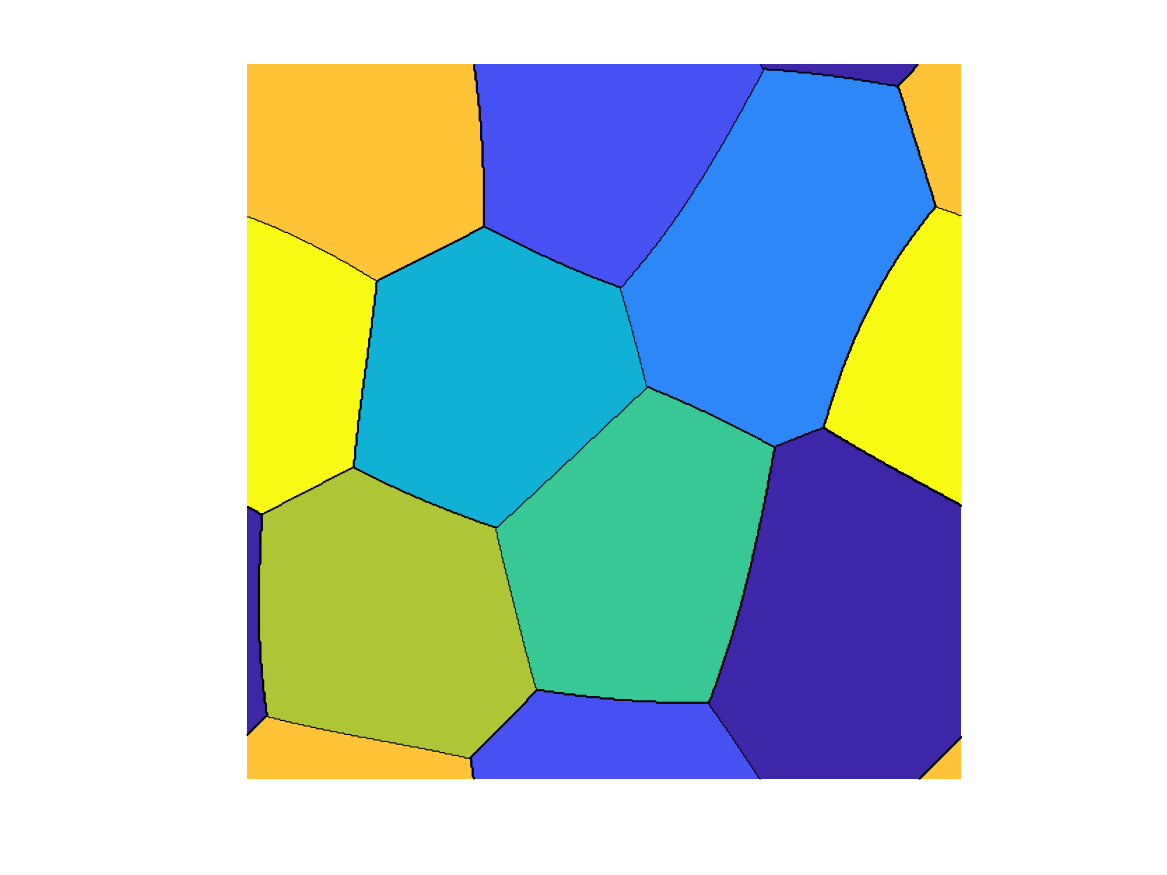}&
			\includegraphics[width = 0.09\textwidth, clip, trim = 4cm 1cm 3cm 1cm]{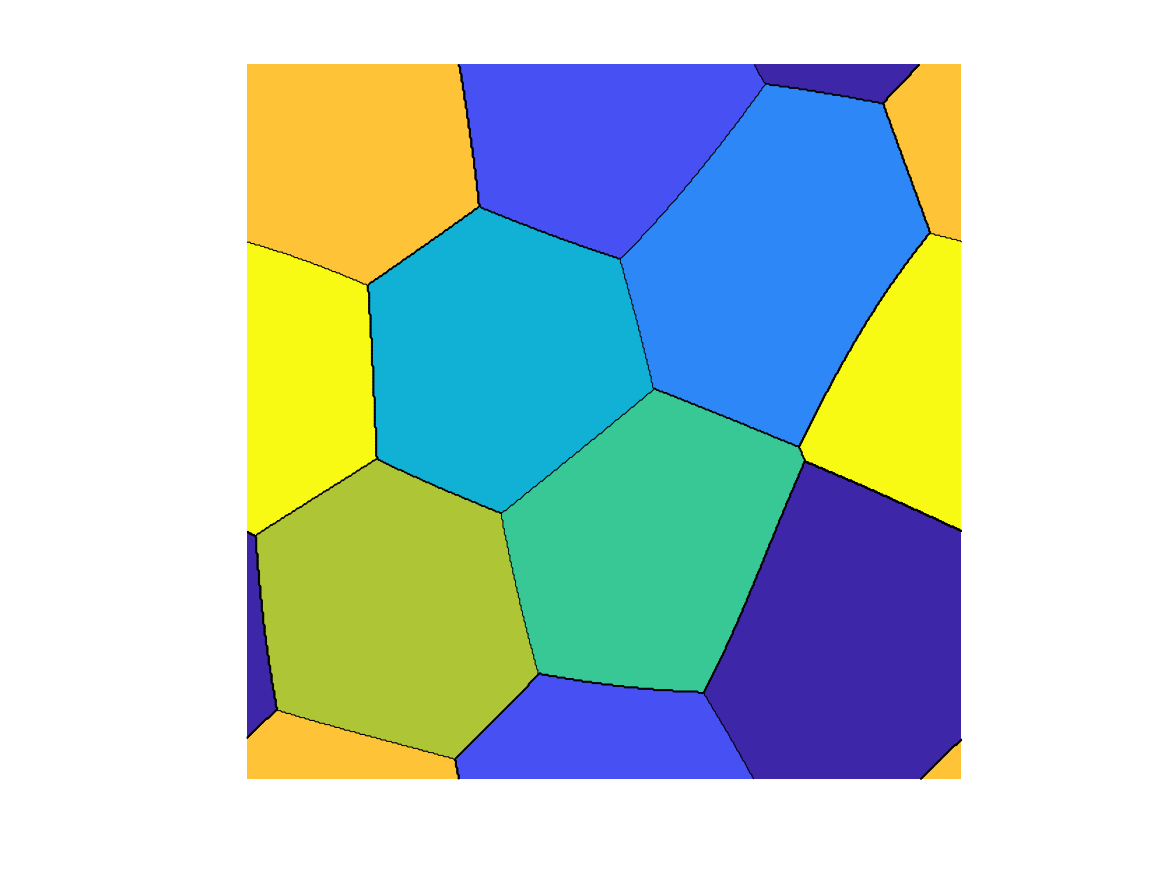}&
			\includegraphics[width = 0.09\textwidth, clip, trim = 4cm 1cm 3cm 1cm]{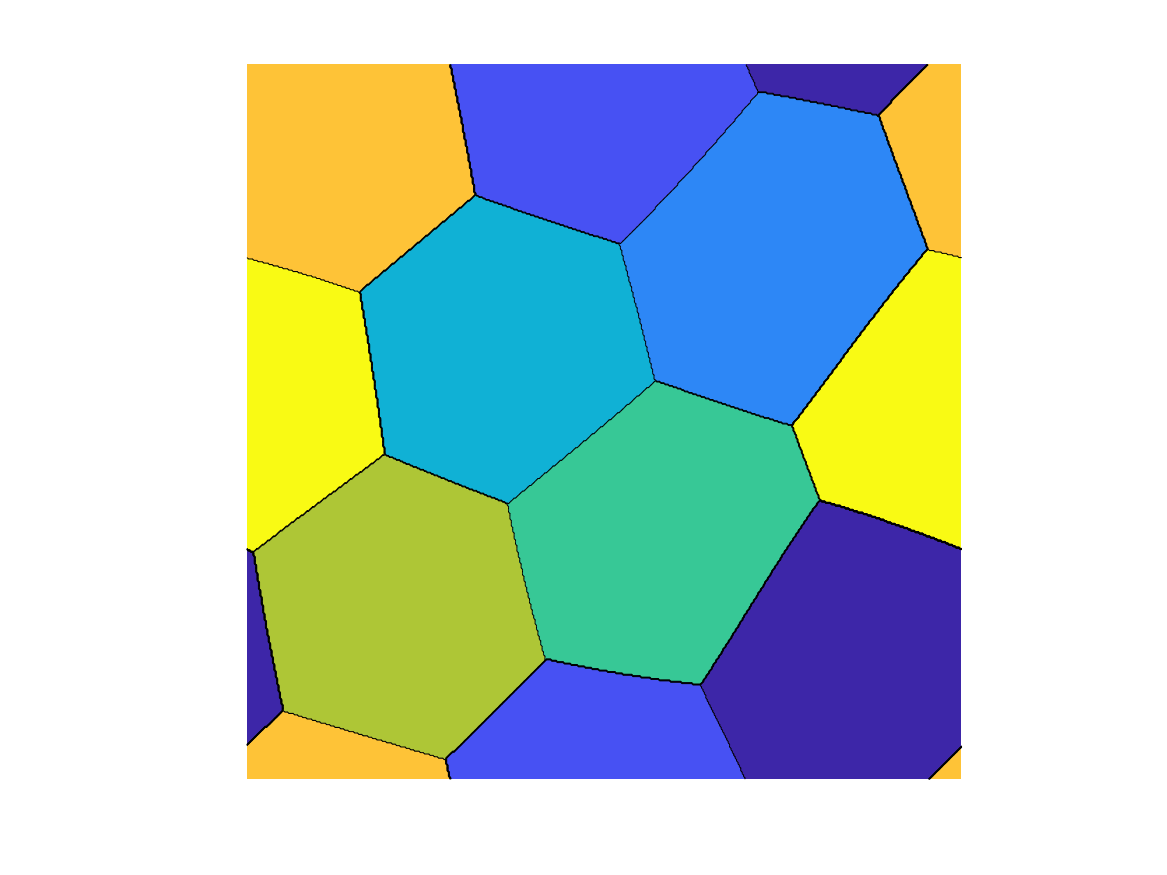}&
			\includegraphics[width = 0.09\textwidth, clip, trim = 4cm 1cm 3cm 1cm]{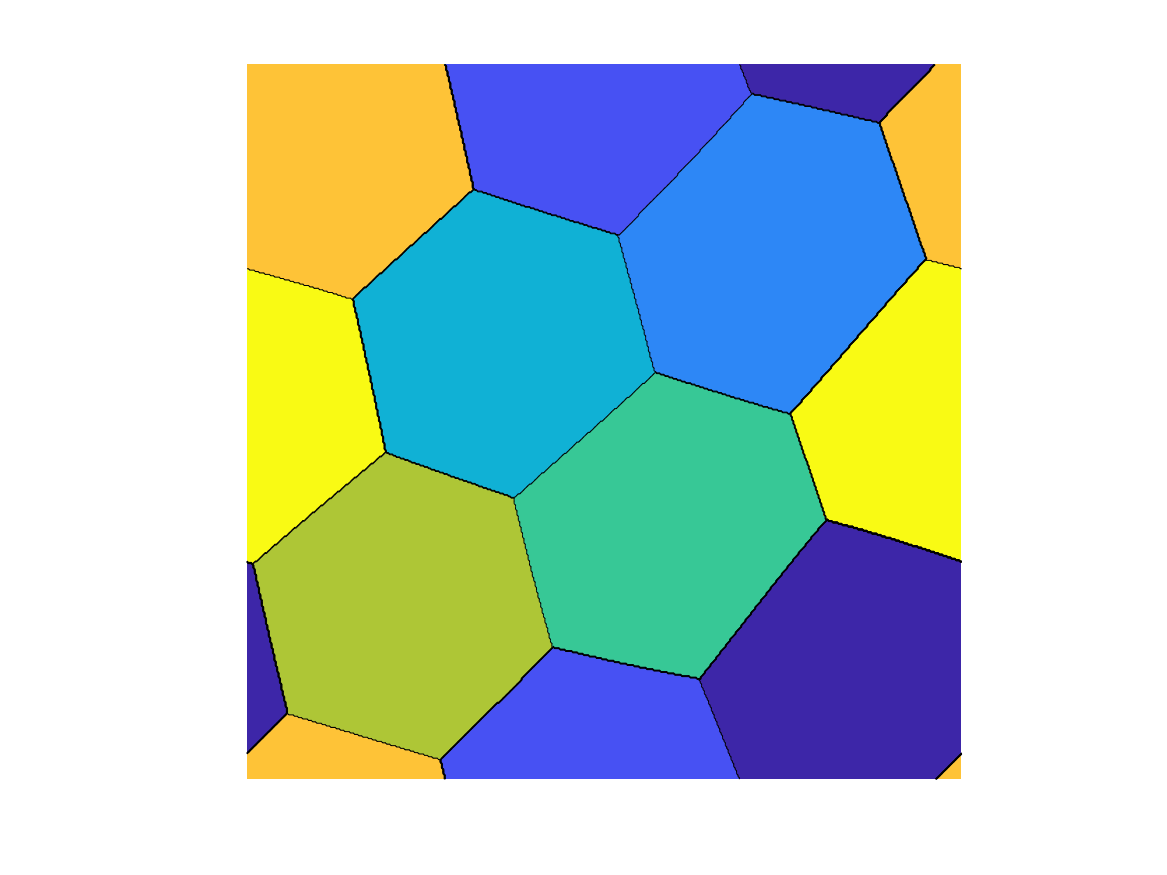}&
			\includegraphics[width = 0.09\textwidth, clip, trim = 4cm 1cm 3cm 1cm]{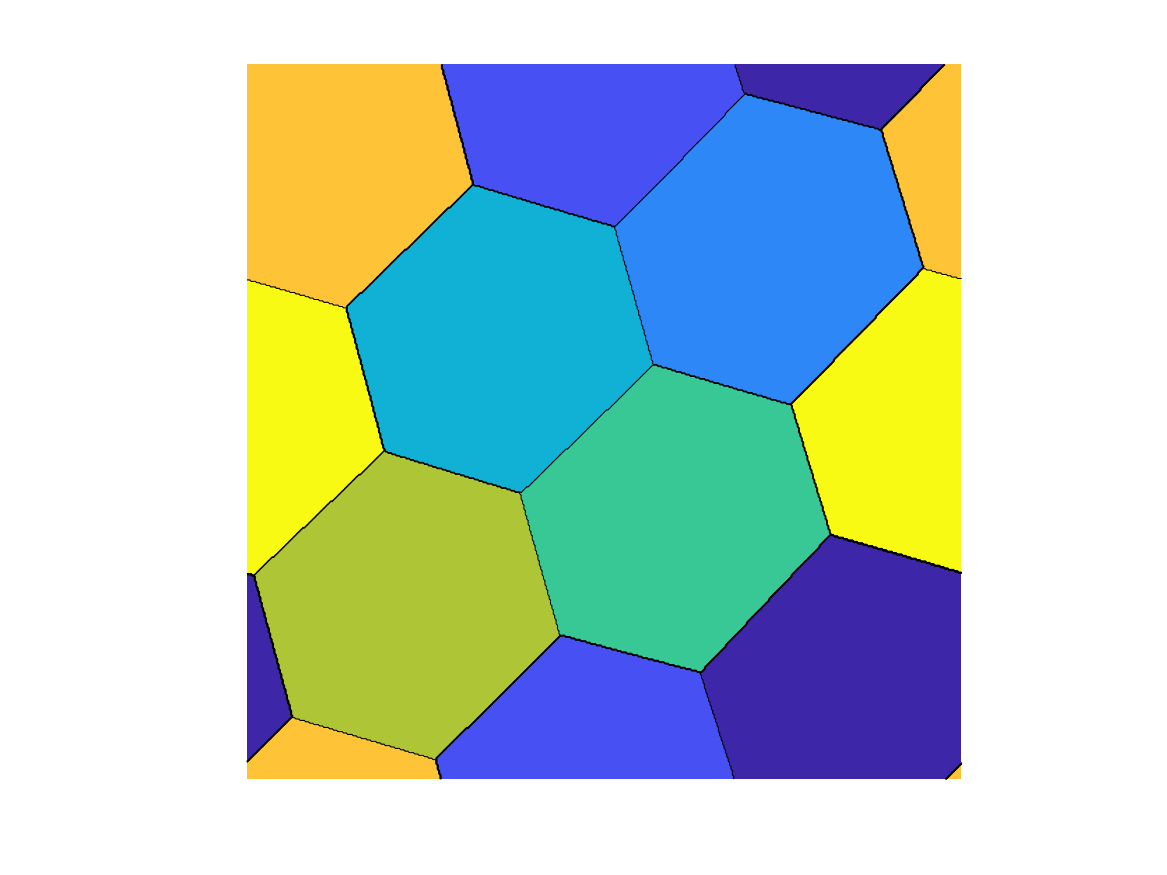}&
			\includegraphics[width = 0.09\textwidth, clip, trim = 4cm 1cm 3cm 1cm]{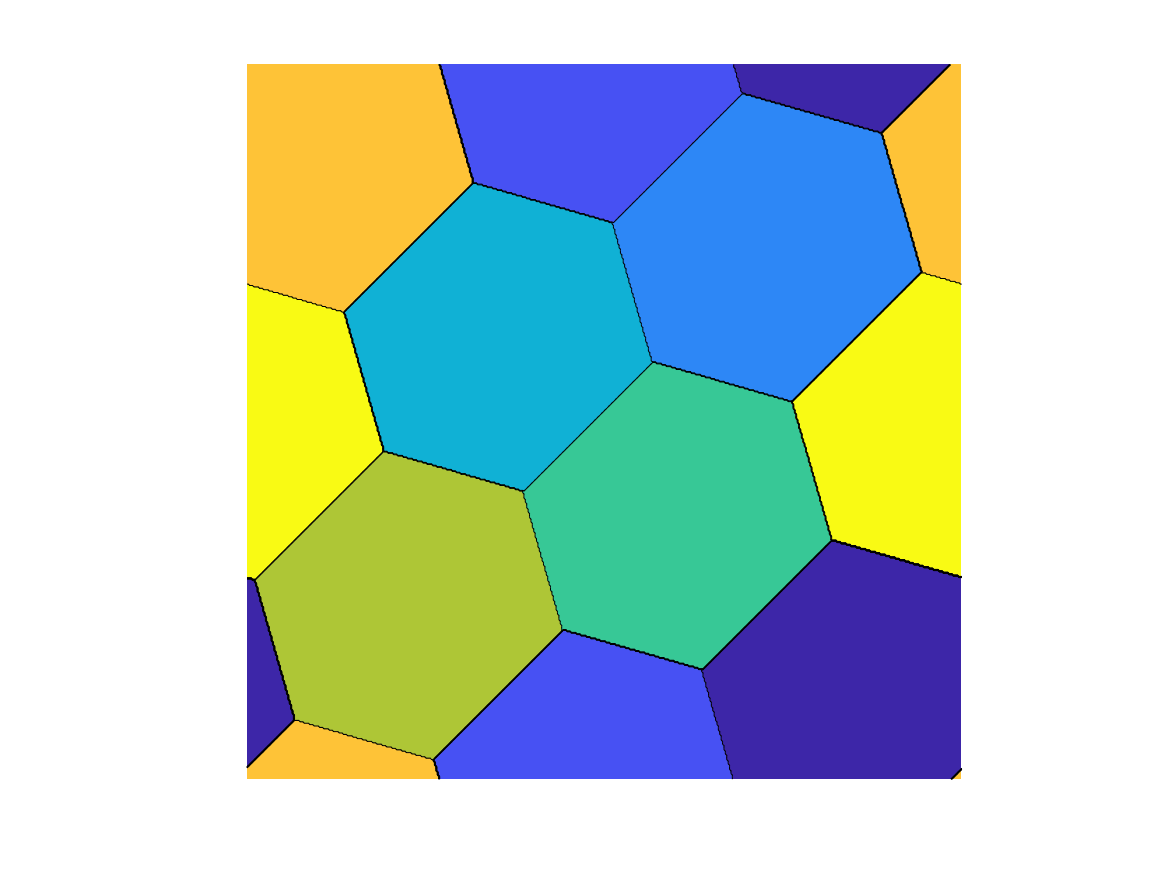} \\ 
			\hline
		\end{tabular}
		\caption{Snapshots of Algorithm~\ref{Alg:4step} at different iterations on a $512\times512$ discretized mesh with a time step size of $\tau=0.1$. The first row corresponds to $k=4$, while the second row corresponds to $k=8$.} \label{fig:EvolAlg_4stepdt01}
	\end{figure}
	\begin{figure}[H]
		\centering
		\begin{tabular}{c|c|c|c|c|c|c|c}
			\hline 
			initial & 50 & 100  & 150 & 200  & 250& 300  & 500 \\
			\includegraphics[width = 0.09\textwidth, clip, trim = 4cm 1cm 3cm 1cm]{figures_sq/initialm4.png}&  
			\includegraphics[width = 0.09\textwidth, clip, trim = 4cm 1cm 3cm 1cm]{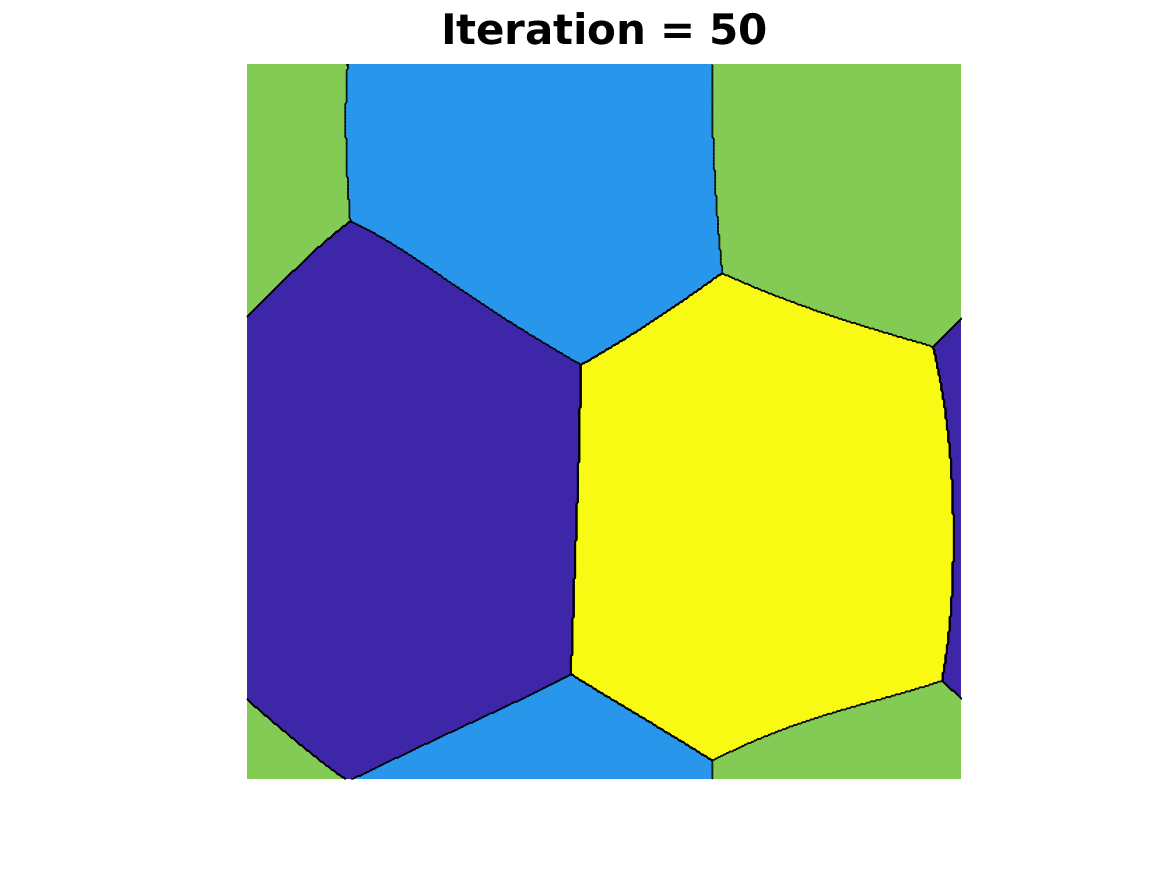}&
			\includegraphics[width = 0.09\textwidth, clip, trim = 4cm 1cm 3cm 1cm]{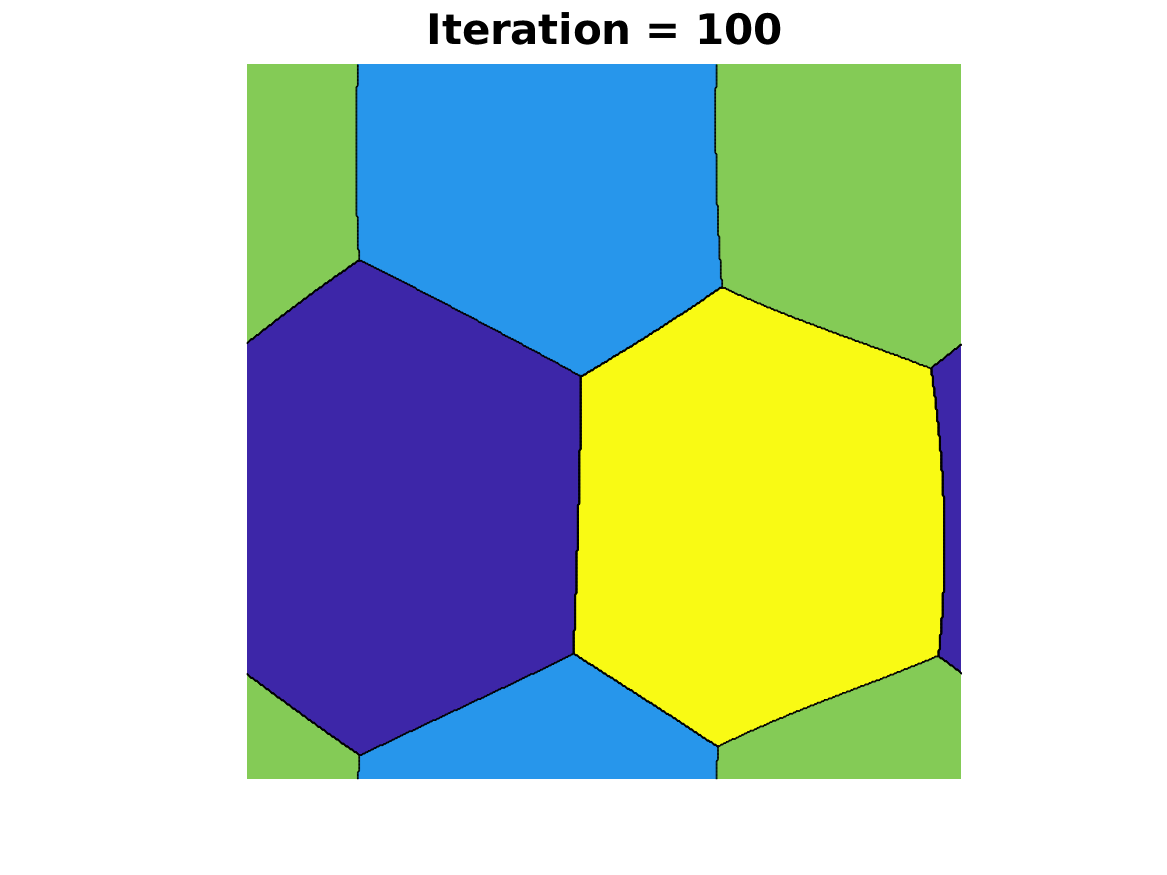}&
			\includegraphics[width = 0.09\textwidth, clip, trim = 4cm 1cm 3cm 1cm]{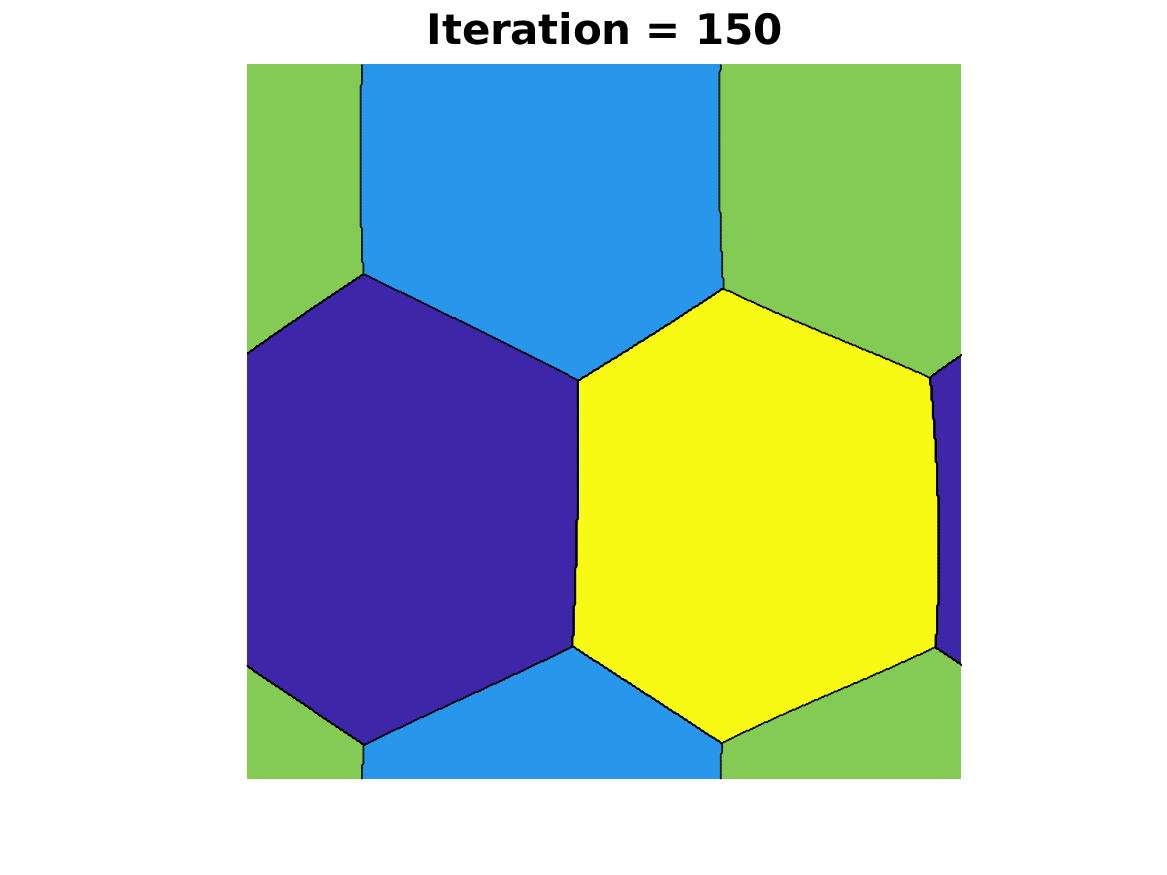}&
			\includegraphics[width = 0.09\textwidth, clip, trim = 4cm 1cm 3cm 1cm]{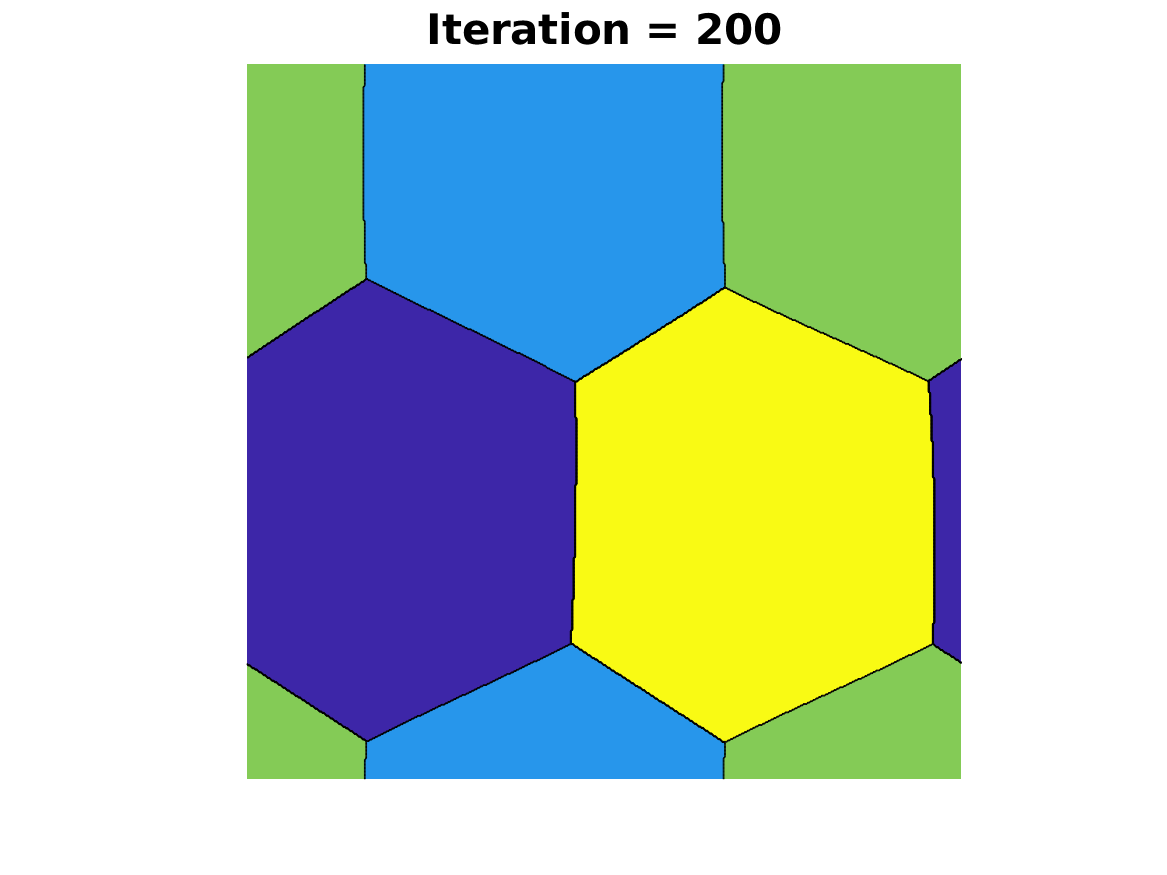}&
			\includegraphics[width = 0.09\textwidth, clip, trim = 4cm 1cm 3cm 1cm]{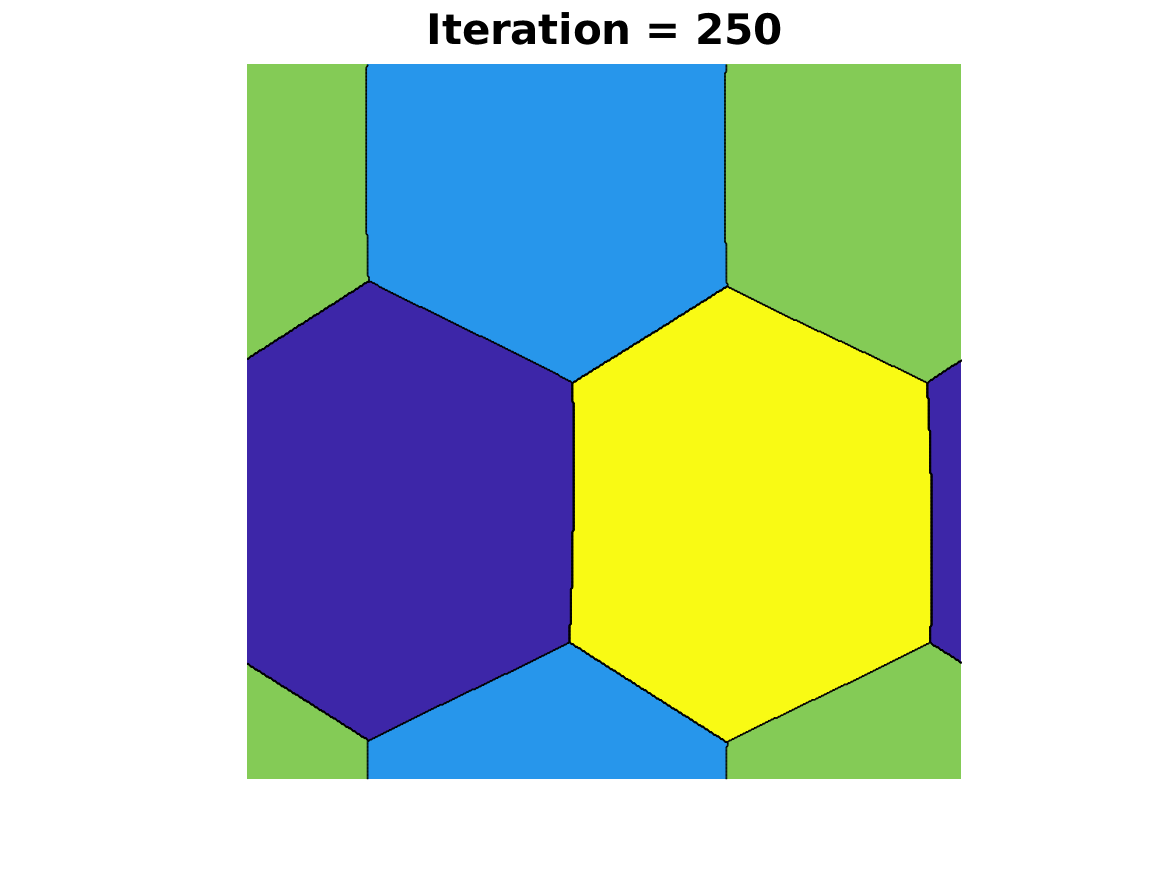}&
			\includegraphics[width = 0.09\textwidth, clip, trim = 4cm 1cm 3cm 1cm]{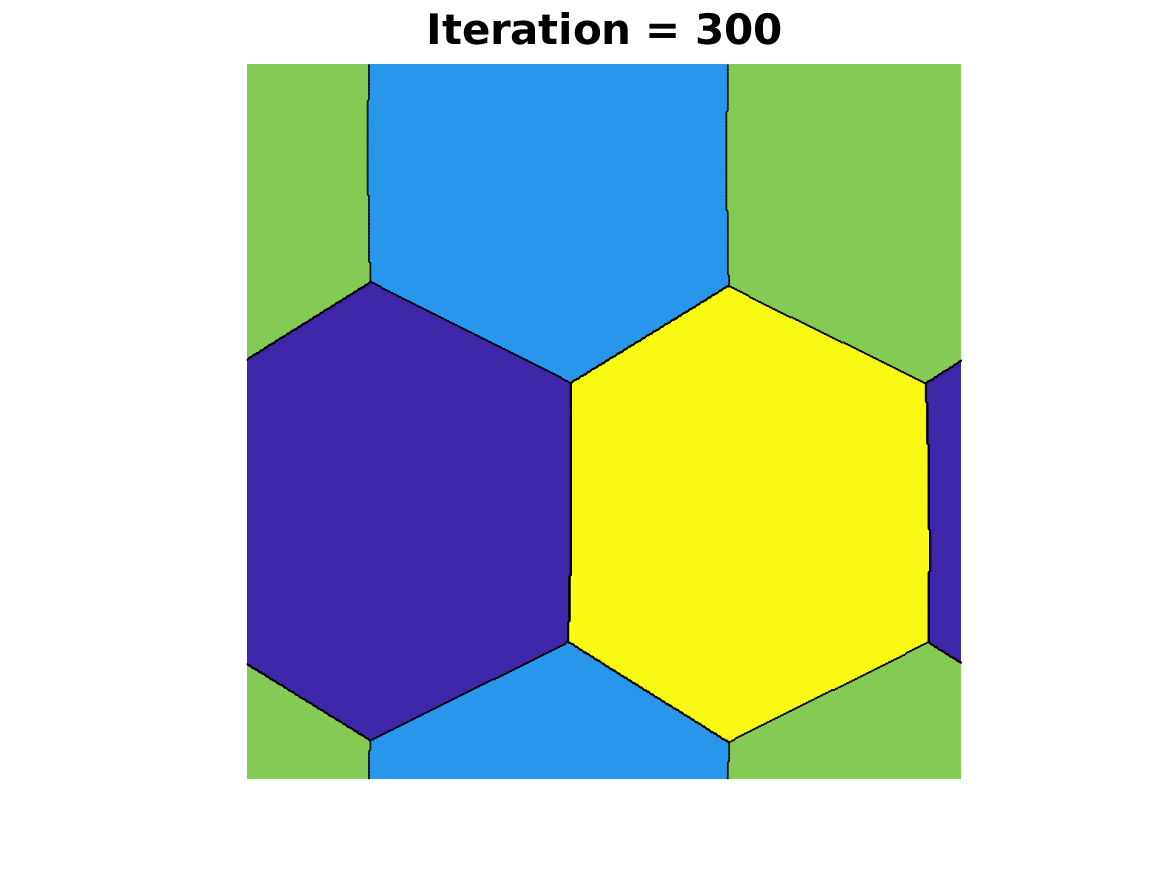}&
			\includegraphics[width = 0.09\textwidth, clip, trim = 4cm 1cm 3cm 1cm]{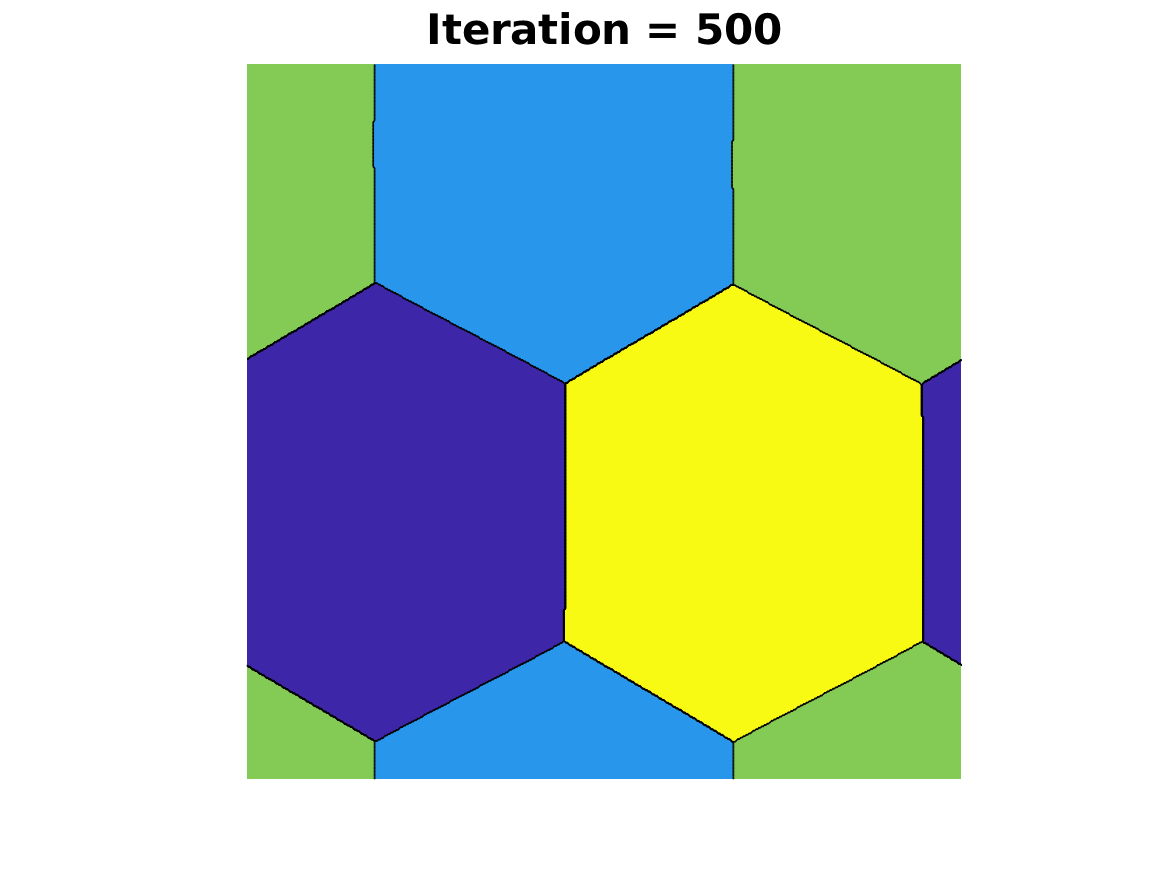} \\ 
			\hline
			\hline 
			initial & 100 & 200  & 300 & 400  & 500& 600  & 700 \\
			\includegraphics[width = 0.09\textwidth, clip, trim = 4cm 1cm 3cm 1cm]{figures_sq/initialm8.png}&  
			\includegraphics[width = 0.09\textwidth, clip, trim = 4cm 1cm 3cm 1cm]{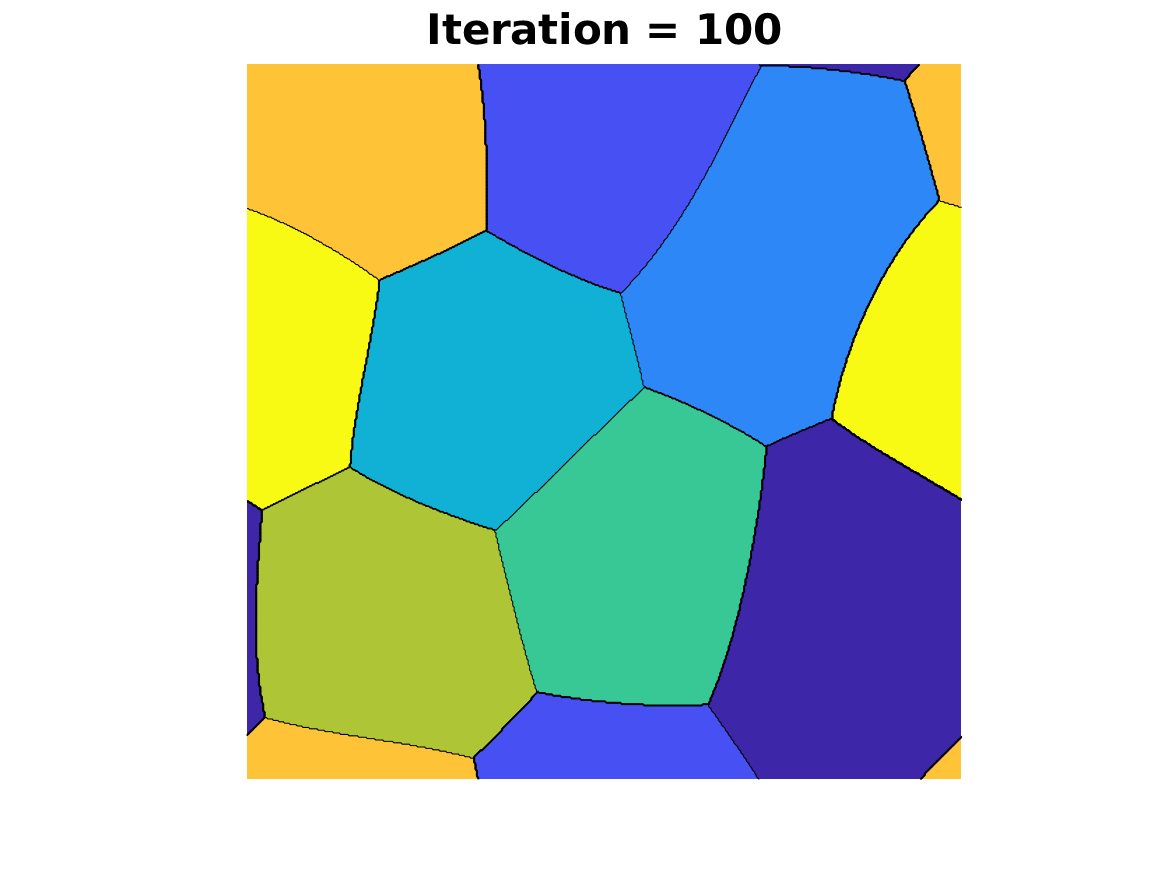}&
			\includegraphics[width = 0.09\textwidth, clip, trim = 4cm 1cm 3cm 1cm]{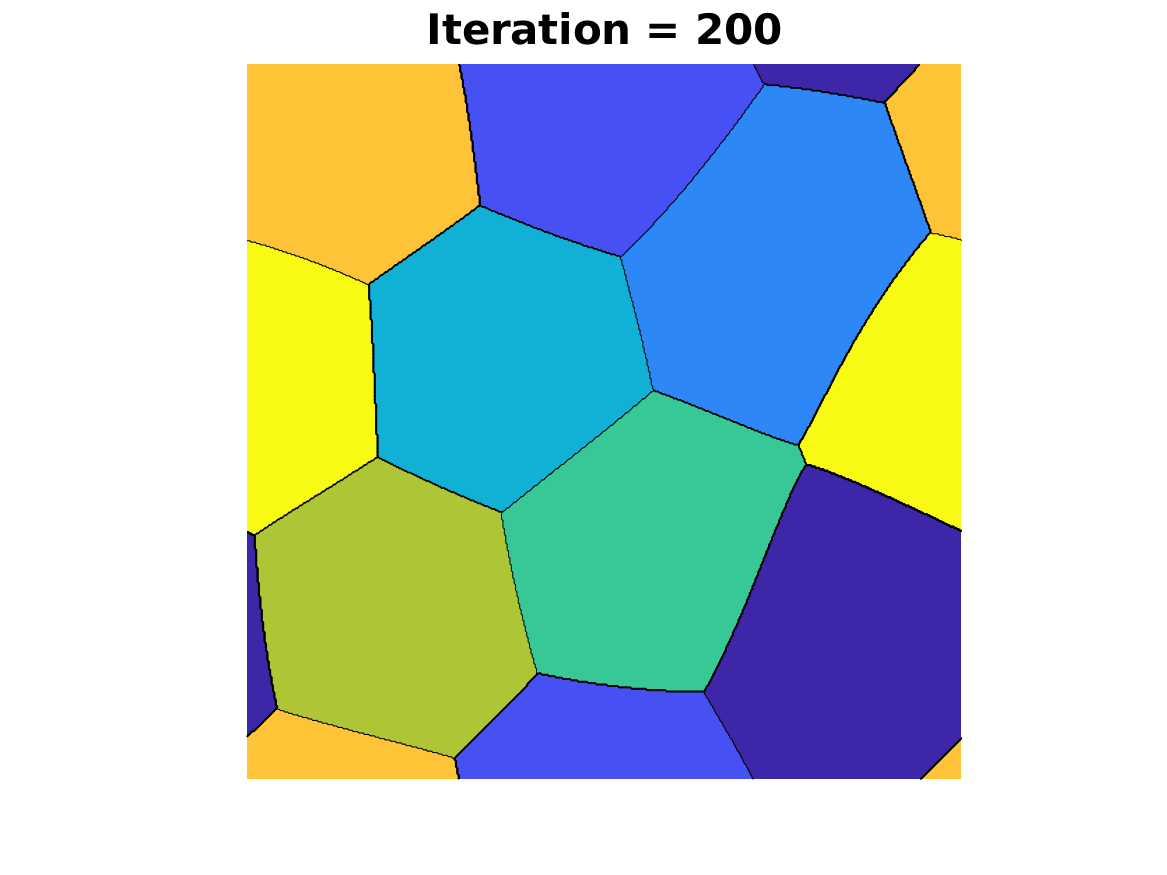}&
			\includegraphics[width = 0.09\textwidth, clip, trim = 4cm 1cm 3cm 1cm]{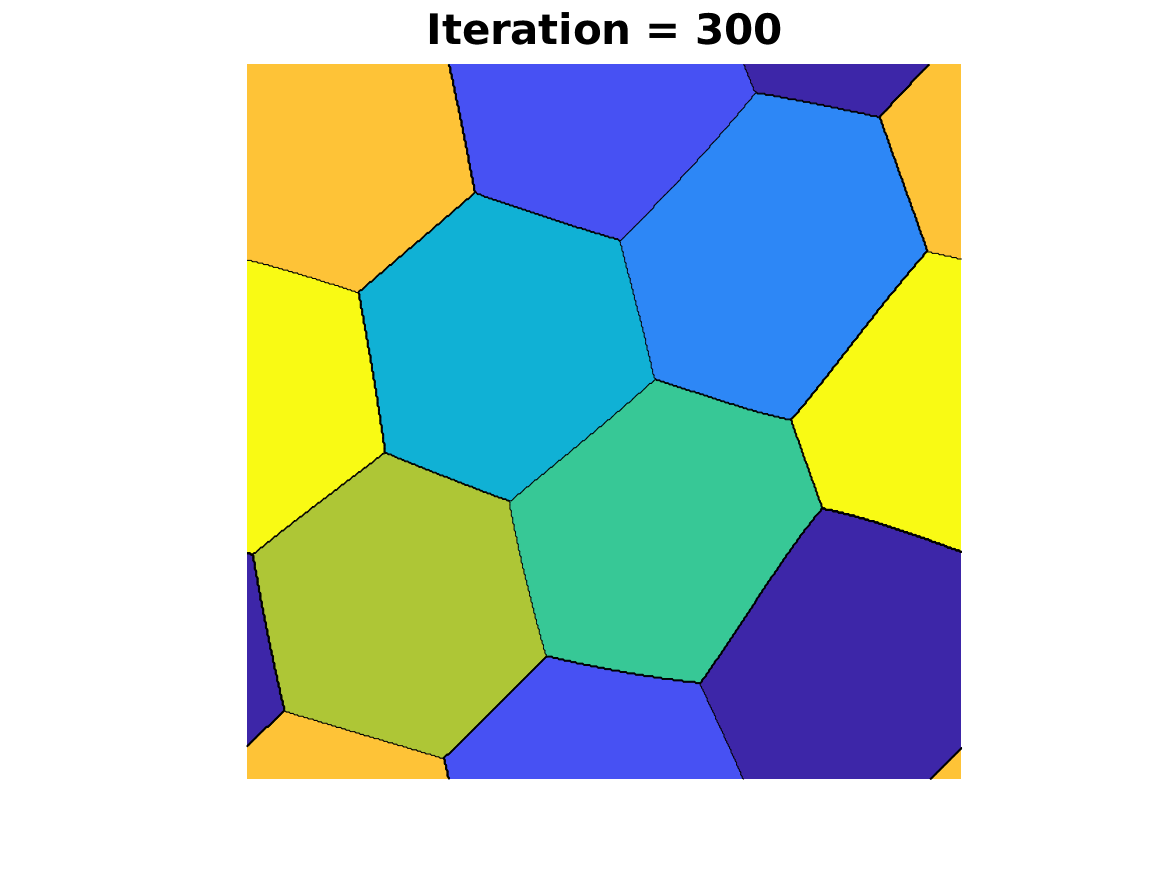}&
			\includegraphics[width = 0.09\textwidth, clip, trim = 4cm 1cm 3cm 1cm]{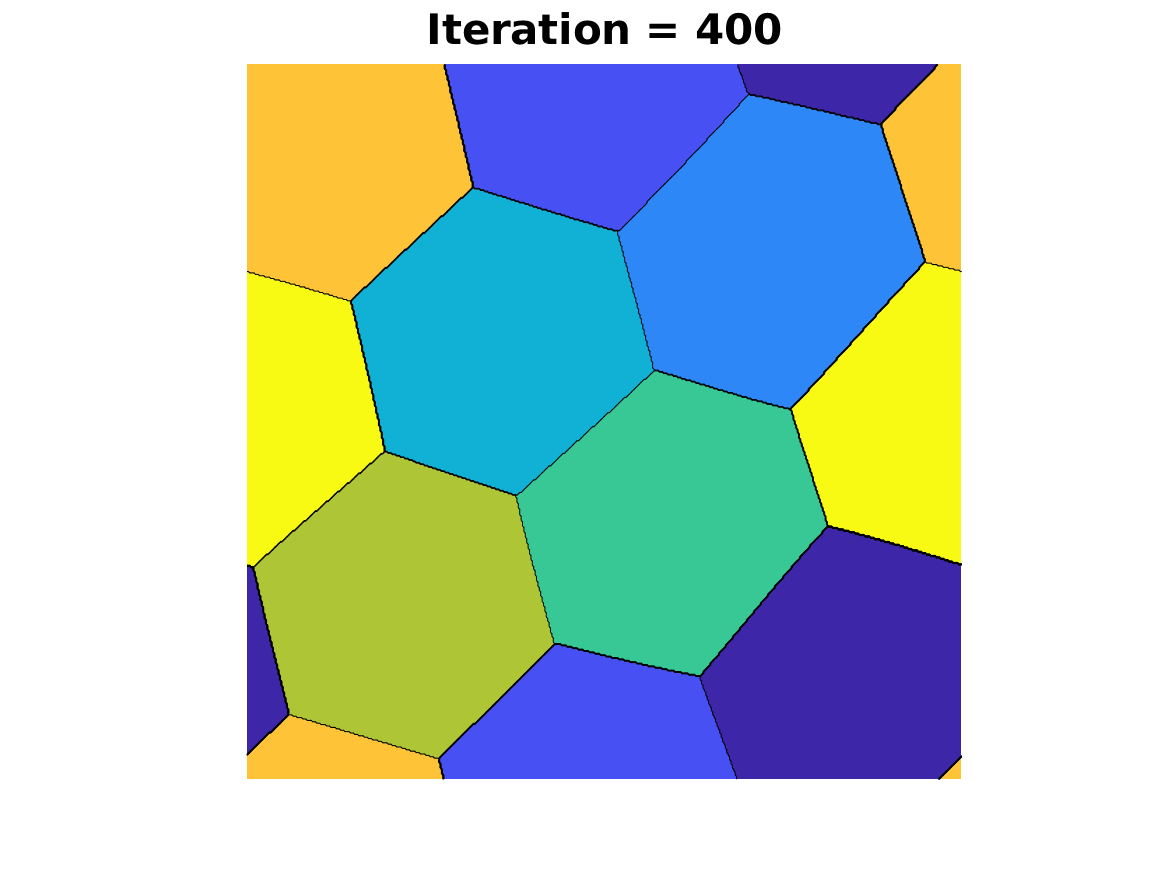}&
			\includegraphics[width = 0.09\textwidth, clip, trim = 4cm 1cm 3cm 1cm]{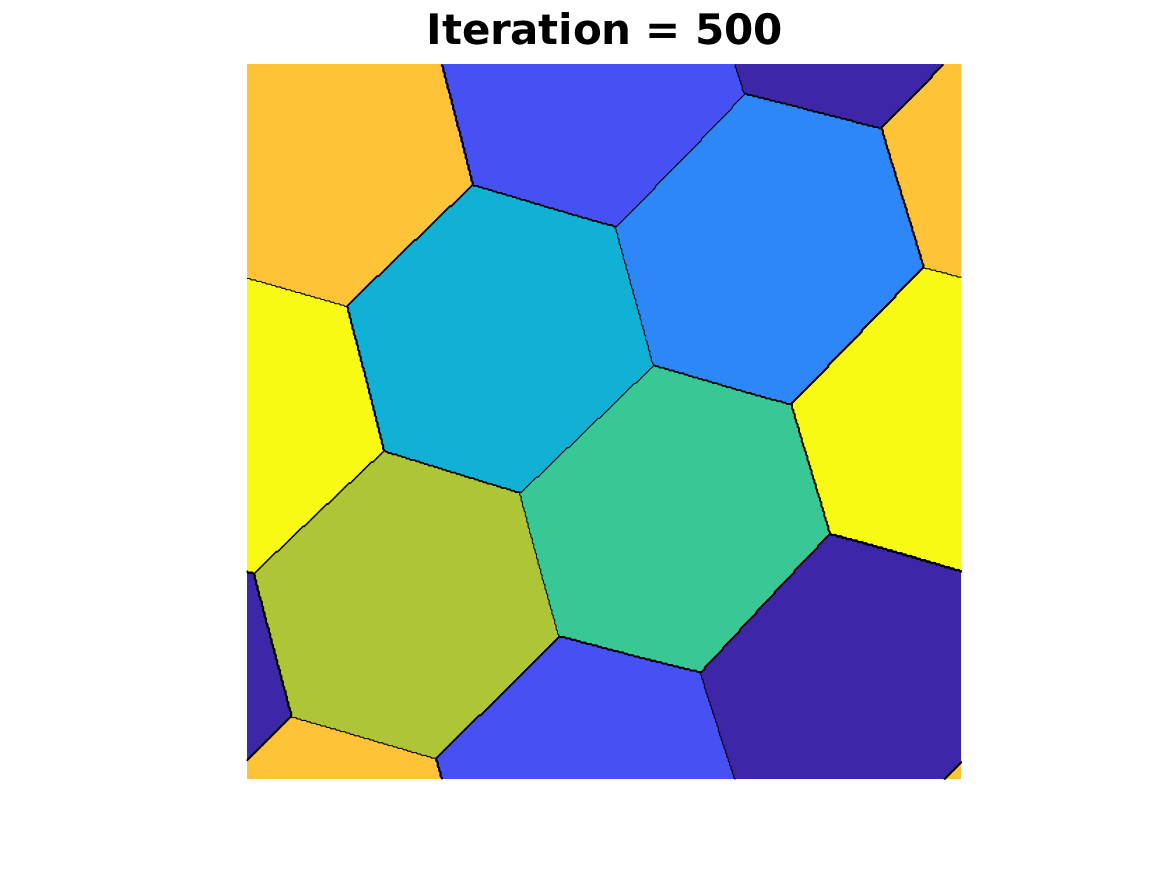}&
			\includegraphics[width = 0.09\textwidth, clip, trim = 4cm 1cm 3cm 1cm]{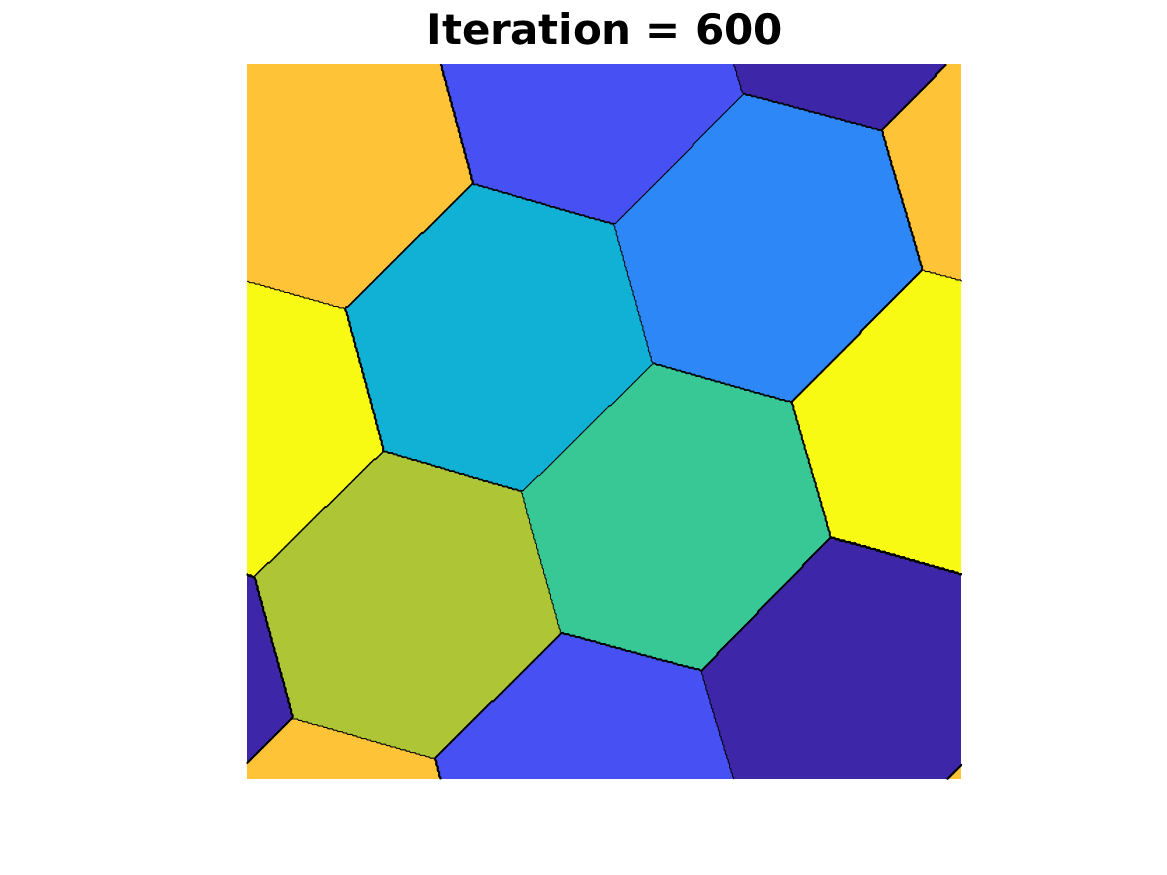}&
			\includegraphics[width = 0.09\textwidth, clip, trim = 4cm 1cm 3cm 1cm]{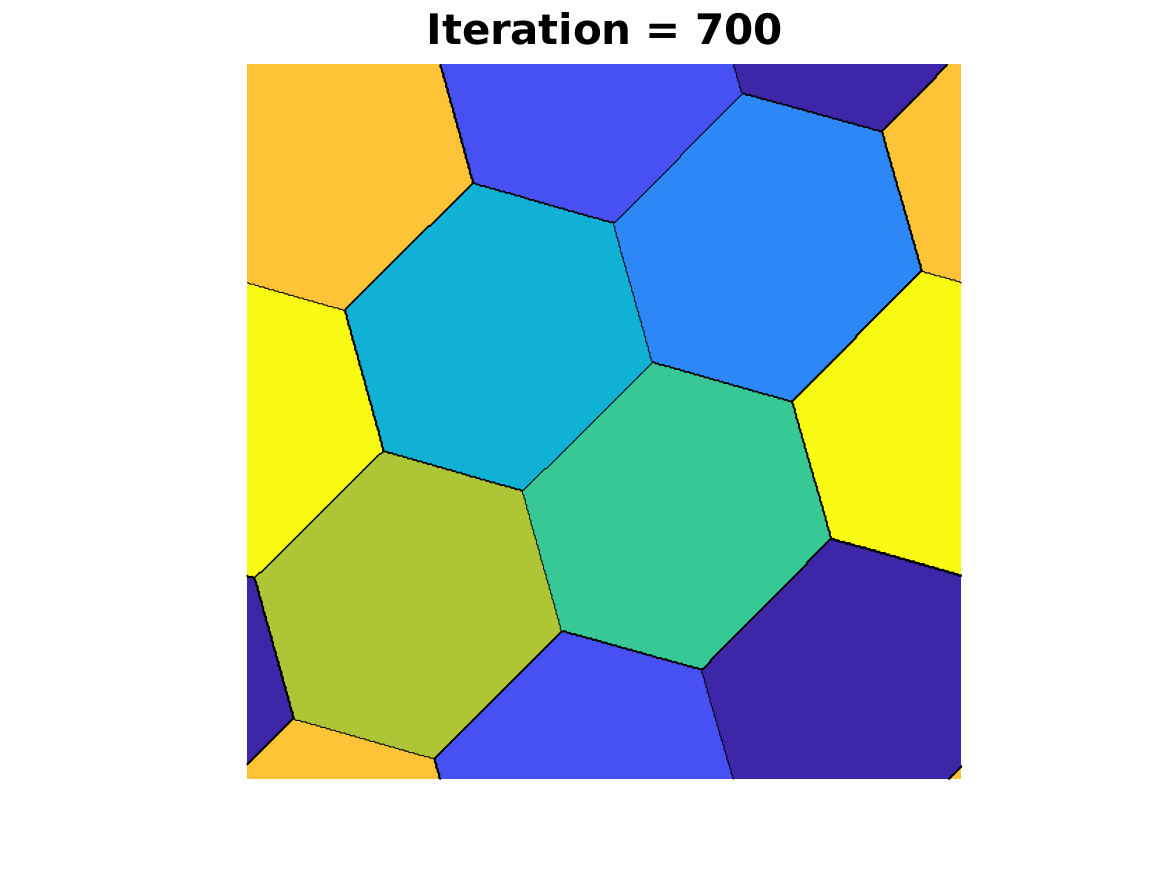} \\ 
			\hline
		\end{tabular}
		\caption{Snapshots of Algorithm~\ref{Alg:3step_Type1} at different iterations on a $512\times512$ discretized mesh with a time step size of $\tau=0.01$. The first row corresponds to $k=4$, while the second row corresponds to $k=8$.}\label{fig:EvolAlg_3stepType1dt001}
	\end{figure}
	
	\begin{figure}[H]
		\centering
		\includegraphics[width=0.45\textwidth]{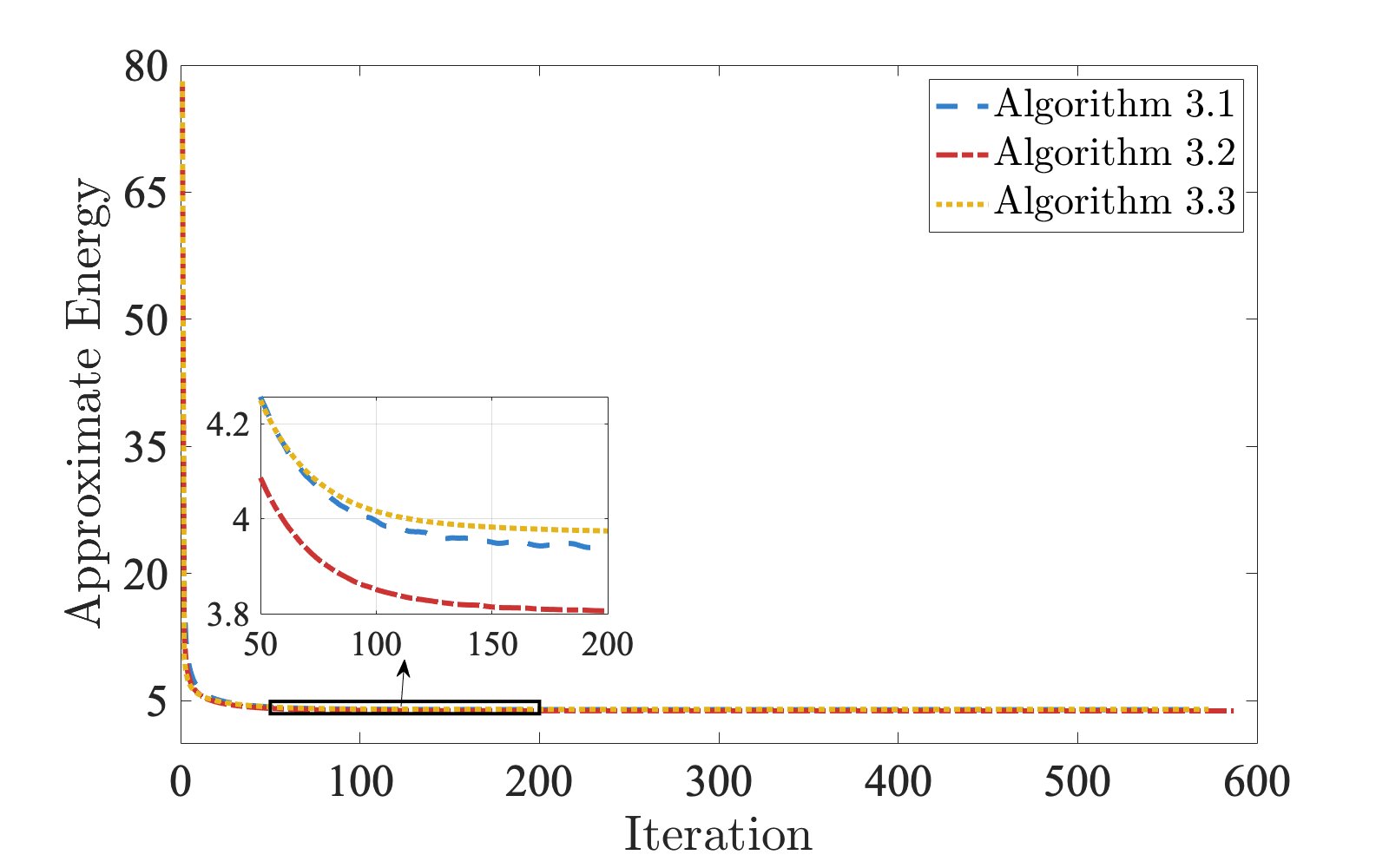}
		\includegraphics[width=0.45\textwidth]{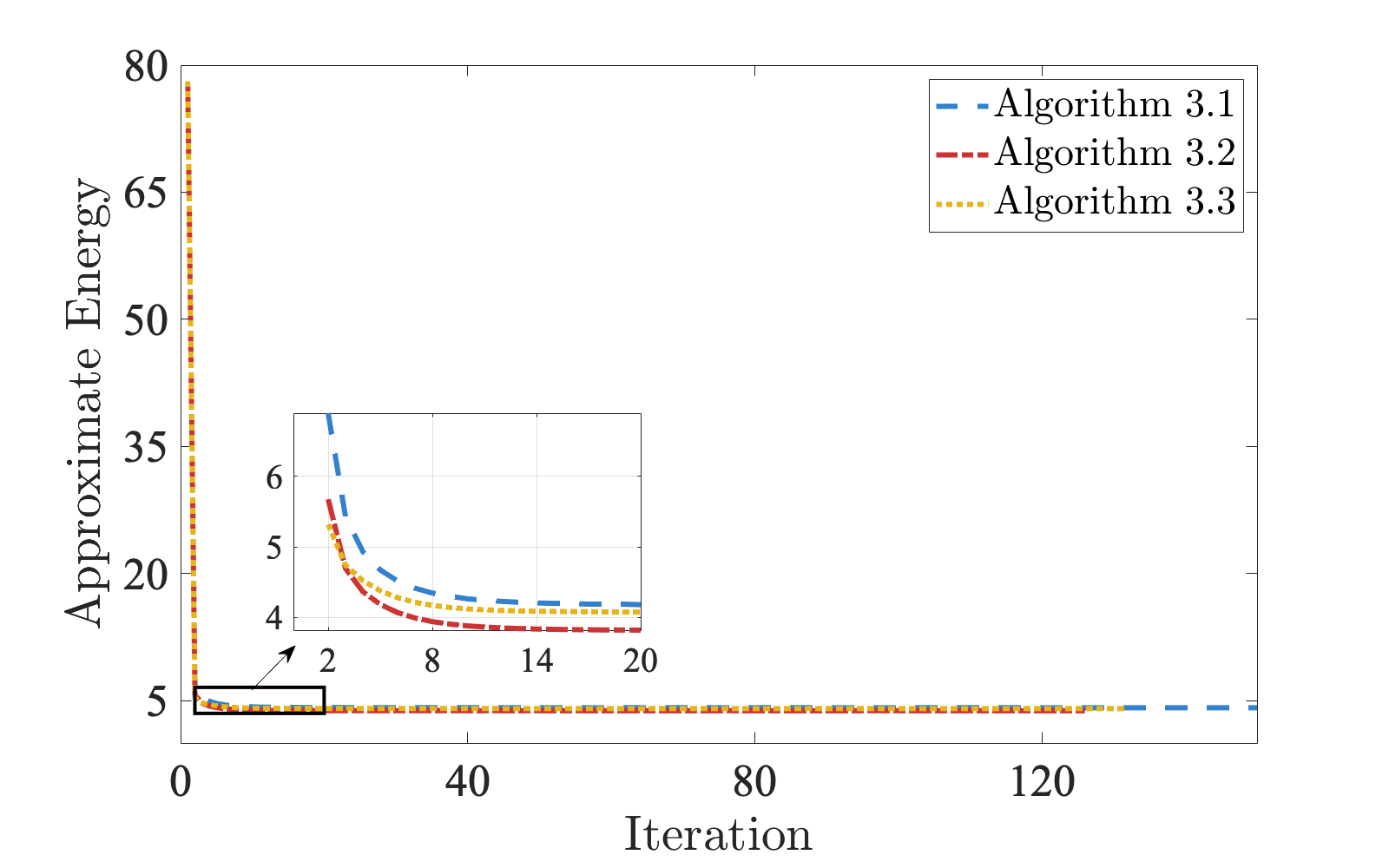}
		\caption{Approximate energy on a $512\times512$ discretized mesh with $k=4$. Left: $\tau=0.01$. Right: $\tau=0.1$.}\label{fig:ECompAlg1_3}
	\end{figure}
\subsubsection{Performance of the energy dissipative schemes}
To demonstrate the energy dissipative property of Algorithms~\ref{Alg:3step_Type1Diss} and \ref{Alg:3step_Type2Diss}, we plot the approximated energy computed by Algorithms~\ref{Alg:3step_Type1}-\ref{Alg:3step_Type2Diss} in Figure~\ref{fig:EnergyDissk4} and Figure~\ref{fig:EnergyDissk8}, with initial conditions given in Figure~\ref{fig:EvolAlg_4stepdt001}. As shown in these figures, the numerical energy computed by Algorithms~\ref{Alg:3step_Type1} and \ref{Alg:3step_Type2} oscillates as the partition approaches the final structure. However, the energy dissipation property is preserved by using the schemes presented in Algorithm~\ref{Alg:3step_Type1Diss} and Algorithm~\ref{Alg:3step_Type2Diss}.
	\begin{figure}[H]
		\centering
		\includegraphics[width=0.45\textwidth]{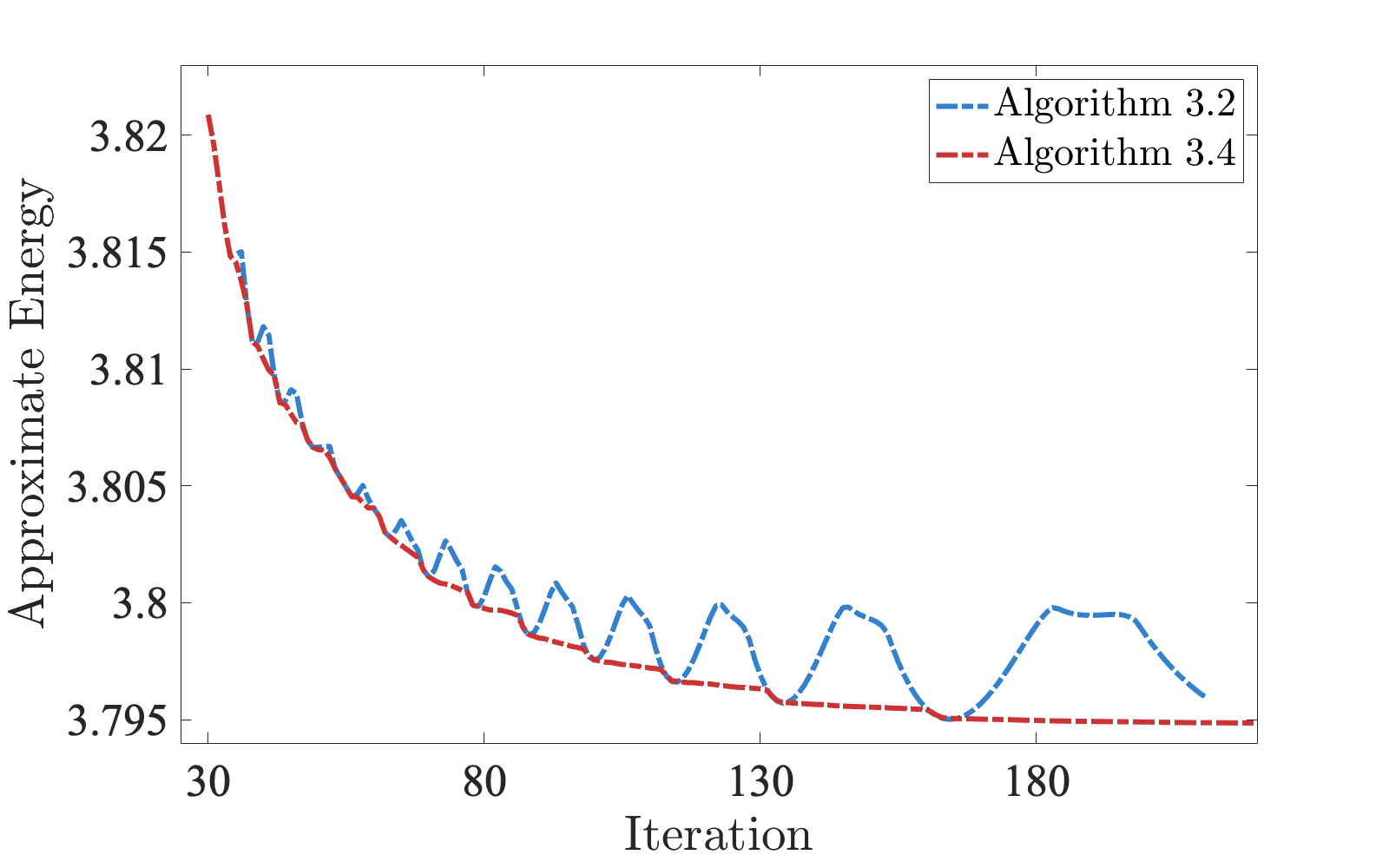}
		\includegraphics[width=0.45\textwidth]{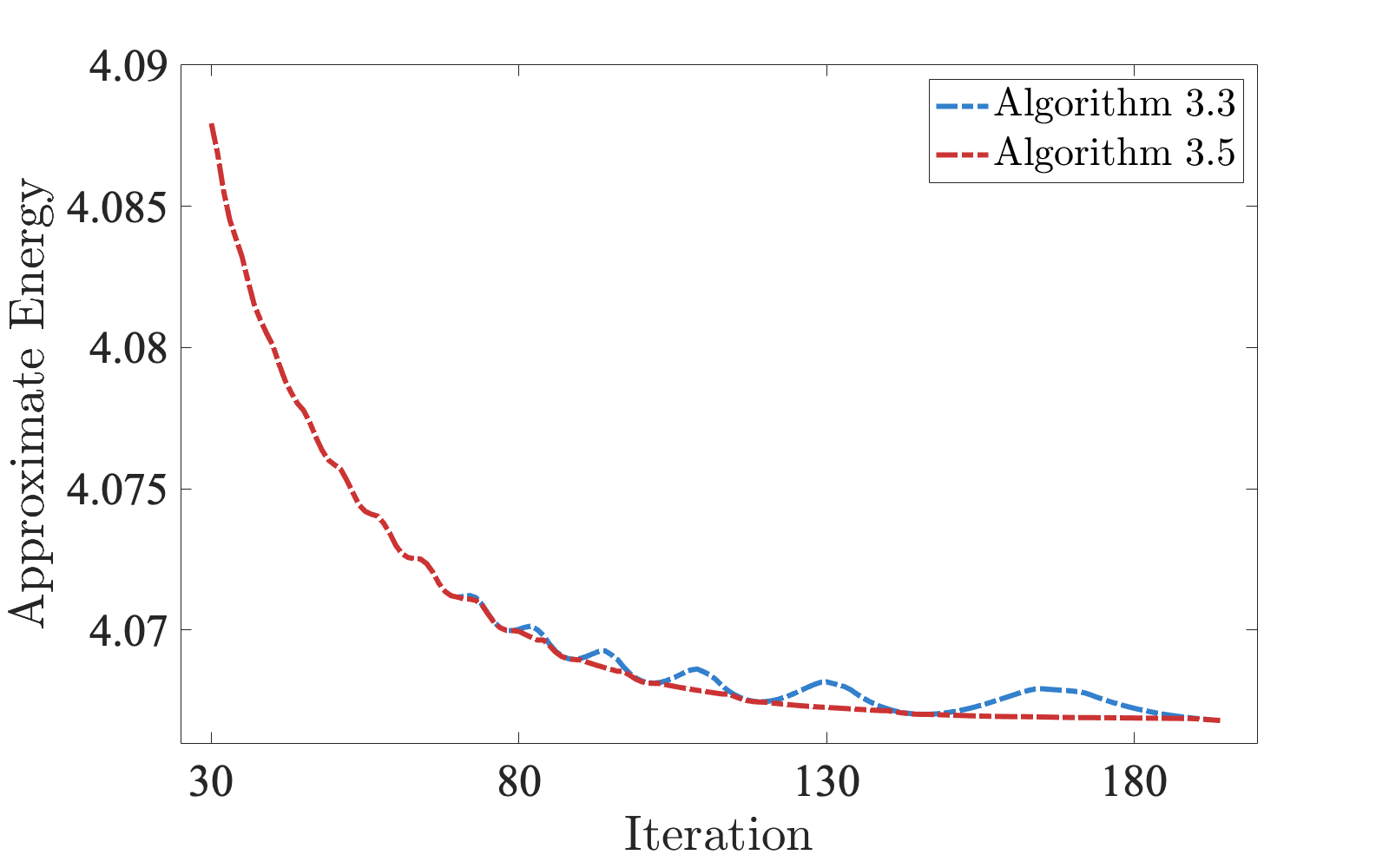}
		\caption{Approximate energy   of Algorithms~\ref{Alg:3step_Type1Diss} and \ref{Alg:3step_Type2Diss} on a $512\times512$ discretized mesh with  $k=4$ and $\tau=0.05$.}\label{fig:EnergyDissk4}
	\end{figure}
	\begin{figure}[H]
		\centering
		\includegraphics[width=0.45\textwidth]{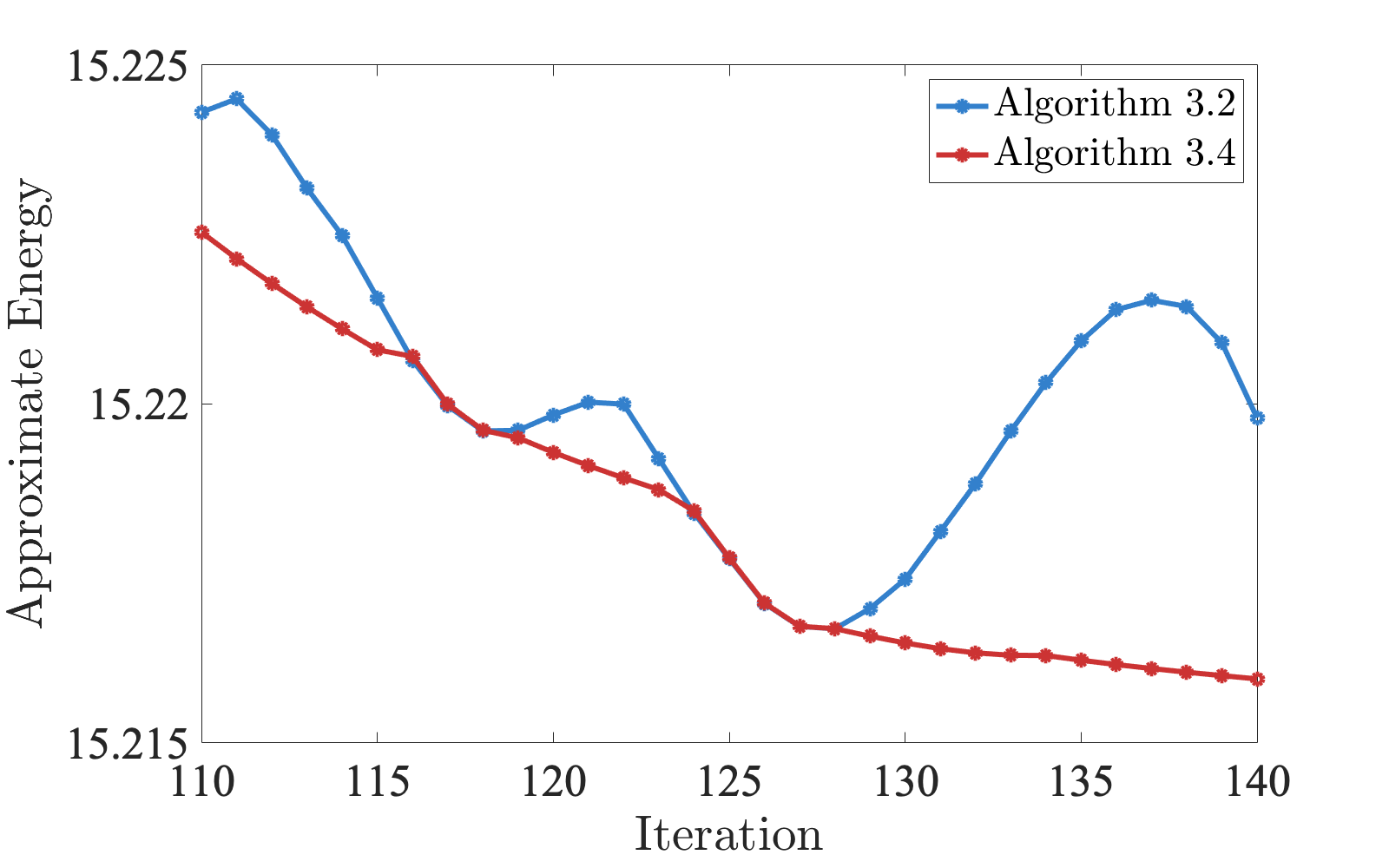}
		\includegraphics[width=0.45\textwidth]{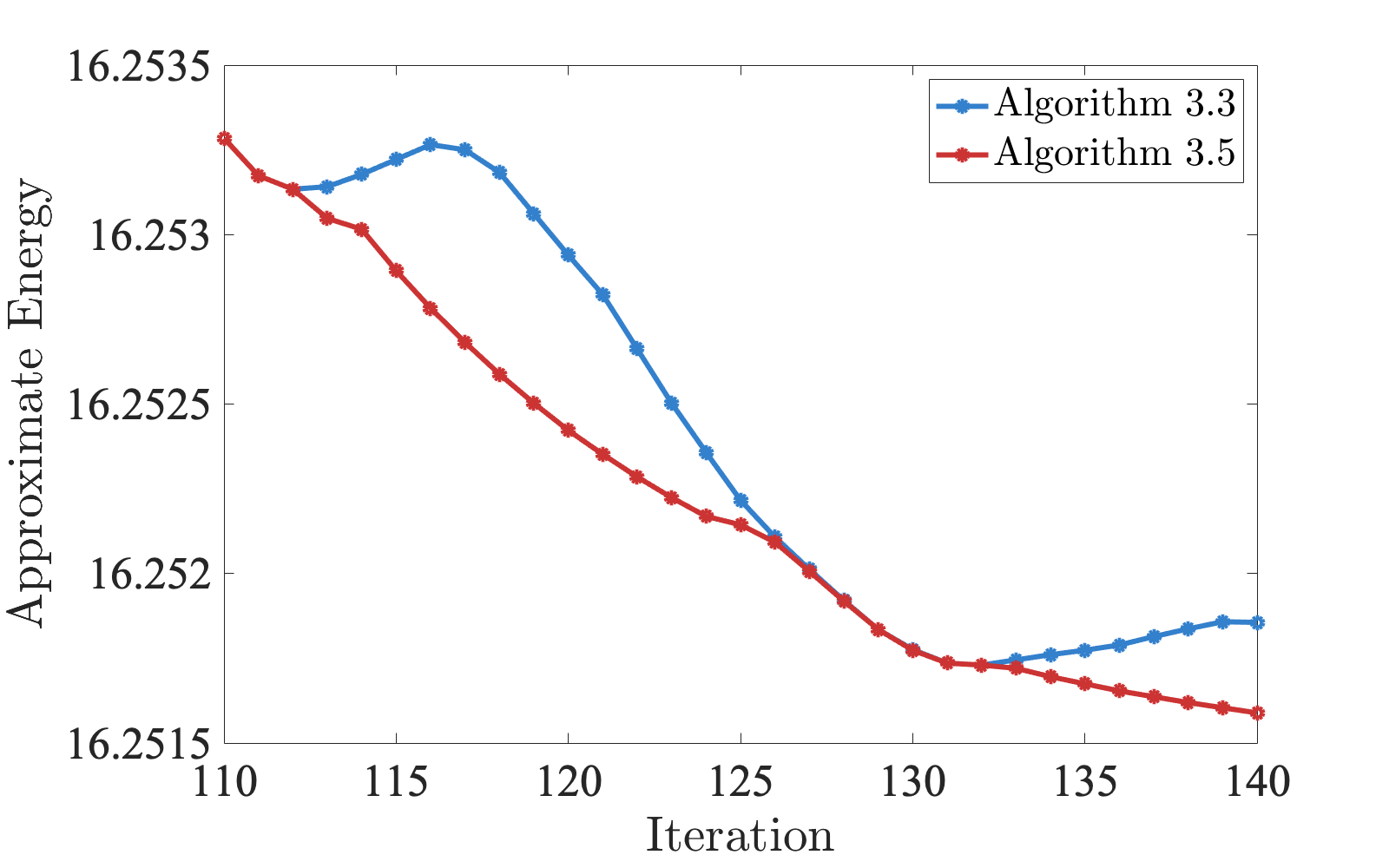}
		\caption{Approximate energy   of Algorithms~\ref{Alg:3step_Type1Diss} and \ref{Alg:3step_Type2Diss} on a $512\times512$ discretized mesh with  $k=8$ and $\tau=0.05$.}\label{fig:EnergyDissk8}
	\end{figure}	
\subsubsection{Various partitions for 2-dimensional flat tori} 
Using \(\tau = 0.1\), we evaluate the performance of Algorithm~\ref{Alg:4step} and Algorithm~\ref{Alg:3step_Type1} for various values of \(k\), starting with random initial guesses. As shown in Figures~\ref{fig:Alg_4stepkdt01} and \ref{fig:Alg_3stepType1kdt01}, both algorithms perform well even with the relatively large value of \(\tau\). These figures highlight the algorithms' robustness across different \(k\) values. Additionally, different local minima can be observed, such as those for \(k=3\) and \(k=5\), indicating the algorithms' capability to explore diverse candidates for the optimal partitions. Furthermore, the periodic extensions of the numerical results implemented by Algorithm~\ref{Alg:3step_Type2} are presented in Figure~\ref{fig:Alg3_extension}. This figure provides a comprehensive view of the algorithm's behavior over extended domains and illustrates the periodic nature of the results.
\begin{figure}[H]
		\centering
		\includegraphics[width = 0.13\textwidth, clip, trim = 4cm 1cm 3cm 1cm]{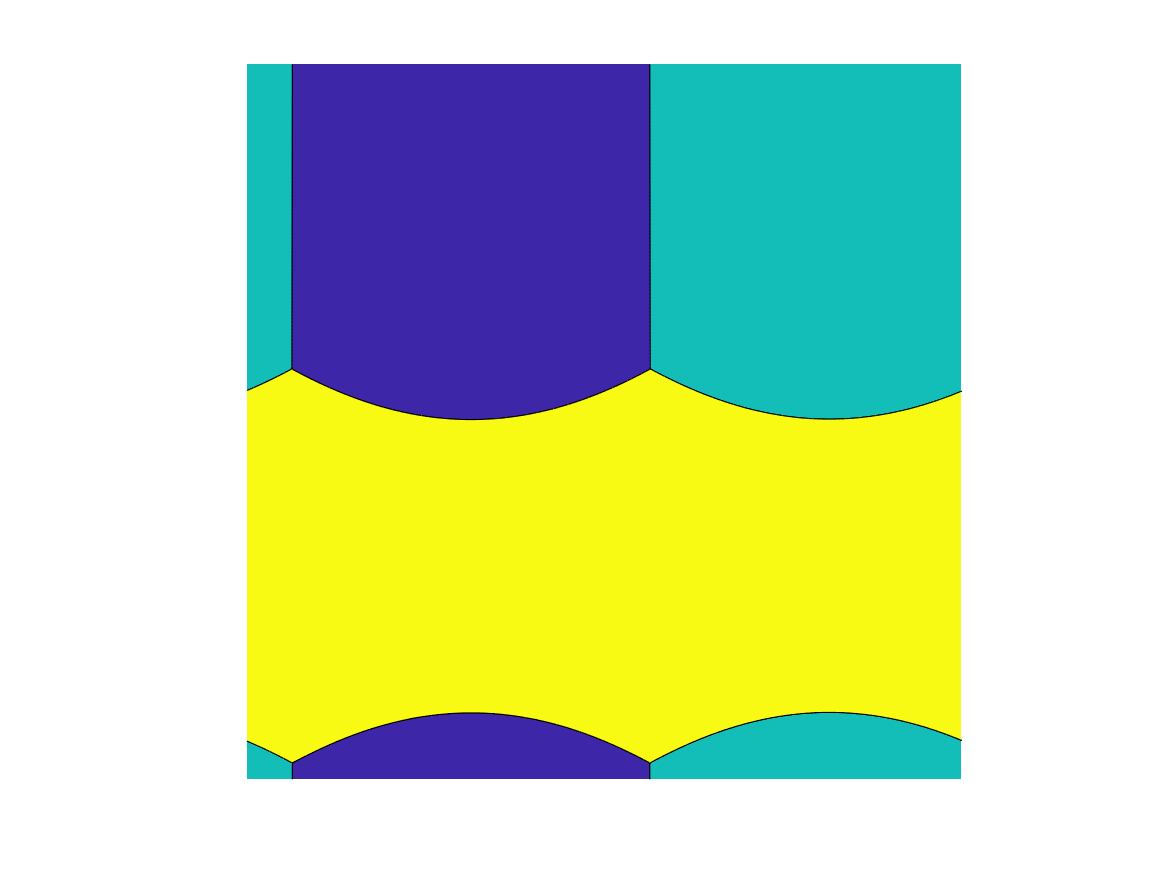}
		\includegraphics[width = 0.13\textwidth, clip, trim = 4cm 1cm 3cm 1cm]{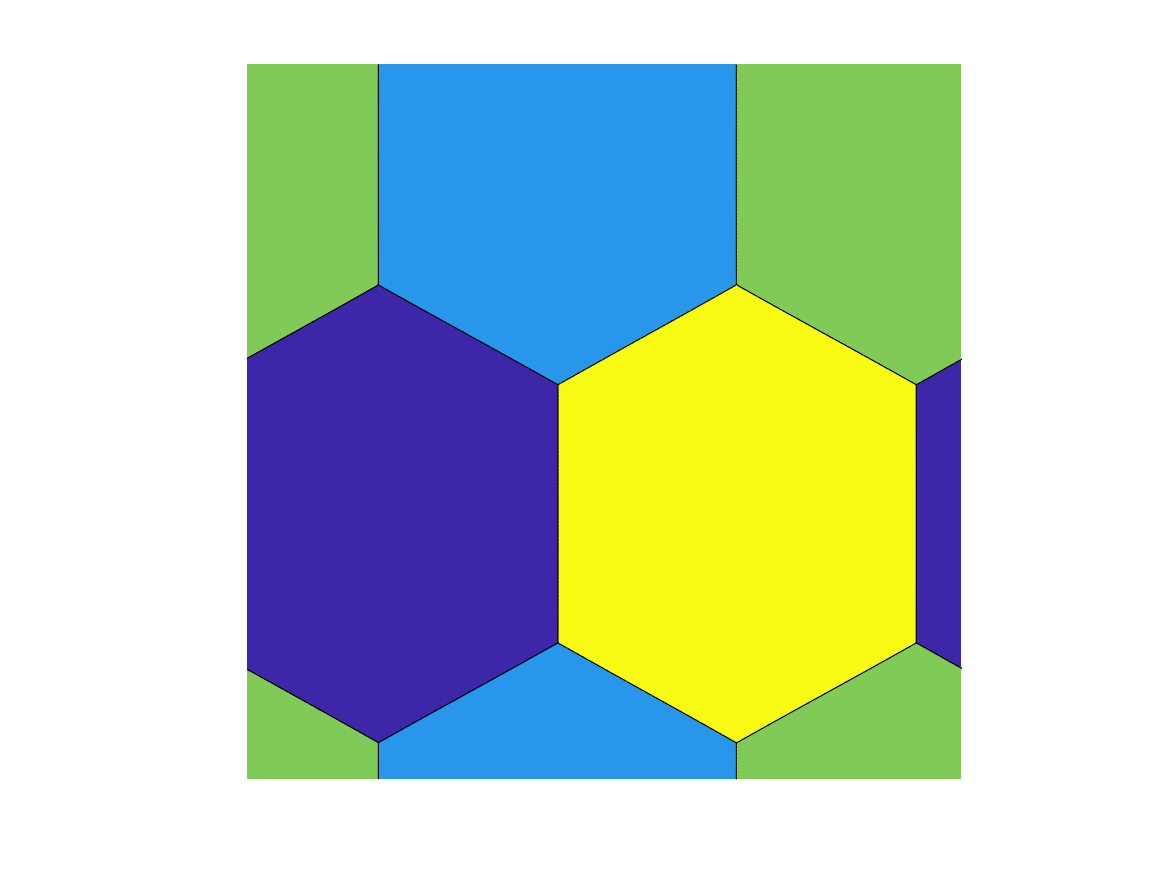}
		\includegraphics[width = 0.13\textwidth, clip, trim = 4cm 1cm 3cm 1cm]{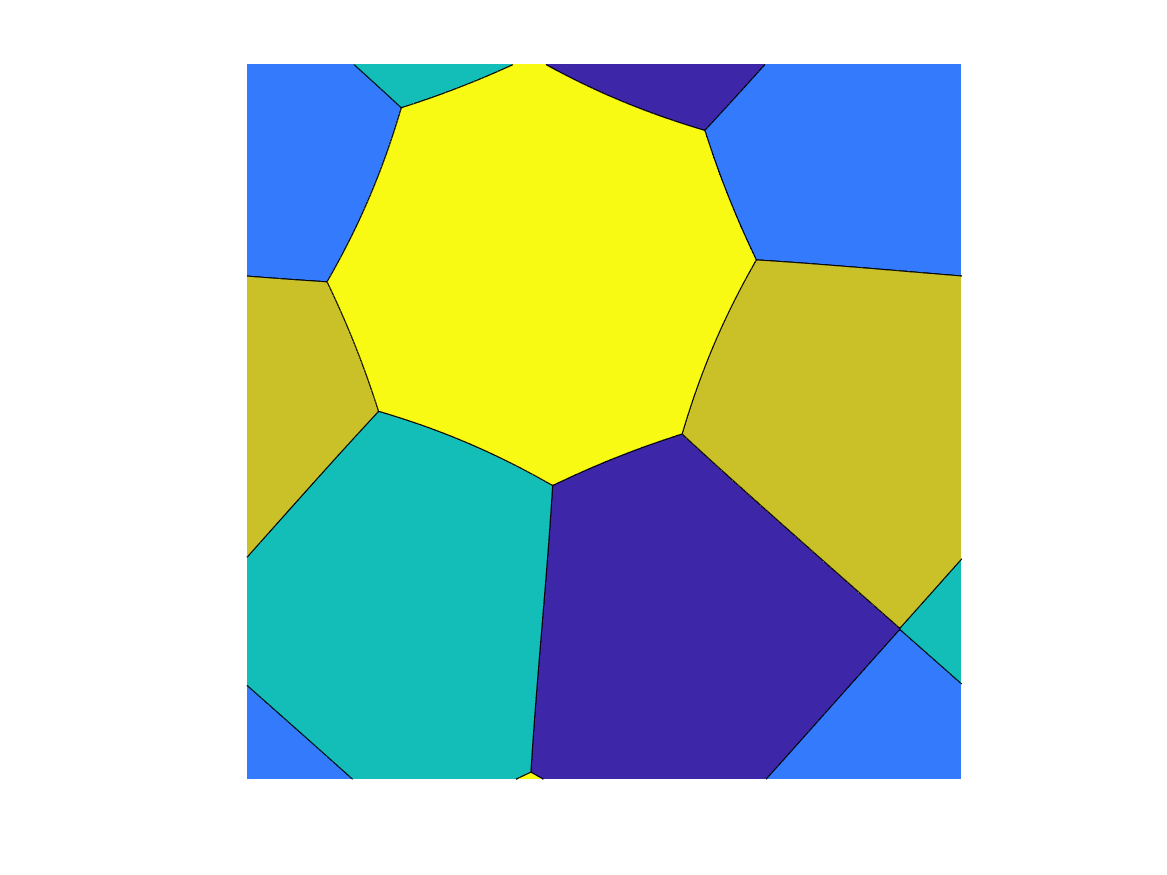}
		\includegraphics[width = 0.13\textwidth, clip, trim = 4cm 1cm 3cm 1cm]{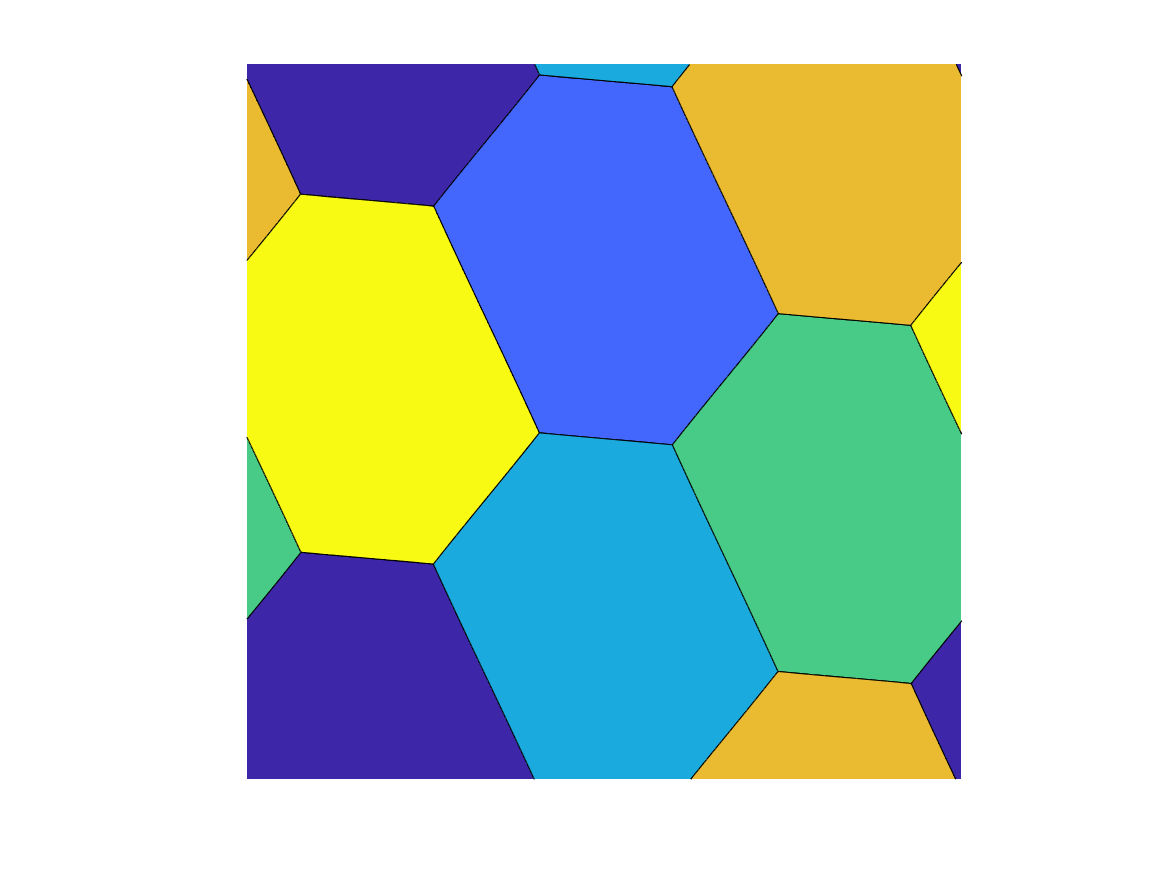}
		\includegraphics[width = 0.13\textwidth, clip, trim = 4cm 1cm 3cm 1cm]{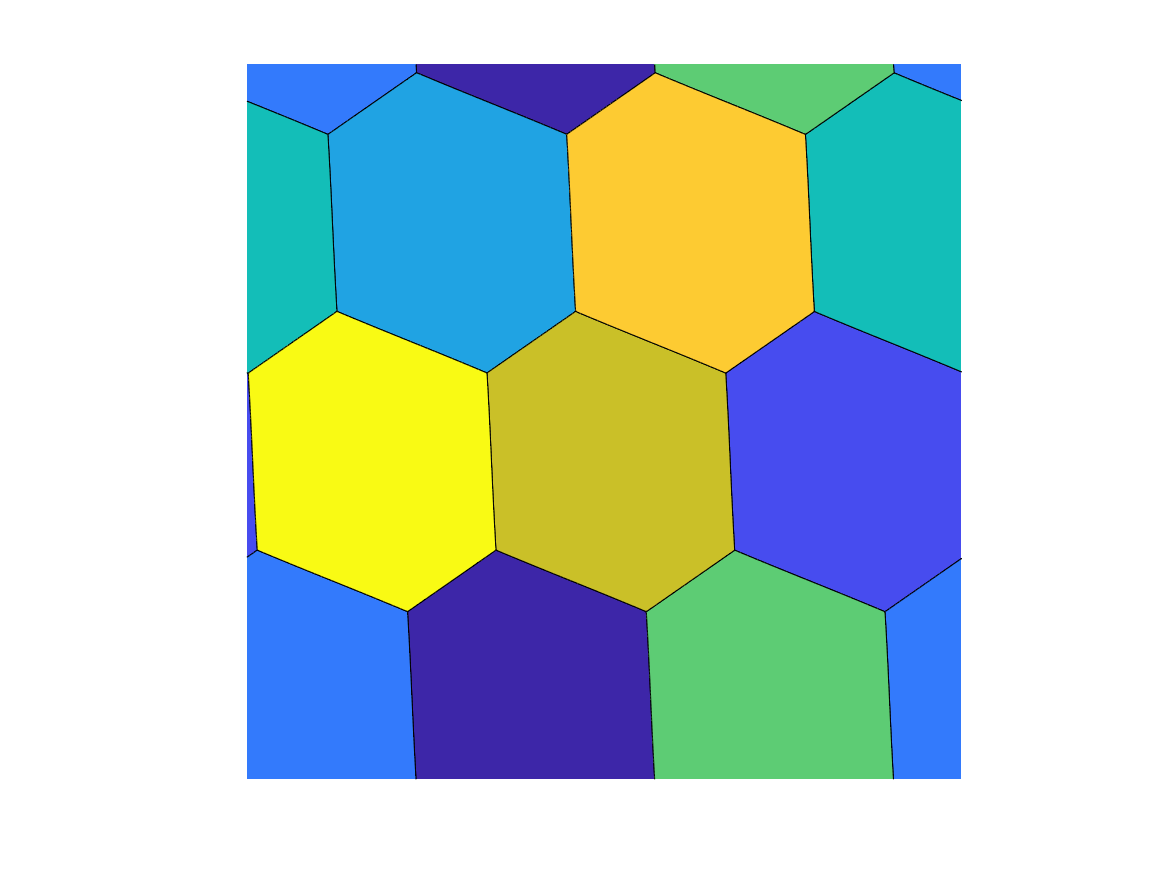}
		\includegraphics[width = 0.13\textwidth, clip, trim = 4cm 1cm 3cm 1cm]{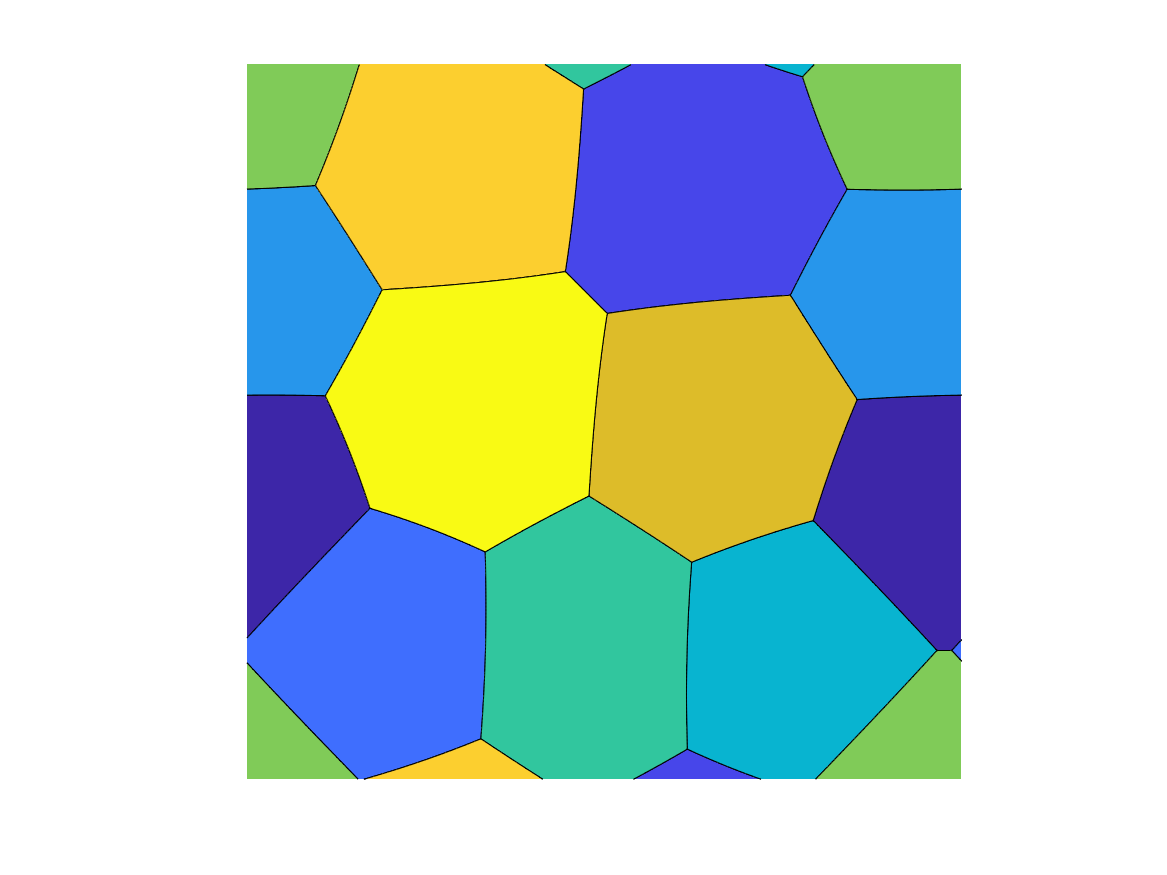}
		\includegraphics[width = 0.13\textwidth, clip, trim = 4cm 1cm 3cm 1cm]{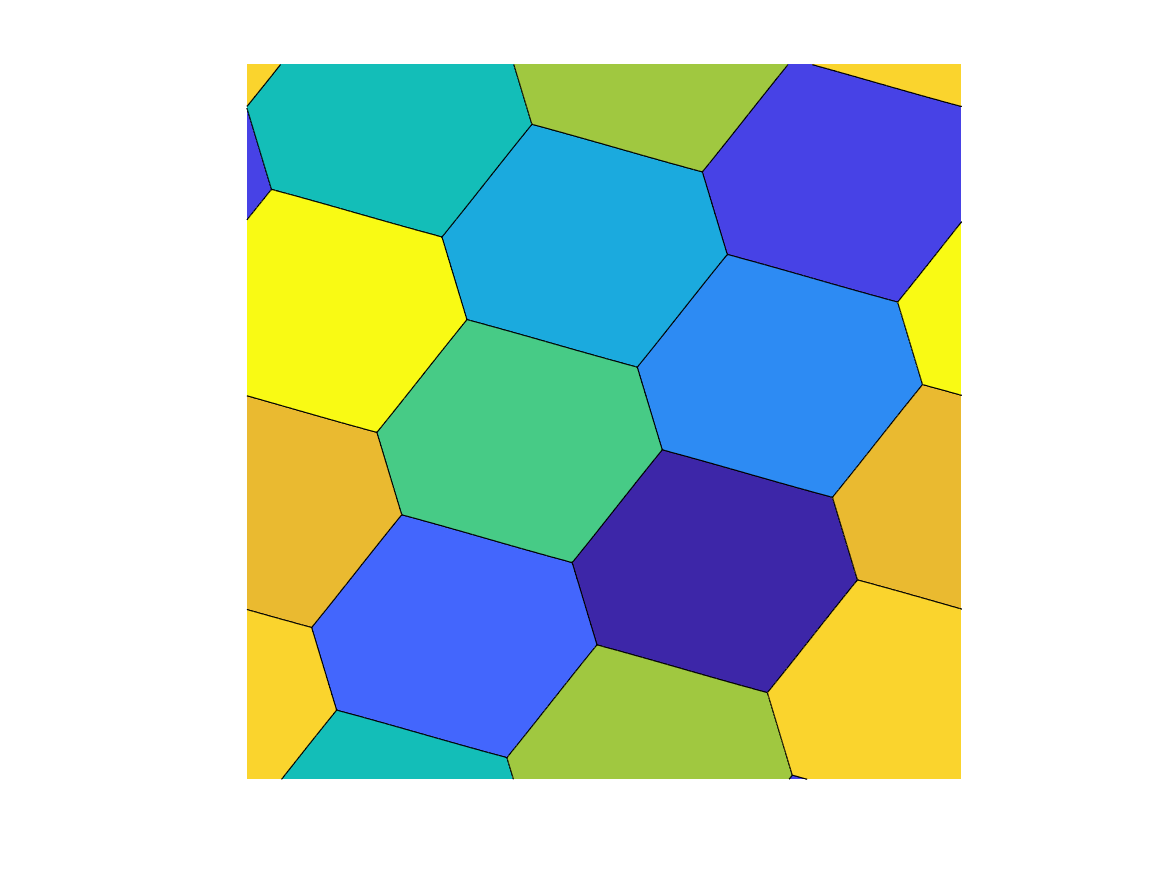}
		\includegraphics[width = 0.13\textwidth, clip, trim = 4cm 1cm 3cm 1cm]{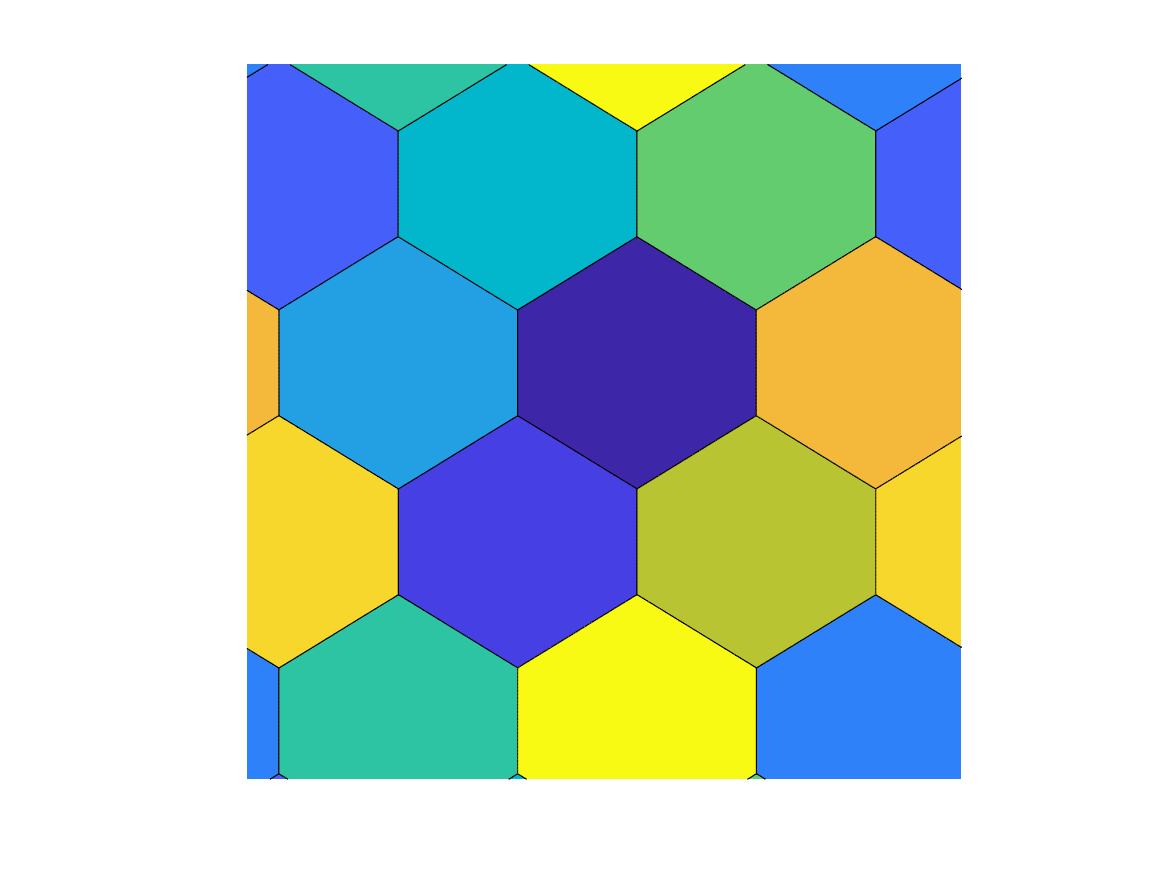}
		\includegraphics[width = 0.13\textwidth, clip, trim = 4cm 1cm 3cm 1cm]{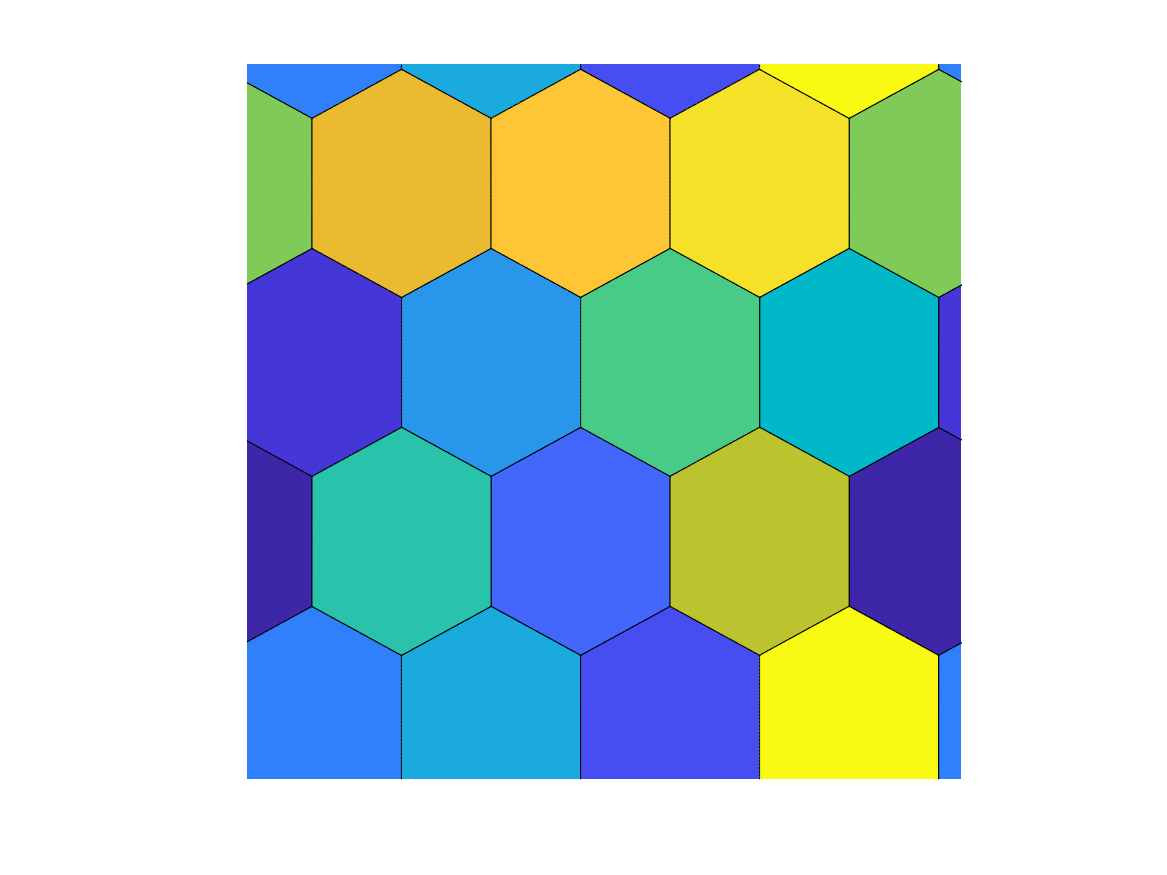}
		\includegraphics[width = 0.13\textwidth, clip, trim = 4cm 1cm 3cm 1cm]{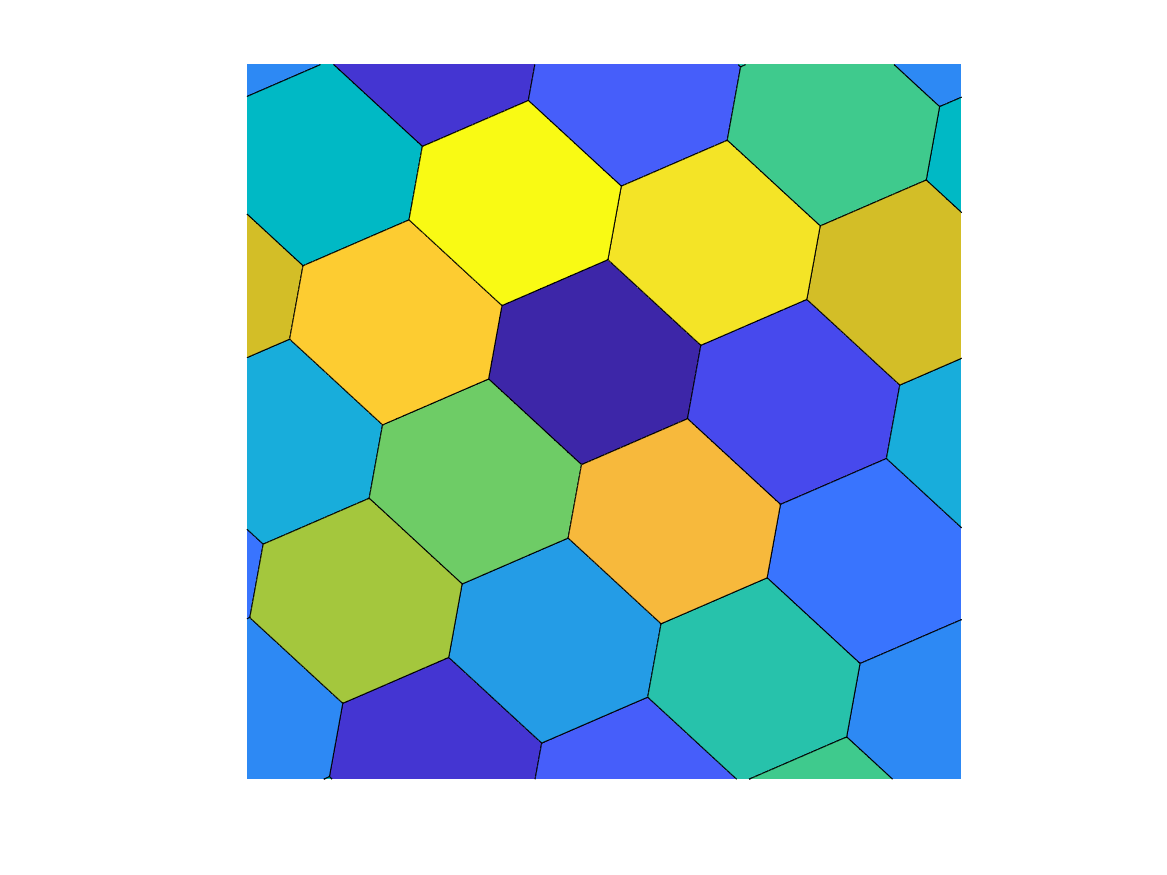}
		\includegraphics[width = 0.13\textwidth, clip, trim = 4cm 1cm 3cm 1cm]{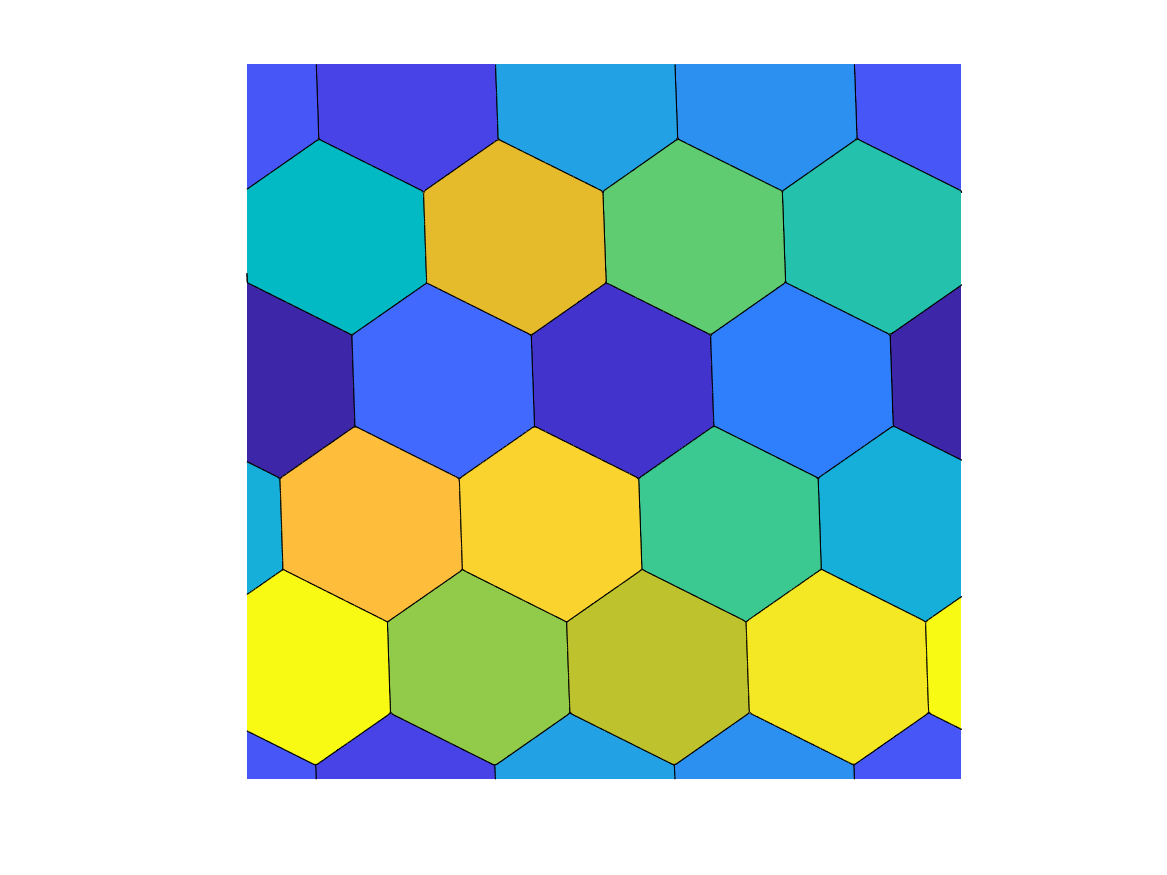}
		\includegraphics[width = 0.13\textwidth, clip, trim = 4cm 1cm 3cm 1cm]{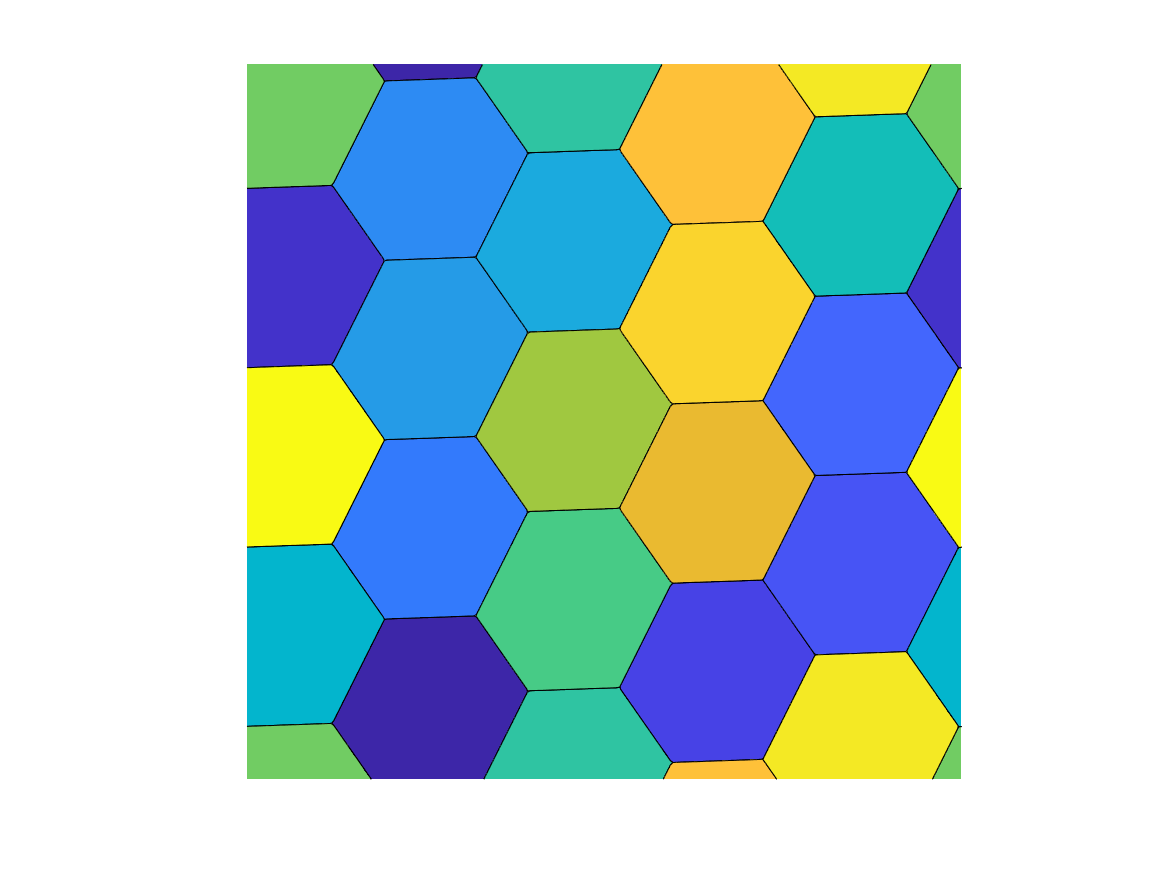}
		\includegraphics[width = 0.13\textwidth, clip, trim = 4cm 1cm 3cm 1cm]{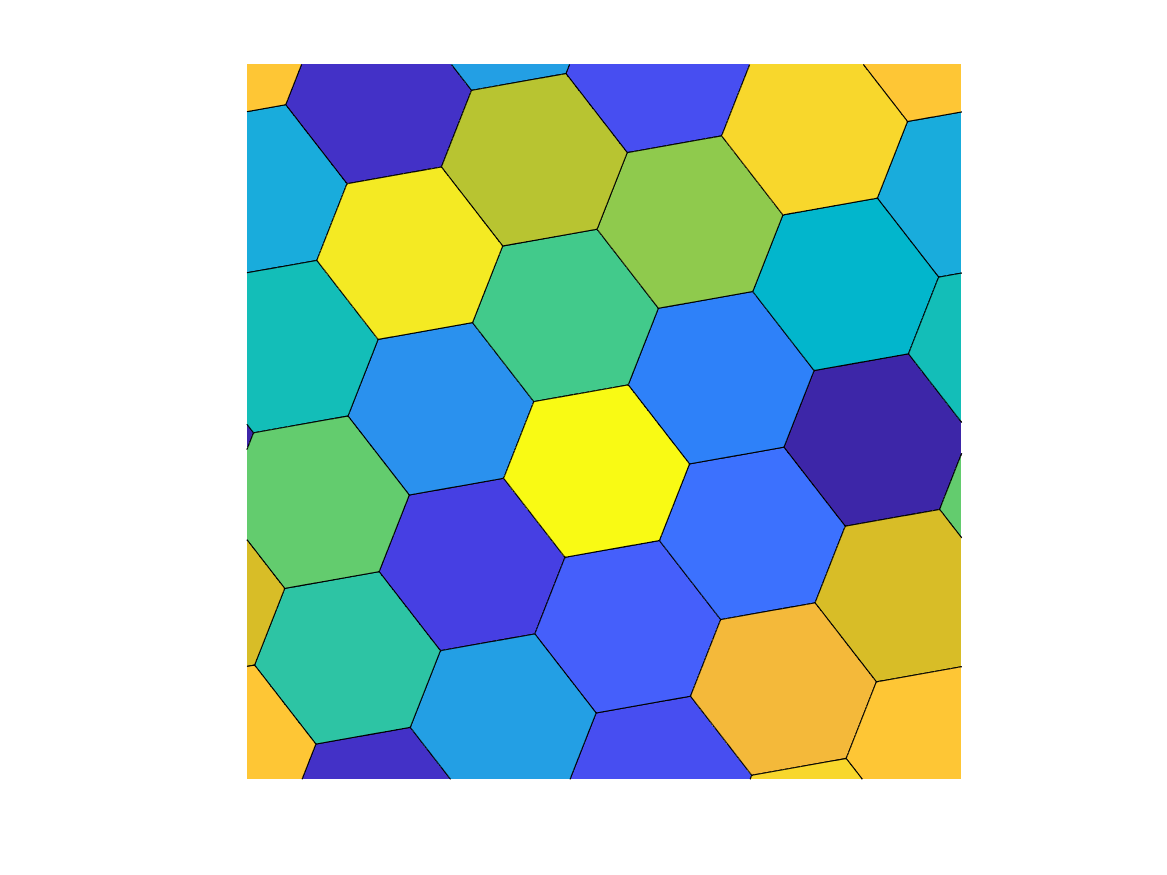}
		\includegraphics[width = 0.13\textwidth, clip, trim = 4cm 1cm 3cm 1cm]{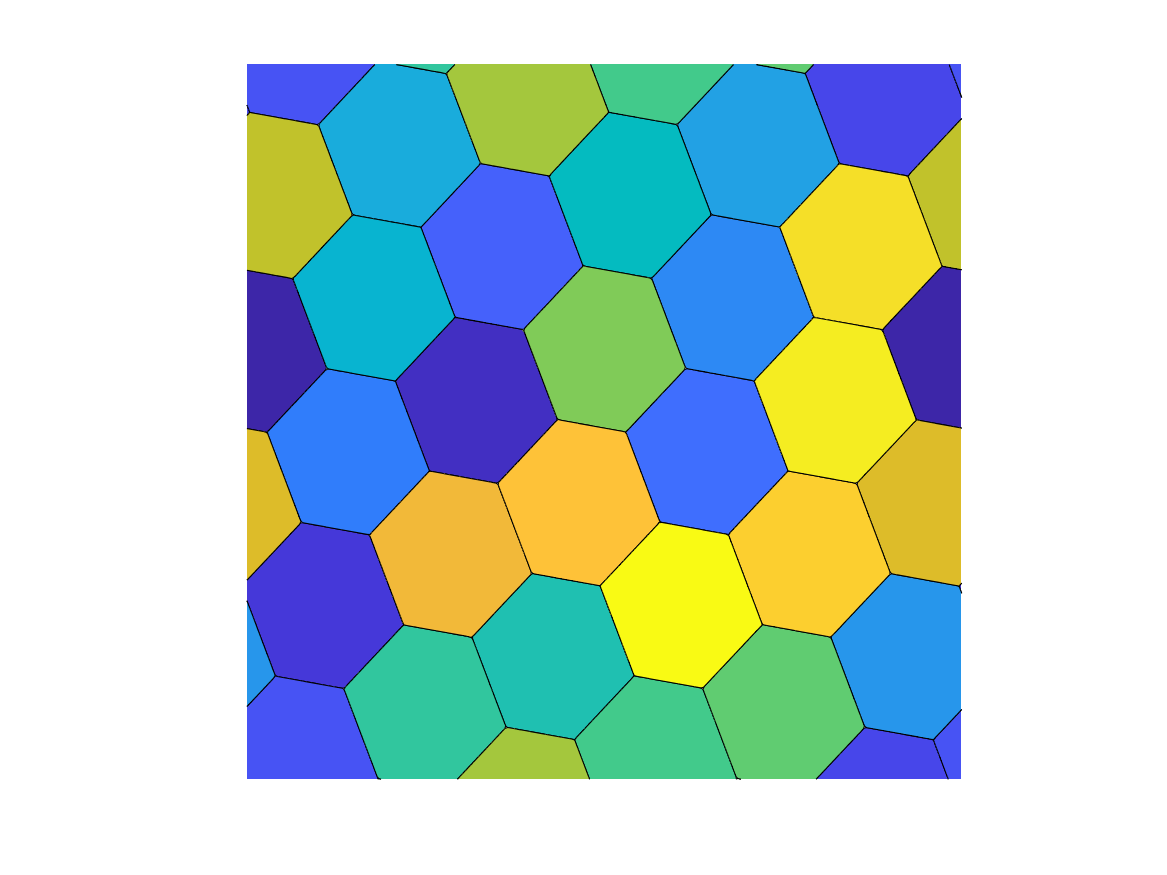}
		\caption{Numerical results of Algorithm~\ref{Alg:4step}  on a $256\times256$ discretized mesh with $\tau=0.1$, $k = 3-6$, $9-12$, $16$, $18$, $20$, $21$, $23$, $28$.} \label{fig:Alg_4stepkdt01}
	\end{figure}
	\begin{figure}[H]
		\centering
		\includegraphics[width = 0.13\textwidth, clip, trim = 4cm 1cm 3cm 1cm]{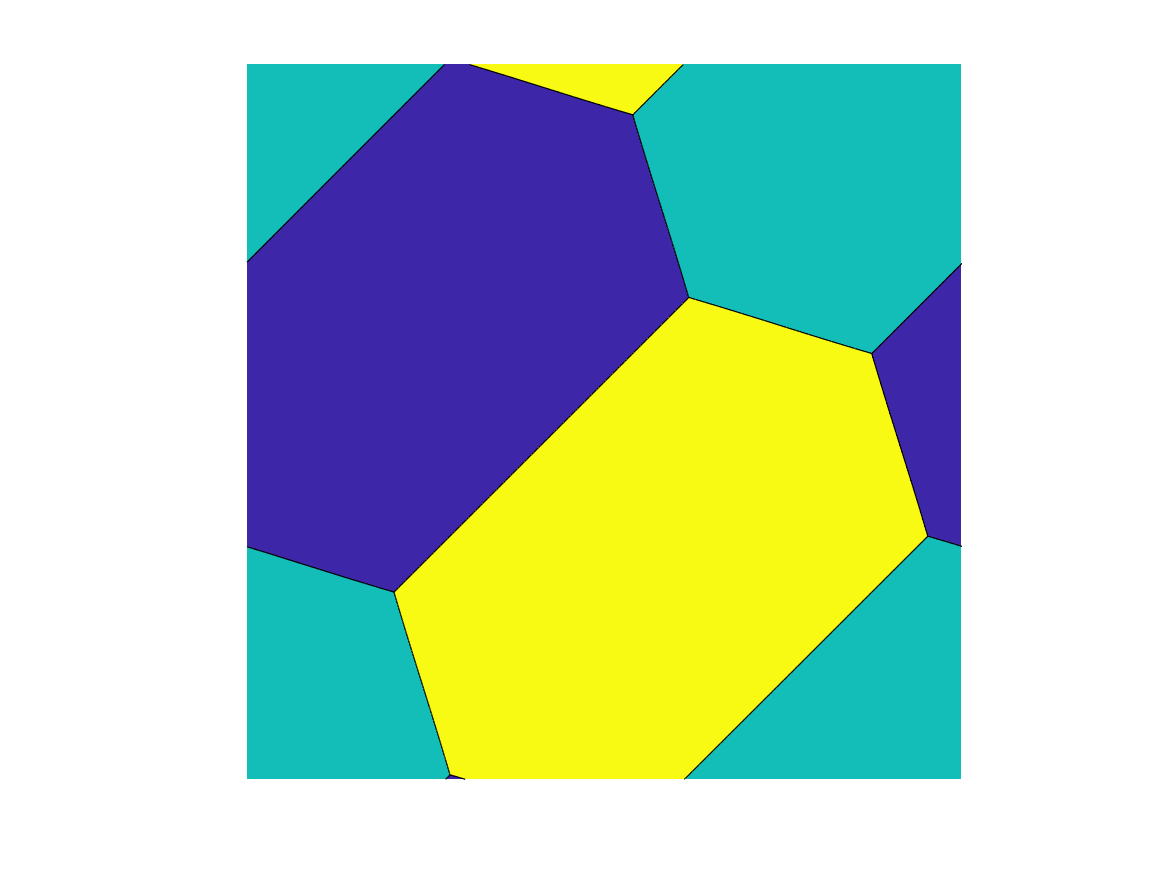}
		\includegraphics[width = 0.13\textwidth, clip, trim = 4cm 1cm 3cm 1cm]{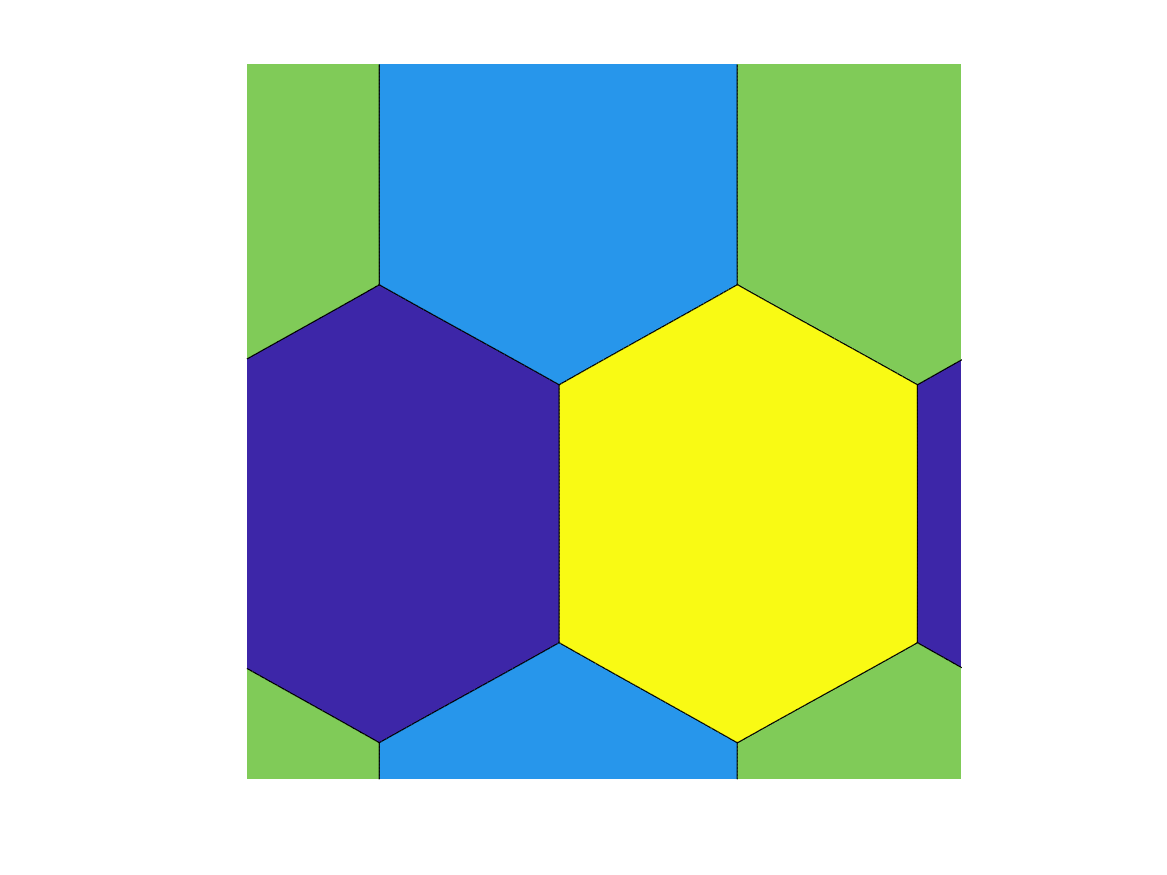}
		\includegraphics[width = 0.13\textwidth, clip, trim = 4cm 1cm 3cm 1cm]{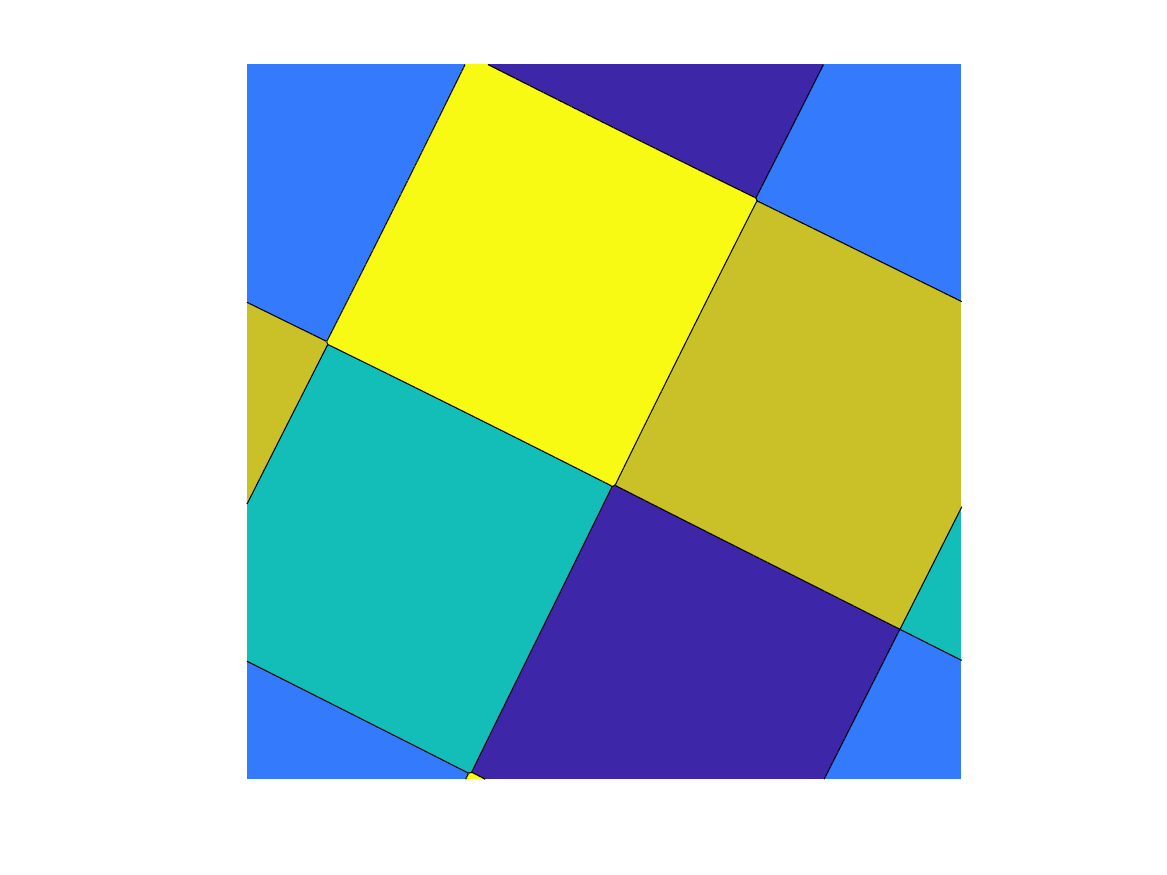}
		\includegraphics[width = 0.13\textwidth, clip, trim = 4cm 1cm 3cm 1cm]{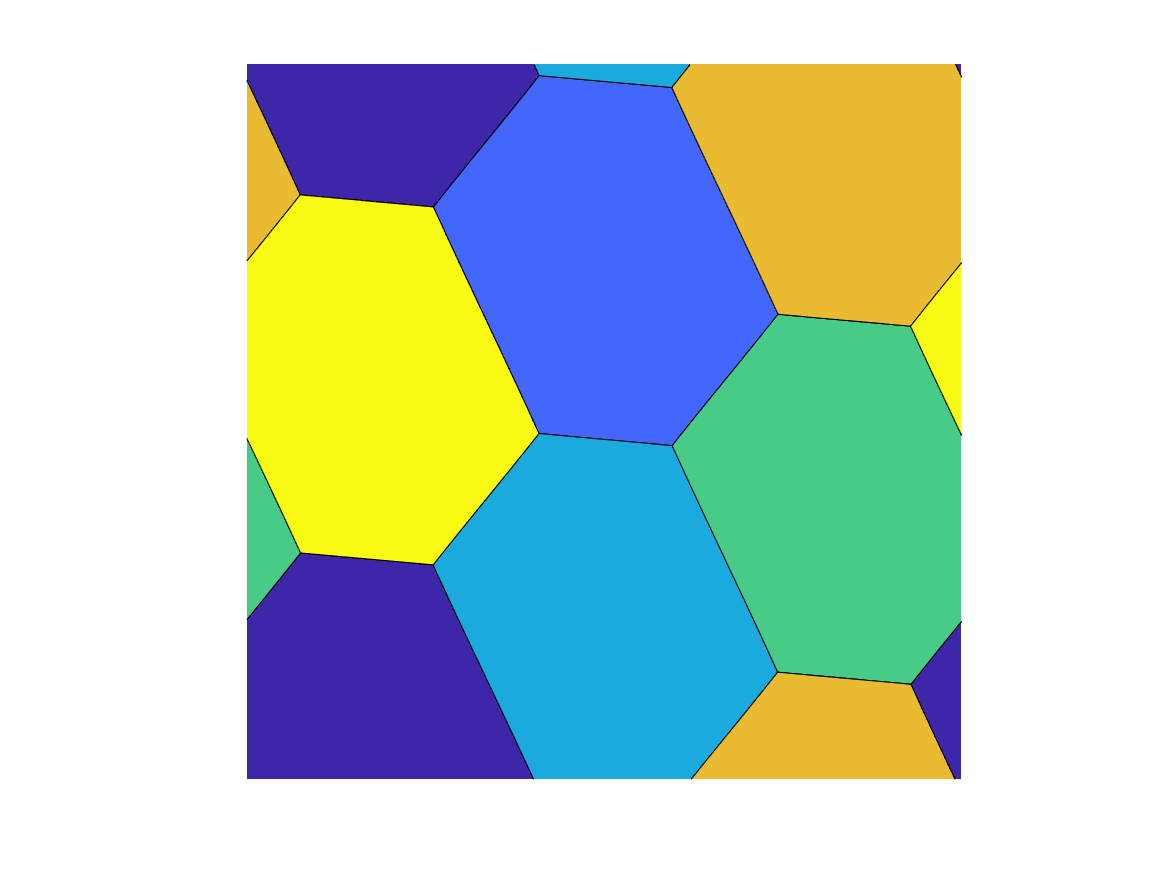}
		\includegraphics[width = 0.13\textwidth, clip, trim = 4cm 1cm 3cm 1cm]{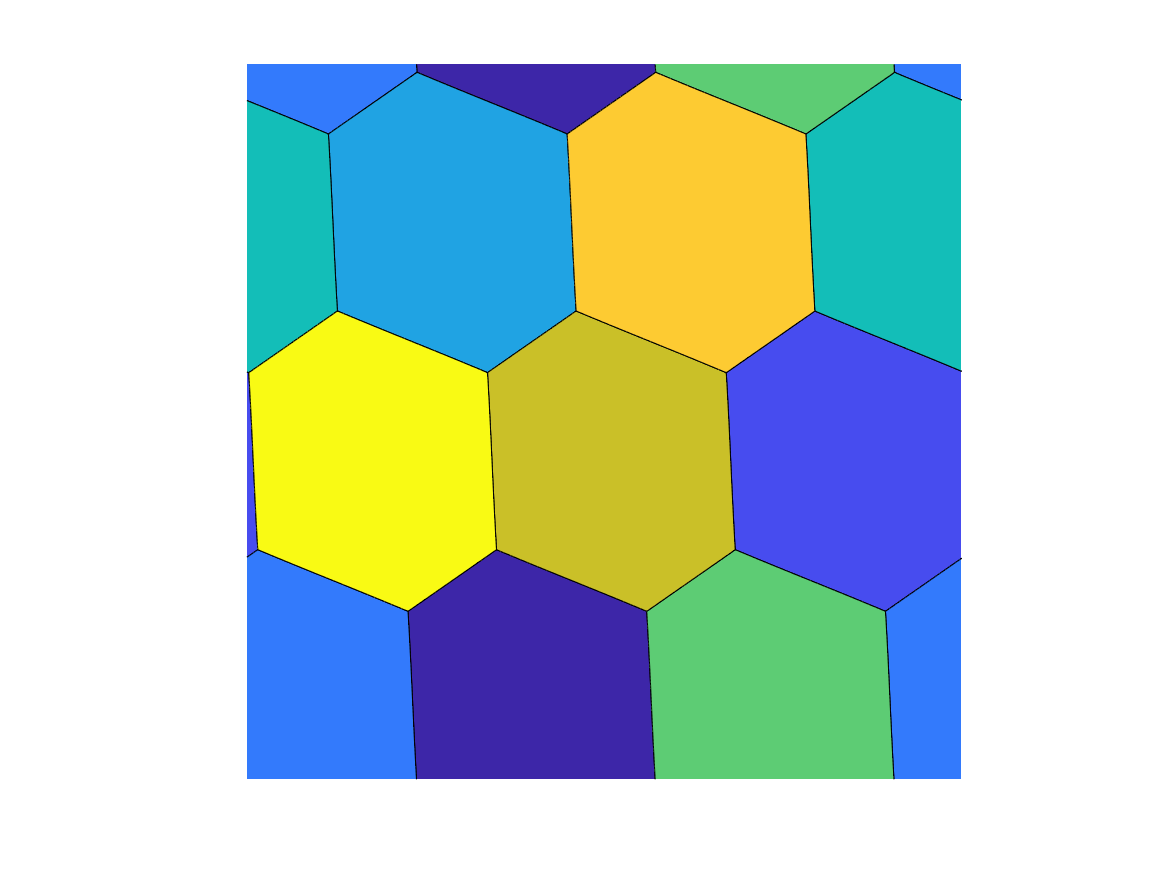}
		\includegraphics[width = 0.13\textwidth, clip, trim = 4cm 1cm 3cm 1cm]{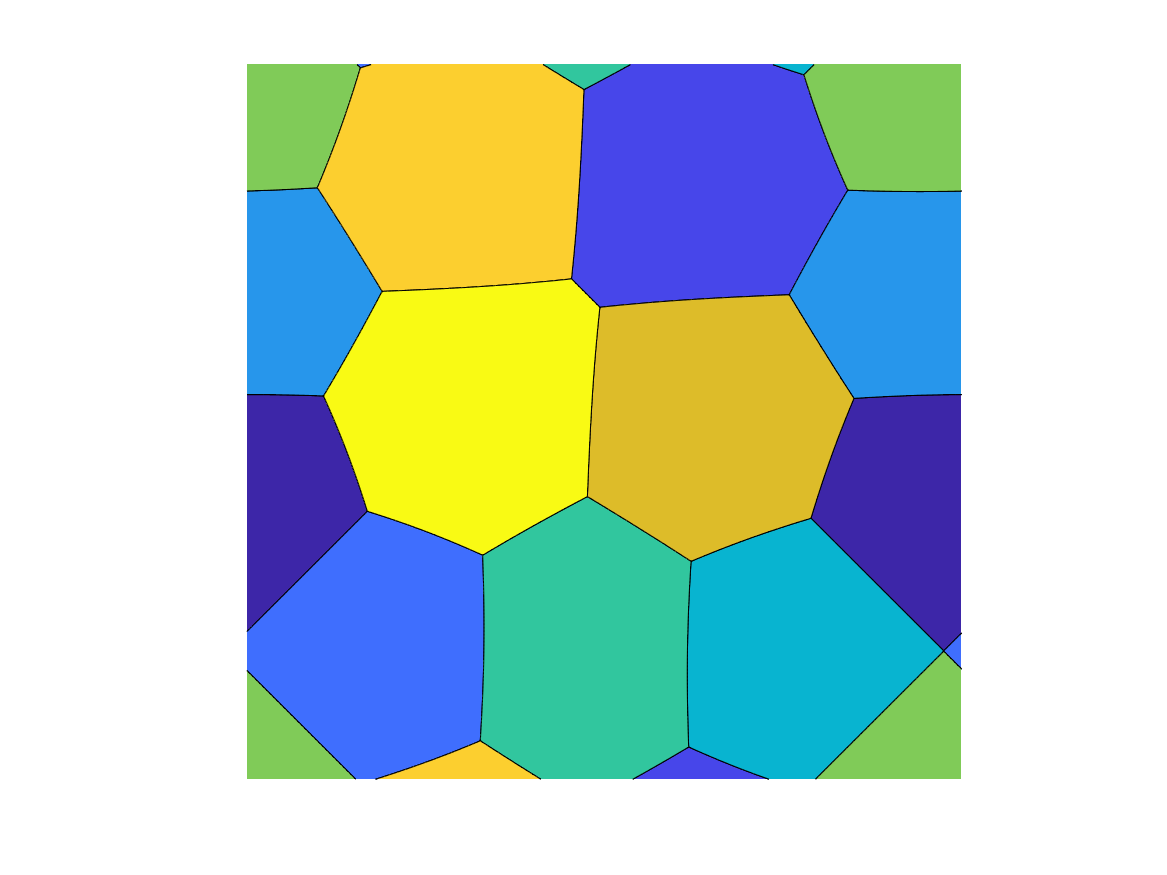}
		\includegraphics[width = 0.13\textwidth, clip, trim = 4cm 1cm 3cm 1cm]{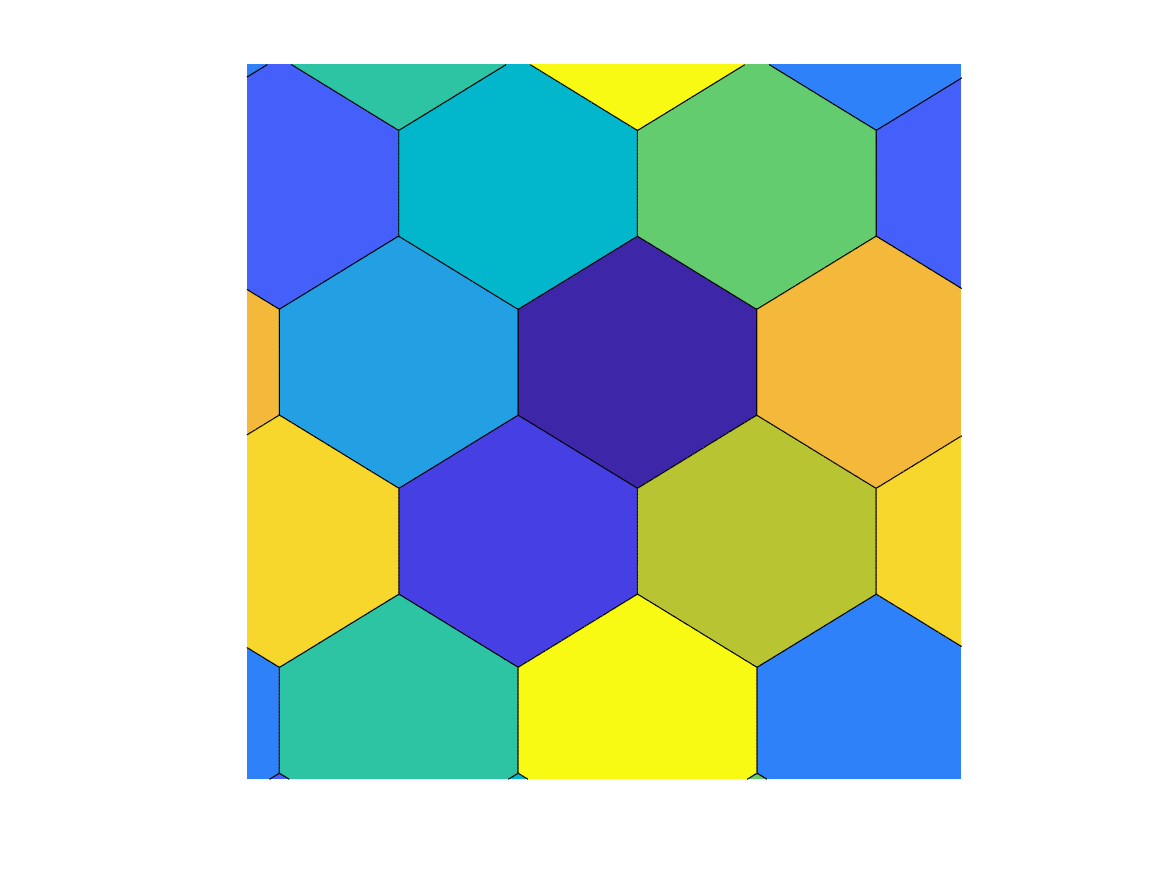}
		\includegraphics[width = 0.13\textwidth, clip, trim = 4cm 1cm 3cm 1cm]{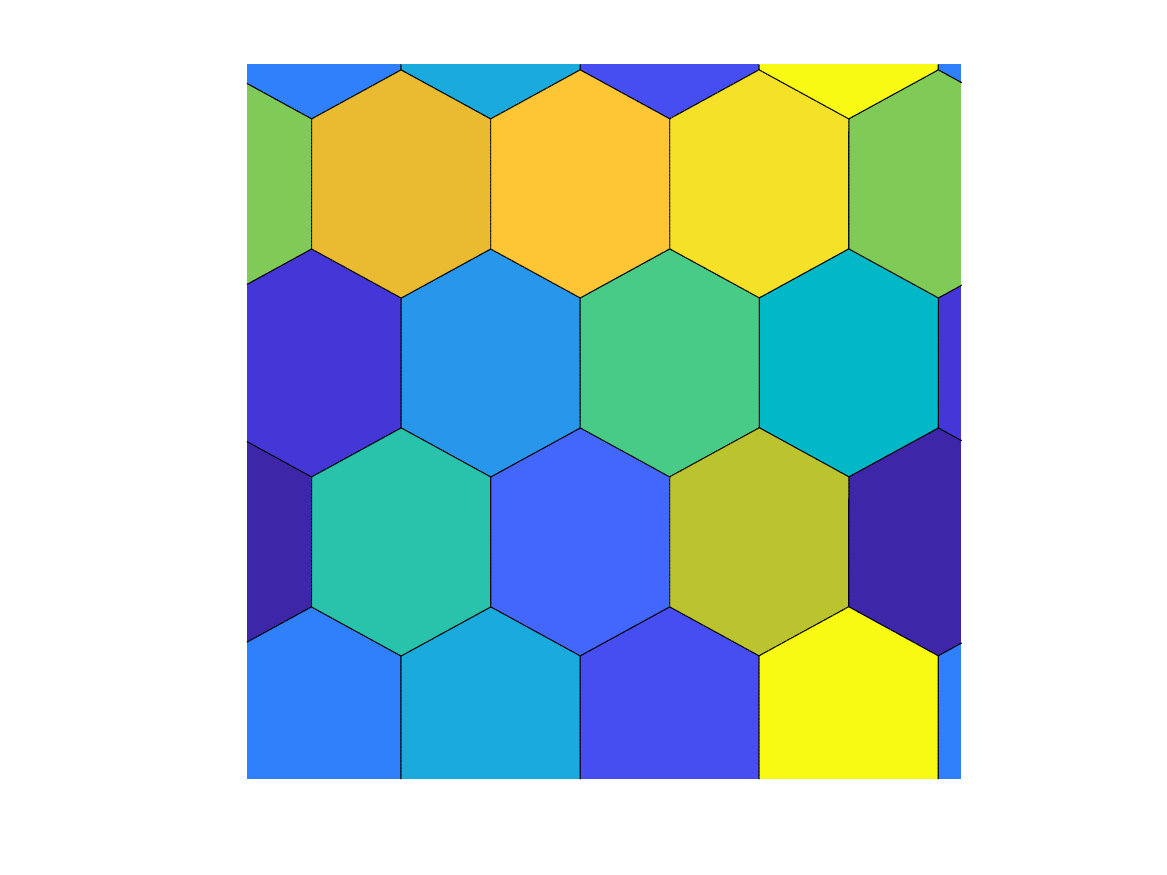}
		\includegraphics[width = 0.13\textwidth, clip, trim = 4cm 1cm 3cm 1cm]{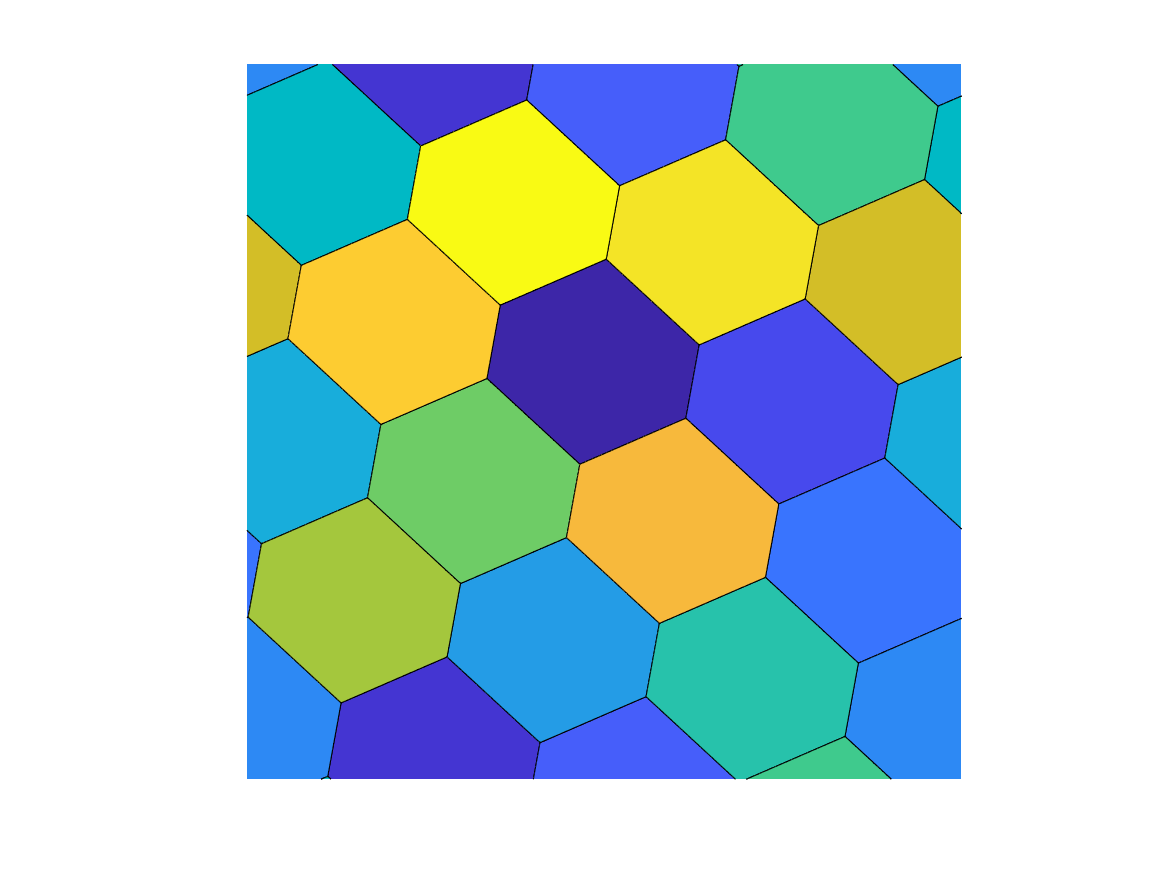}
		\includegraphics[width = 0.13\textwidth, clip, trim = 4cm 1cm 3cm 1cm]{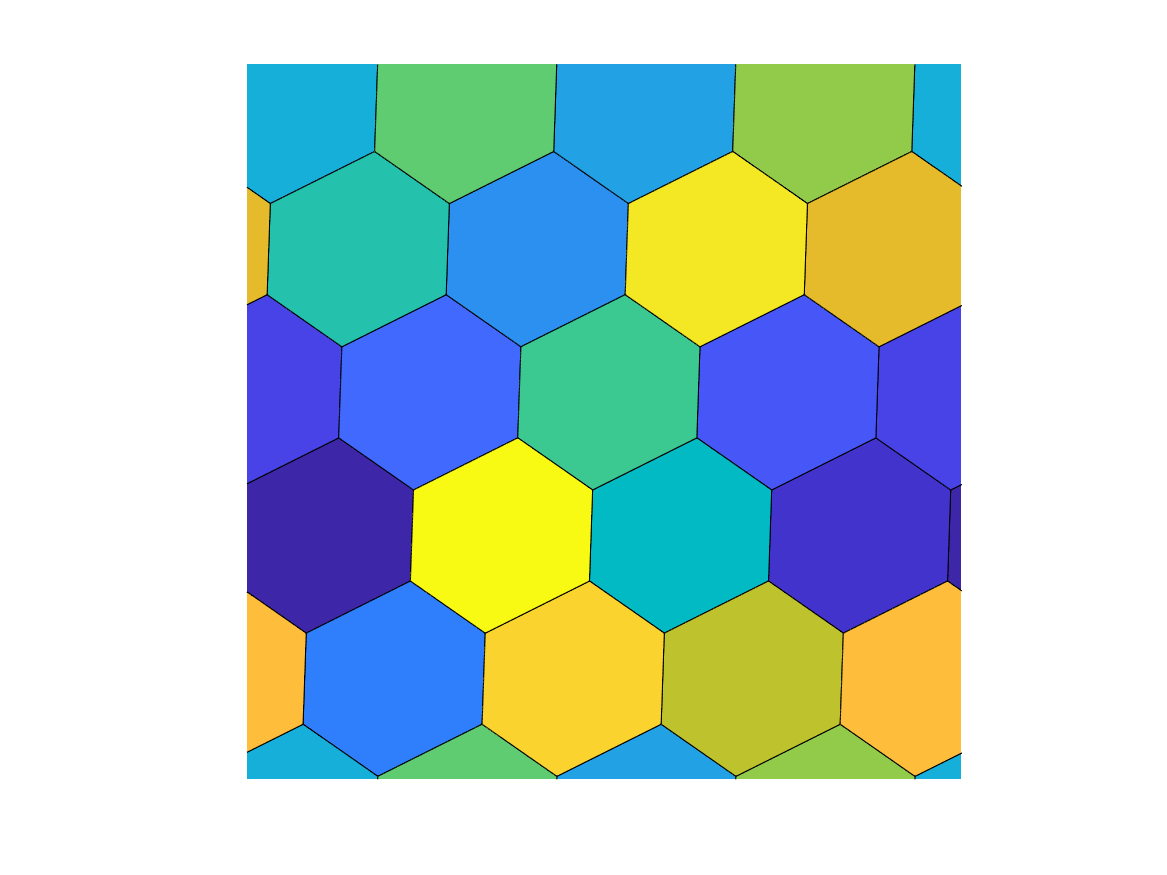}
		\includegraphics[width = 0.13\textwidth, clip, trim = 4cm 1cm 3cm 1cm]{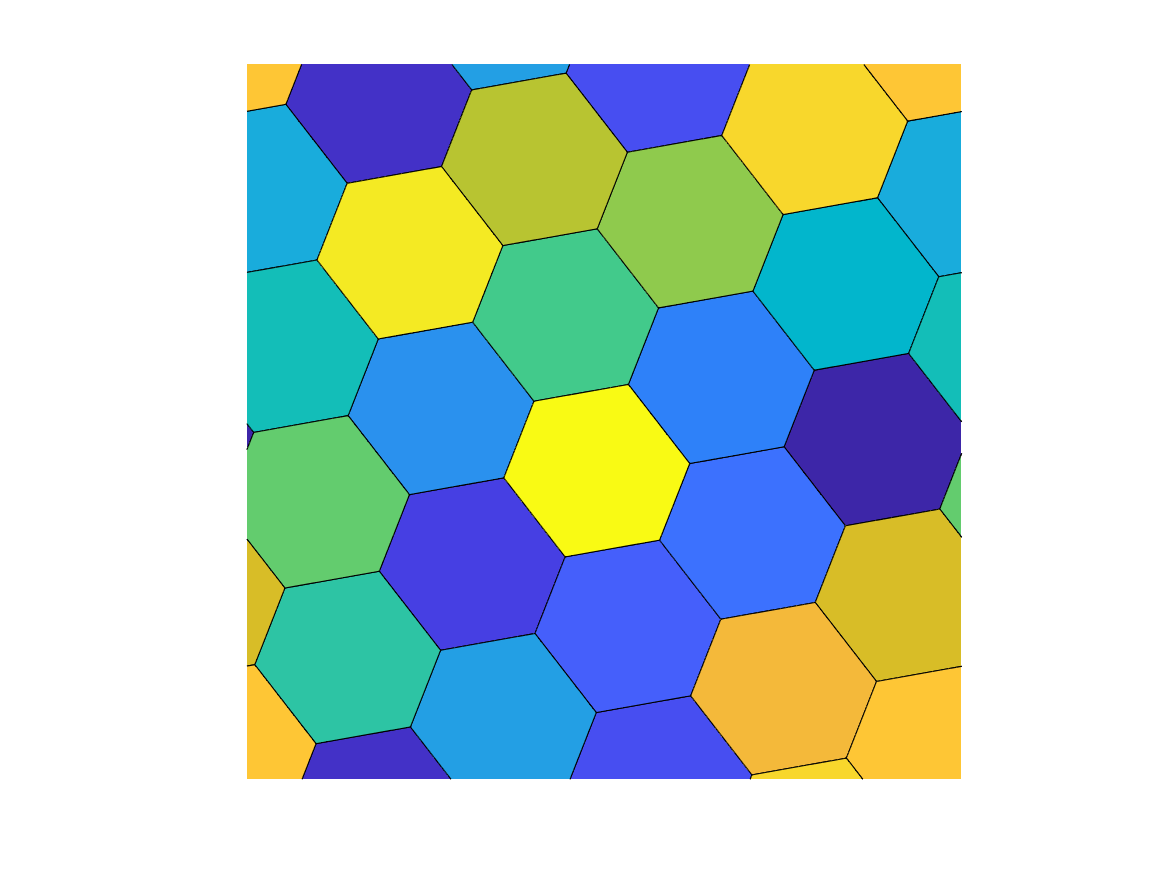}
		\includegraphics[width = 0.13\textwidth, clip, trim = 4cm 1cm 3cm 1cm]{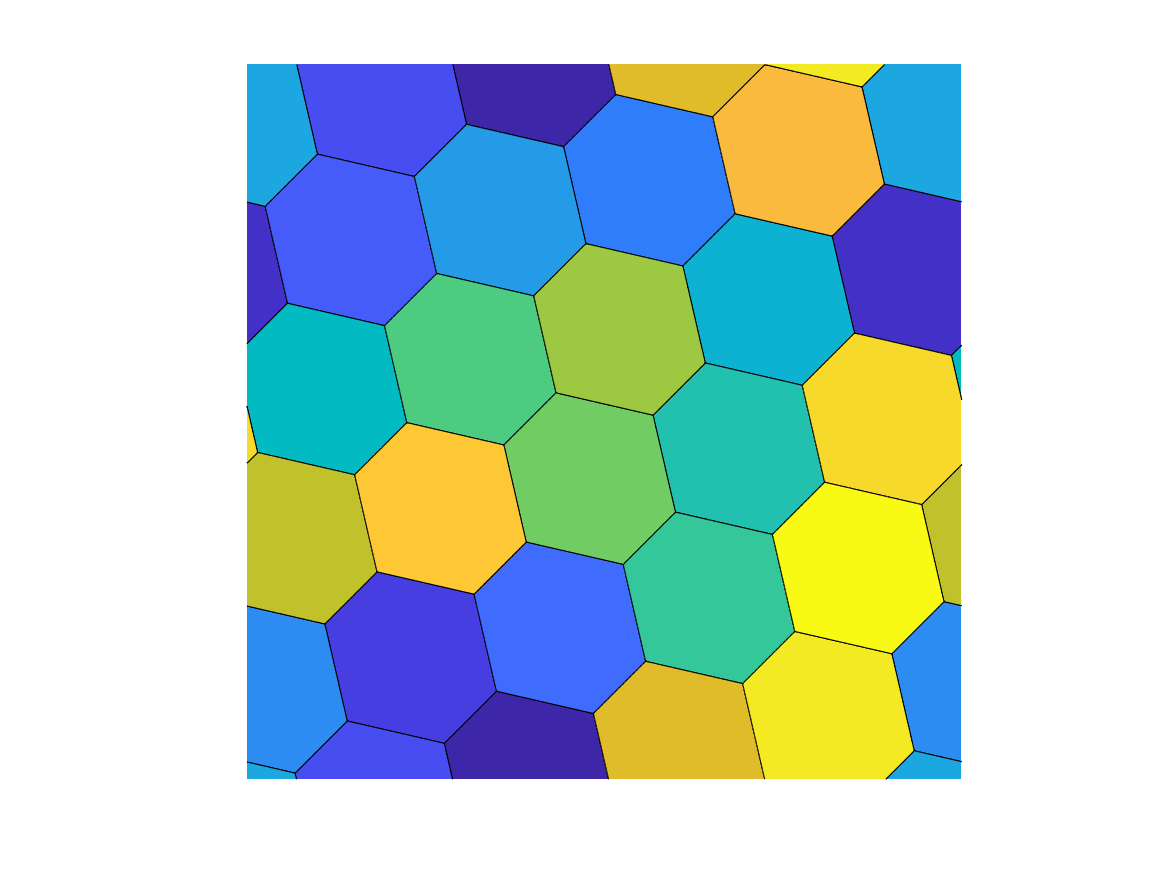}
		\includegraphics[width = 0.13\textwidth, clip, trim = 4cm 1cm 3cm 1cm]{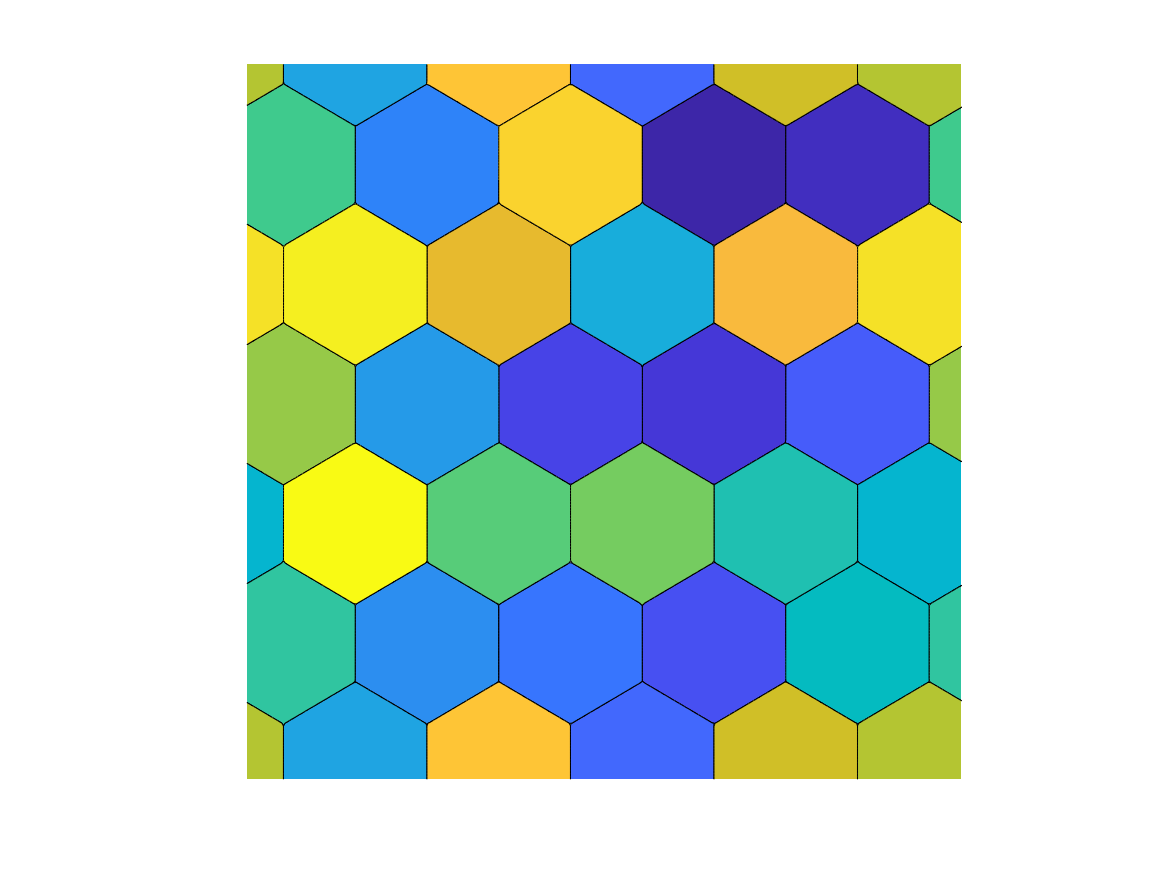}
		\includegraphics[width = 0.13\textwidth, clip, trim = 4cm 1cm 3cm 1cm]{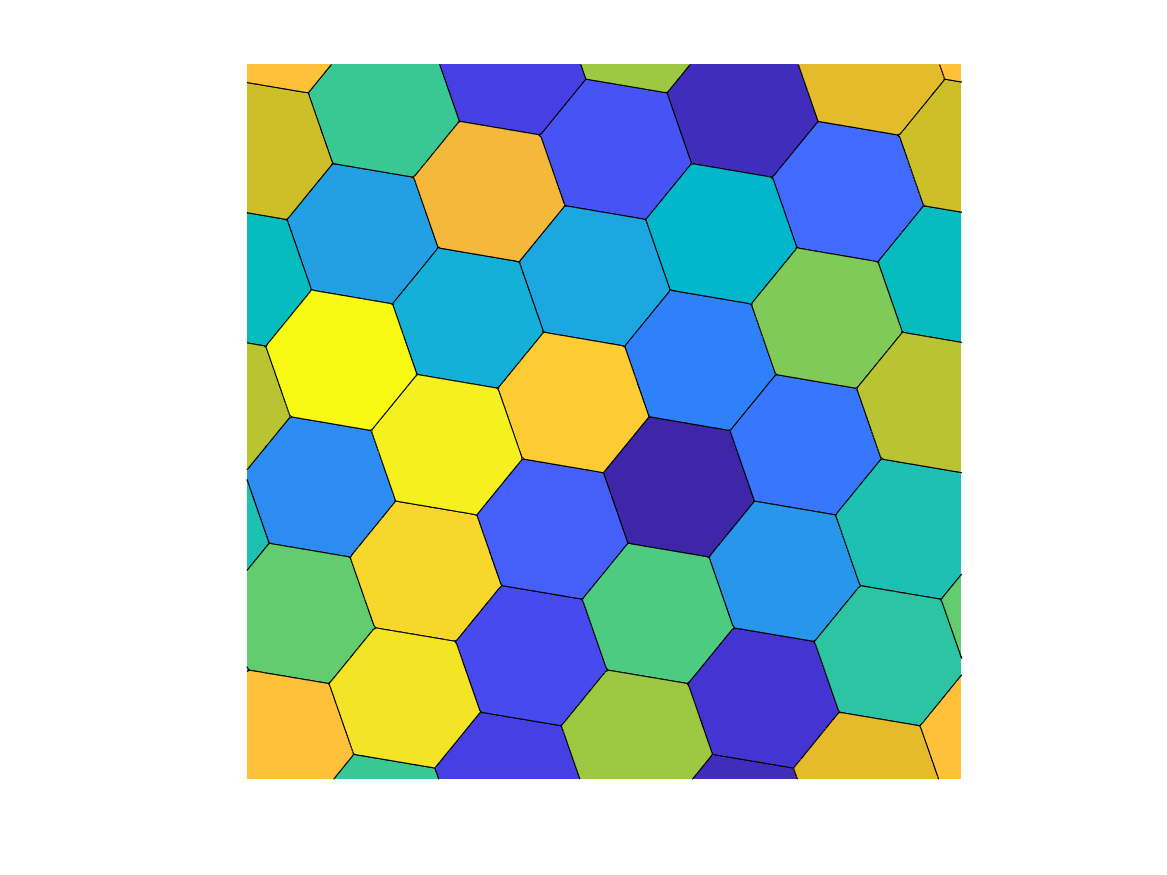}
		\caption{Numerical results of Algorithm~\ref{Alg:3step_Type1}  on a $256\times256$ discretized mesh with $\tau=0.1$, $k = 3-6$, $9$, $10$, $12$, $16$, $18$, $20$,  $23$, $24$, $30$, $34$.} \label{fig:Alg_3stepType1kdt01}
	\end{figure}
	\begin{figure}[ht]
		\centering
		\includegraphics[width = 0.3\textwidth, clip, trim = 8cm 3cm 6cm 2cm]{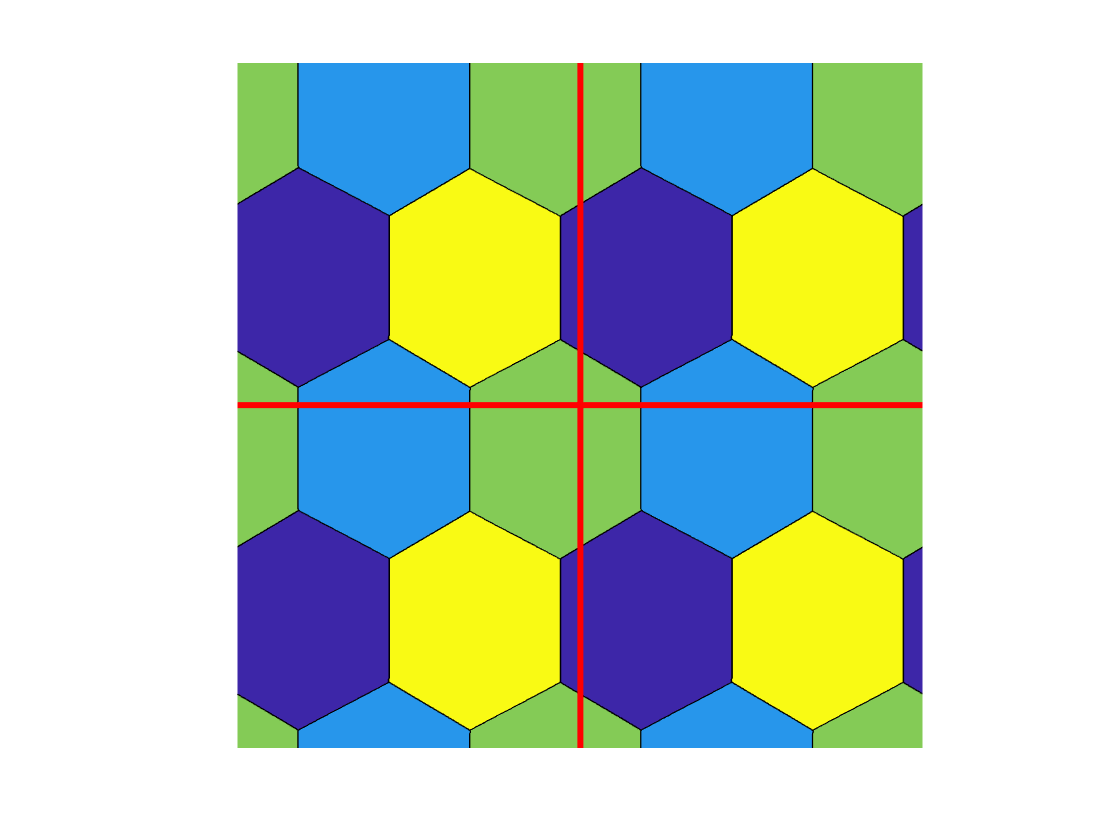}
		\includegraphics[width = 0.3\textwidth, clip, trim = 8cm 3cm 6cm 2cm]{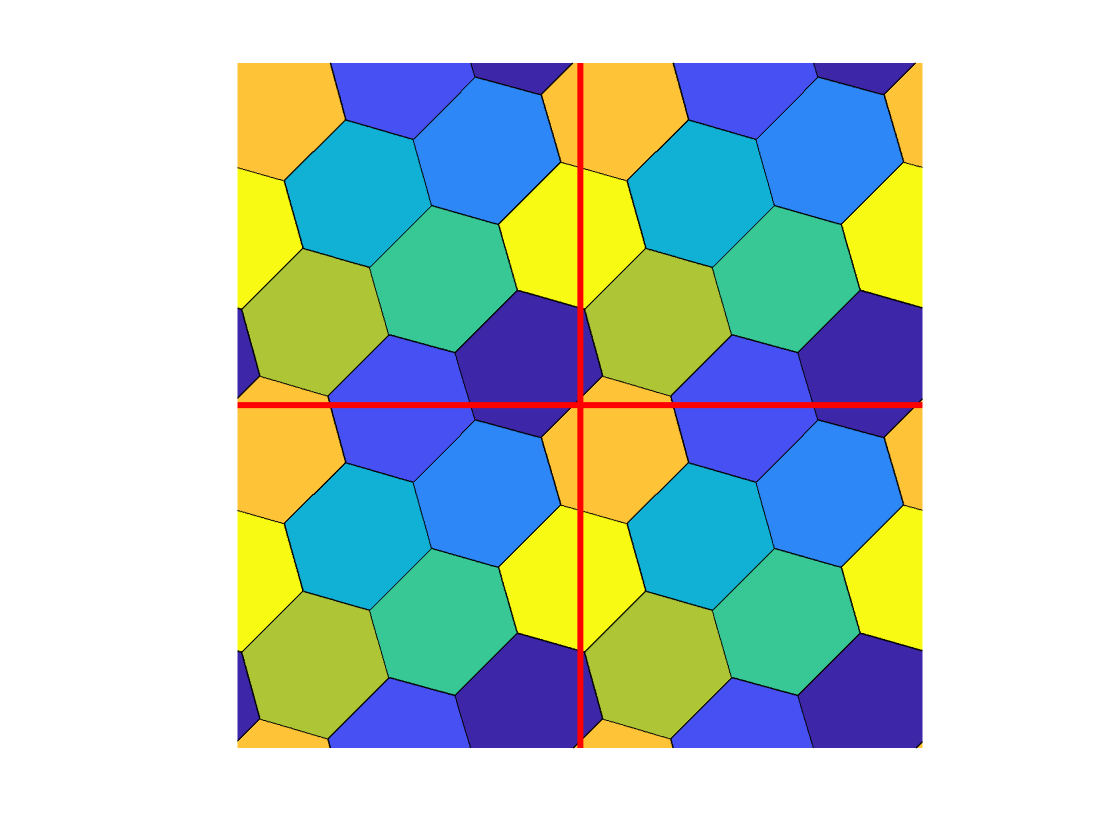}
		\includegraphics[width = 0.3\textwidth, clip, trim = 8cm 3cm 6cm 2cm]{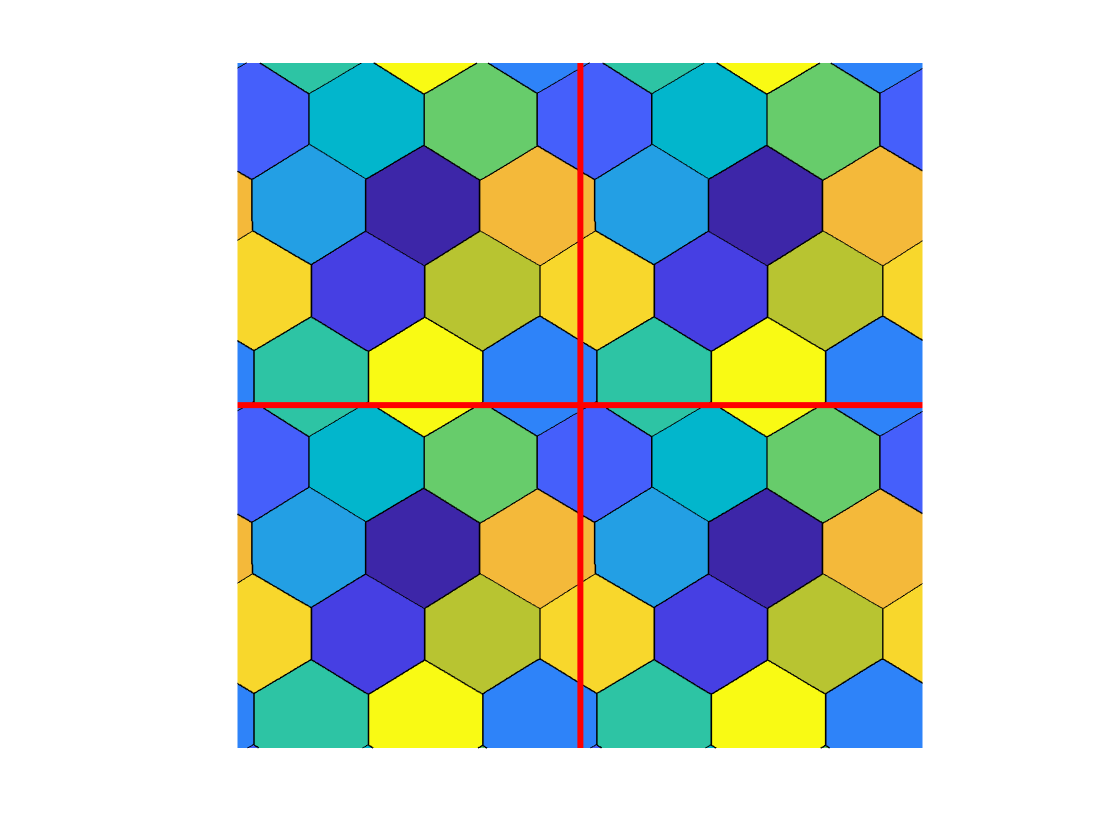}
		
		\caption{Periodic extensions of $k$-partitions implemented by Algorithm~\ref{Alg:3step_Type2} with $k = 4,8,12$ on a $512\times512$ discretized mesh with $\tau=0.01$.} \label{fig:Alg3_extension}
	\end{figure}
	\subsubsection{Partitions for 3-dimensional tori}
To test the feasibility and efficiency of the proposed algorithm for the optimal partition problem in a three-dimensional domain, we use a uniform mesh with \(128^3\) points to approximate the problem under periodic boundary conditions within the domain \([- \pi, \pi]^3\).  The numerical results obtained using Algorithm~\ref{Alg:4step} are presented in Figures~\ref{fig:Alg1m43D} and \ref{fig:Alg1m83D}. The structures depicted in these figures are consistent with those reported in \cite{Wang22},  demonstrating the algorithm's ability to effectively capture and reproduce the expected partitioning patterns. The results highlight the robustness of the algorithm in solving the three-dimensional optimal partition problem.
	\begin{figure}[H]
		\centering
		\includegraphics[width = 0.3\textwidth, trim = 3cm 8cm 6cm 5.5cm]{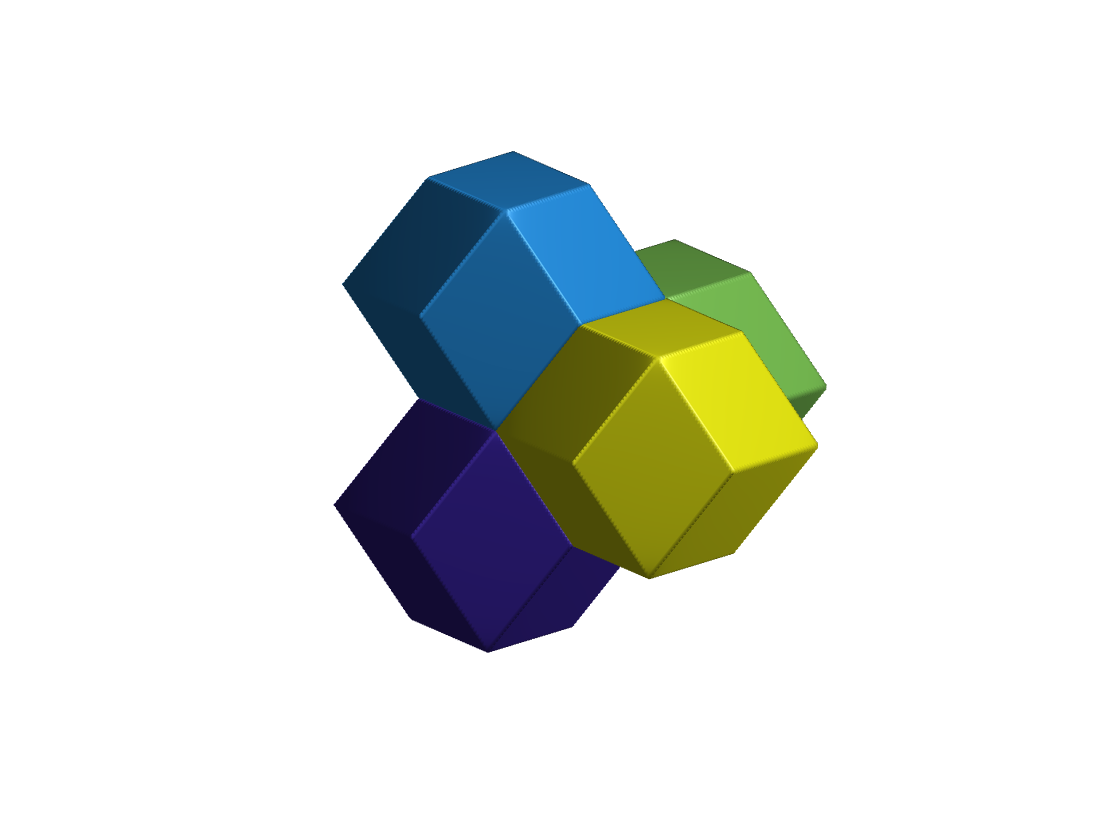} 
		\includegraphics[width = 0.3\textwidth,clip, trim = 3cm 6cm 6cm 2cm]{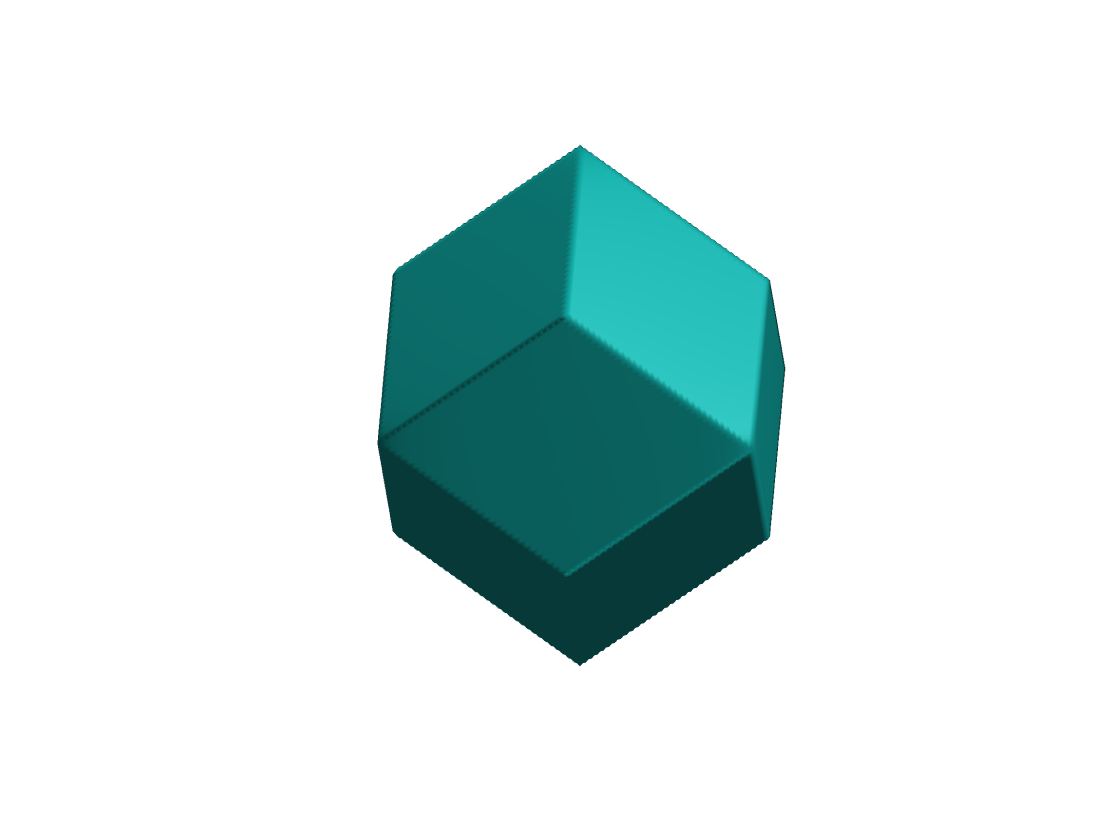} 
		\caption{Numerical results of  Algorithm~\ref{Alg:4step} on a $128\times 128\times 128$ discretized mesh in space with the time step size $\tau=\pi/16 $.  Left: A periodic extension on the $4$-partition in a 3-dimensional flat torus. Middle: The rhombic dodecahedron structure. } \label{fig:Alg1m43D}
	\end{figure}

	\begin{figure}[H]
		\centering
		\includegraphics[width = 0.2\textwidth,clip, trim = 8cm 5cm 7cm 6cm]{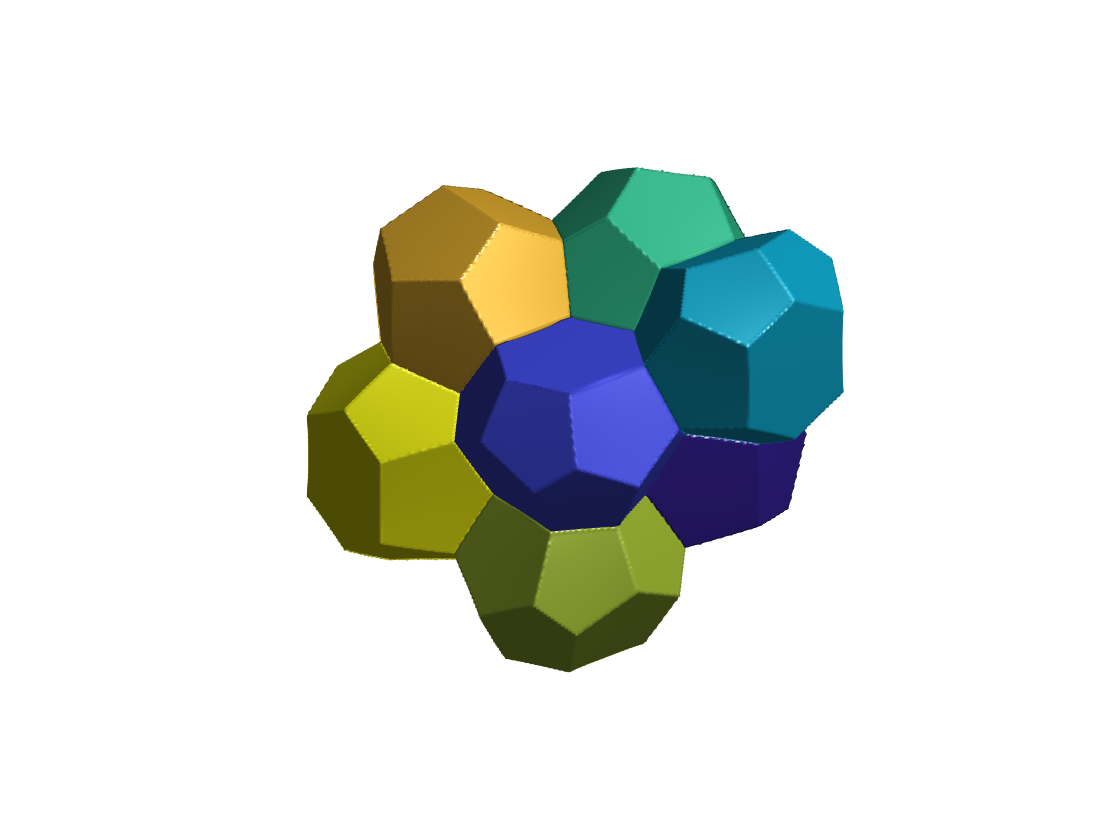}
		\includegraphics[width = 0.2 \textwidth,clip, trim = 5cm 3cm 7cm 1cm]{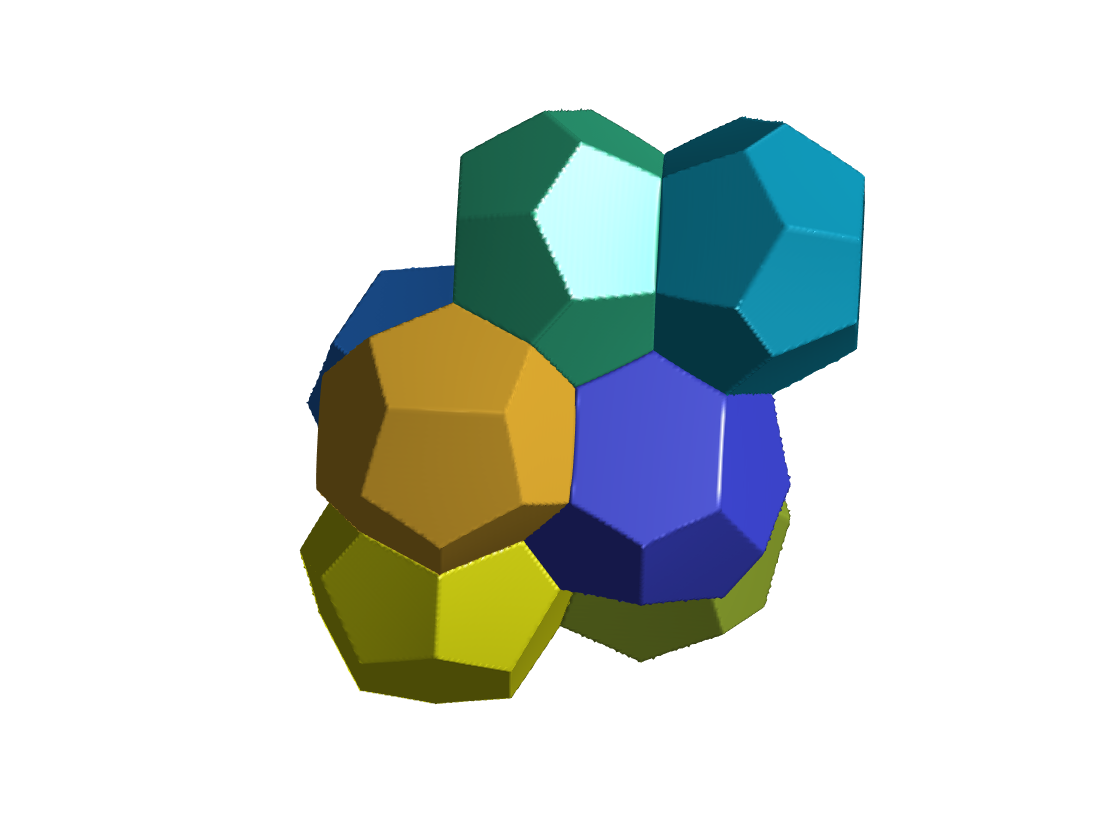}
		\includegraphics[width = 0.2 \textwidth,clip, trim = 5cm 2cm 5cm 1cm]{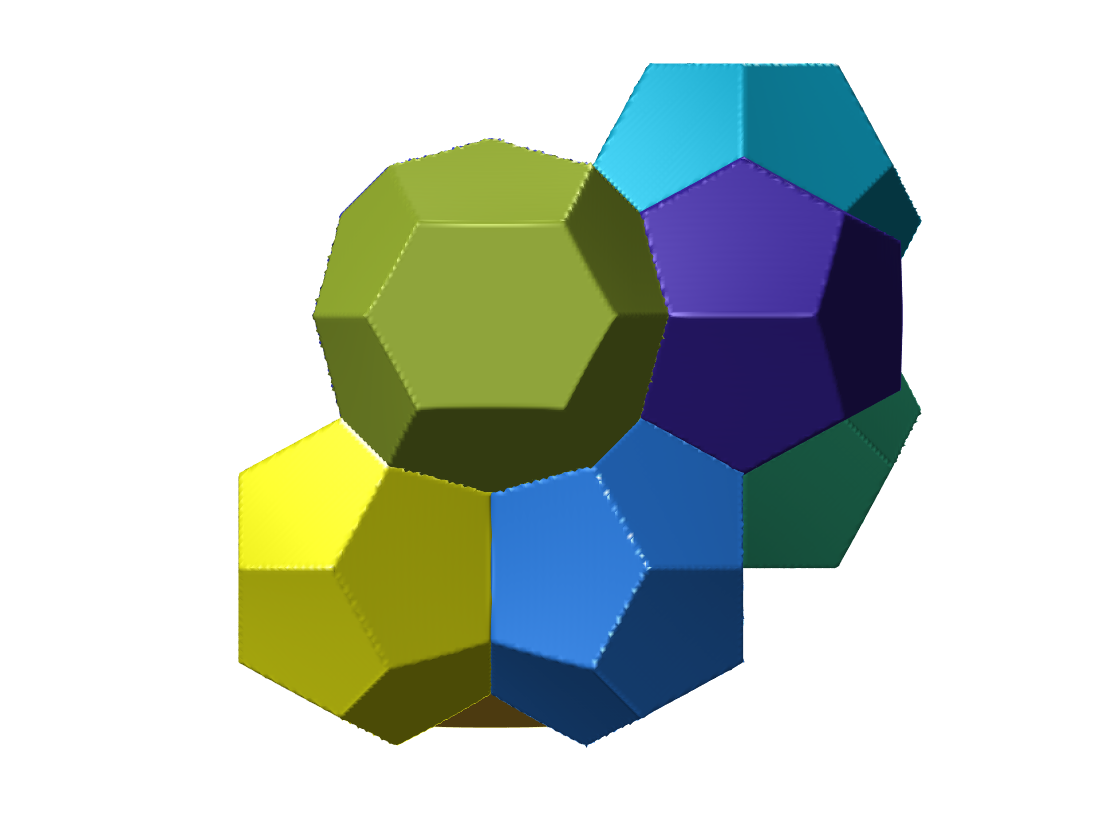}
		\includegraphics[width = 0.2 \textwidth,clip, trim = 6cm 3cm 5cm 1cm]{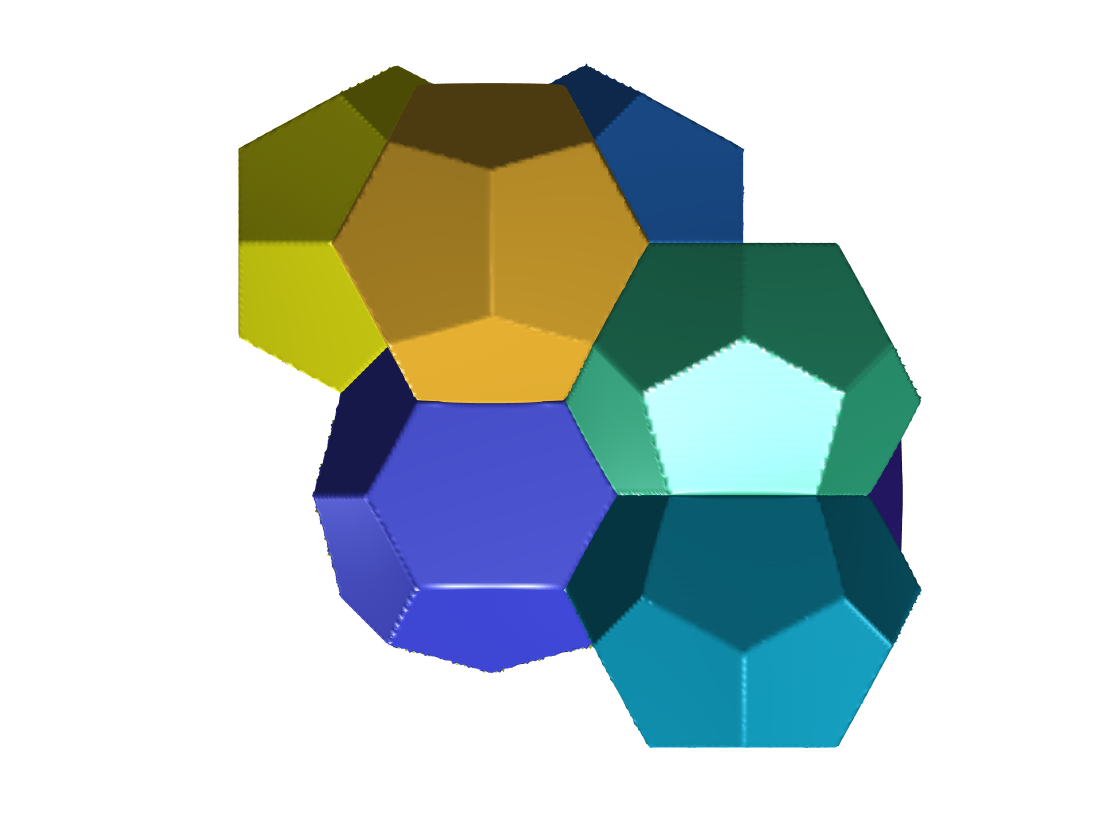}
		\includegraphics[width = 0.2\textwidth,clip, trim = 8.5cm 5.2cm 10cm 6cm]{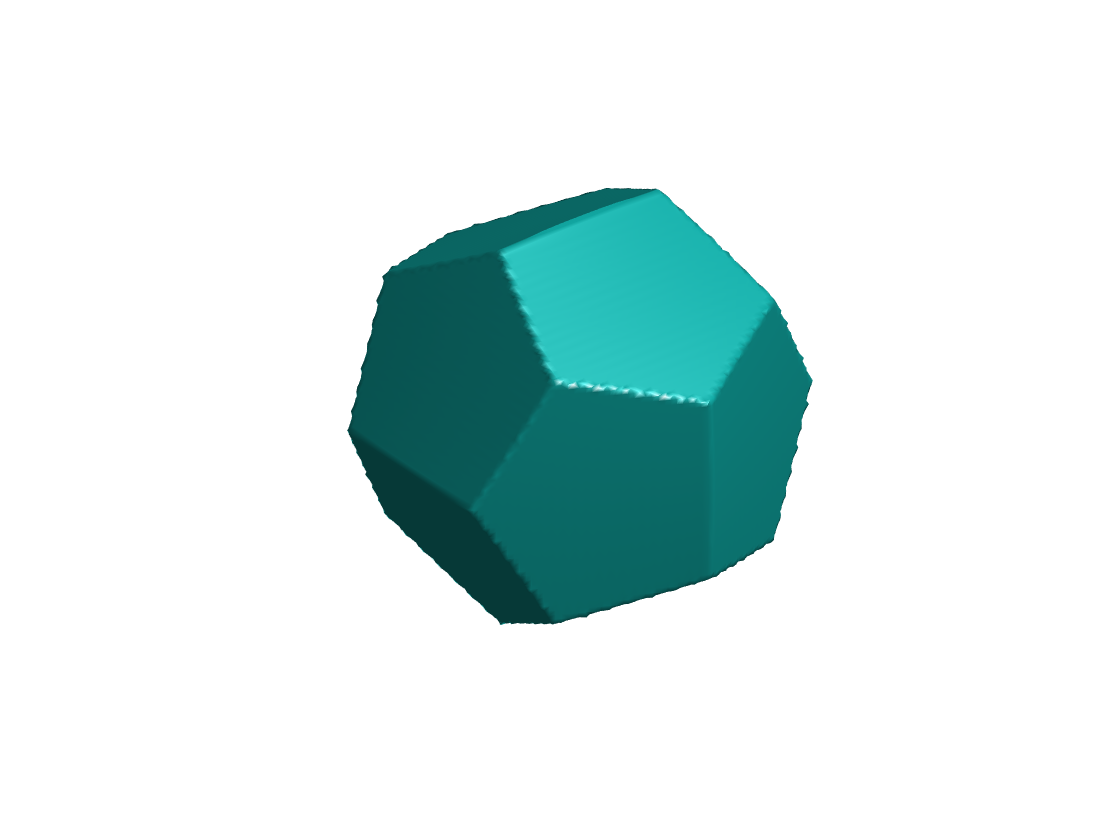}
		\includegraphics[width = 0.2 \textwidth,clip, trim = 7cm 3cm 7cm 1cm]{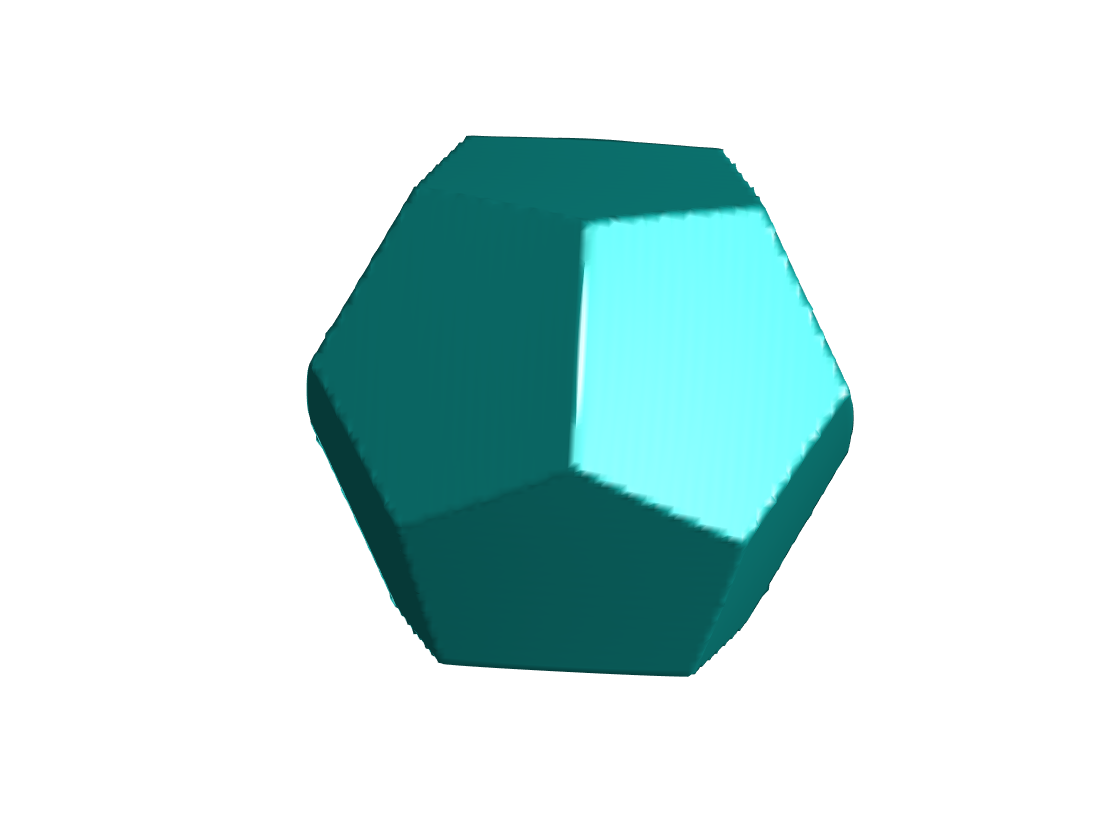}
		\includegraphics[width = 0.2 \textwidth,clip, trim = 5cm 2cm 5cm 1cm]{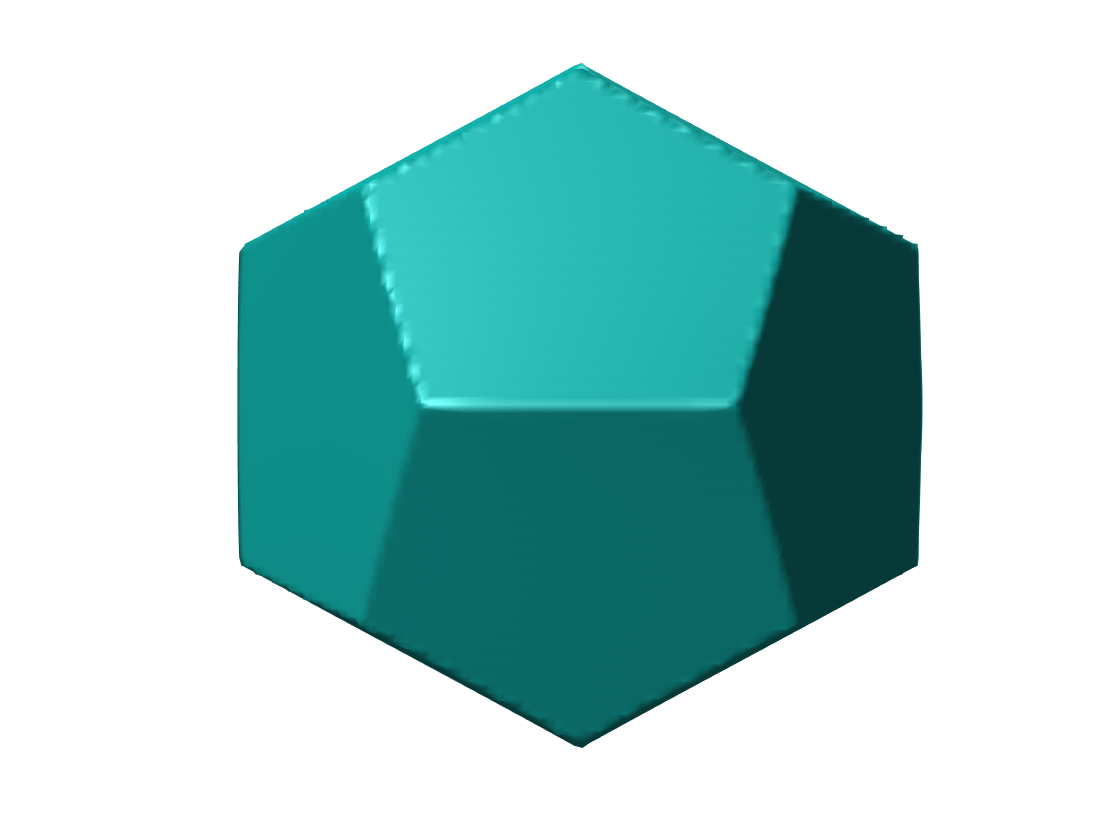}
		\includegraphics[width = 0.2 \textwidth,clip, trim = 5cm 2cm 5cm 0.5cm]{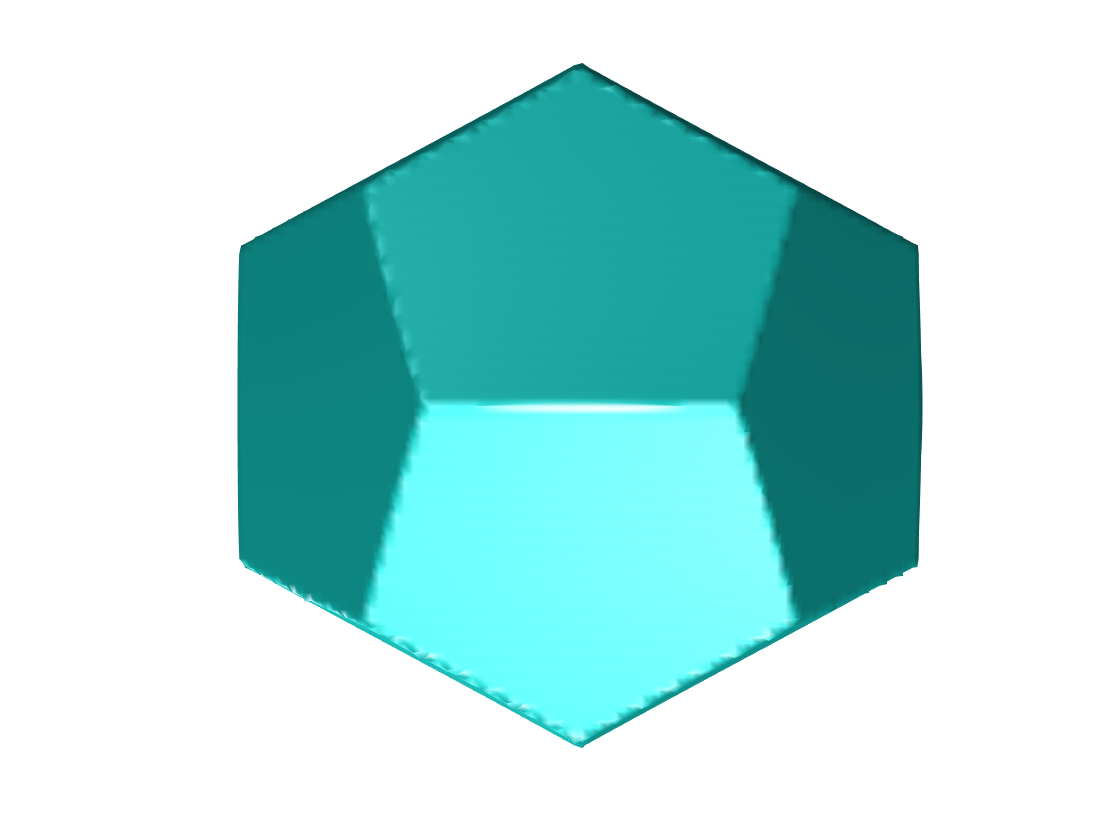}
		\includegraphics[width = 0.2\textwidth,clip, trim = 8.5cm 5.2cm 9cm 5cm]{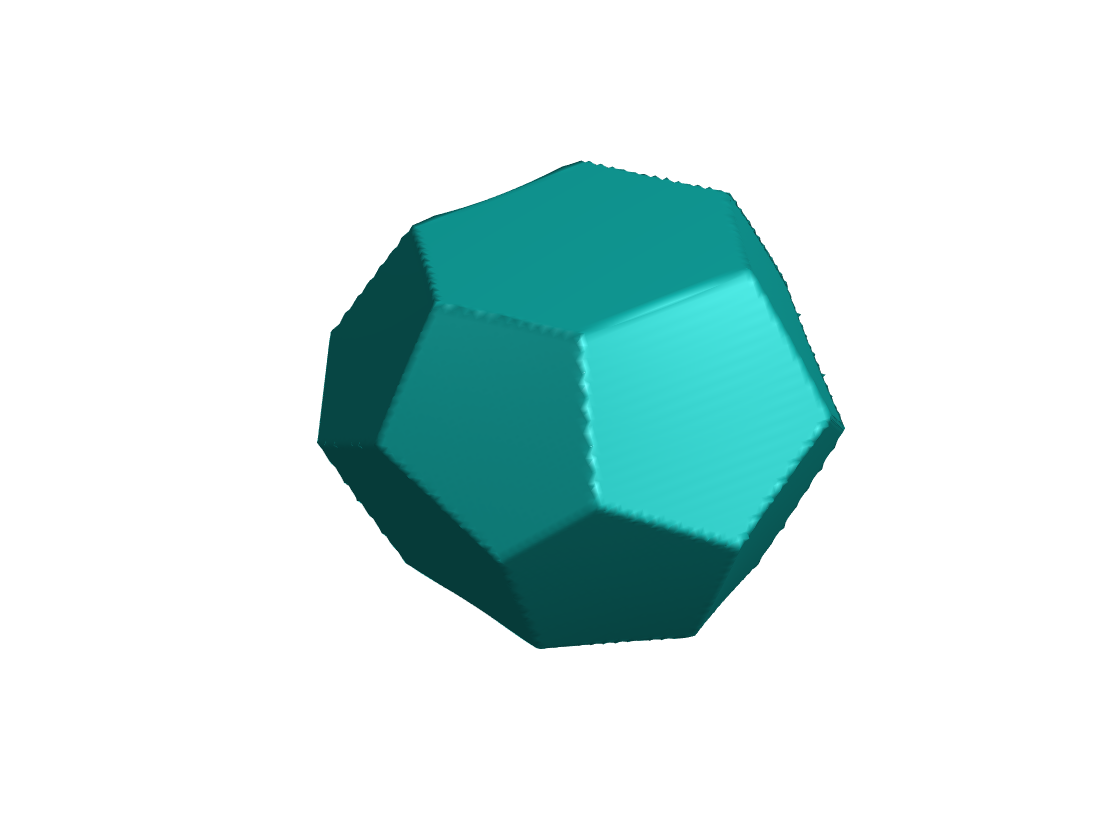}
		\includegraphics[width = 0.2 \textwidth,clip, trim = 7cm 5cm 8cm 3cm]{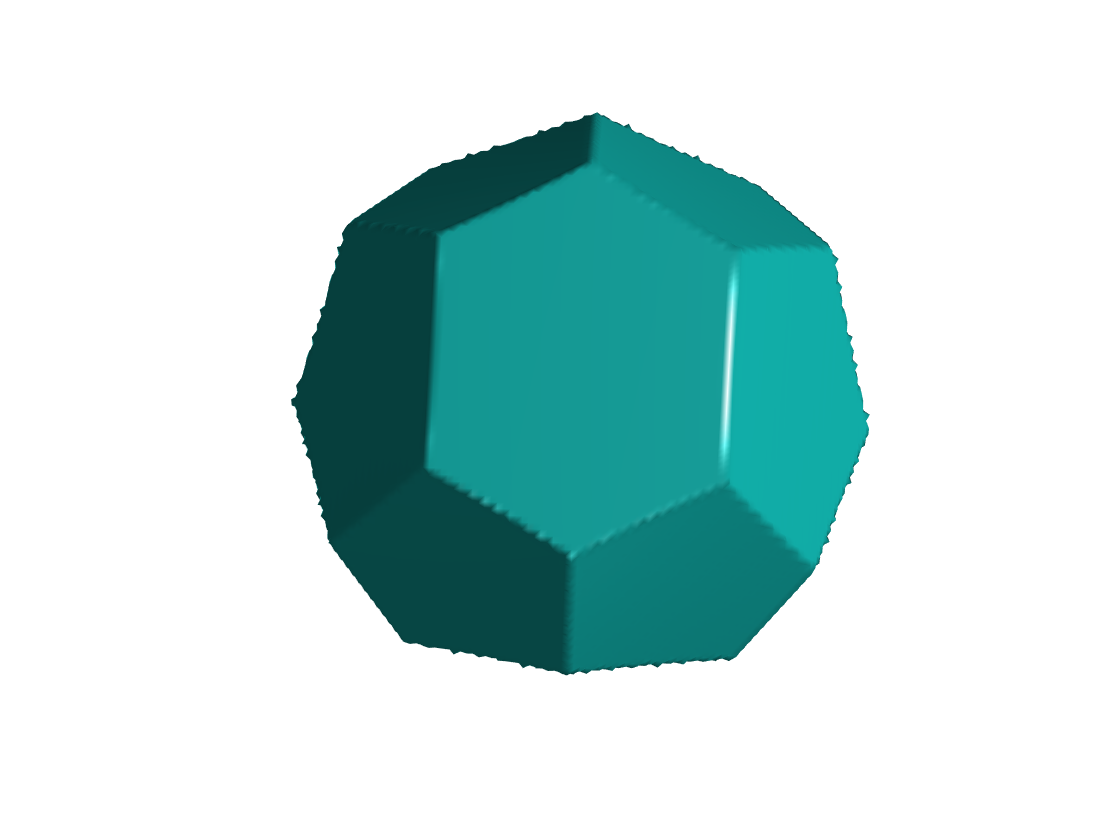}
		\includegraphics[width = 0.2 \textwidth,clip, trim = 5cm 3cm 5cm 1cm]{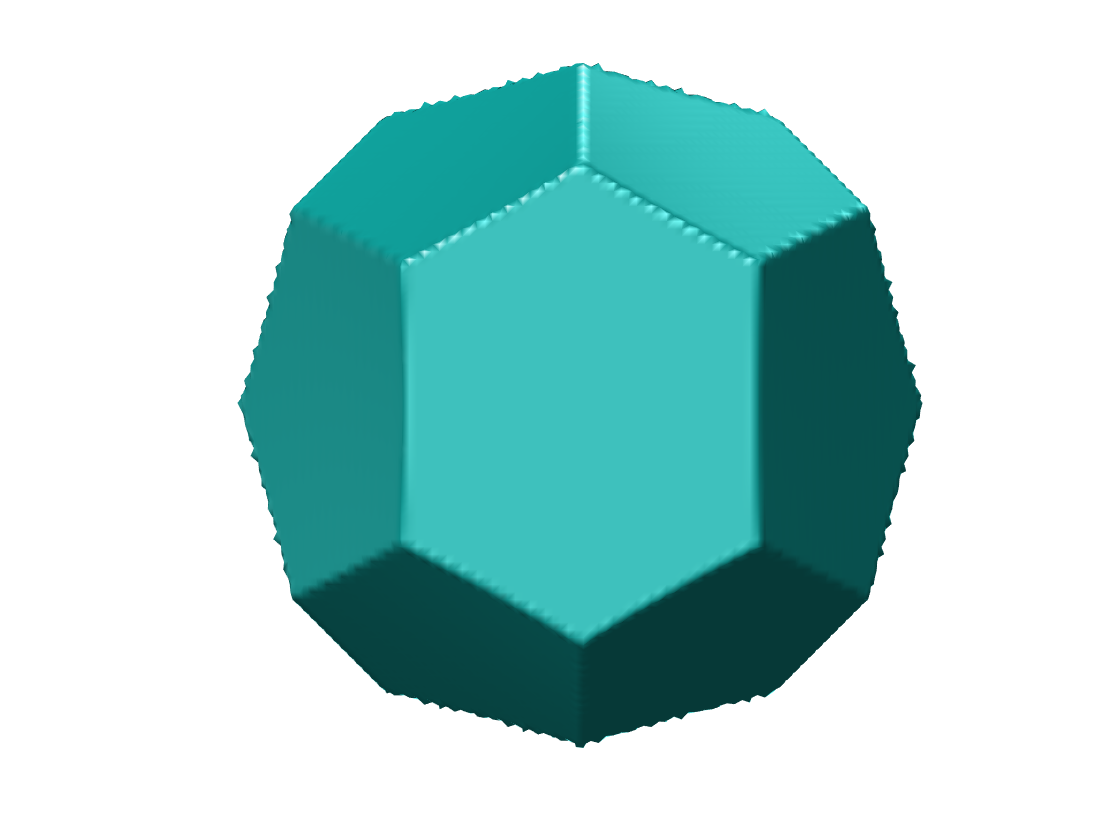}
		\includegraphics[width = 0.2 \textwidth,clip, trim = 5cm 3cm 5cm 1cm]{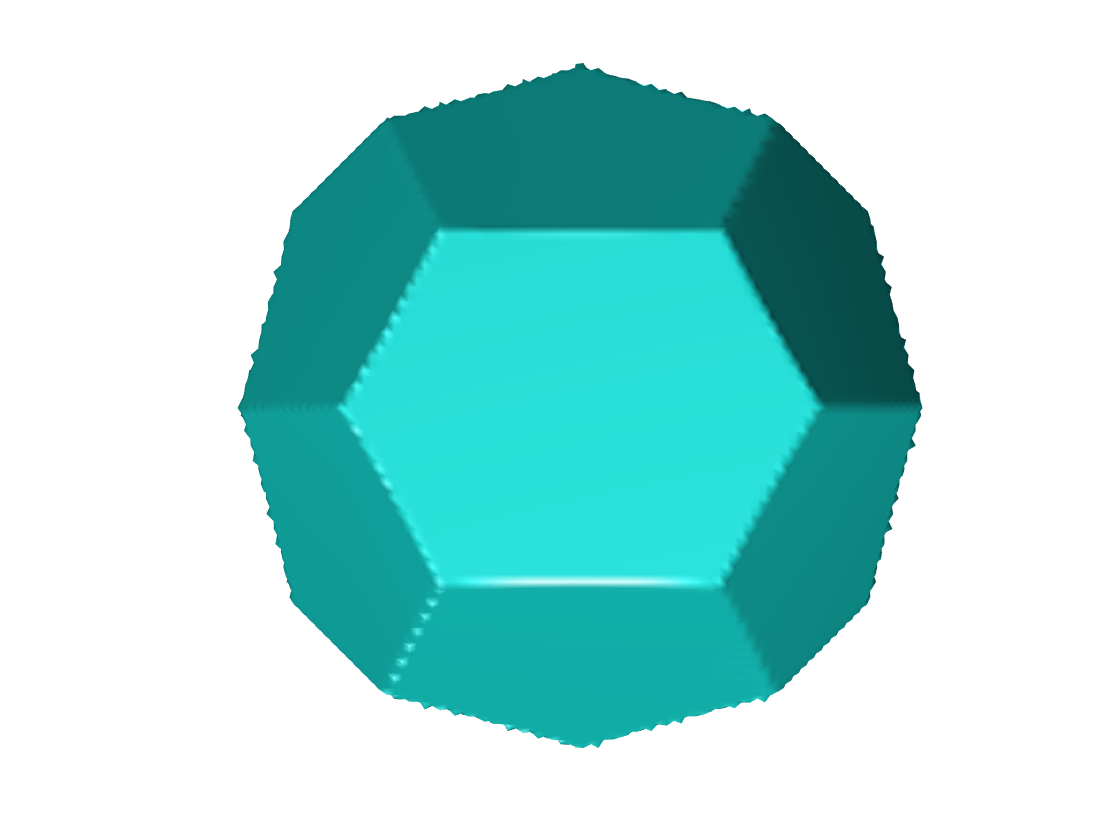}
		\caption{Numerical results of Algorithm~\ref{Alg:4step} on a $128\times 128\times 128$ discretized mesh in space with the time step size  $\tau=\pi/16$: First row: 8-partition of a periodic cube shown from four different views. Second row: Shape consists of 12 pentagonal faces. Third row:  Shape consists of 2 hexagonal surrounded by 12 pentagonal faces.}\label{fig:Alg1m83D}
	\end{figure}
	\subsection{Partitions with Dirichlet boundary conditions} 
 In this example, we explore the feasibility of Algorithms~\ref{Alg:4step}-\ref{Alg:3step_Type2} for solving the system described by \eqref{gradient_sys}-\eqref{norm:4} with Dirichlet boundary conditions. The diffusion step within these algorithms employs the standard compact difference method with a fast implementation, as detailed in \cite{ZhuJuZhao}, ensuring both accuracy and efficiency in implementing the diffusion step. We investigate the performance of the algorithms on partitions within both the square domain \([- \pi, \pi]^2\) and more general, arbitrary domains. This evaluation provides insights into the algorithms' adaptability and effectiveness across different geometric configurations. 
 \subsubsection{Square domain}
Figures~\ref{fig:EvolAlg_4stepdt01Dir} and \ref{fig:EvolAlg_3stepT1dt01Dir} illustrate the evolution of the partition for \(k=4\) and \(k=5\) with \(\tau = 0.1\), showing that the results of Algorithms~\ref{Alg:4step} and \ref{Alg:3step_Type1} change rapidly within the first few dozen iterations.
These figures highlight the effectiveness and unconditional stability of the algorithms. The results are consistent with those reported in \cite{Wang22}, confirming the reliability and accuracy of these methods for solving \eqref{gradient_sys}-\eqref{norm:4} with Dirichlet boundary conditions. Results for Algorithm~\ref{Alg:3step_Type2} are omitted, as its final partitions are identical to those shown in Figures~\ref{fig:EvolAlg_4stepdt01Dir} and \ref{fig:EvolAlg_3stepT1dt01Dir}.

	\begin{figure}[H]
		\centering
		\begin{tabular}{c|c|c|c|c|c|c|c}
			\hline 
			initial & 20 & 40  & 60 & 80  & 100& 120  & 164 \\
			\includegraphics[width = 0.09\textwidth, clip, trim = 4cm 1cm 3cm 1cm]{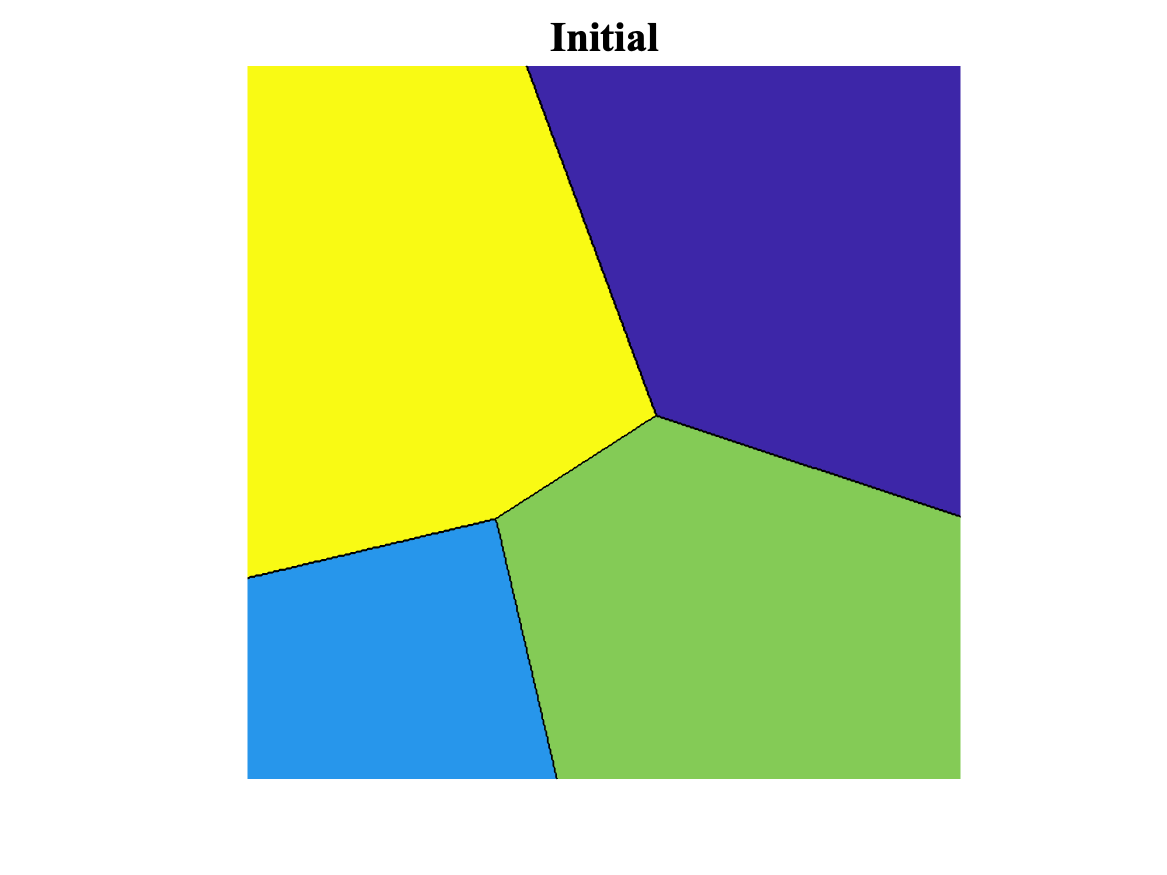}&  
			\includegraphics[width = 0.09\textwidth, clip, trim = 4cm 1cm 3cm 1cm]{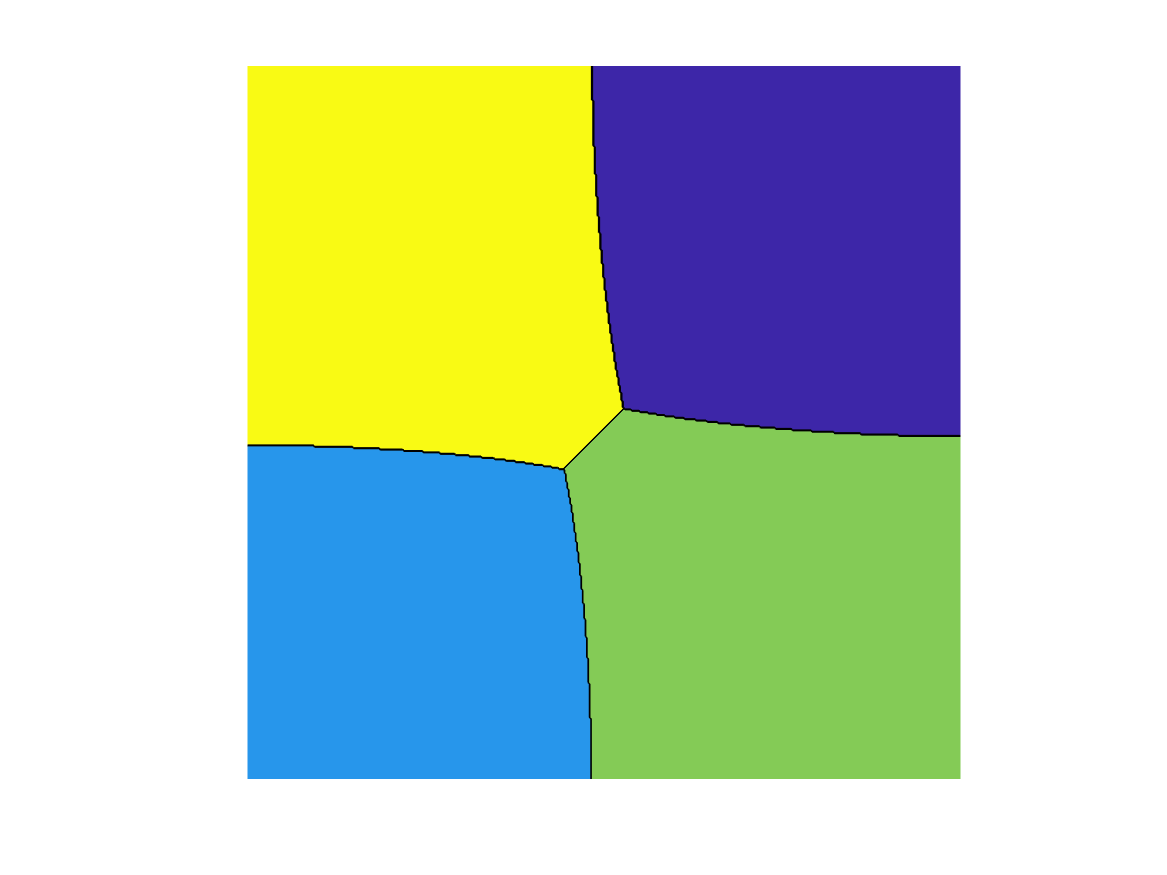}&
			\includegraphics[width = 0.09\textwidth, clip, trim = 4cm 1cm 3cm 1cm]{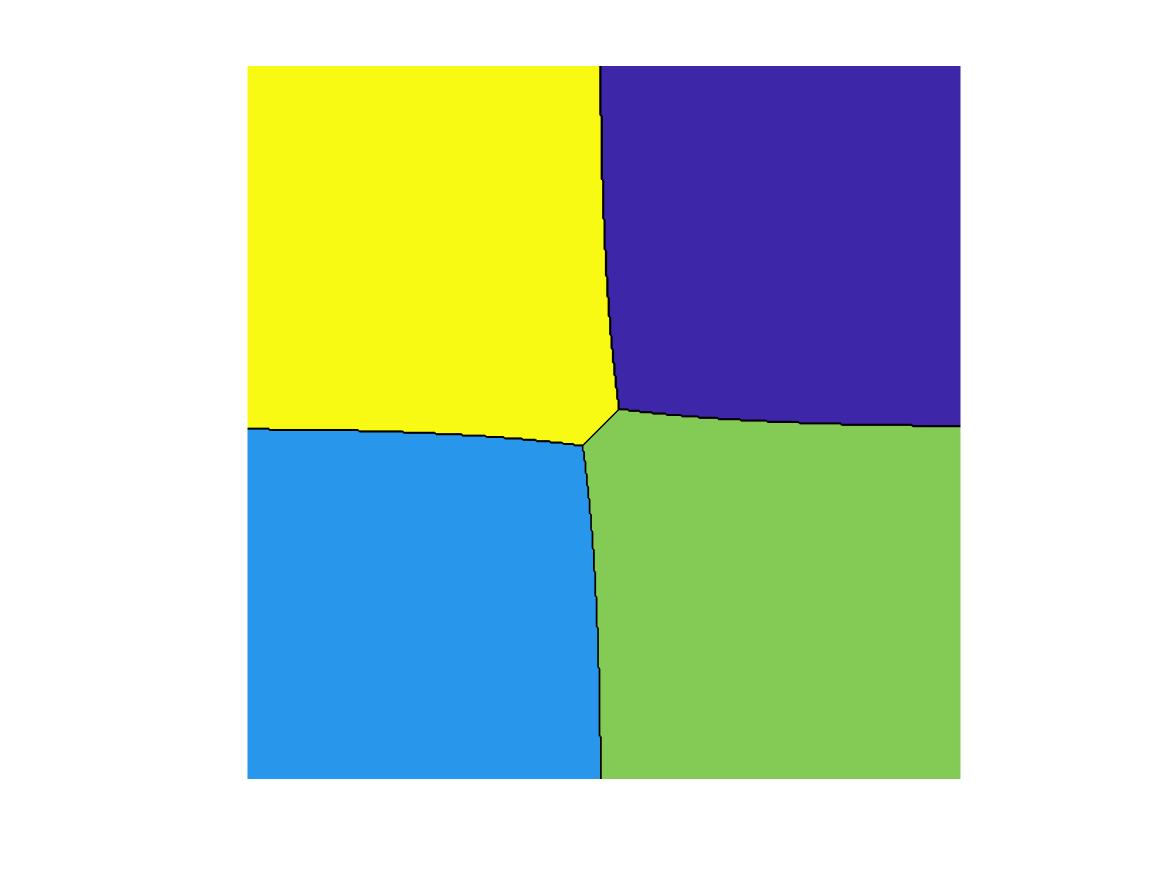}&
			\includegraphics[width = 0.09\textwidth, clip, trim = 4cm 1cm 3cm 1cm]{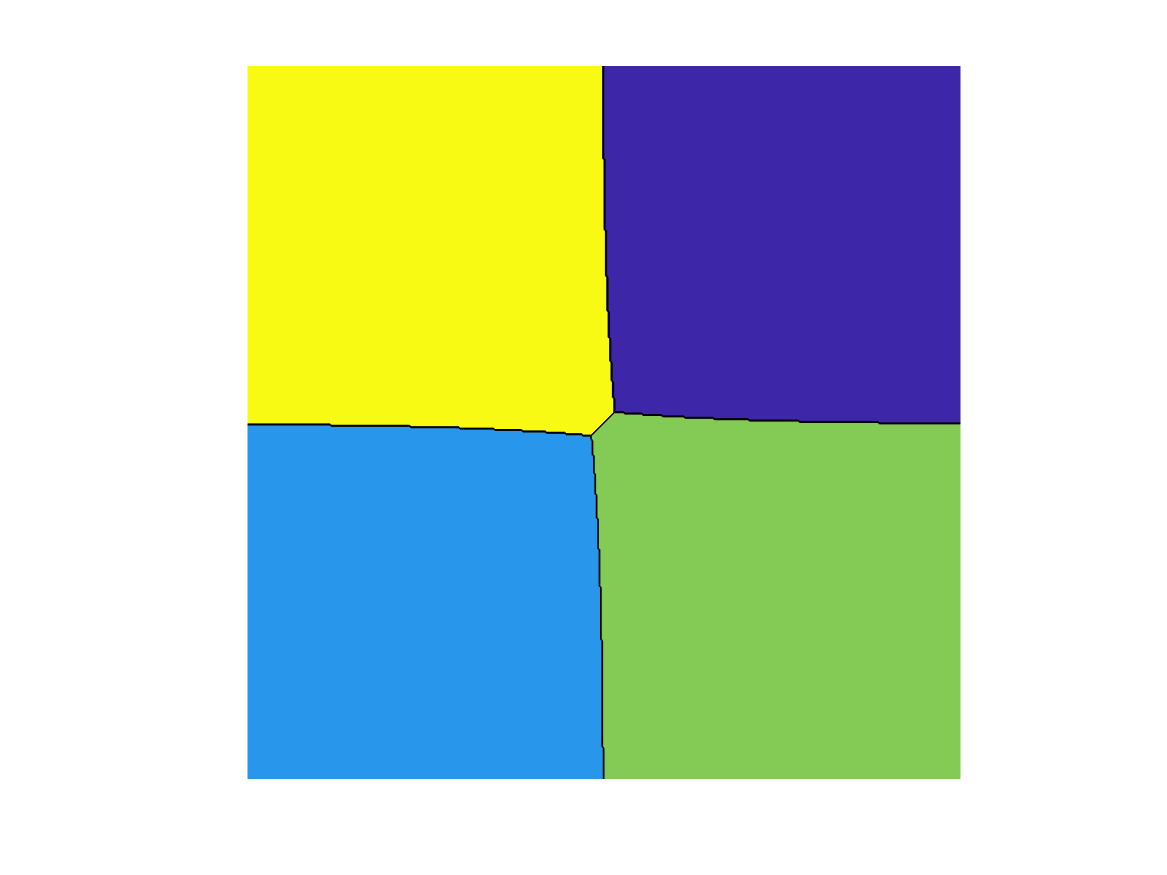}&
			\includegraphics[width = 0.09\textwidth, clip, trim = 4cm 1cm 3cm 1cm]{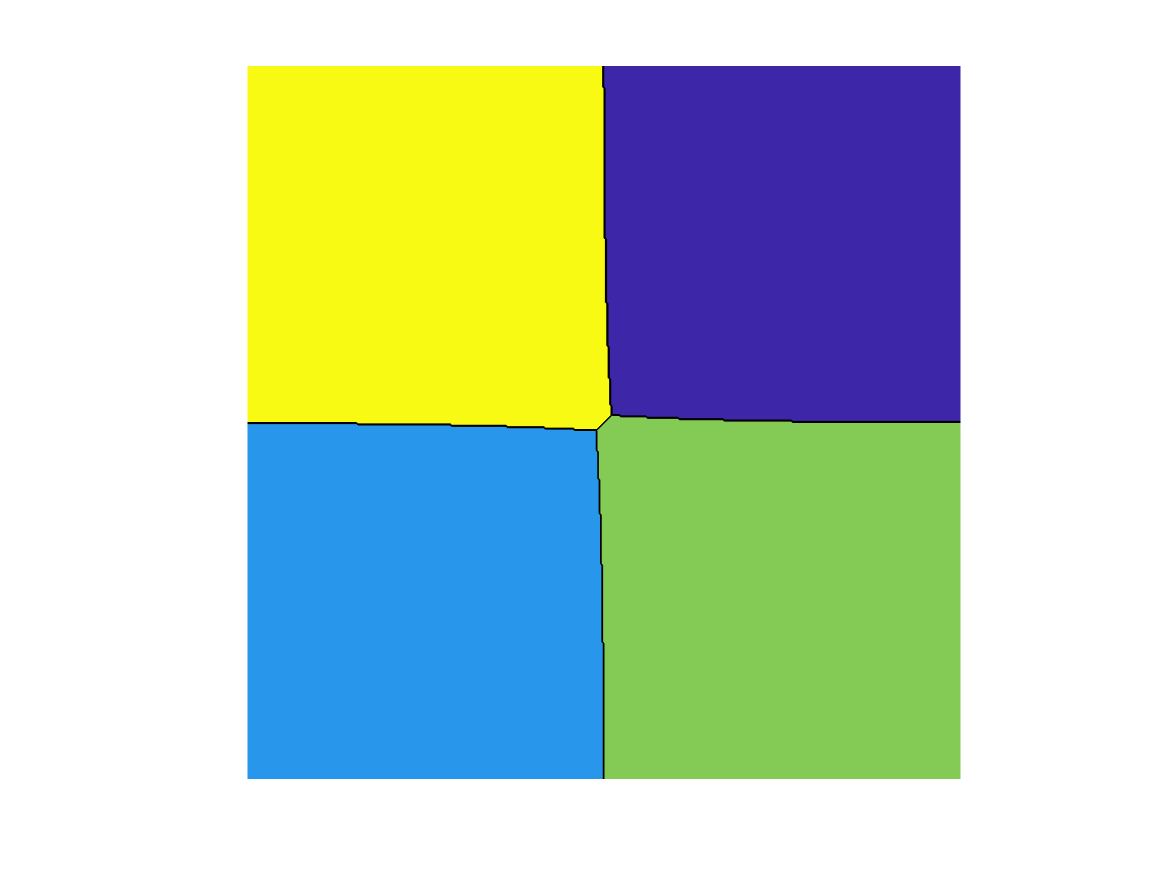}&
			\includegraphics[width = 0.09\textwidth, clip, trim = 4cm 1cm 3cm 1cm]{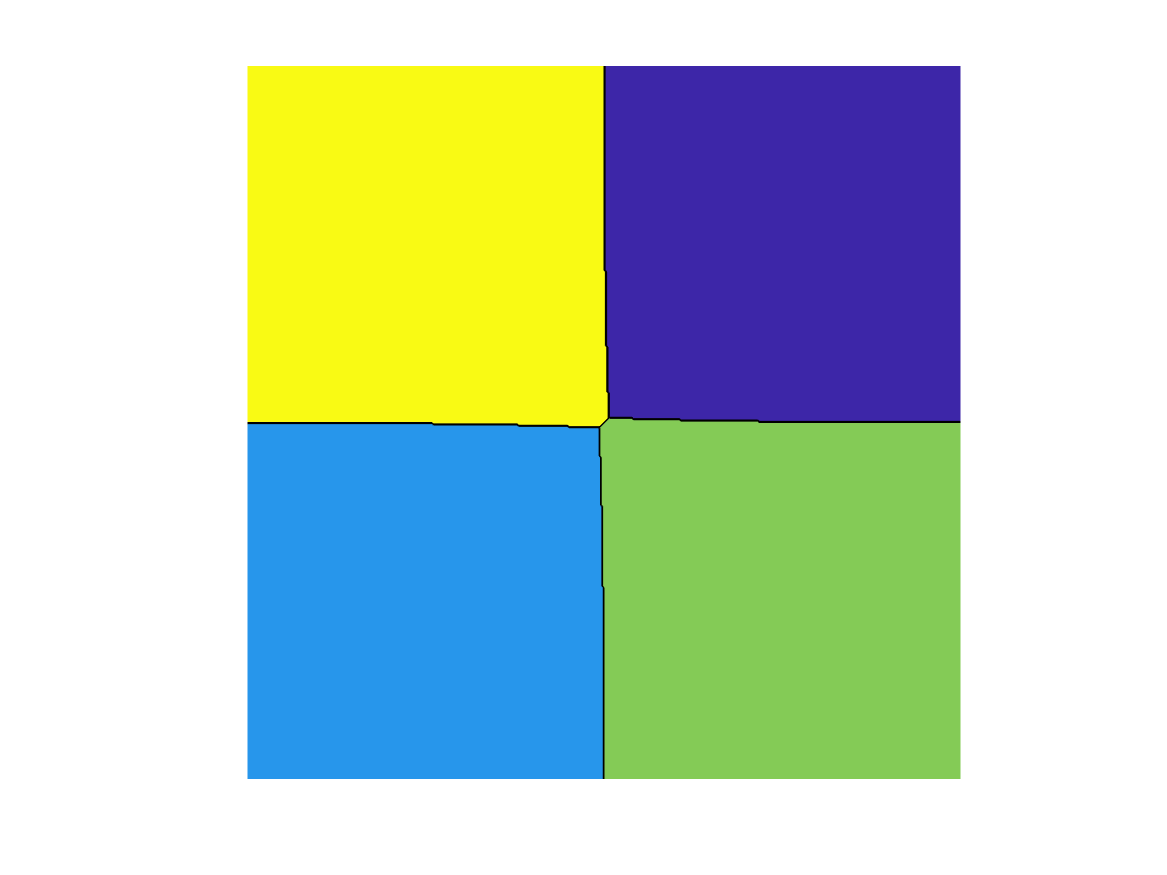}&
			\includegraphics[width = 0.09\textwidth, clip, trim = 4cm 1cm 3cm 1cm]{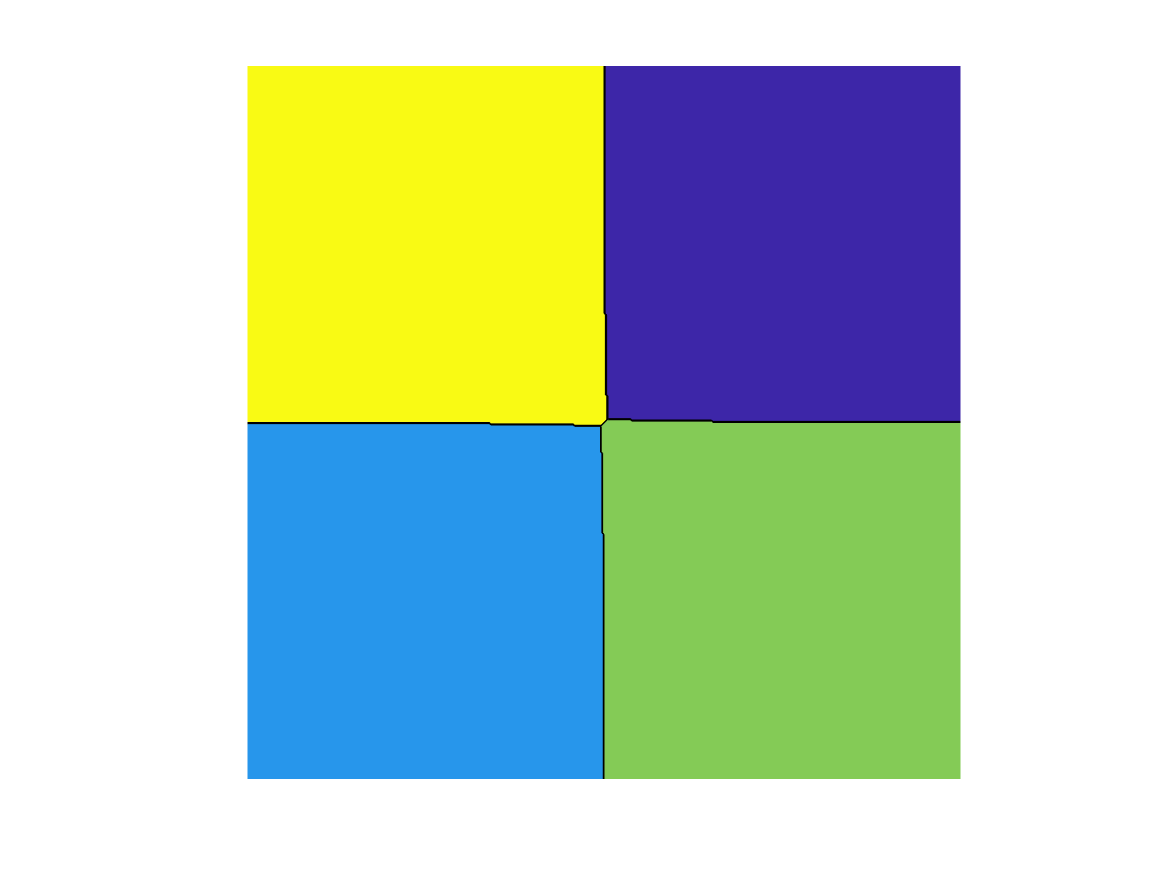}&
			\includegraphics[width = 0.09\textwidth, clip, trim = 4cm 1cm 3cm 1cm]{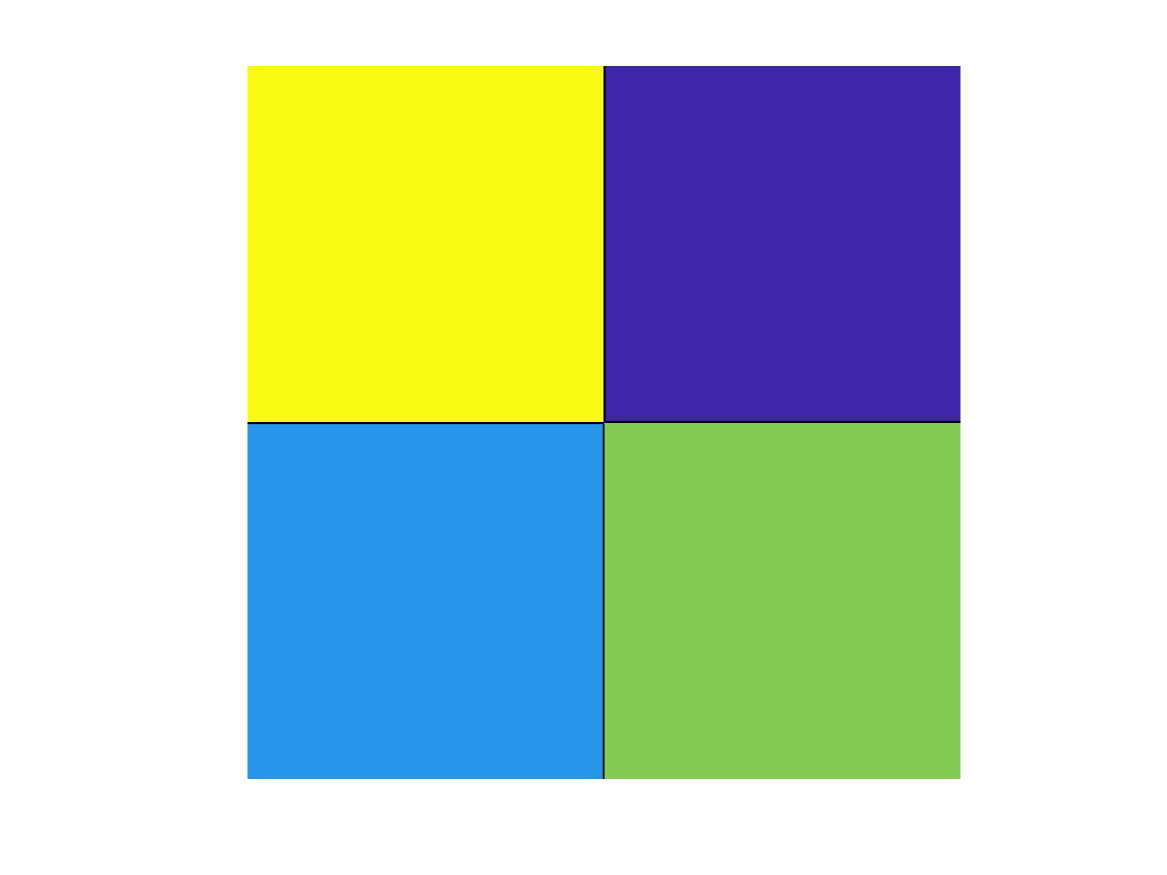} \\ 
			\hline
			\hline 
			initial & 50 &100  &150 & 200  & 250&300  & 389\\
			\includegraphics[width = 0.09\textwidth, clip, trim = 4cm 1cm 3cm 1cm]{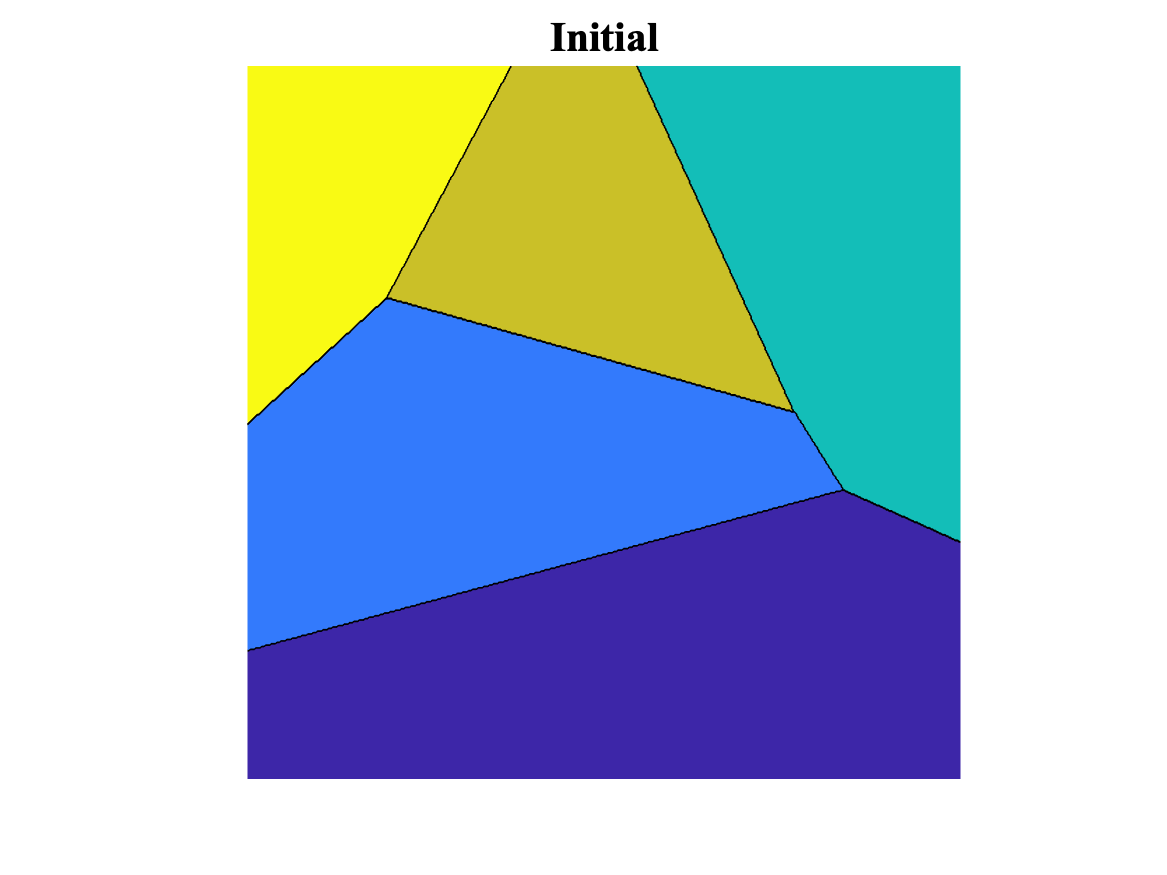}&  
			\includegraphics[width = 0.09\textwidth, clip, trim = 4cm 1cm 3cm 1cm]{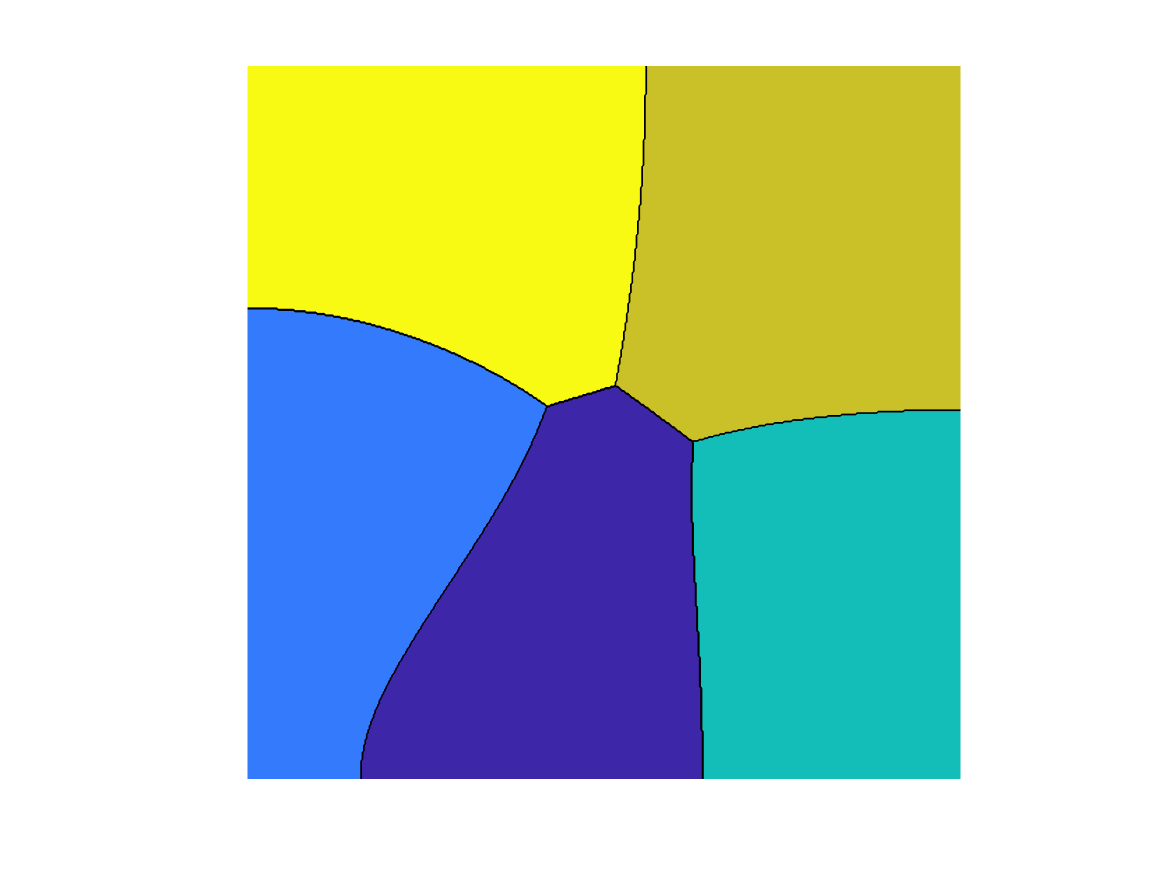}&
			\includegraphics[width = 0.09\textwidth, clip, trim = 4cm 1cm 3cm 1cm]{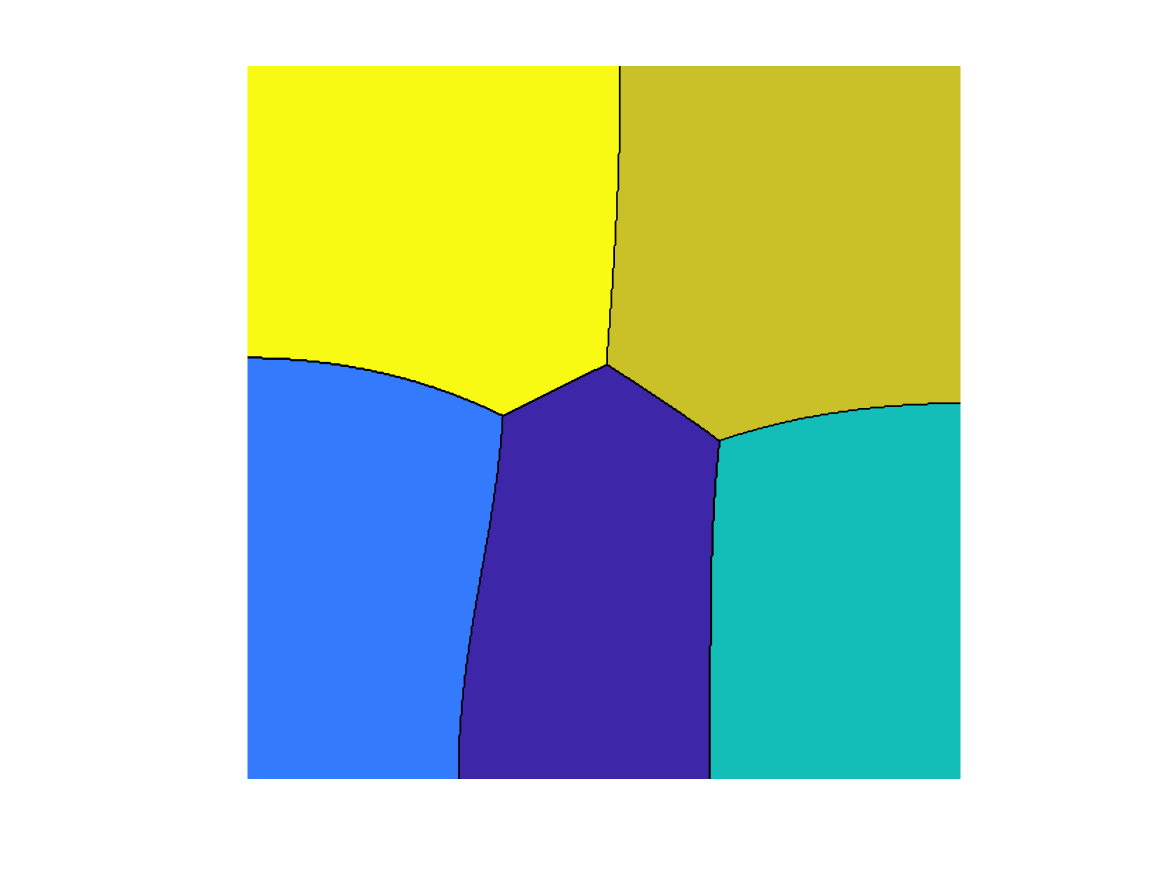}&
			\includegraphics[width = 0.09\textwidth, clip, trim = 4cm 1cm 3cm 1cm]{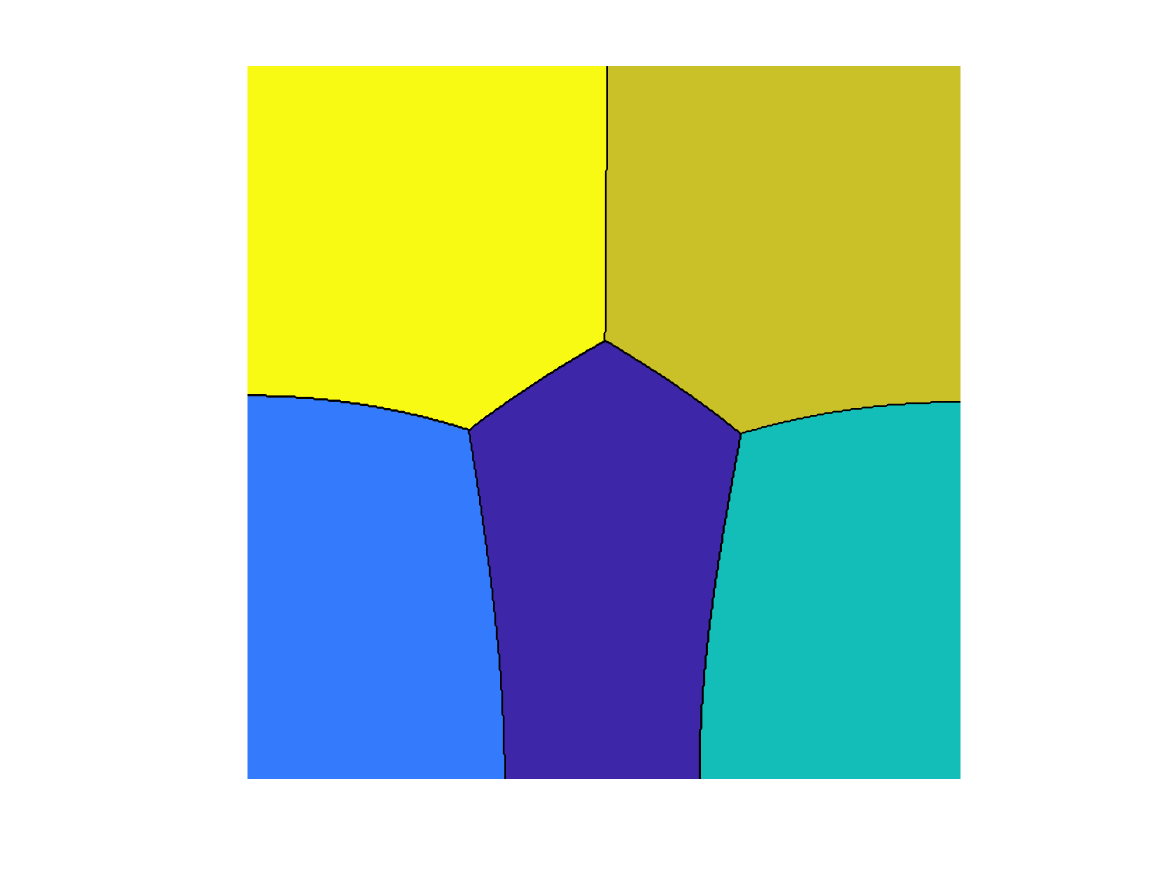}&
			\includegraphics[width = 0.09\textwidth, clip, trim = 4cm 1cm 3cm 1cm]{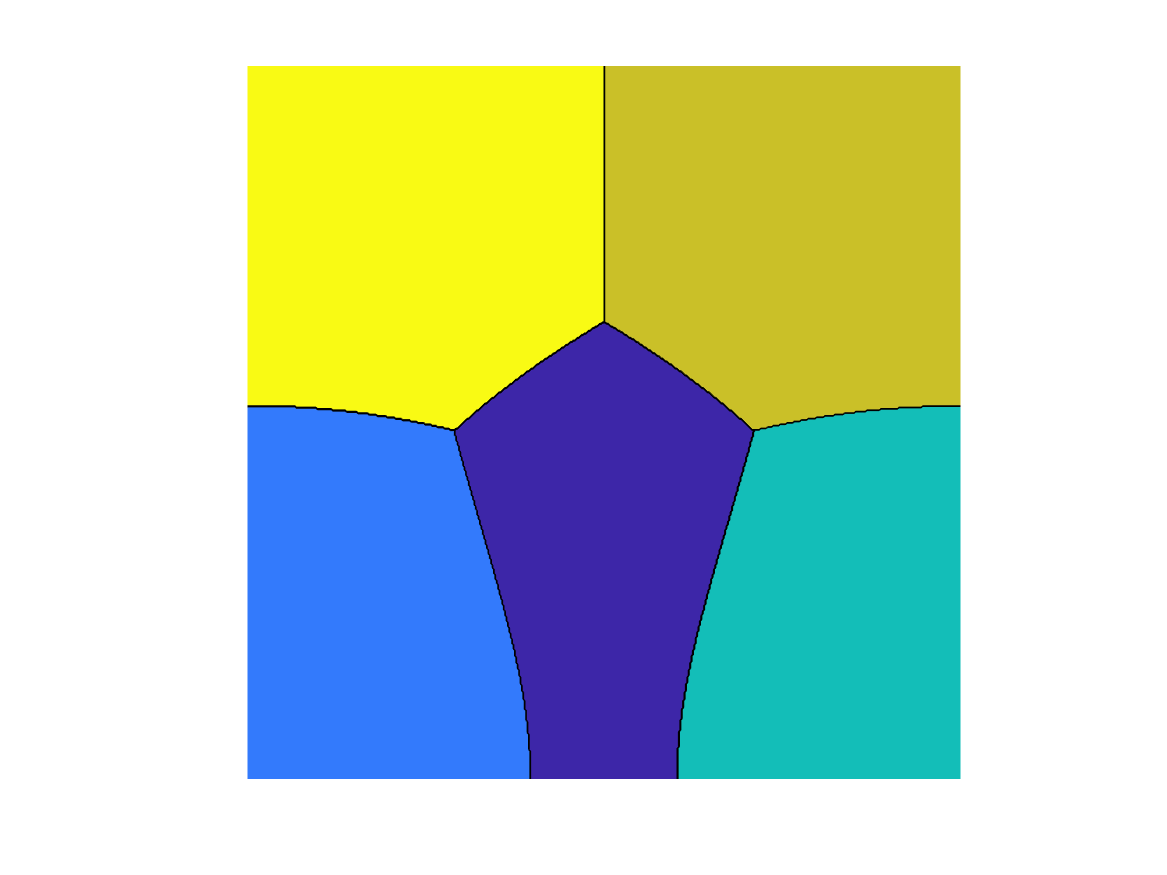}&
			\includegraphics[width = 0.09\textwidth, clip, trim = 4cm 1cm 3cm 1cm]{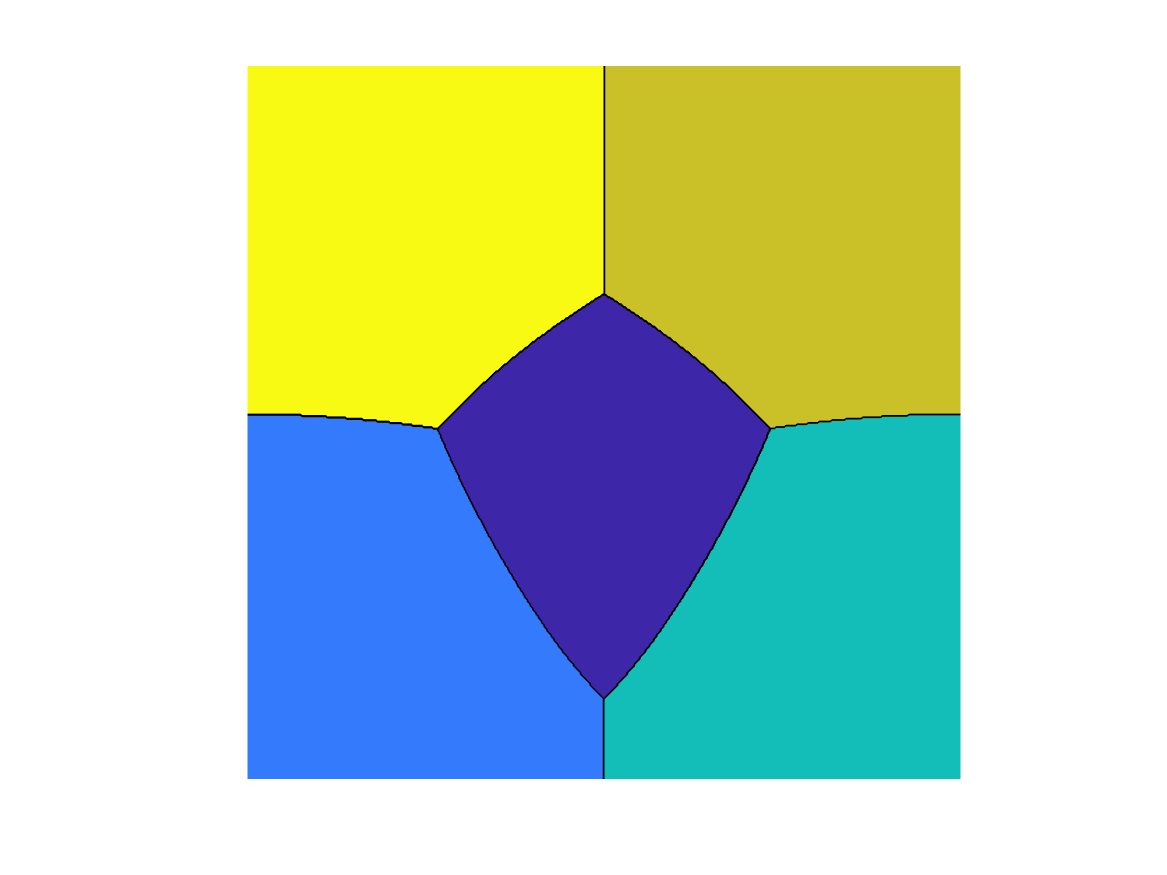}&
			\includegraphics[width = 0.09\textwidth, clip, trim = 4cm 1cm 3cm 1cm]{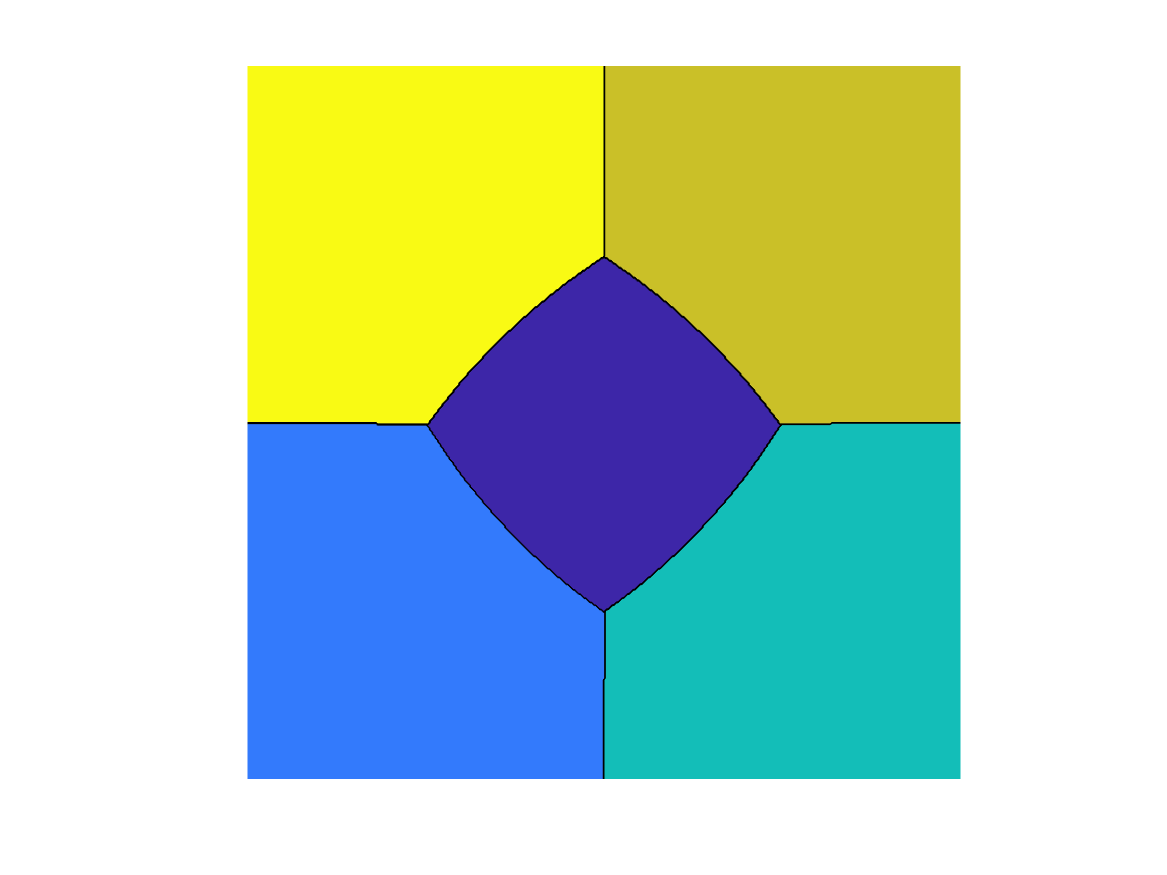}&
			\includegraphics[width = 0.09\textwidth, clip, trim = 4cm 1cm 3cm 1cm]{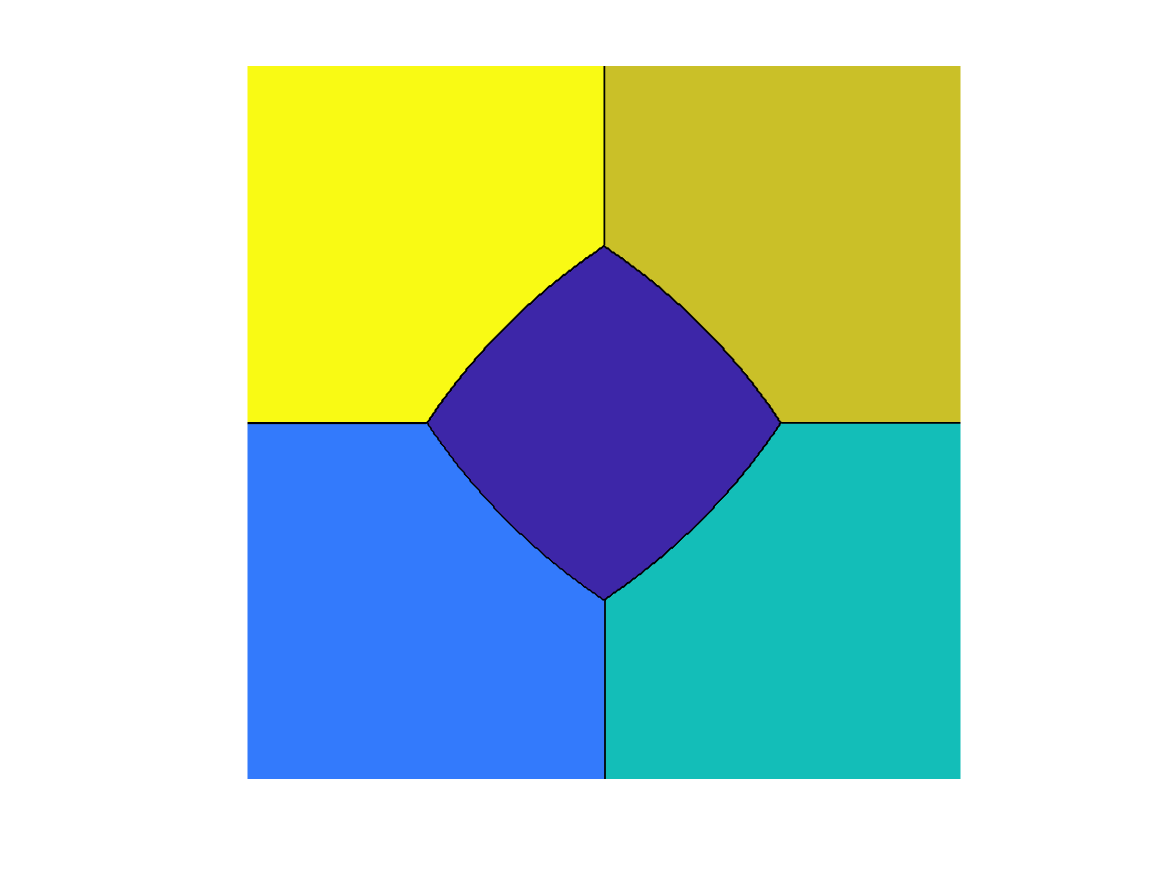} \\ 
			\hline
		\end{tabular}
		\caption{Snapshots of Algorithm~\ref{Alg:4step} with Dirichlet boundary conditions at different iterations on a $512\times512$ discretized mesh with a time step size of $\tau=0.1$. The first row corresponds to $k=4$, while the second row corresponds to $k=5$.} \label{fig:EvolAlg_4stepdt01Dir}
	\end{figure}
	
	\begin{figure}[H]
		\centering
		\begin{tabular}{c|c|c|c|c|c|c|c}
			\hline 
			initial & 20 & 40  & 60 & 80  & 100& 120  & 128 \\
			\includegraphics[width = 0.09\textwidth, clip, trim = 4cm 1cm 3cm 1cm]{figures_Dbnd/Dirinitialm4.png}&  
			\includegraphics[width = 0.09\textwidth, clip, trim = 4cm 1cm 3cm 1cm]{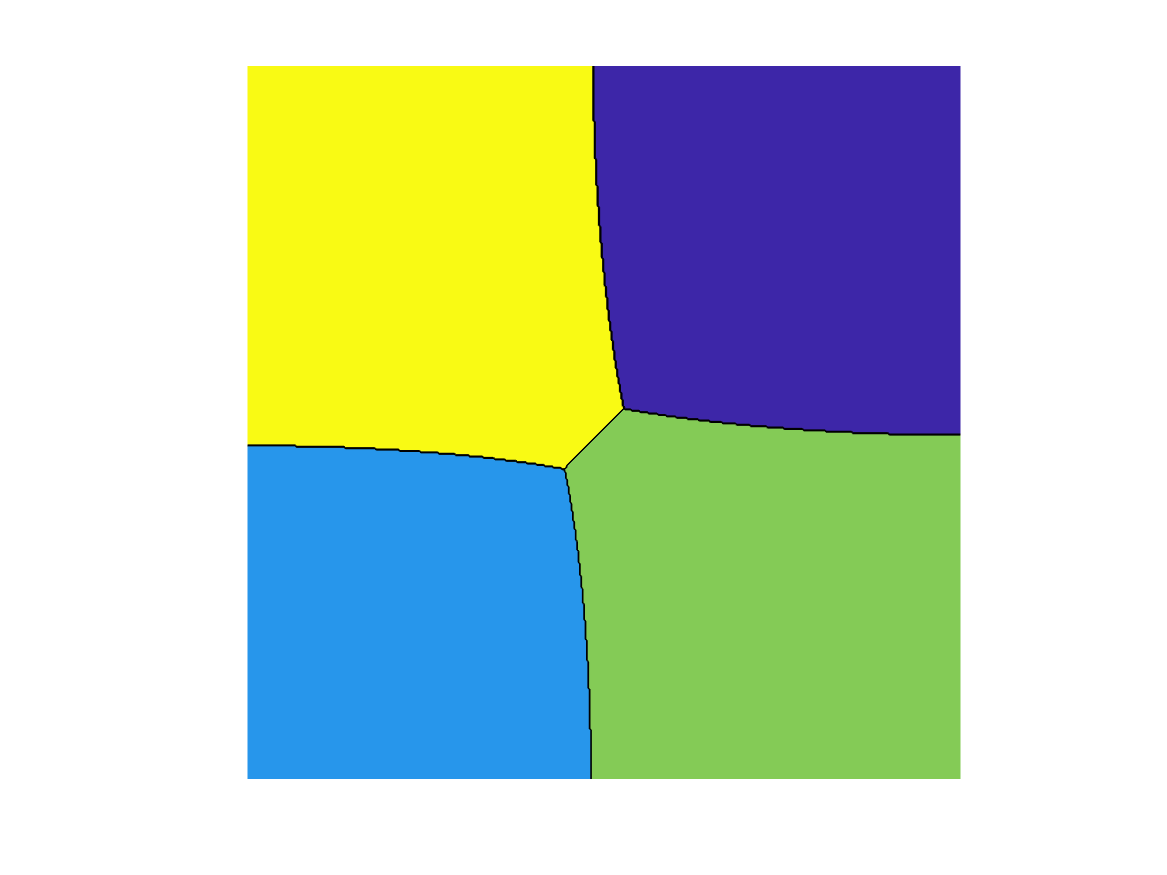}&
			\includegraphics[width = 0.09\textwidth, clip, trim = 4cm 1cm 3cm 1cm]{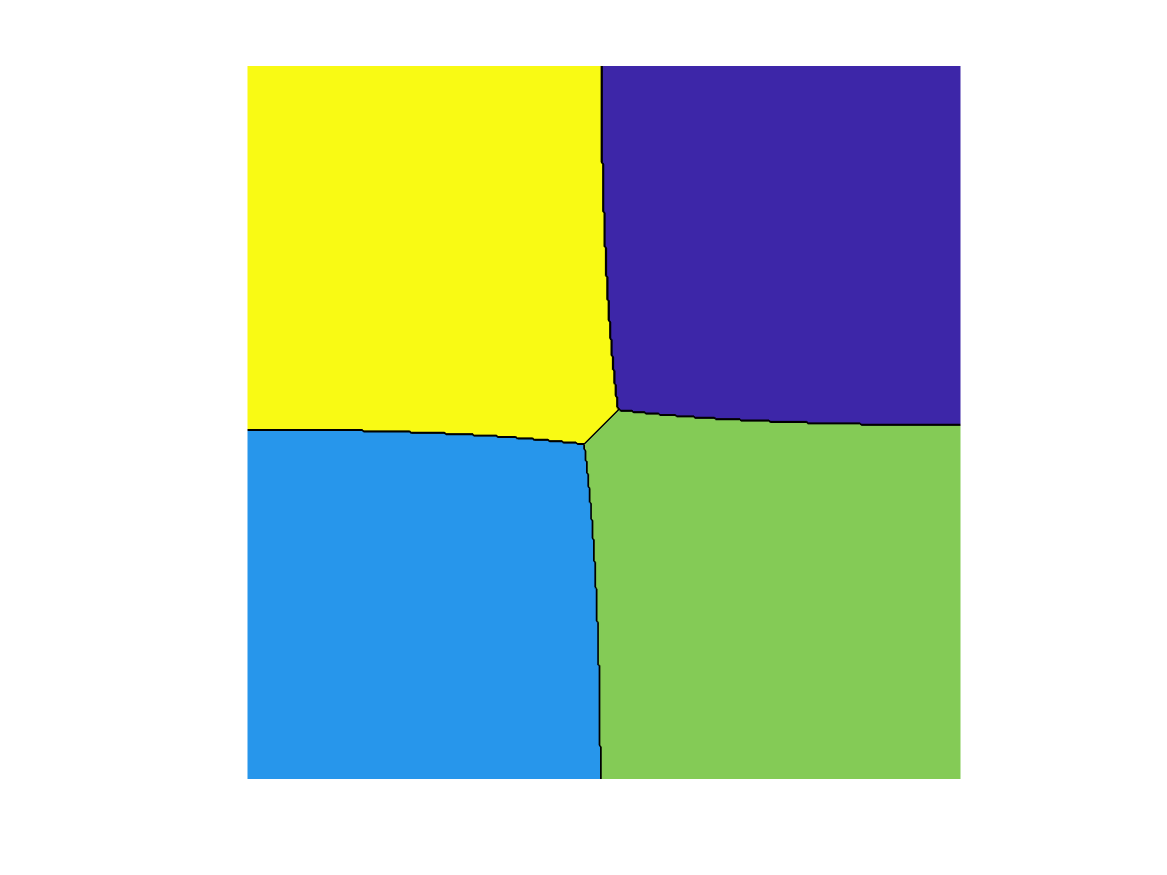}&
			\includegraphics[width = 0.09\textwidth, clip, trim = 4cm 1cm 3cm 1cm]{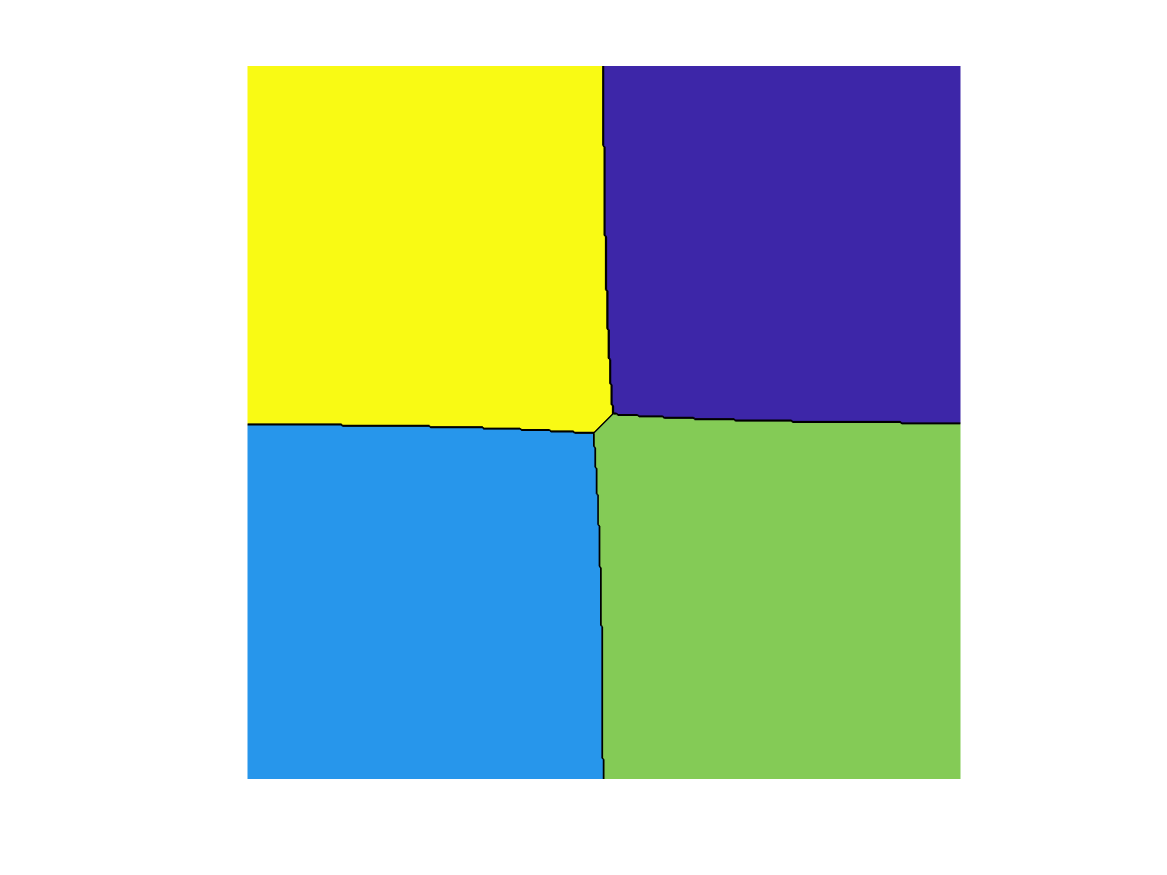}&
			\includegraphics[width = 0.09\textwidth, clip, trim = 4cm 1cm 3cm 1cm]{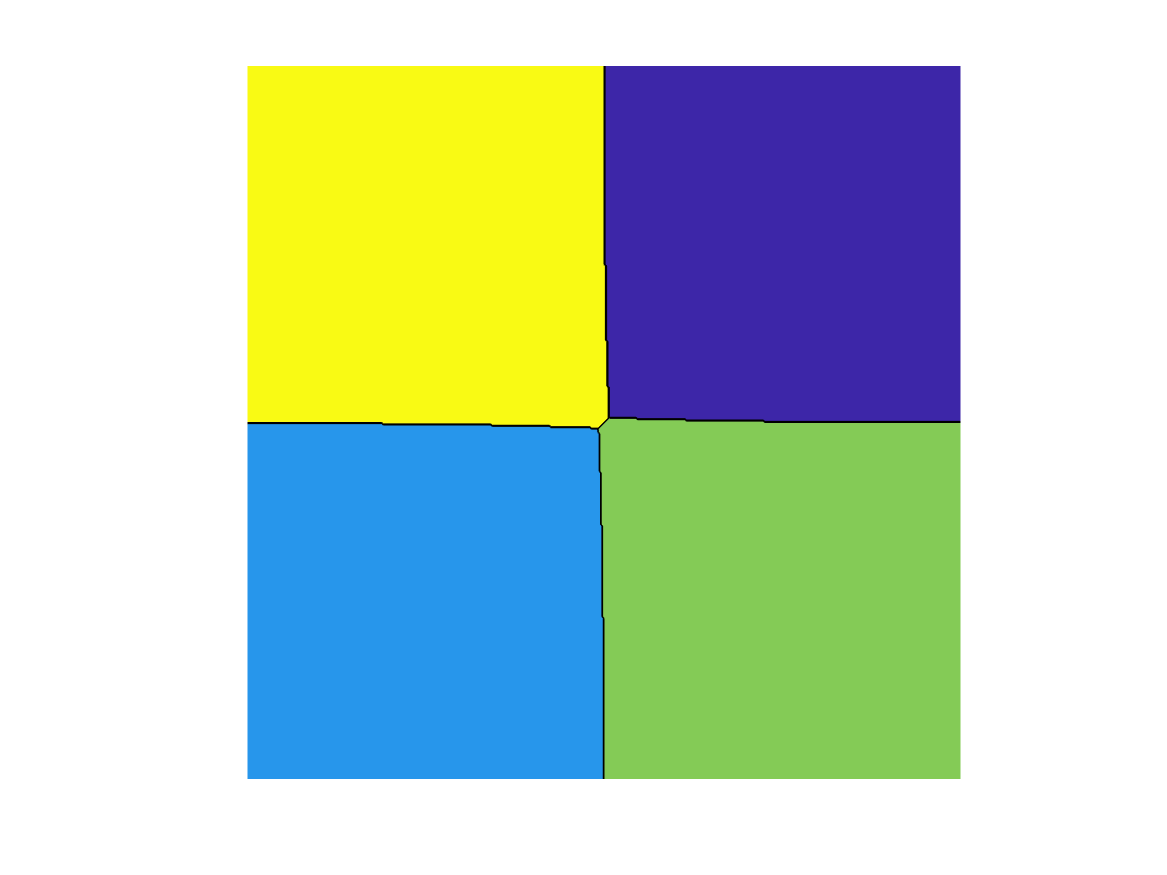}&
			\includegraphics[width = 0.09\textwidth, clip, trim = 4cm 1cm 3cm 1cm]{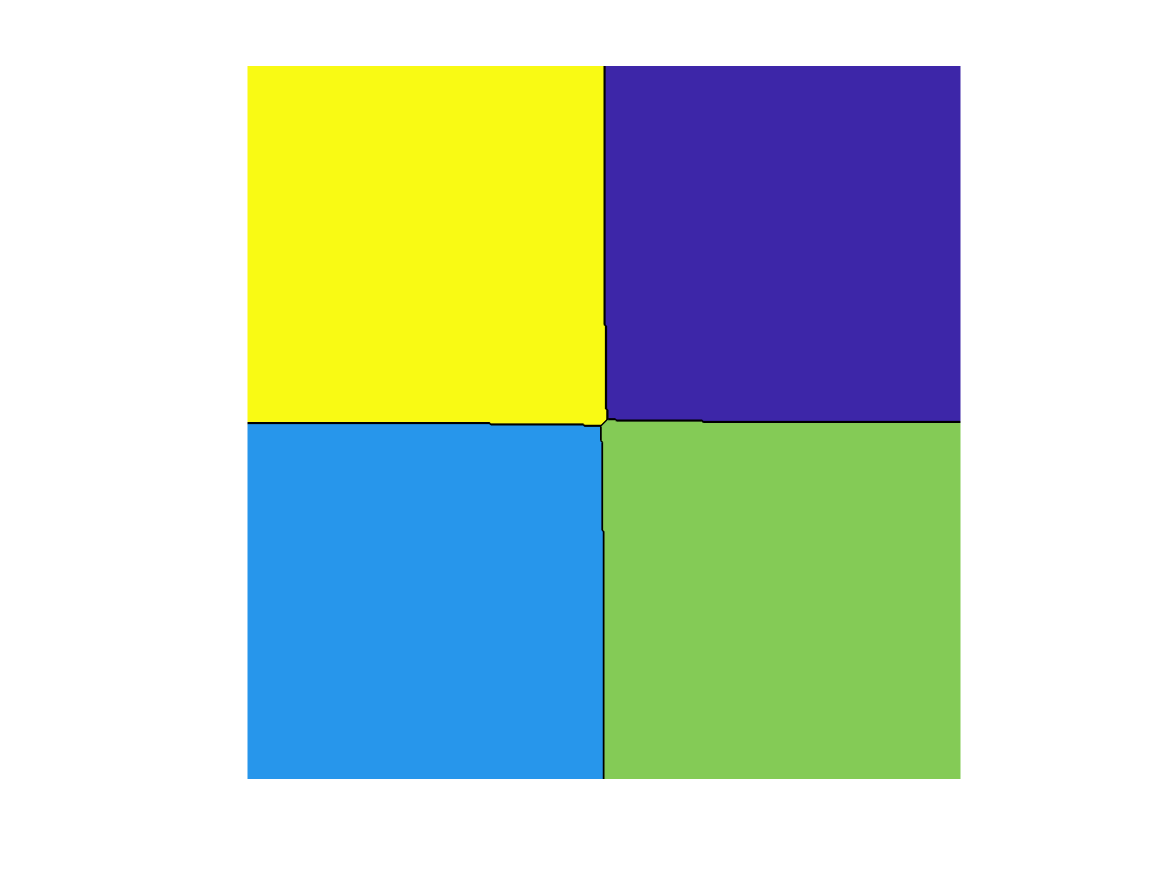}&
			\includegraphics[width = 0.09\textwidth, clip, trim = 4cm 1cm 3cm 1cm]{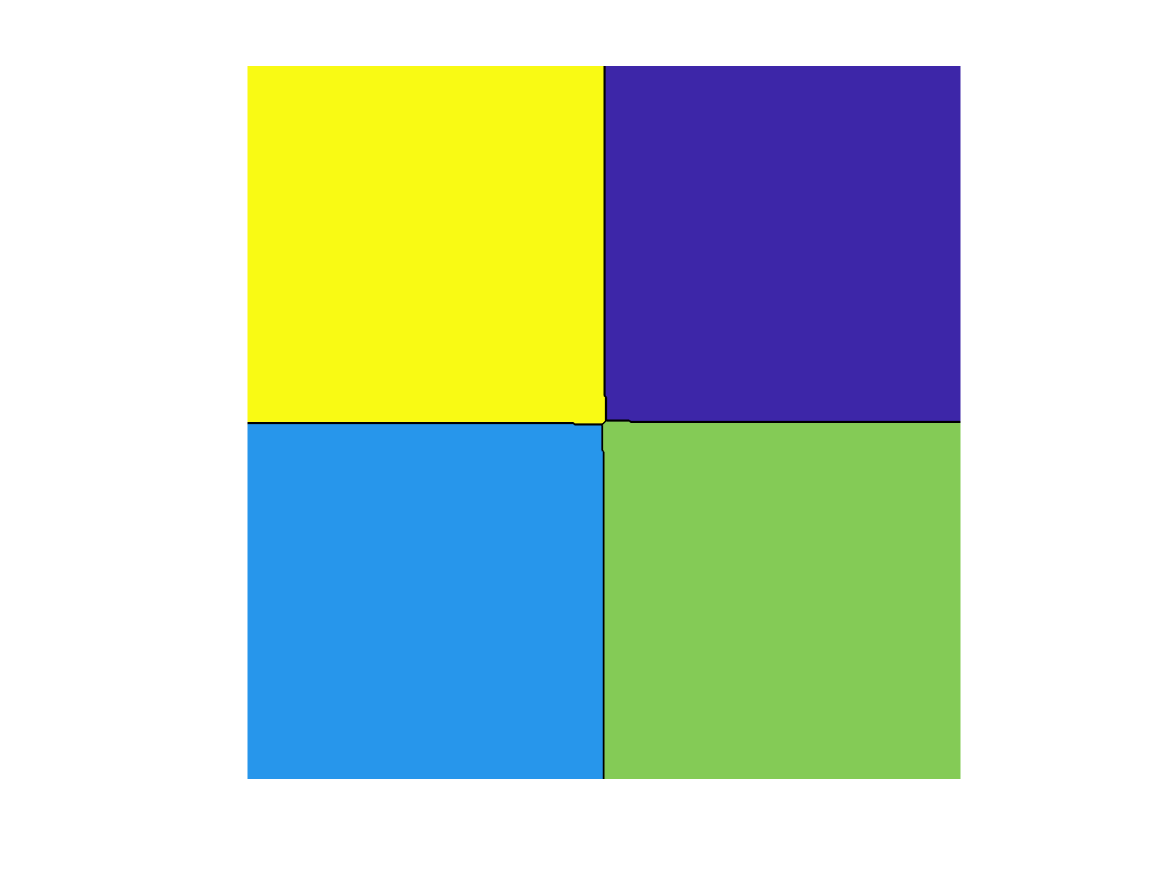}&
			\includegraphics[width = 0.09\textwidth, clip, trim = 4cm 1cm 3cm 1cm]{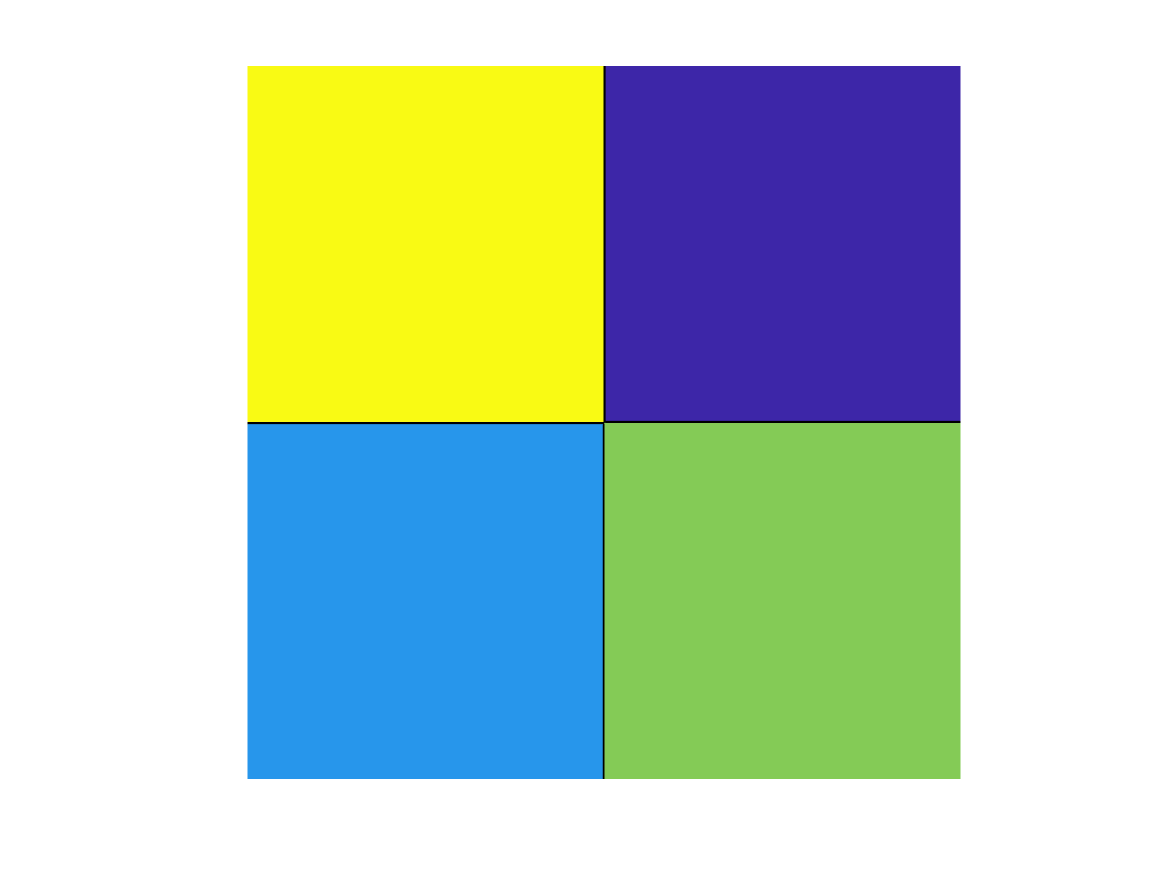} \\ 
			\hline
			\hline 
			initial & 50 &100  &150 & 200  & 250&300  & 415\\
			\includegraphics[width = 0.09\textwidth, clip, trim = 4cm 1cm 3cm 1cm]{figures_Dbnd/Dirinitialm5.png}&  
			\includegraphics[width = 0.09\textwidth, clip, trim = 4cm 1cm 3cm 1cm]{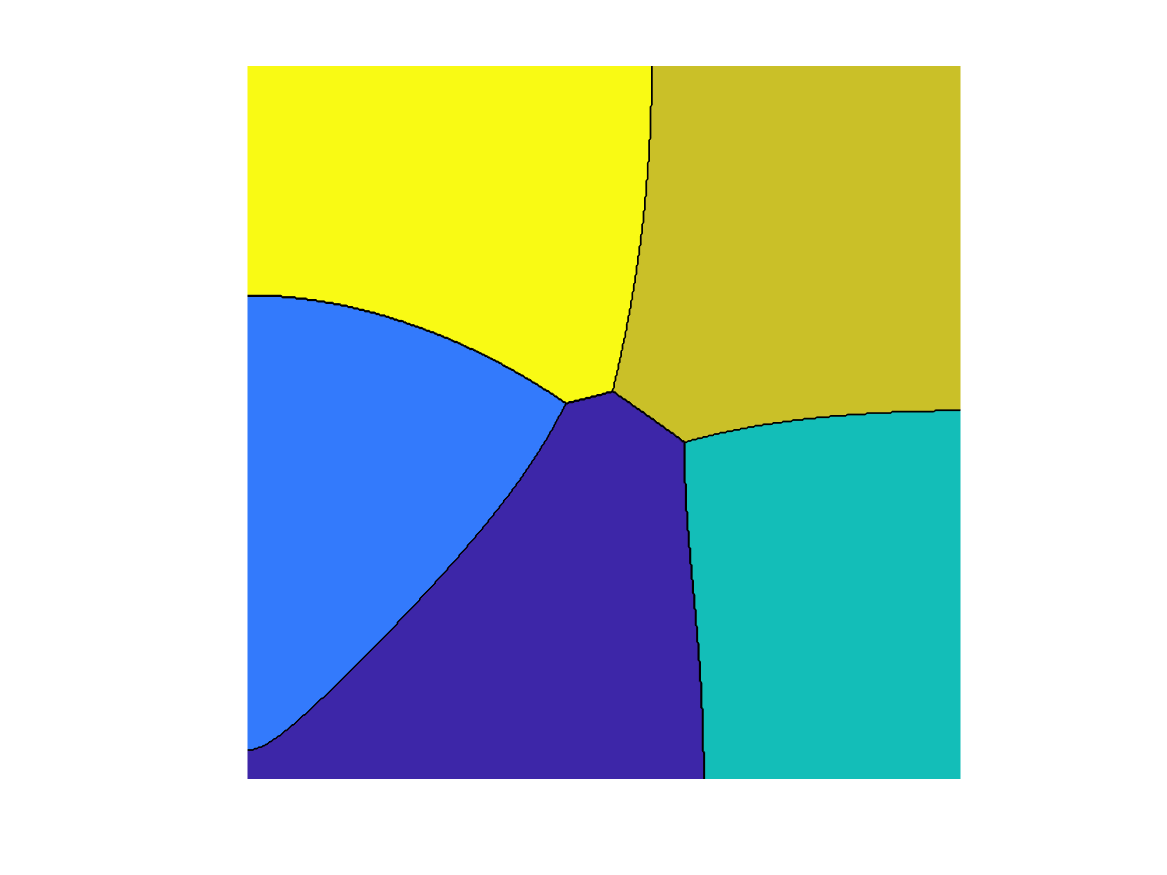}&
			\includegraphics[width = 0.09\textwidth, clip, trim = 4cm 1cm 3cm 1cm]{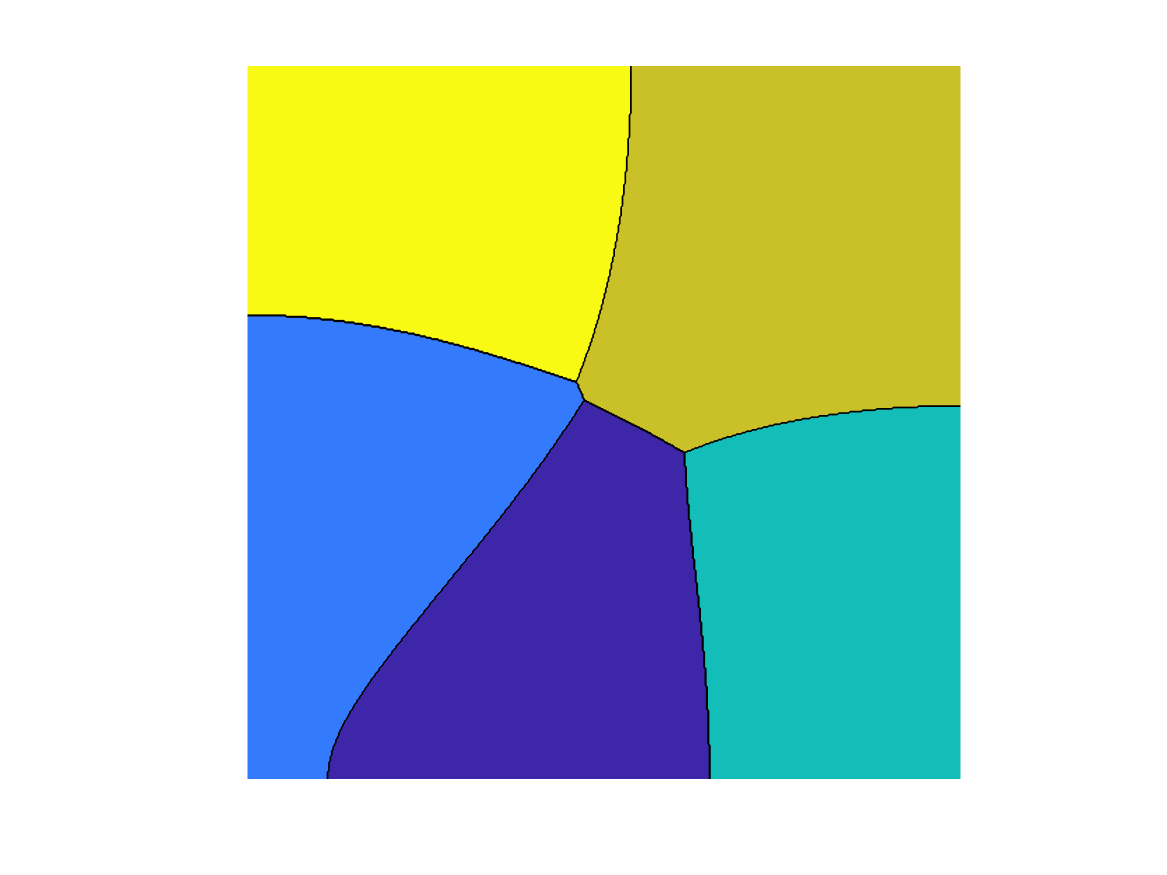}&
			\includegraphics[width = 0.09\textwidth, clip, trim = 4cm 1cm 3cm 1cm]{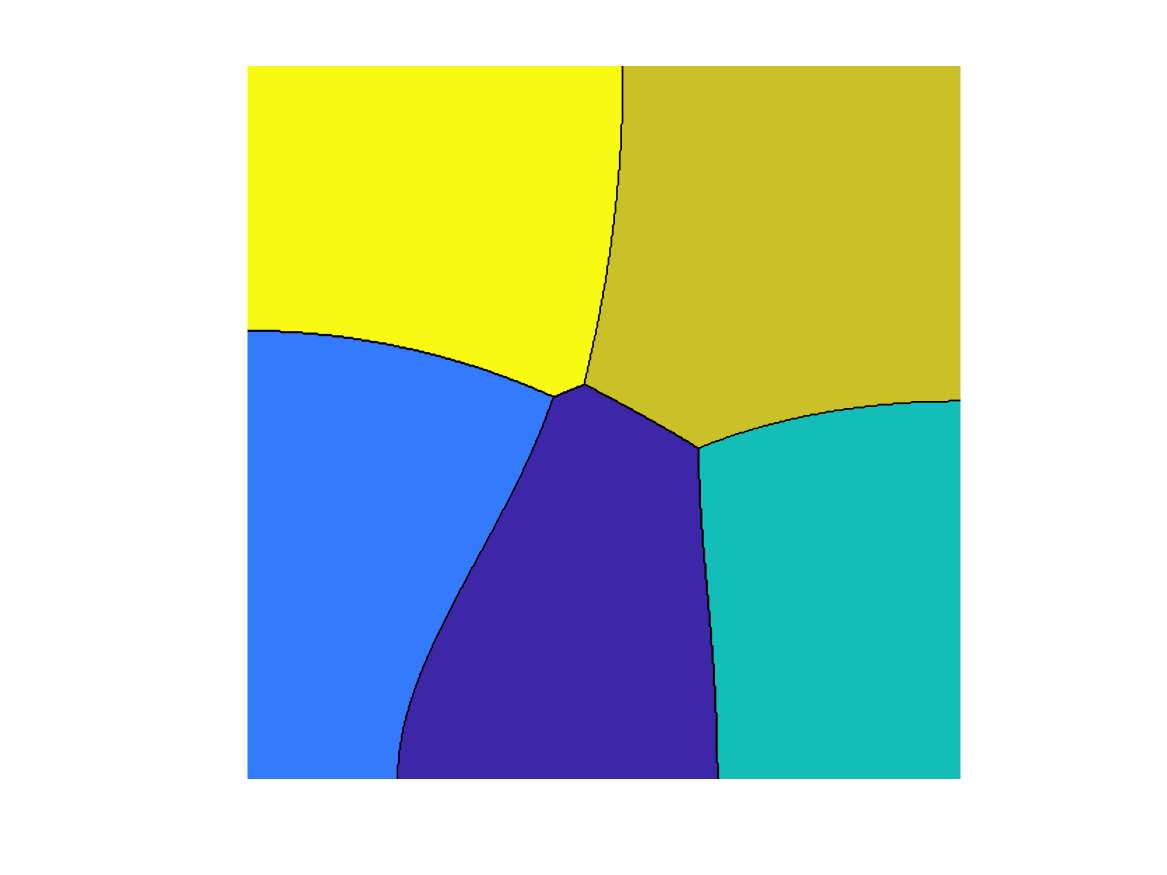}&
			\includegraphics[width = 0.09\textwidth, clip, trim = 4cm 1cm 3cm 1cm]{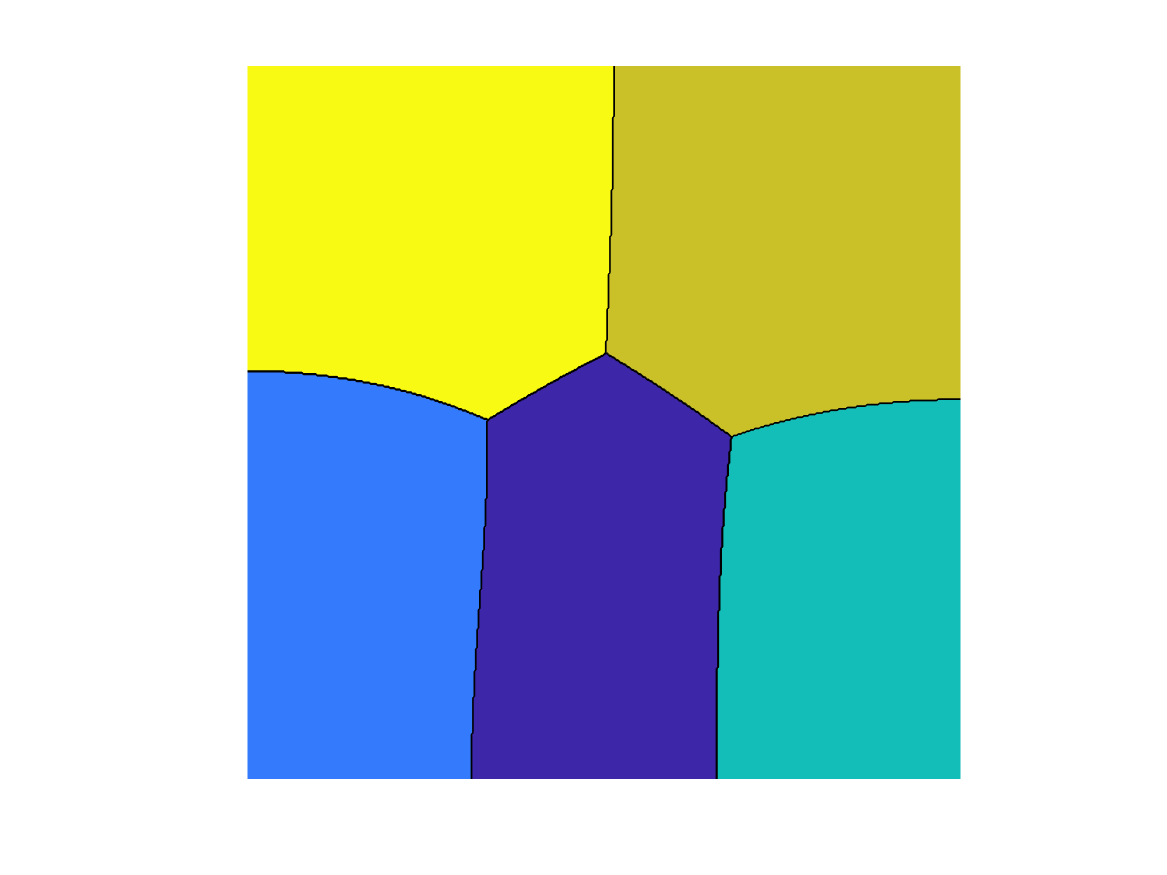}&
			\includegraphics[width = 0.09\textwidth, clip, trim = 4cm 1cm 3cm 1cm]{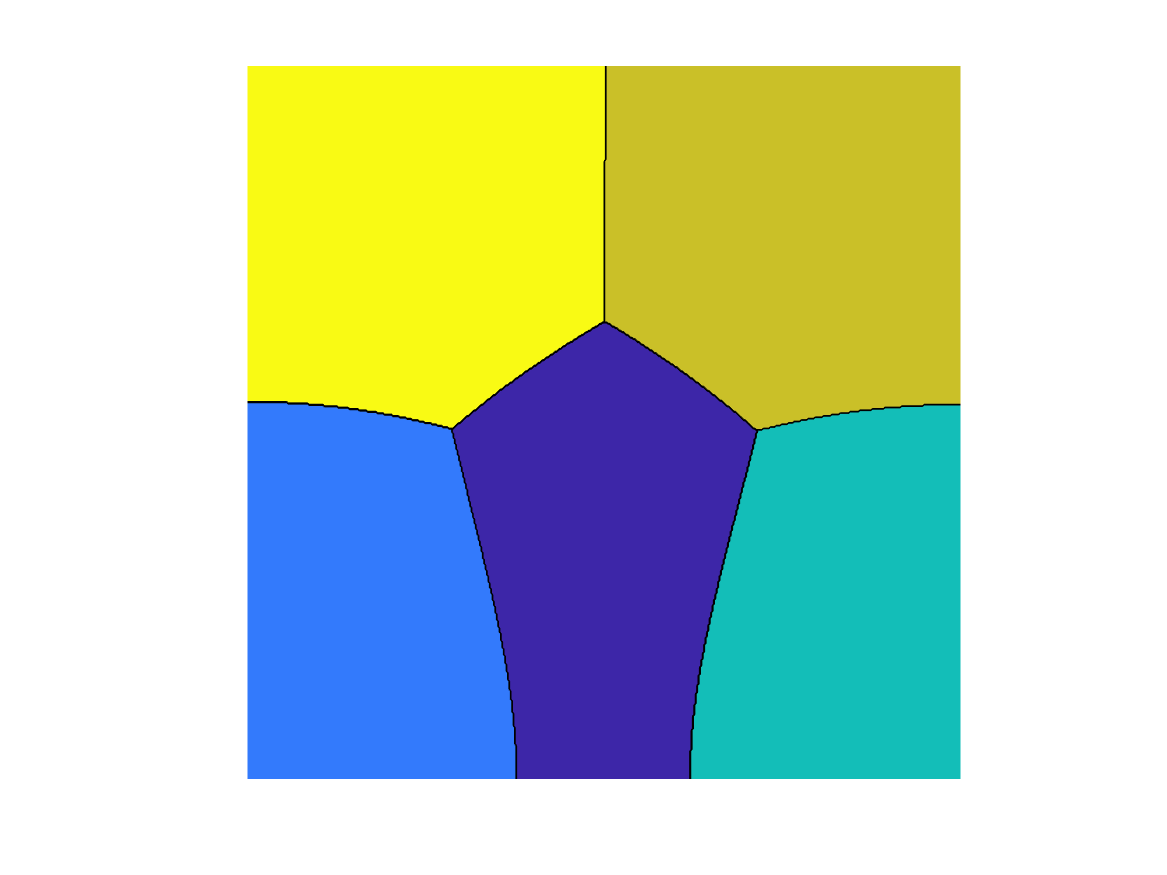}&
			\includegraphics[width = 0.09\textwidth, clip, trim = 4cm 1cm 3cm 1cm]{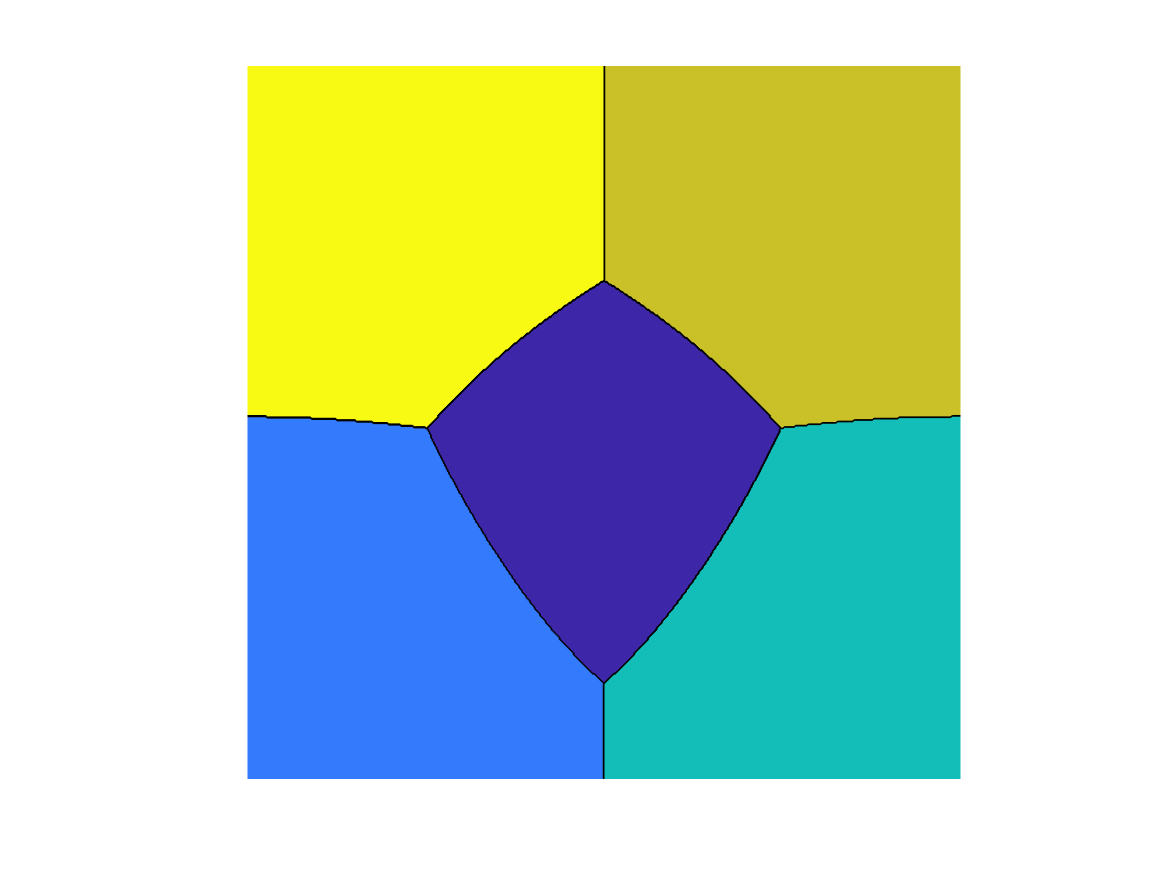}&
			\includegraphics[width = 0.09\textwidth, clip, trim = 4cm 1cm 3cm 1cm]{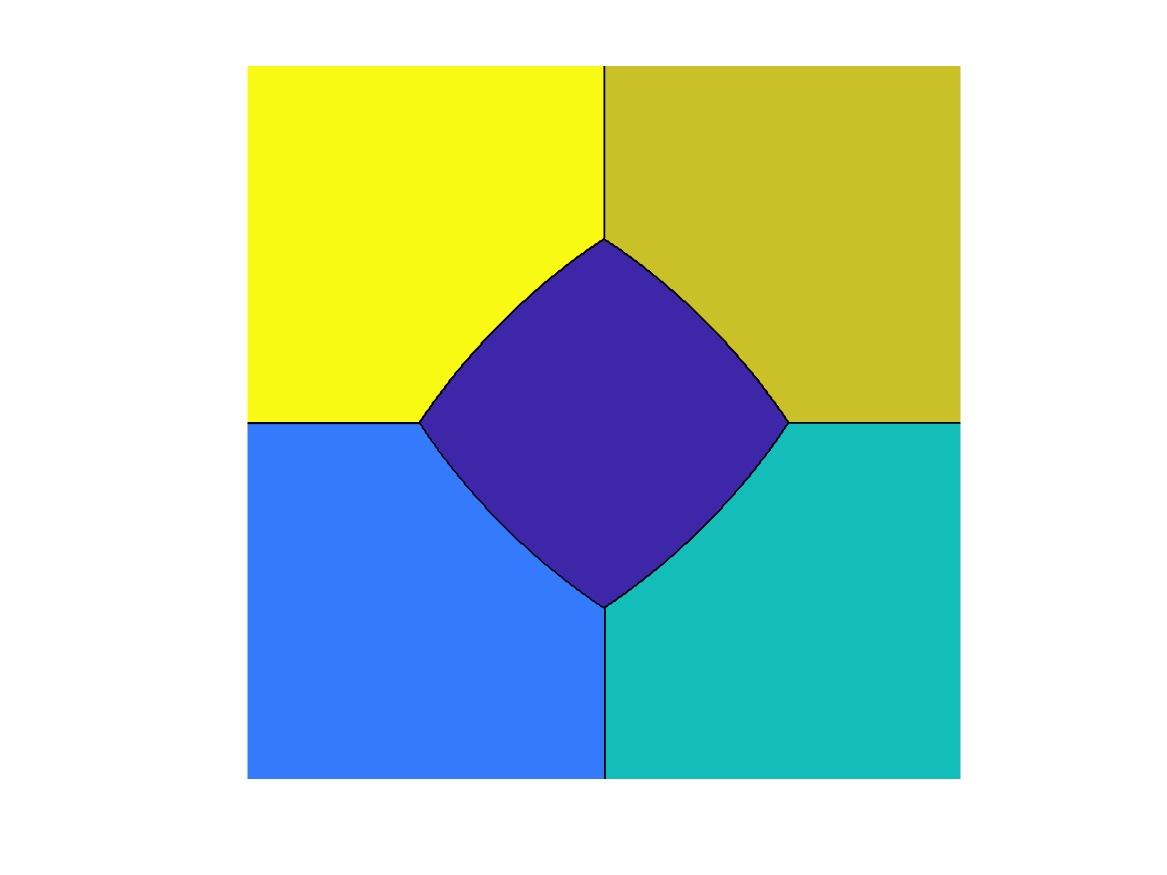} \\ 
			\hline
		\end{tabular}
		\caption{Snapshots of Algorithm~\ref{Alg:3step_Type1} with Dirichlet boundary conditions at different iterations on a $512\times512$ discretized mesh with a time step size of $\tau=0.1$. The first row corresponds to $k=4$, while the second row corresponds to $k=5$.} \label{fig:EvolAlg_3stepT1dt01Dir}
	\end{figure}
	\subsubsection{Arbitrary domains}  
For solving the gradient system \eqref{gradient_sys}-\eqref{norm:4} with Dirichlet boundary conditions on an arbitrary domain \(\Omega_{\text{int}}\), the algorithms discussed in this paper remain applicable by extending \(\Omega_{\text{int}}\) to a larger, regular domain \(\Omega\) (i.e., \(\Omega_{\text{int}} \subsetneq \Omega\)). Specifically, an additional step needs to be introduced following the diffusion step of the algorithms:
\[
\tilde{u}^{n+1} = \chi_{\Omega_{\text{int}}} \tilde{u}^{n+1},
\]
where the indicator function \( \chi_{\Omega_{\text{int}}} \) is defined in \eqref{u0_guess}.
 The results depicted in Figures~\ref{fig:5AbiDAlg1Alg1} and \ref{fig:AbiDAlg1Alg2Alg3} are obtained with $\Omega=[-\pi,\pi]^2$ using a standard compact difference method \cite{ZhuJuZhao} on a $512 \times 512$ discretized mesh. To prevent the partition from diffusing outside \(\Omega_{\text{int}}\), we set \(\tau=[1/128, 1/64, 1/32, 1/16]\) for the initial four iterations and then adopt \(\tau=1/8\) for subsequent iterations.
 The results in Figure~\ref{fig:5AbiDAlg1Alg1} validate the robust performance of Algorithm~\ref{Alg:4step} across domains with diverse shapes and topologies. Additionally, Figure~\ref{fig:AbiDAlg1Alg2Alg3} illustrates that Algorithms~\ref{Alg:4step}-\ref{Alg:3step_Type2} converge to the same partition when initialized under identical conditions, highlighting their consistency and reliability.
	\begin{figure}[H]
		\centering
		\includegraphics[width = 0.1\textwidth, clip, trim = 7cm 5.5cm 6cm 4cm]{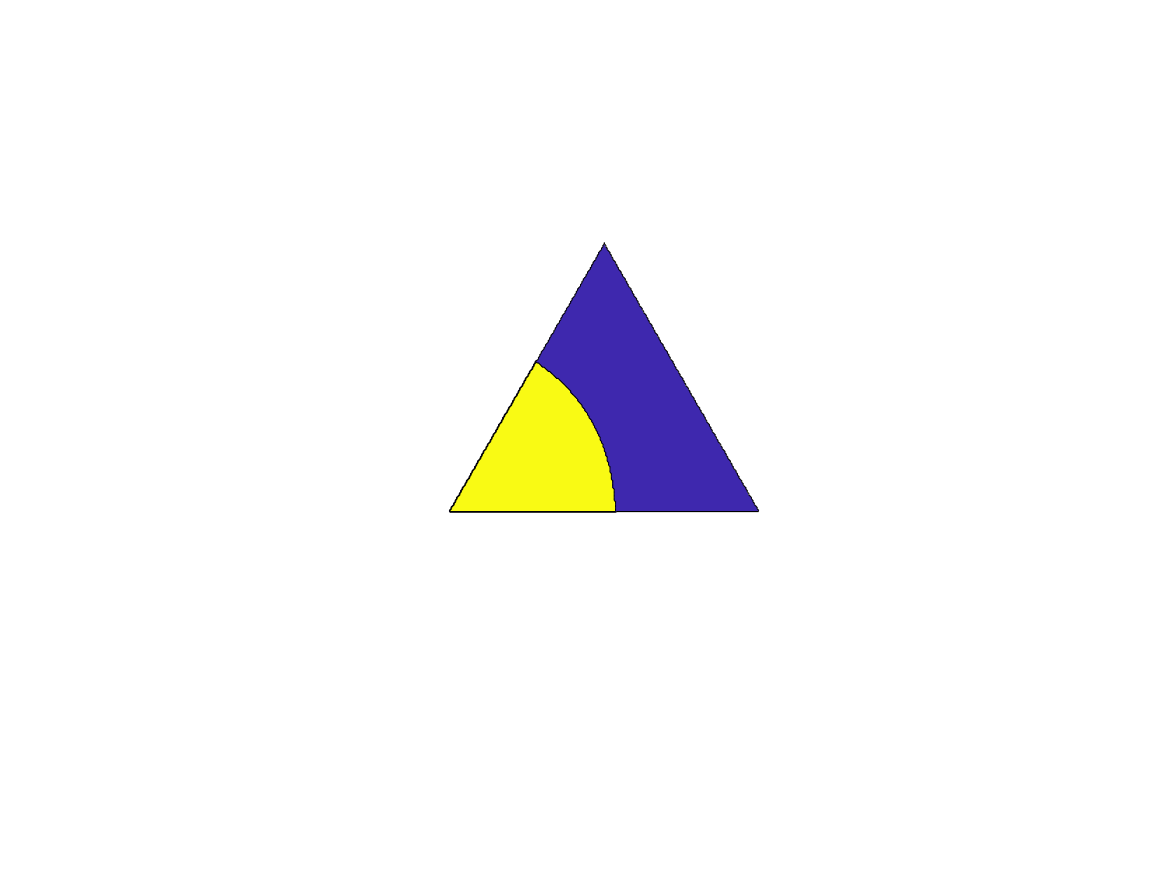}
		\includegraphics[width = 0.1\textwidth, clip, trim = 7cm 5.5cm 6cm 4cm]{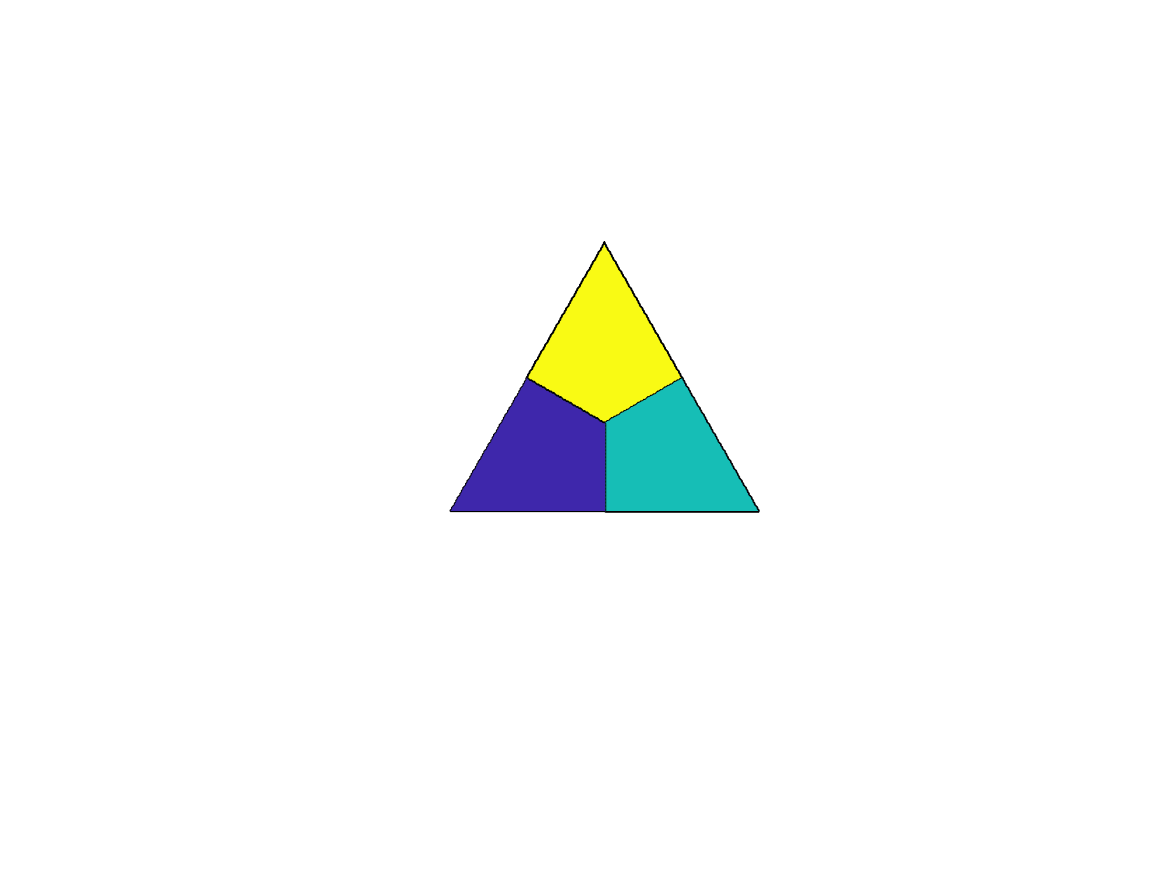}
		\includegraphics[width = 0.1\textwidth, clip, trim = 7cm 5.5cm 6cm 4cm]{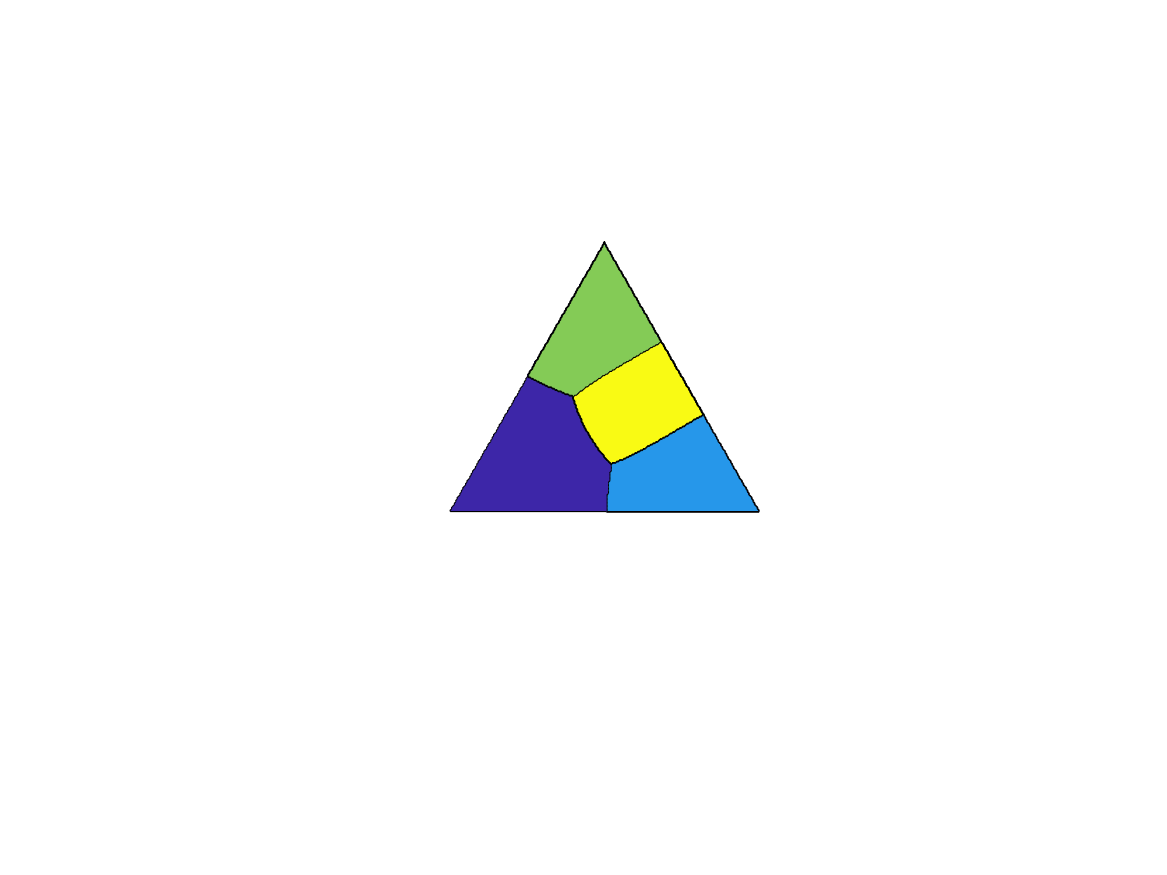}
		\includegraphics[width = 0.1\textwidth, clip, trim = 7cm 5.5cm 6cm 4cm]{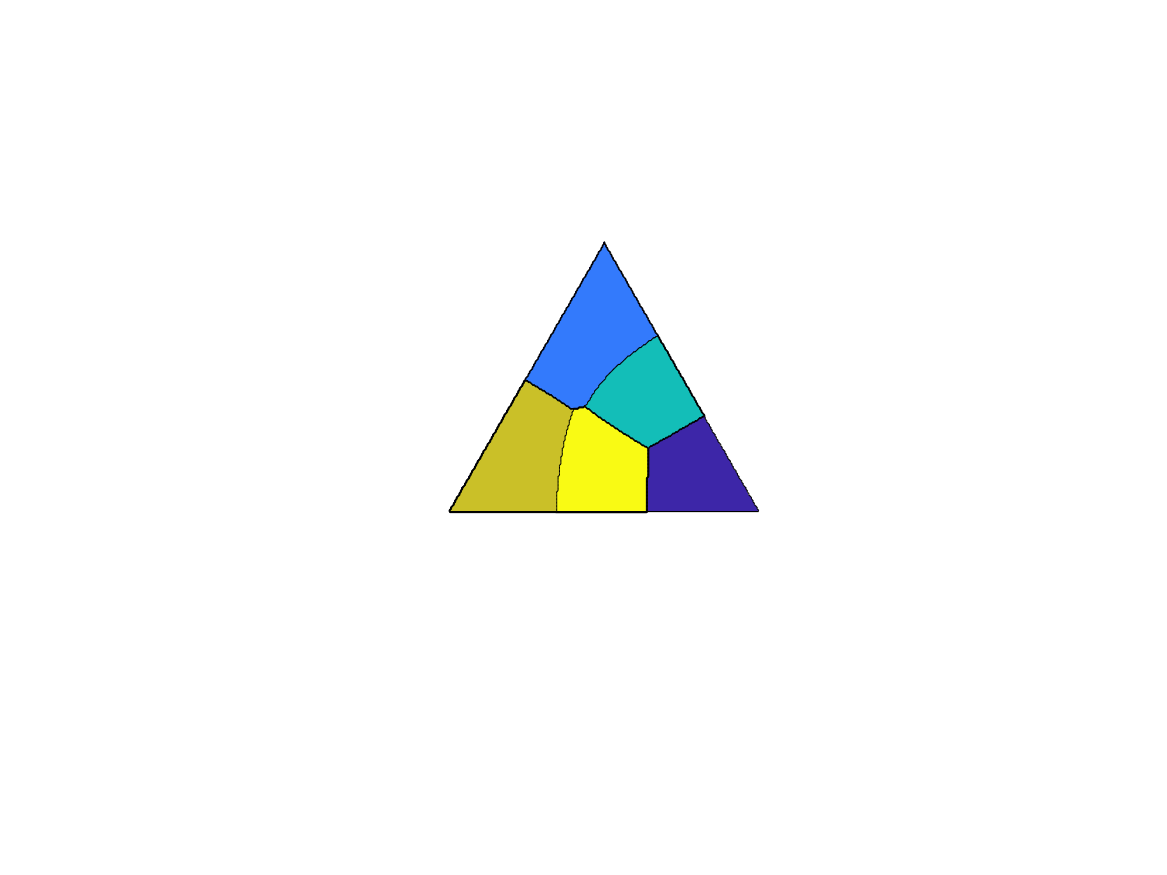}
		\includegraphics[width = 0.1\textwidth, clip, trim = 7cm 5.5cm 6cm 4cm]{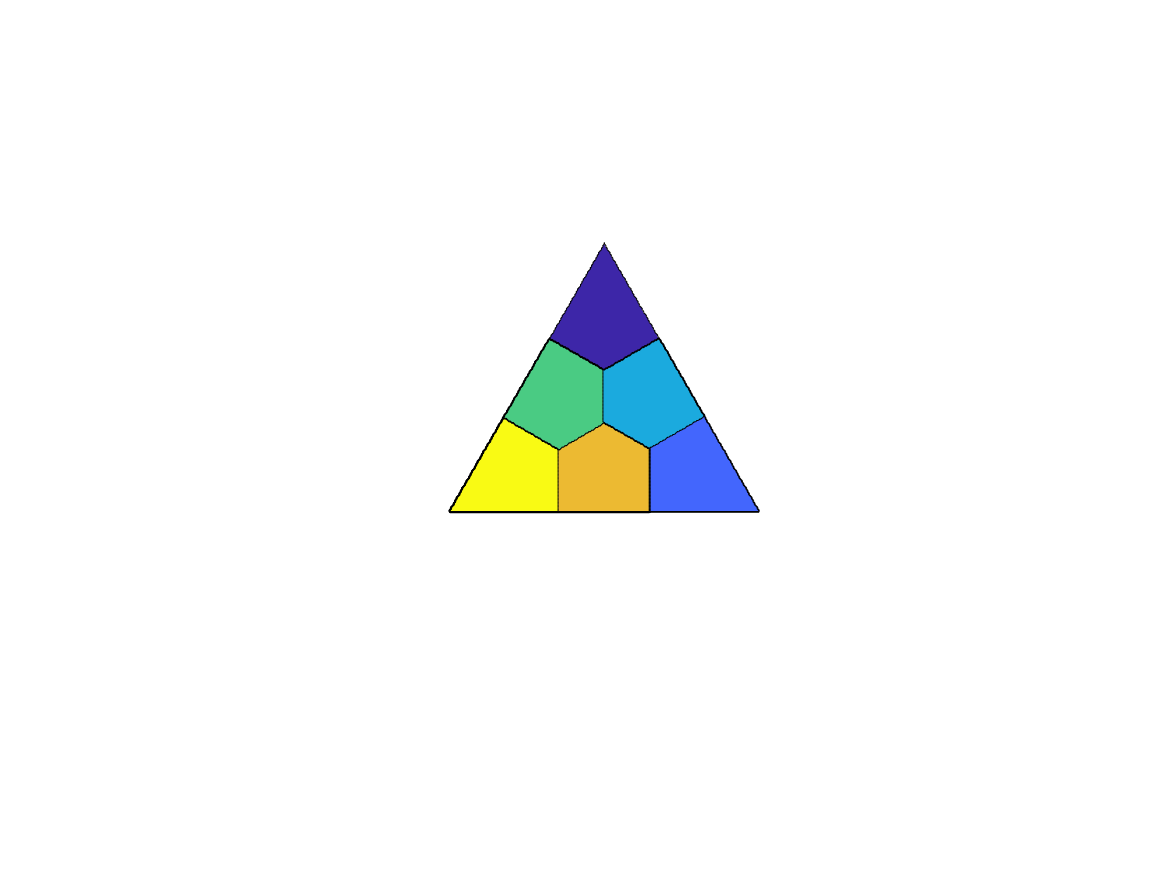}
		\includegraphics[width = 0.1\textwidth, clip, trim = 7cm 5.5cm 6cm 4cm]{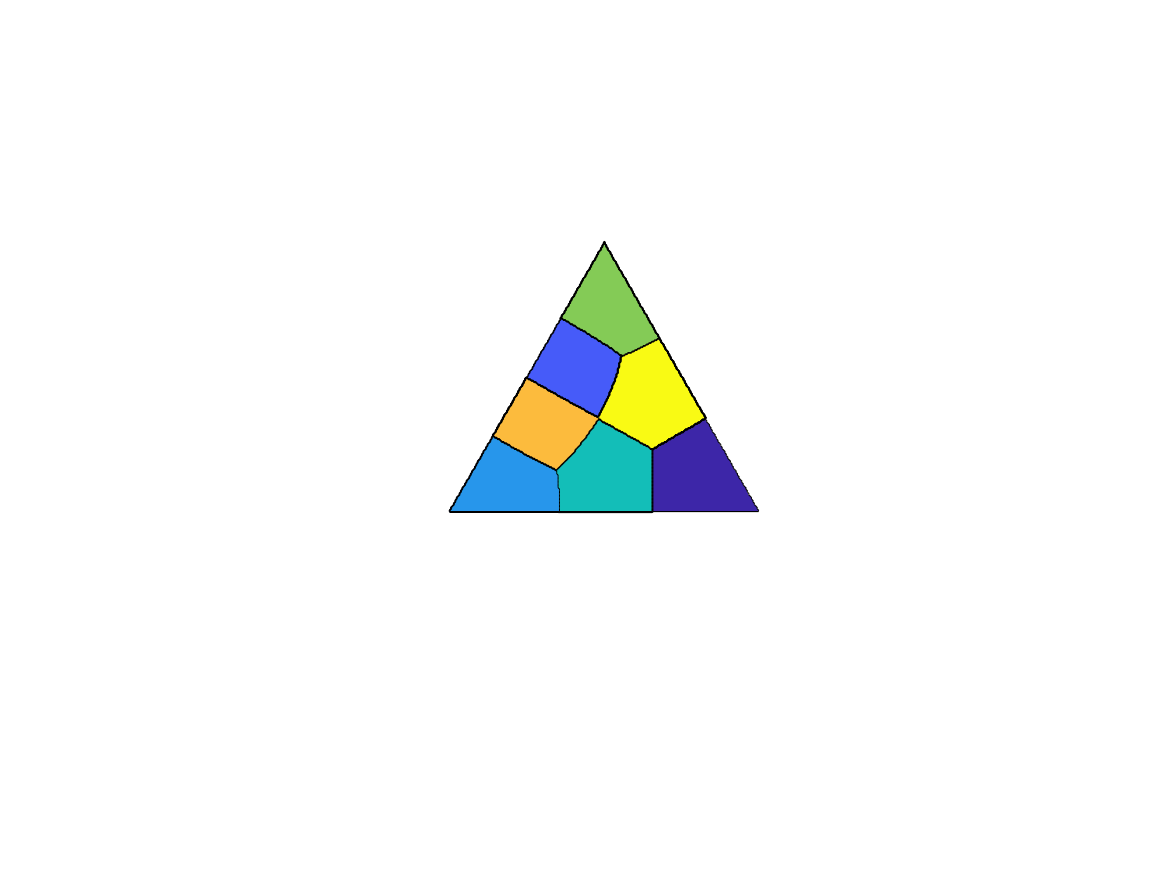}
		\includegraphics[width = 0.1\textwidth, clip, trim = 7cm 5.5cm 6cm 4cm]{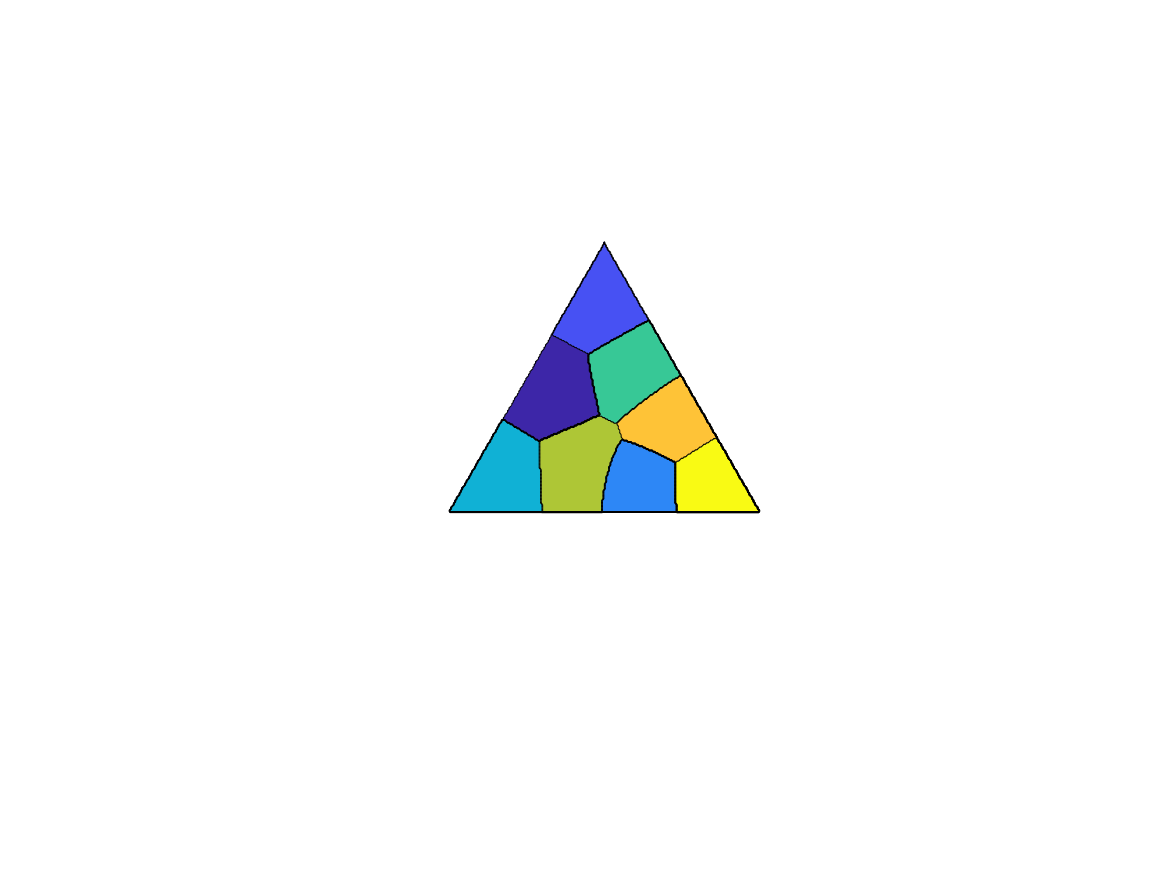}
		\includegraphics[width = 0.1\textwidth, clip, trim = 7cm 5.5cm 6cm 4cm]{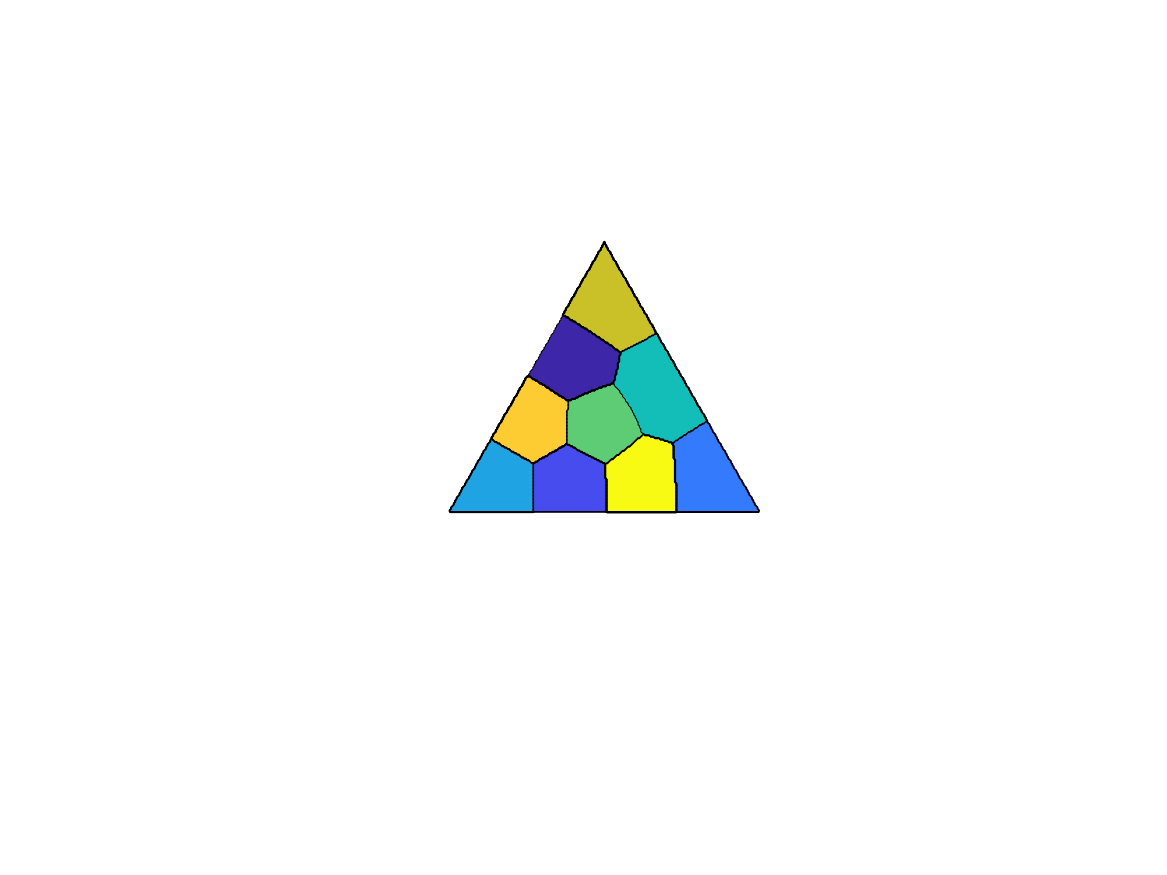}
		\includegraphics[width = 0.1\textwidth, clip, trim = 7cm 5.5cm 6cm 4cm]{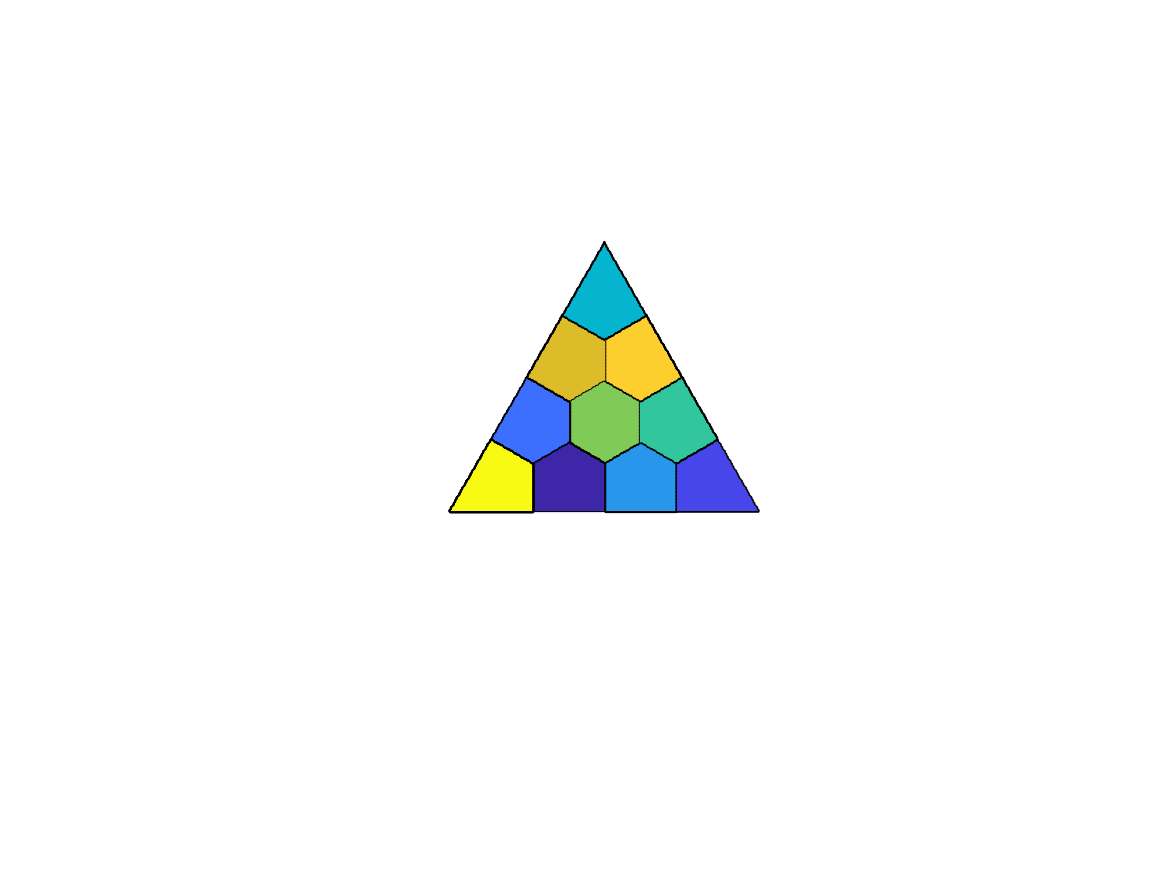}
		\smallskip
		\includegraphics[width = 0.1\textwidth, clip, trim = 7cm 5cm 6cm 4cm]{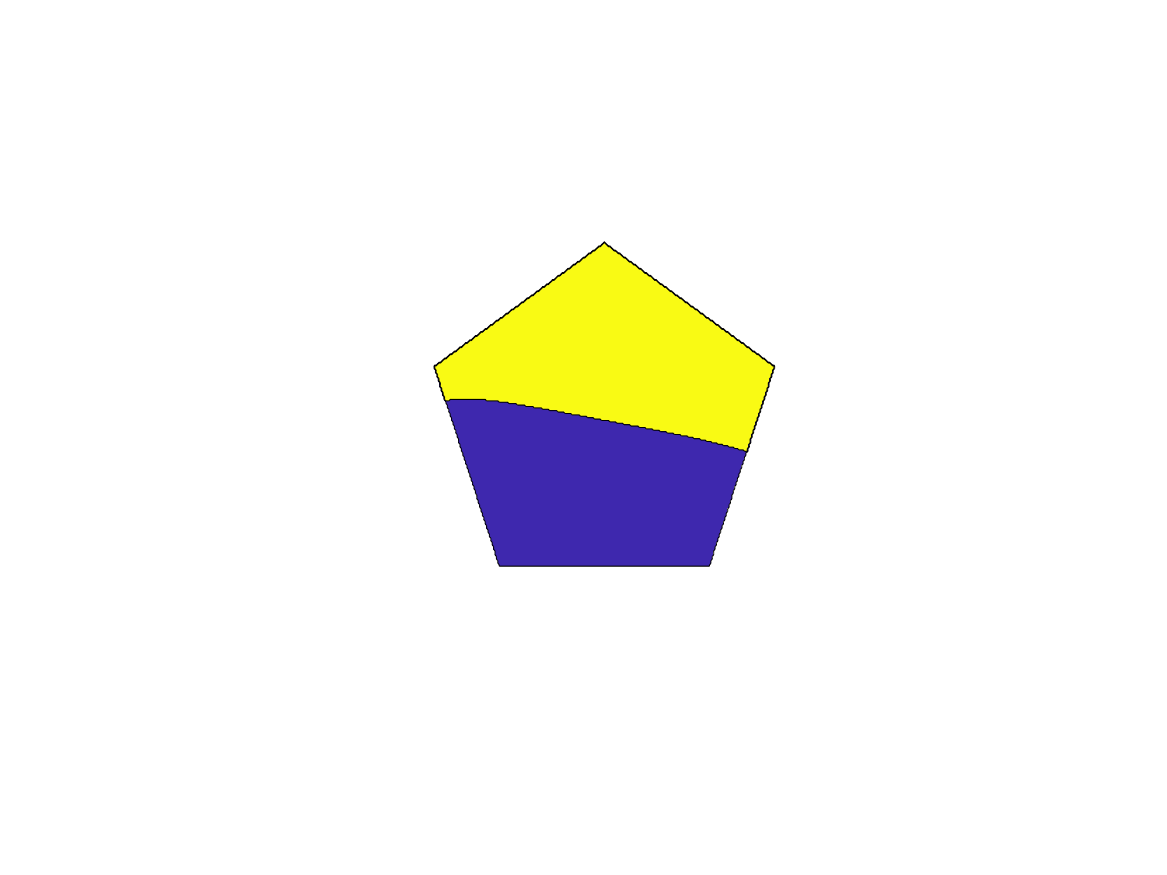}
		\includegraphics[width = 0.1\textwidth, clip, trim = 7cm 5cm 6cm 4cm]{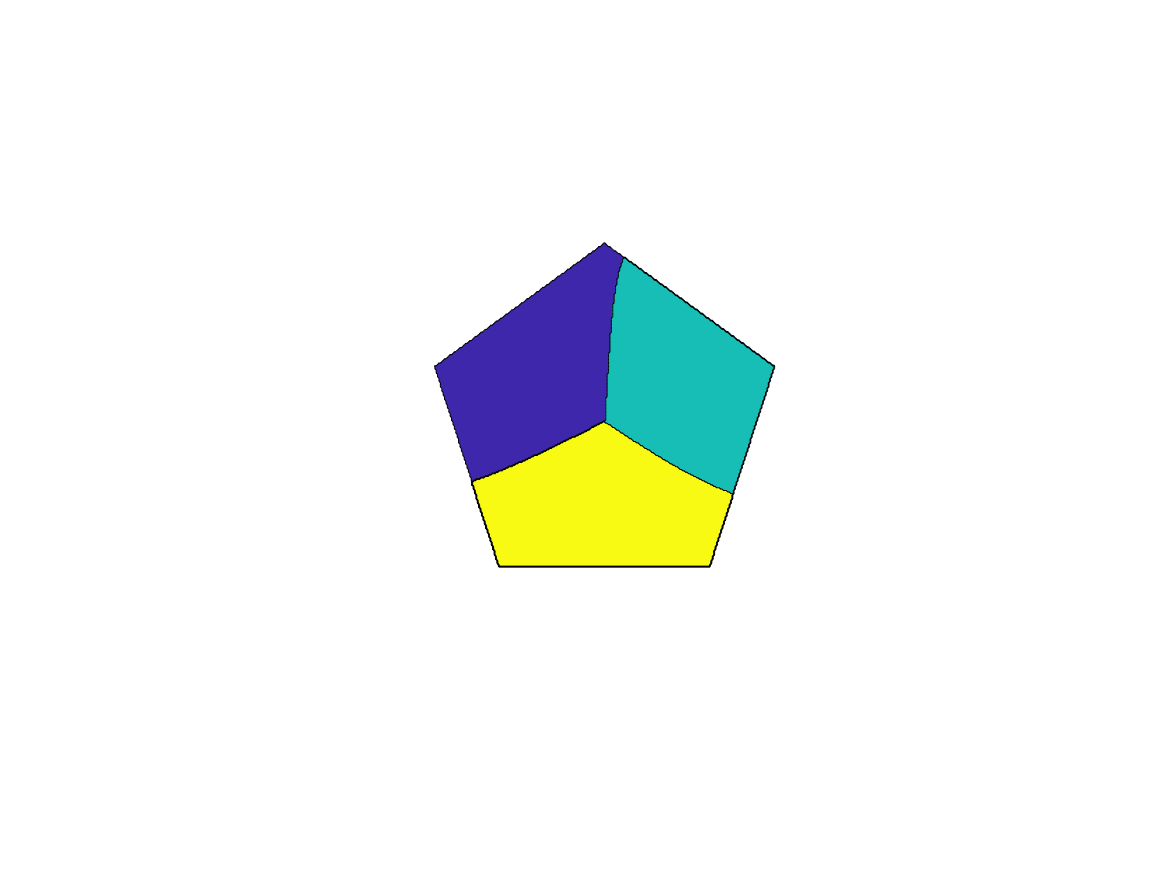}
		\includegraphics[width = 0.1\textwidth, clip, trim = 7cm 5cm 6cm 4cm]{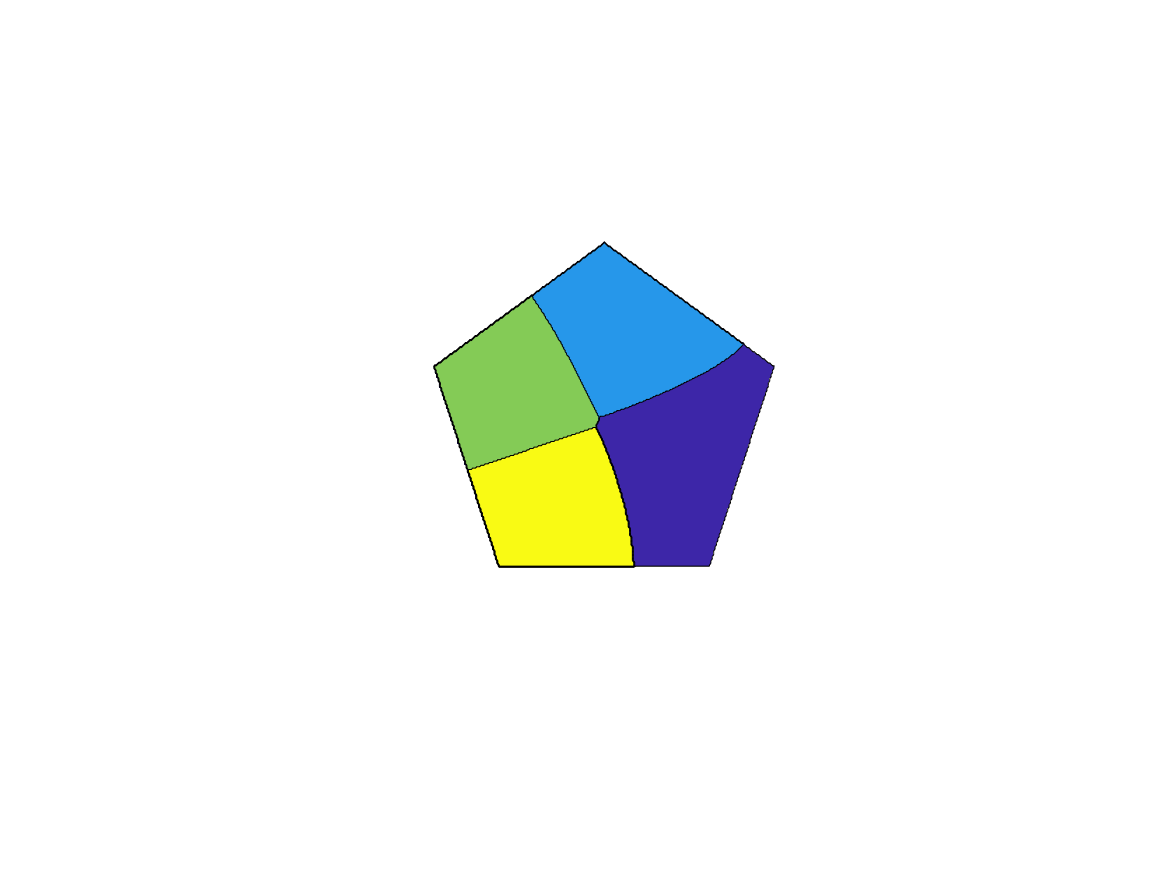}
		\includegraphics[width = 0.1\textwidth, clip, trim = 7cm 5cm 6cm 4cm]{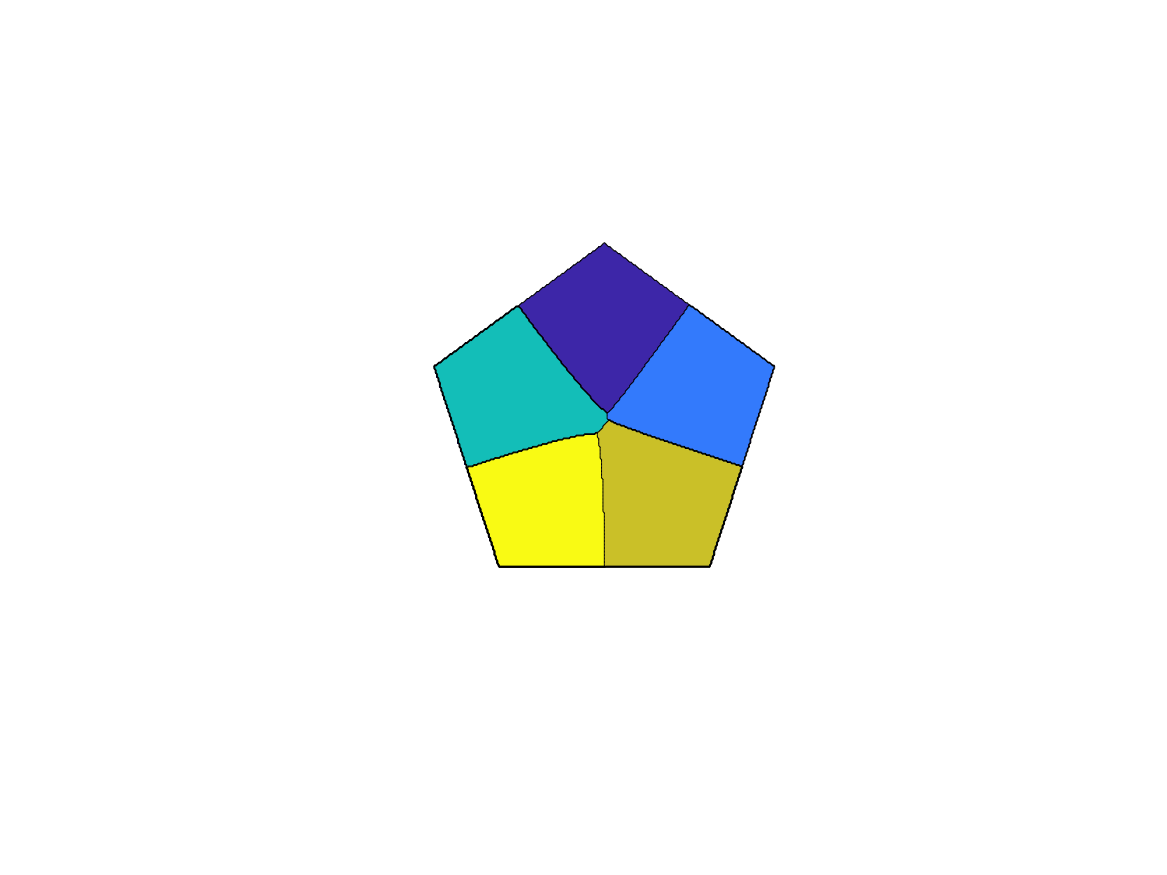}
		\includegraphics[width = 0.1\textwidth, clip, trim = 7cm 5cm 6cm 4cm]{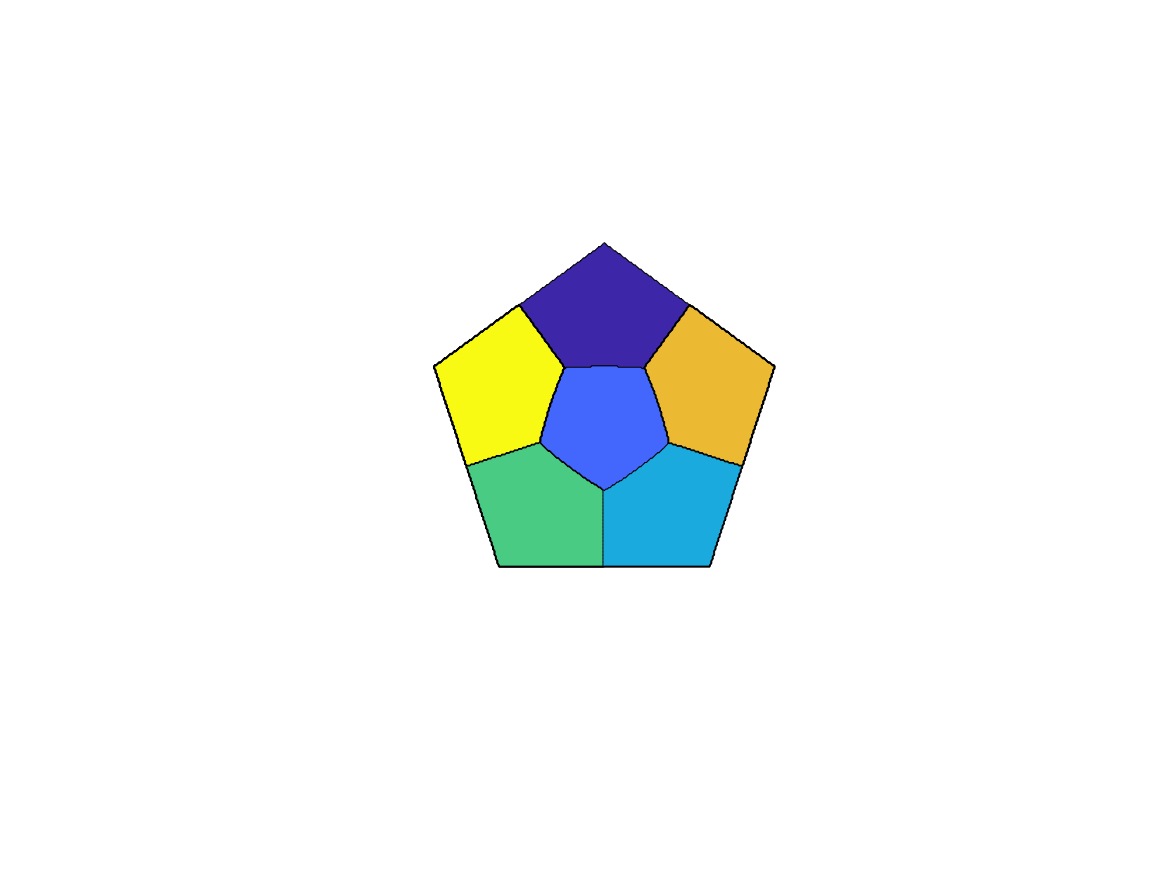}
		\includegraphics[width = 0.1\textwidth, clip, trim = 7cm 5cm 6cm 4cm]{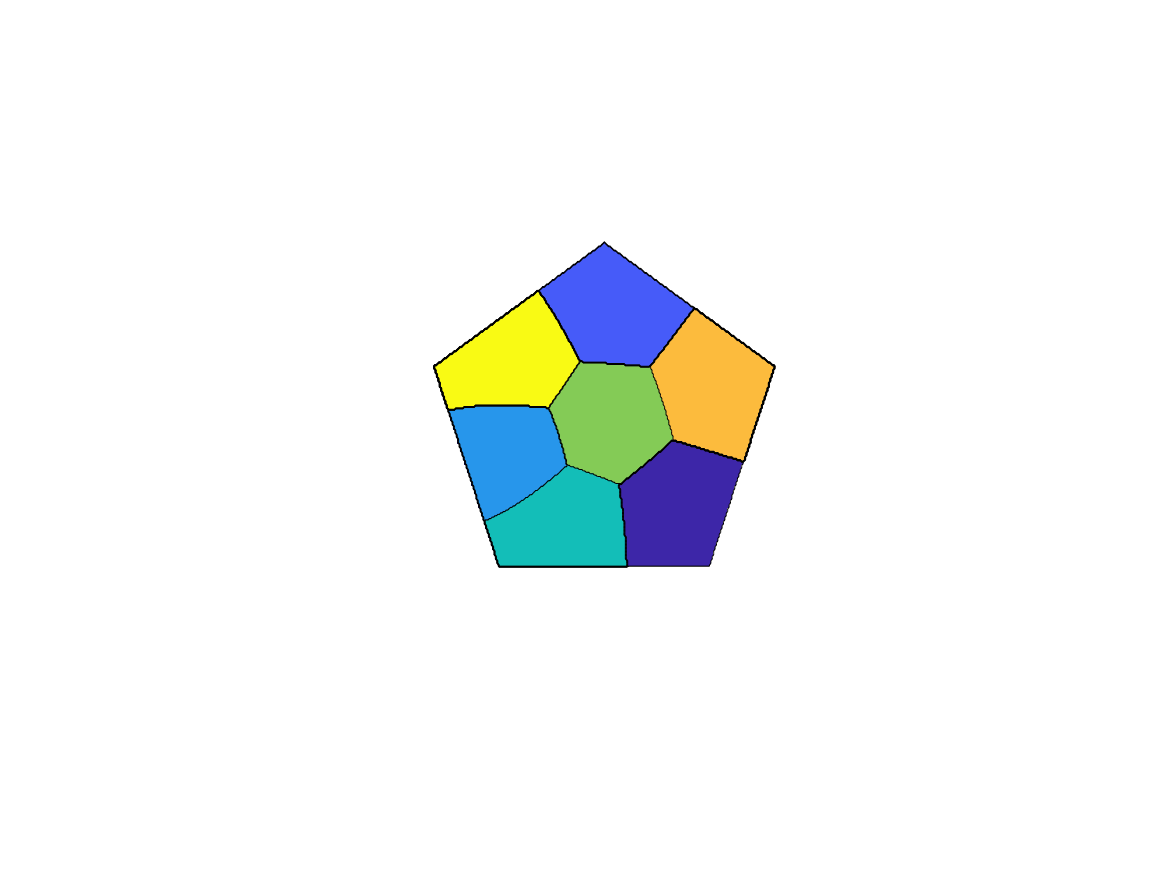}
		\includegraphics[width = 0.1\textwidth, clip, trim = 7cm 5cm 6cm 4cm]{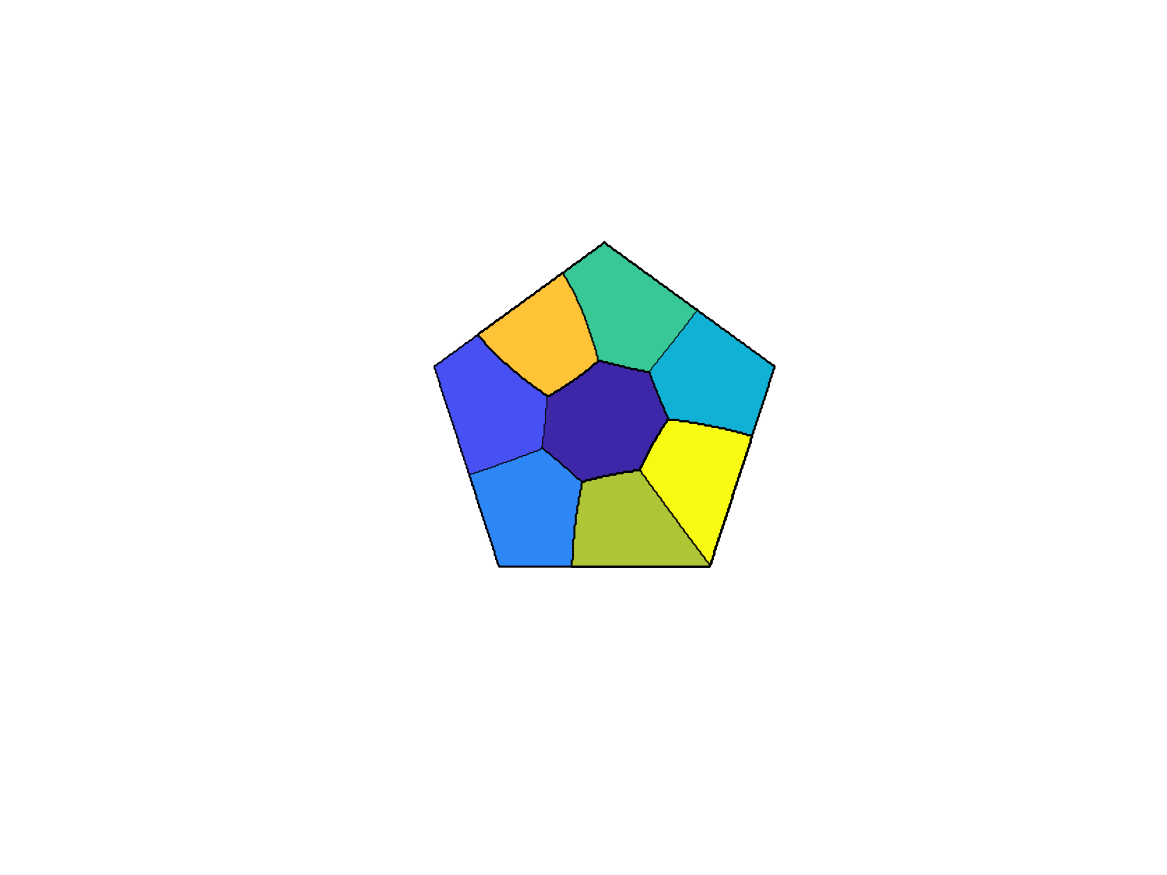}
		\includegraphics[width = 0.1\textwidth, clip, trim = 7cm 5cm 6cm 4cm]{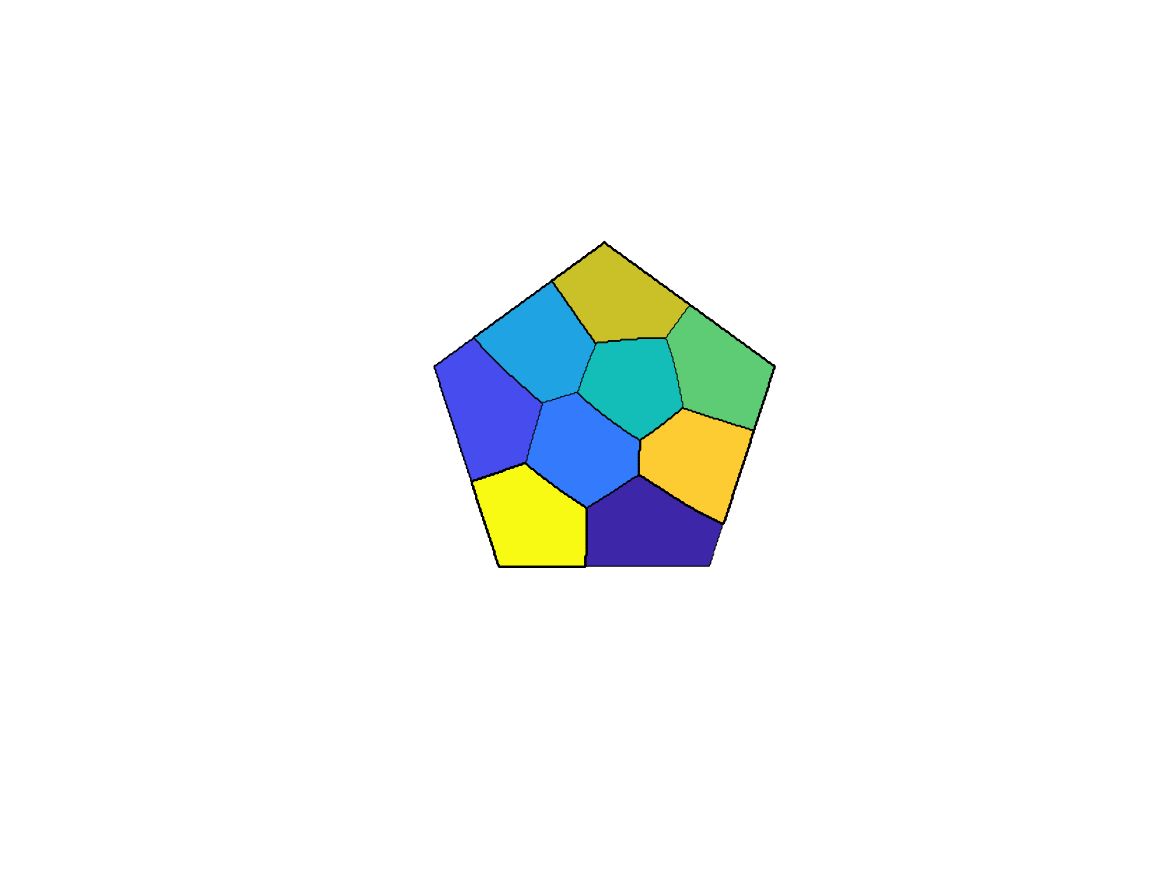}
		\includegraphics[width = 0.1\textwidth, clip, trim = 7cm 5cm 6cm 4cm]{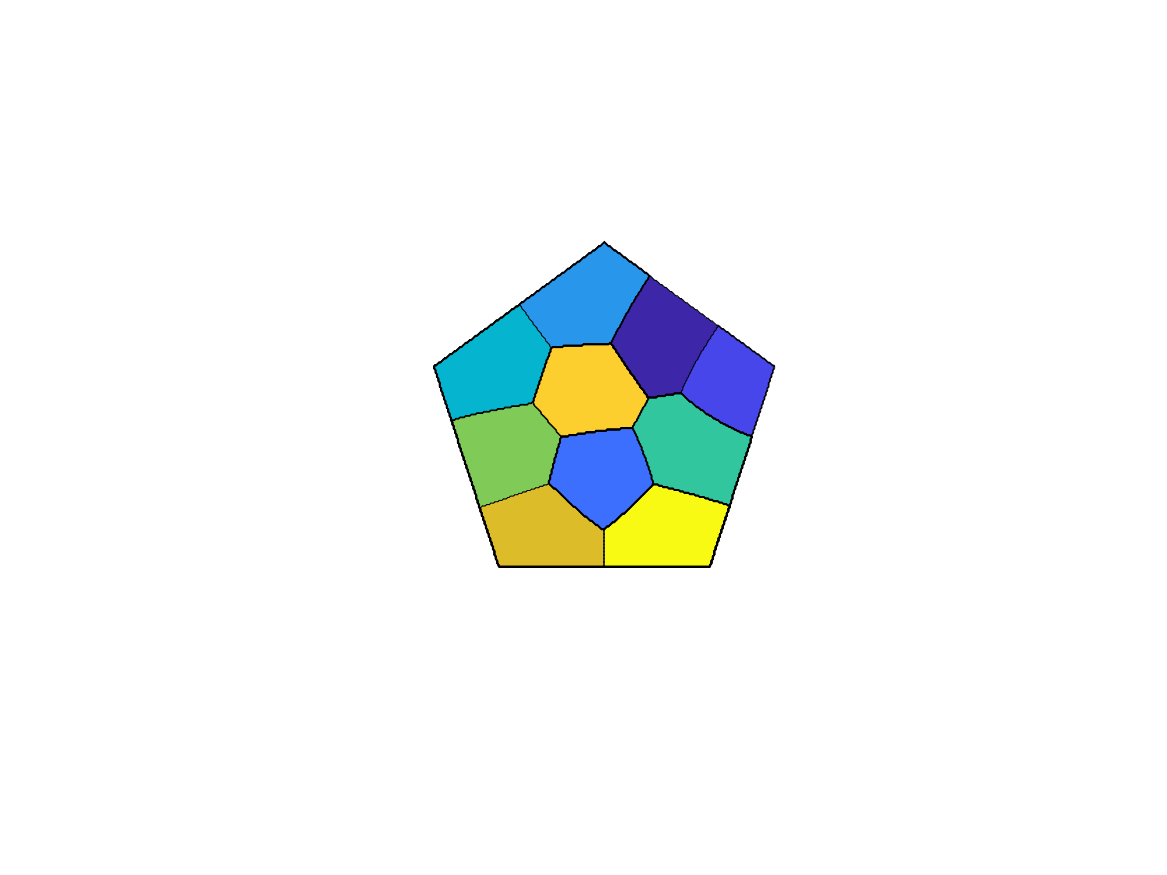}
		\smallskip
		\includegraphics[width = 0.1\textwidth, clip, trim = 7cm 4.5cm 6cm 4cm]{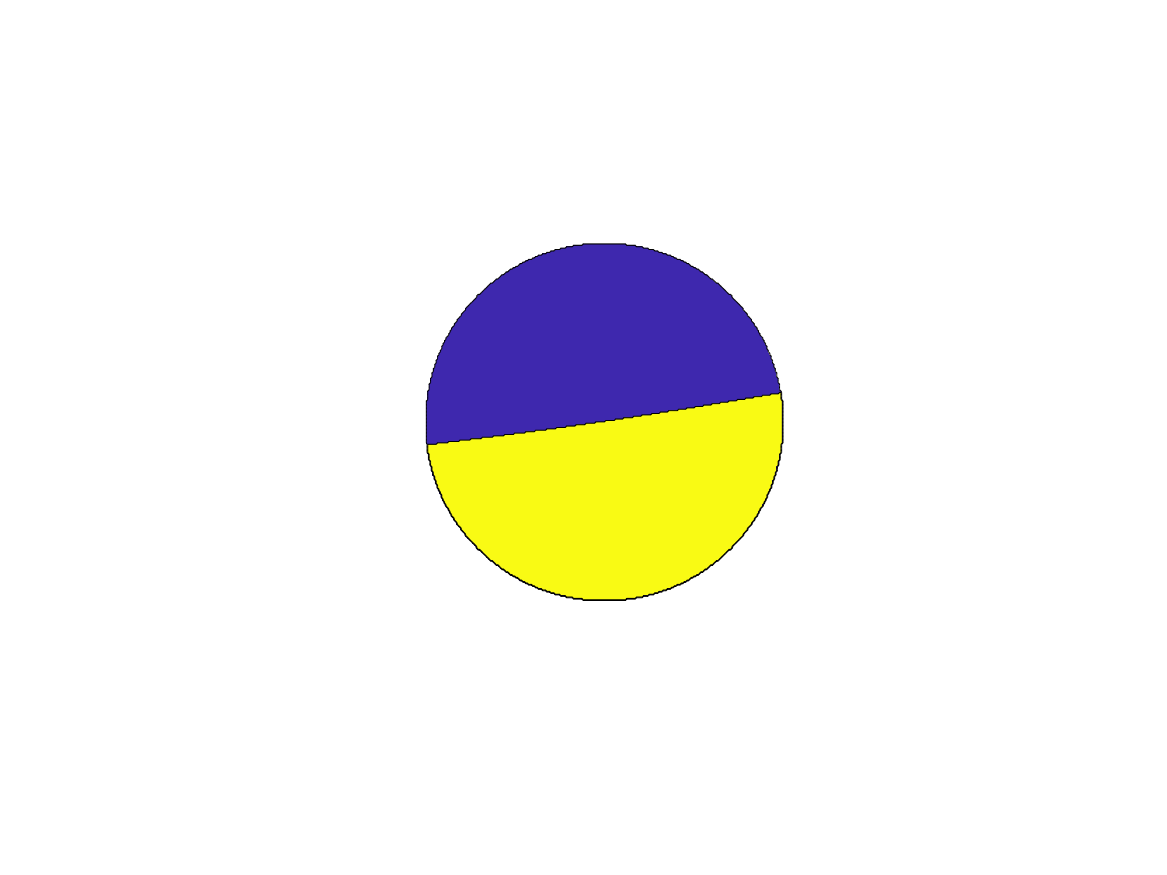}
		\includegraphics[width = 0.1\textwidth, clip, trim = 7cm 4.5cm 6cm 4cm]{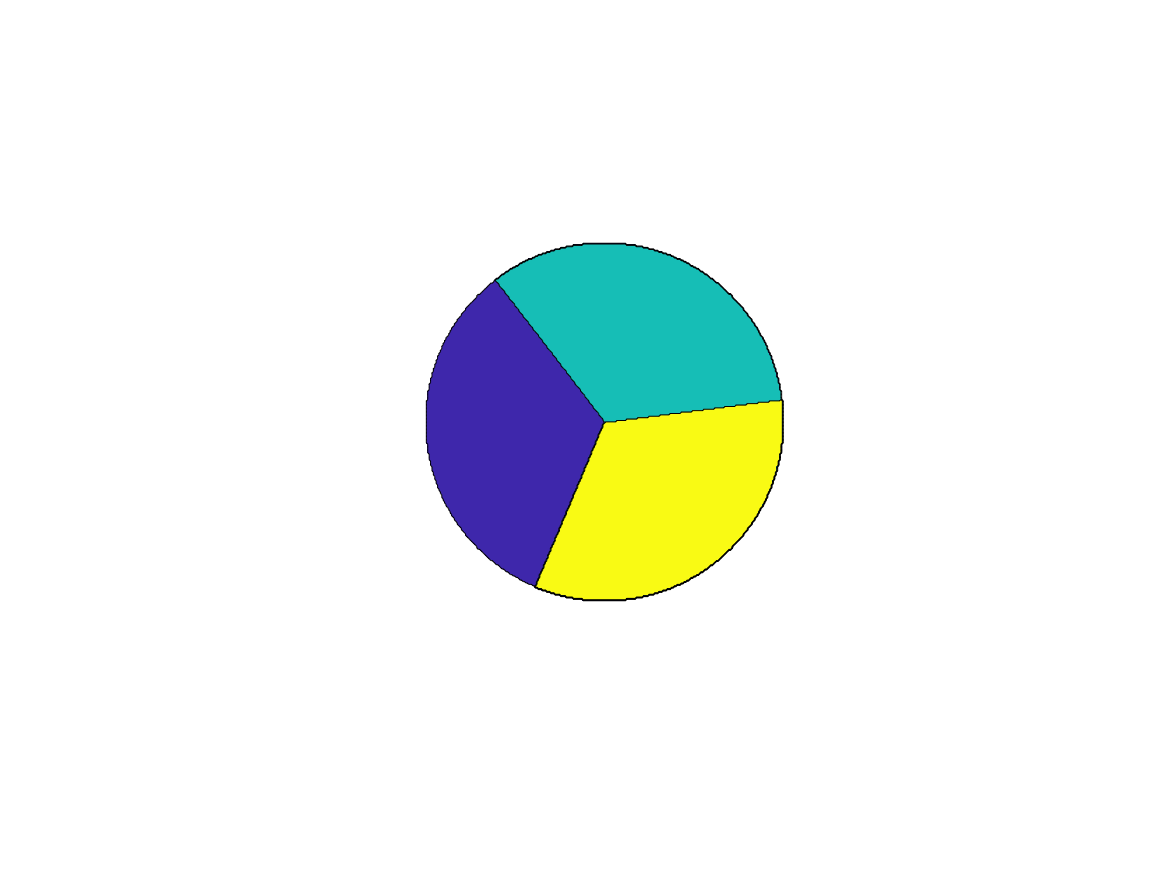}
		\includegraphics[width = 0.1\textwidth, clip, trim = 7cm 4.5cm 6cm 4cm]{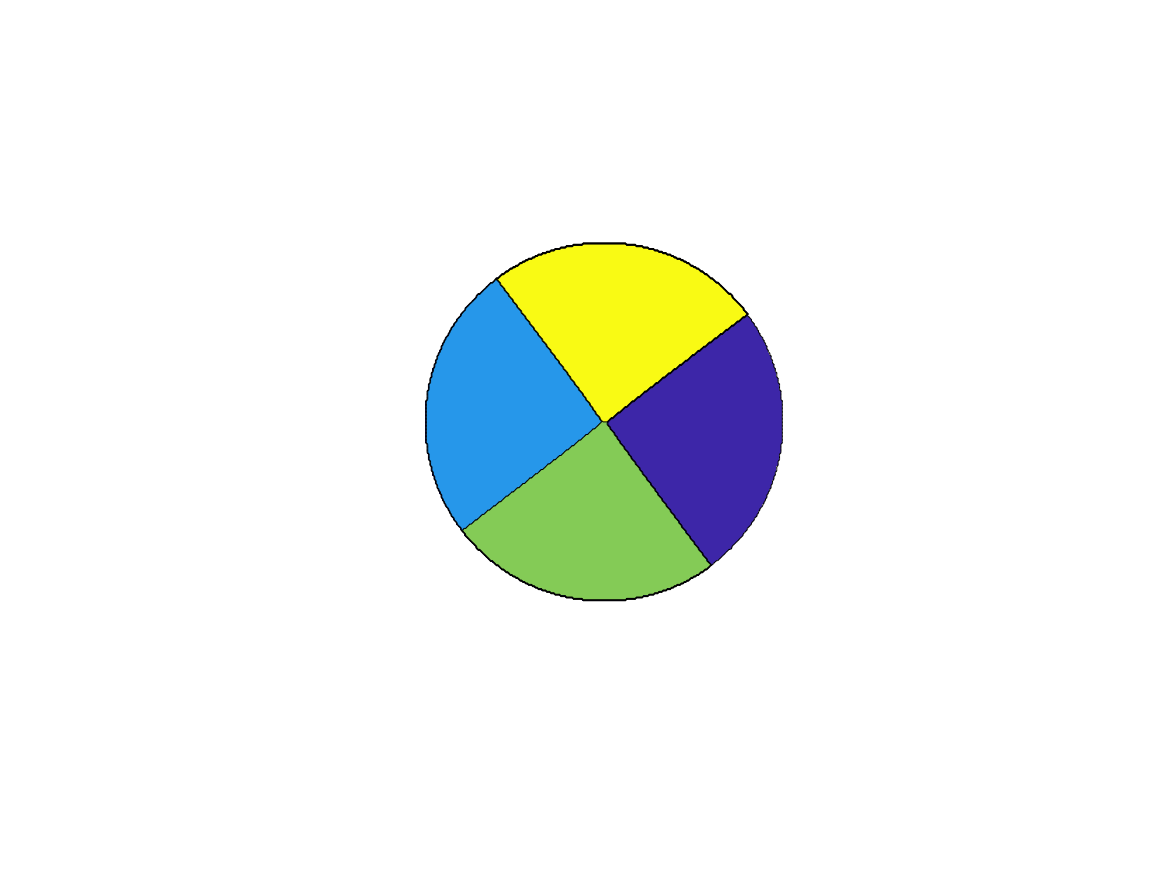}
		\includegraphics[width = 0.1\textwidth, clip, trim = 7cm 4.5cm 6cm 4cm]{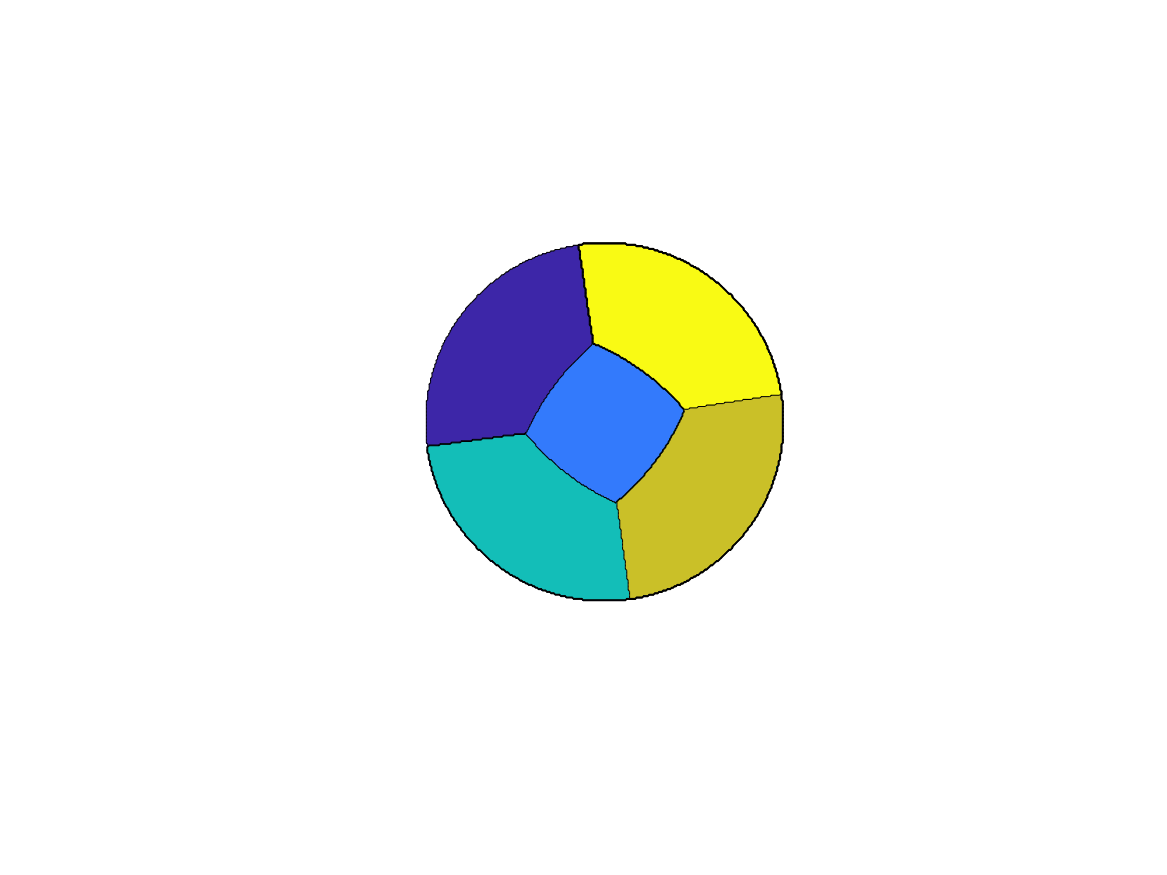}
		\includegraphics[width = 0.1\textwidth, clip, trim = 7cm 4.5cm 6cm 4cm]{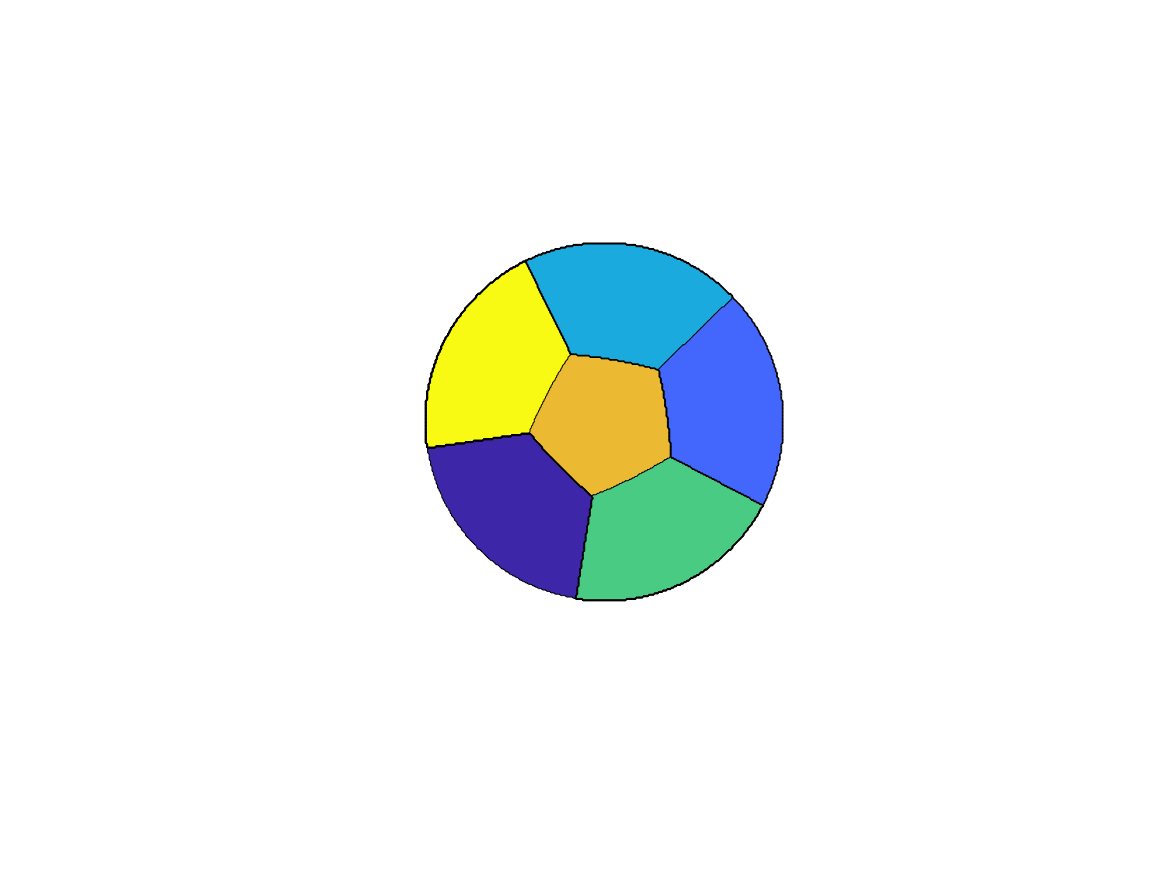}
		\includegraphics[width = 0.1\textwidth, clip, trim = 7cm 4.5cm 6cm 4cm]{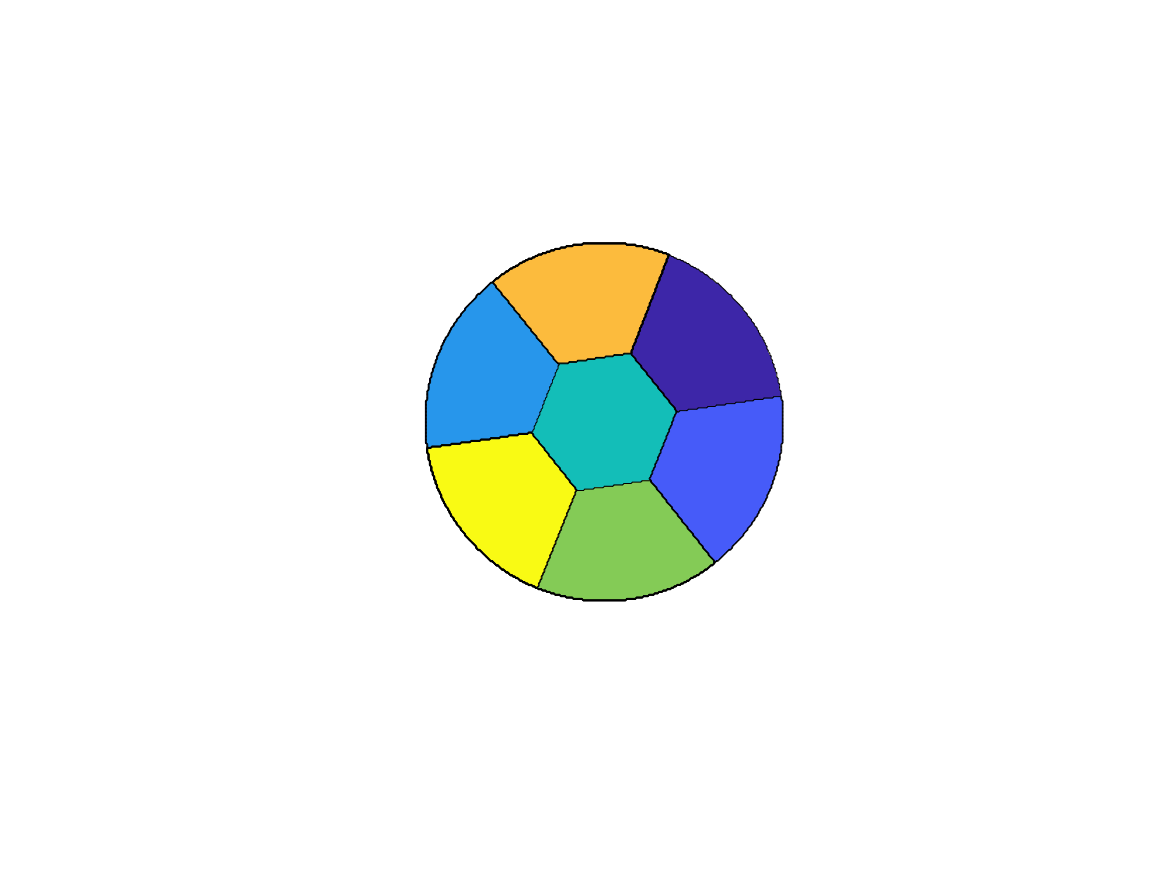}
		\includegraphics[width = 0.1\textwidth, clip, trim = 7cm 4.5cm 6cm 4cm]{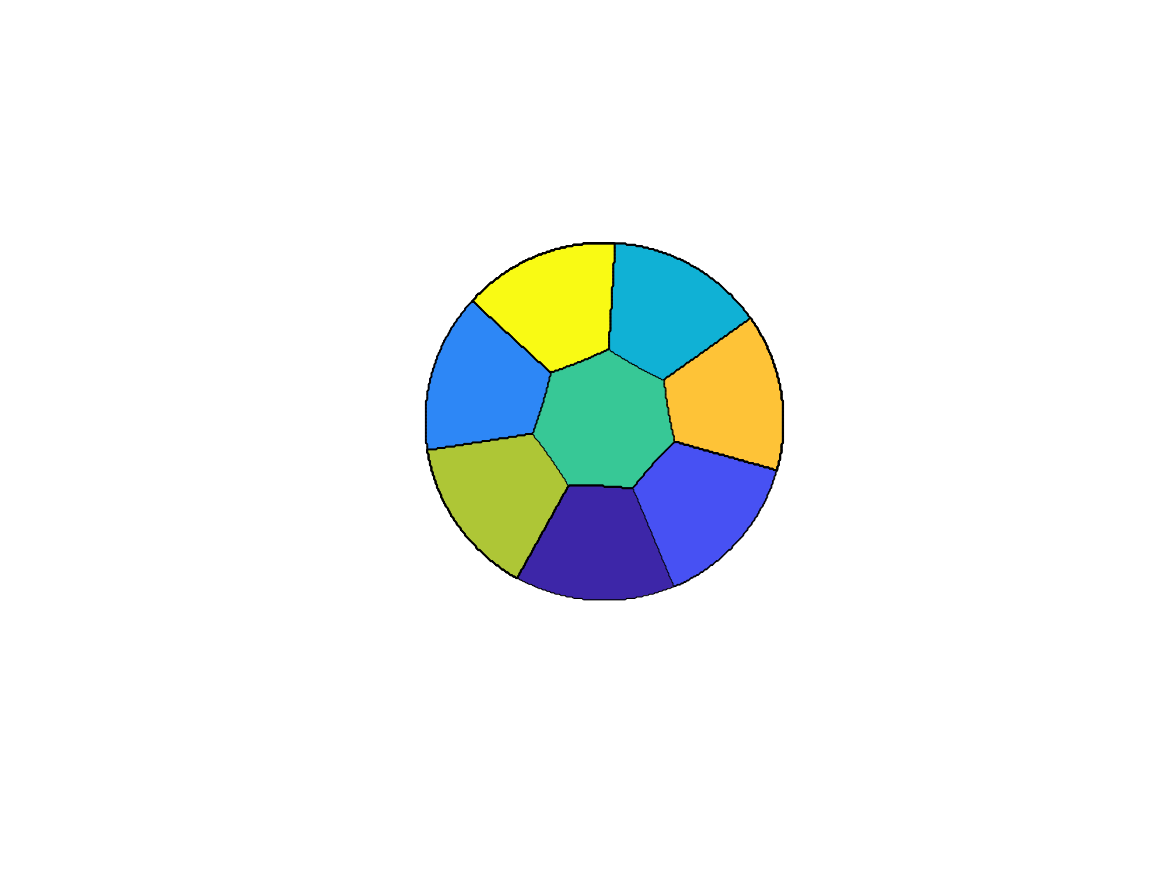}
		\includegraphics[width = 0.1\textwidth, clip, trim = 7cm 4.5cm 6cm 4cm]{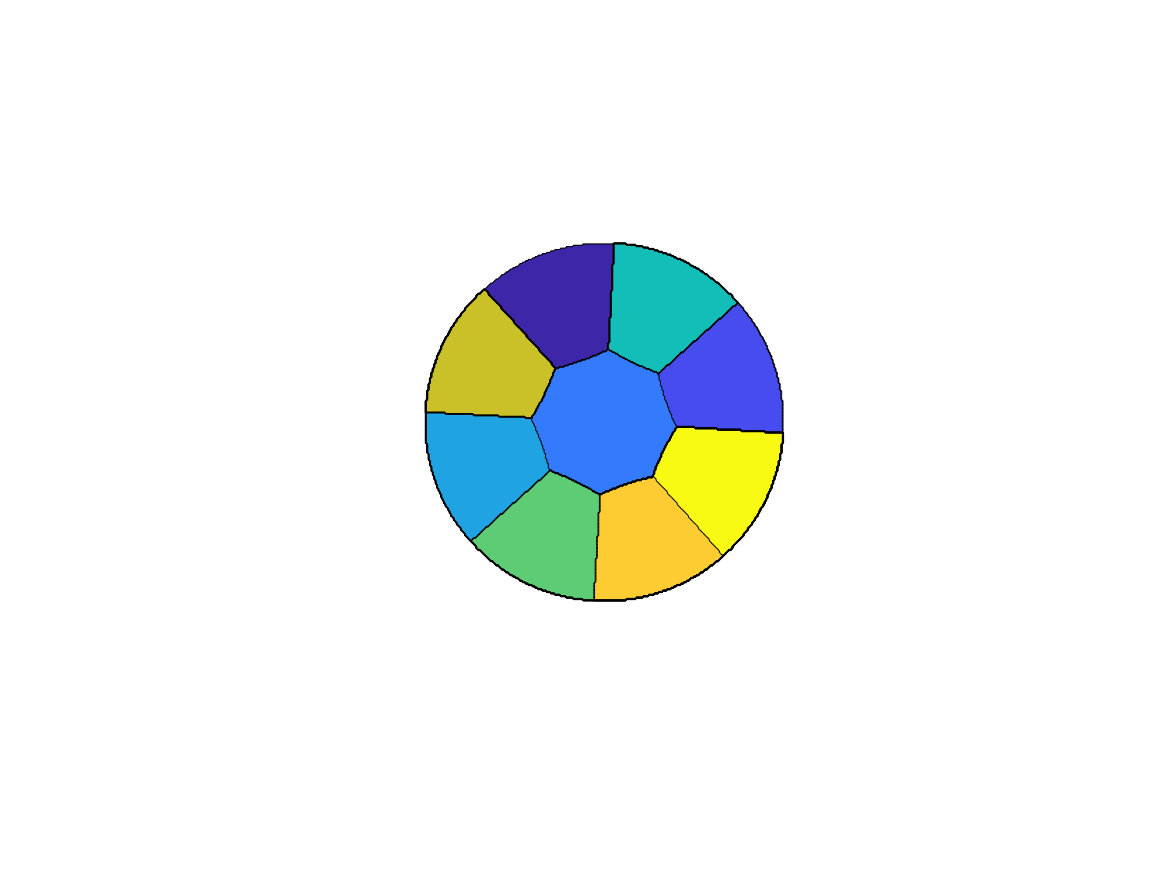}
		\includegraphics[width = 0.1\textwidth, clip, trim = 7cm 4.5cm 6cm 4cm]{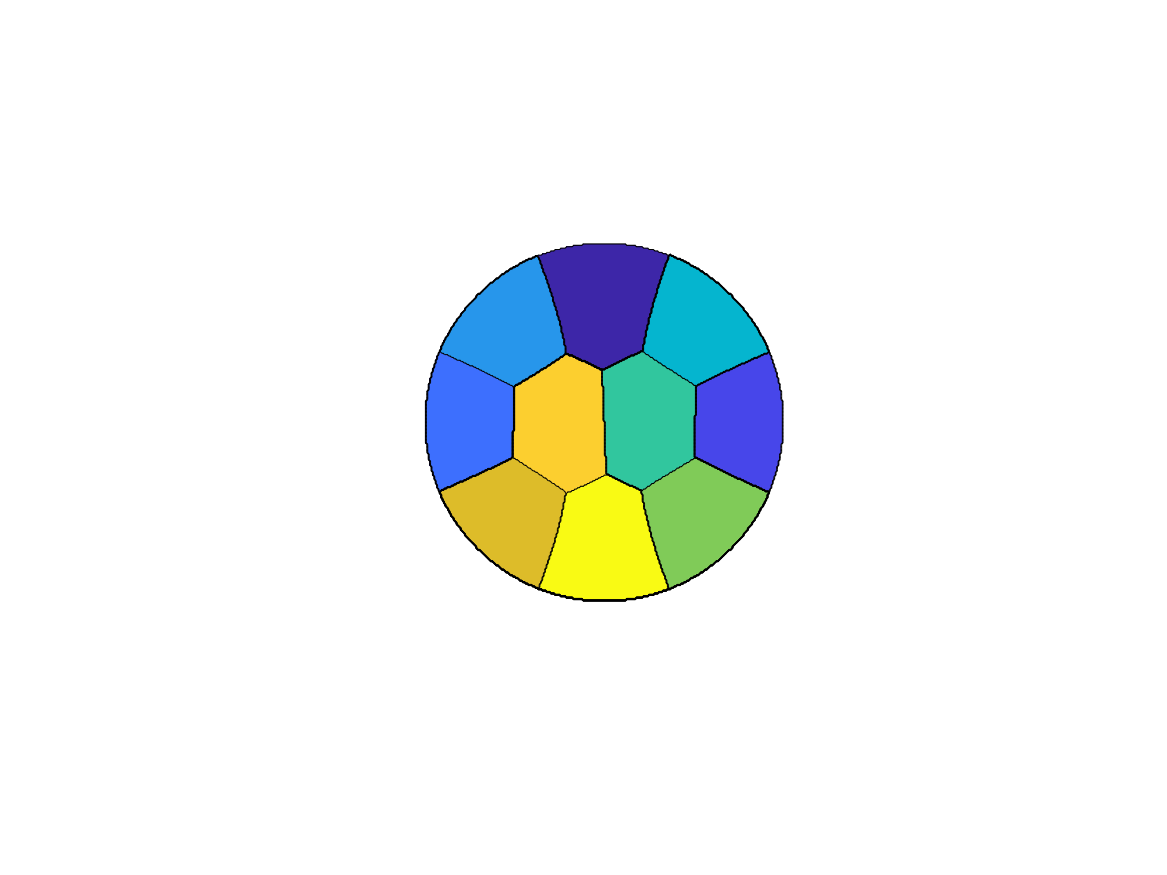}
		\smallskip
		\includegraphics[width = 0.1\textwidth, clip, trim = 6.5cm 4cm 5.5cm 3cm]{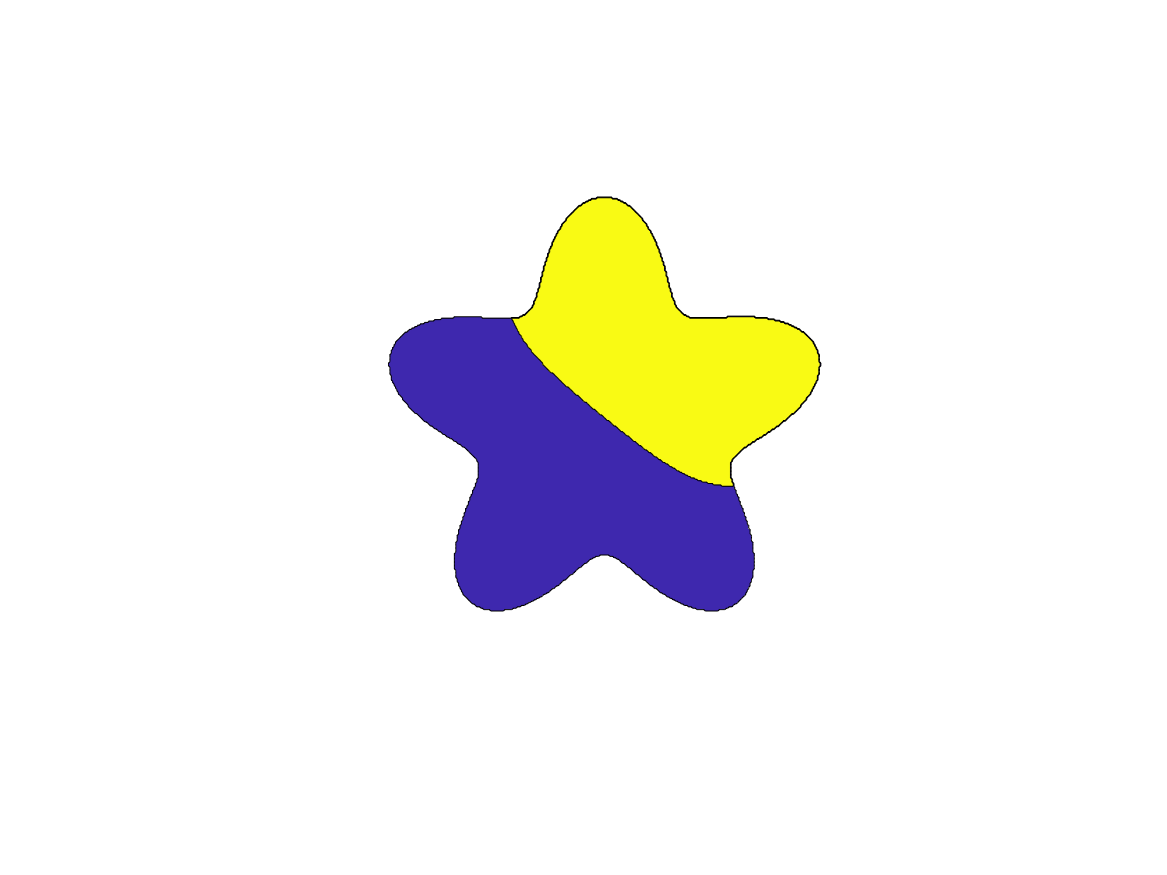}
		\includegraphics[width = 0.1\textwidth, clip, trim = 6.5cm 4cm 5.5cm 3cm]{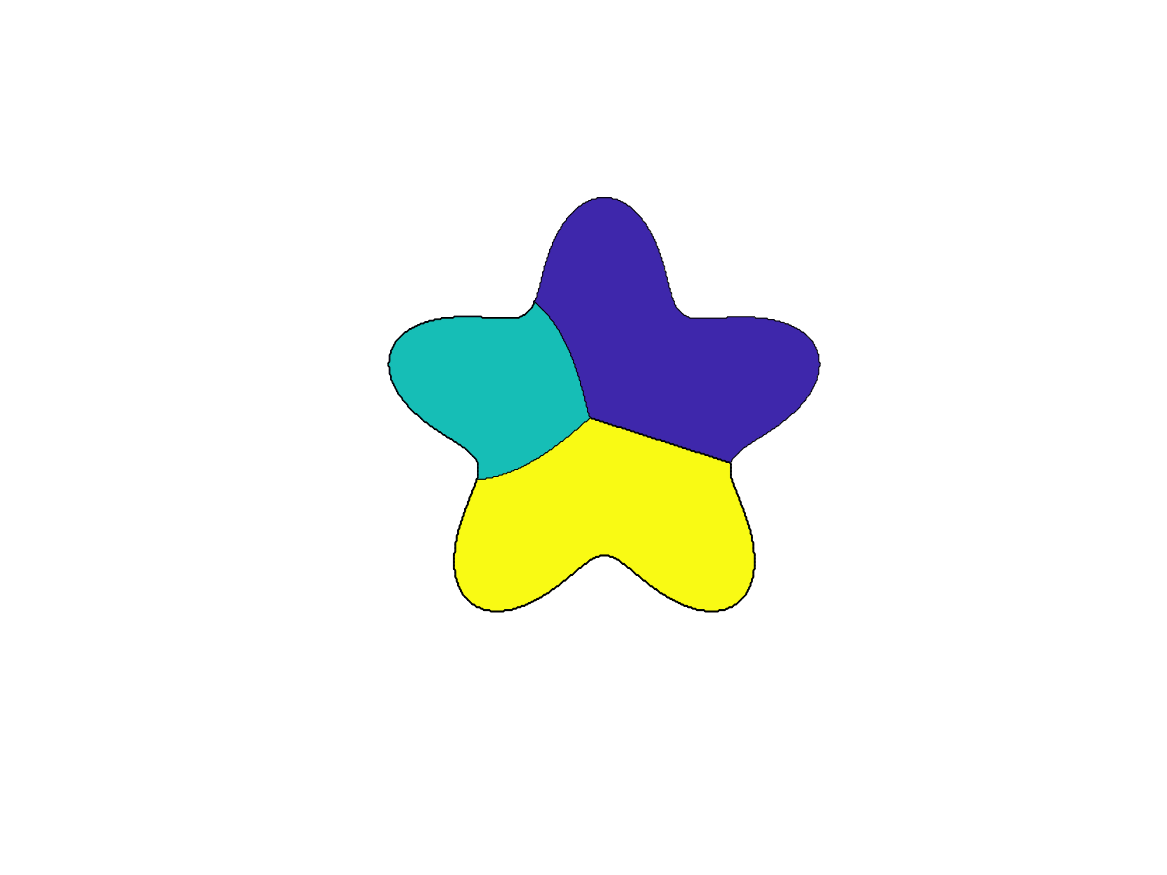}
		\includegraphics[width = 0.1\textwidth, clip, trim = 6.5cm 4cm 5.5cm 3cm]{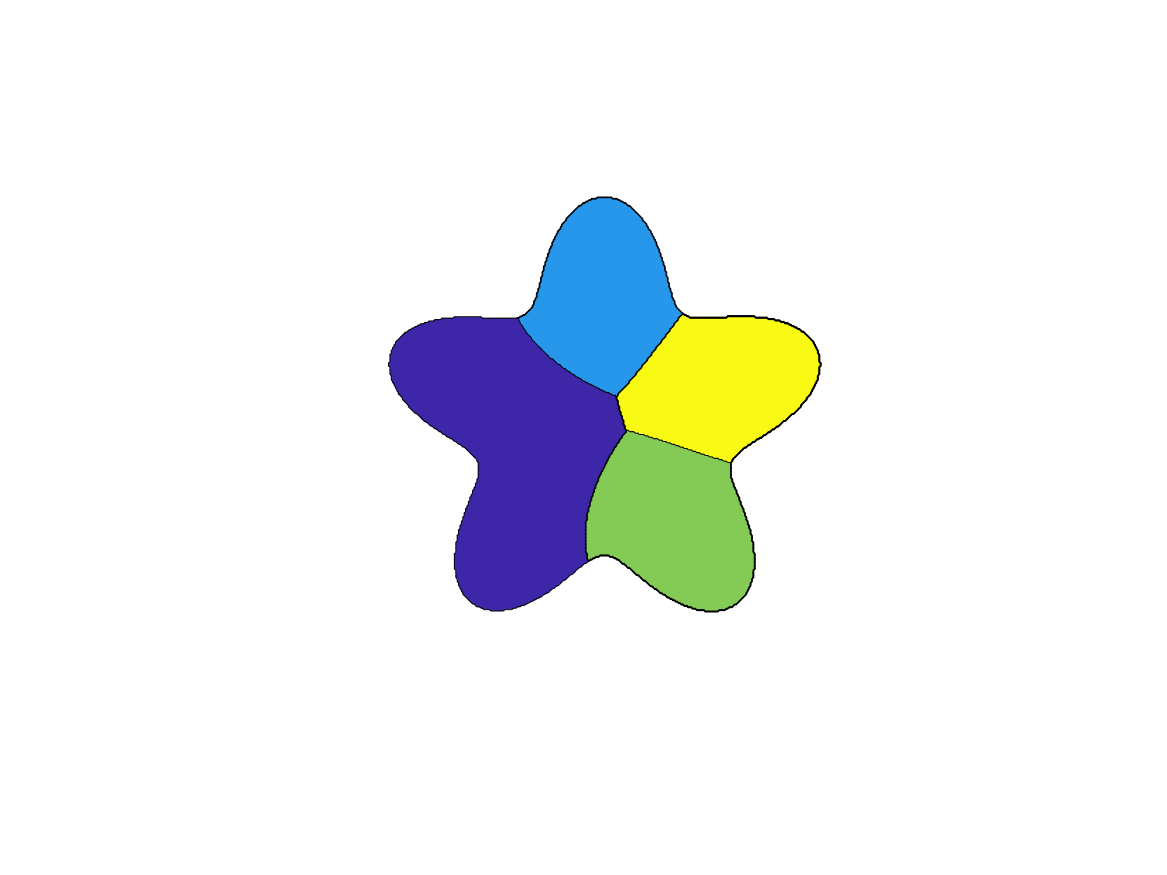}
		\includegraphics[width = 0.1\textwidth, clip, trim = 6.5cm 4cm 5.5cm 3cm]{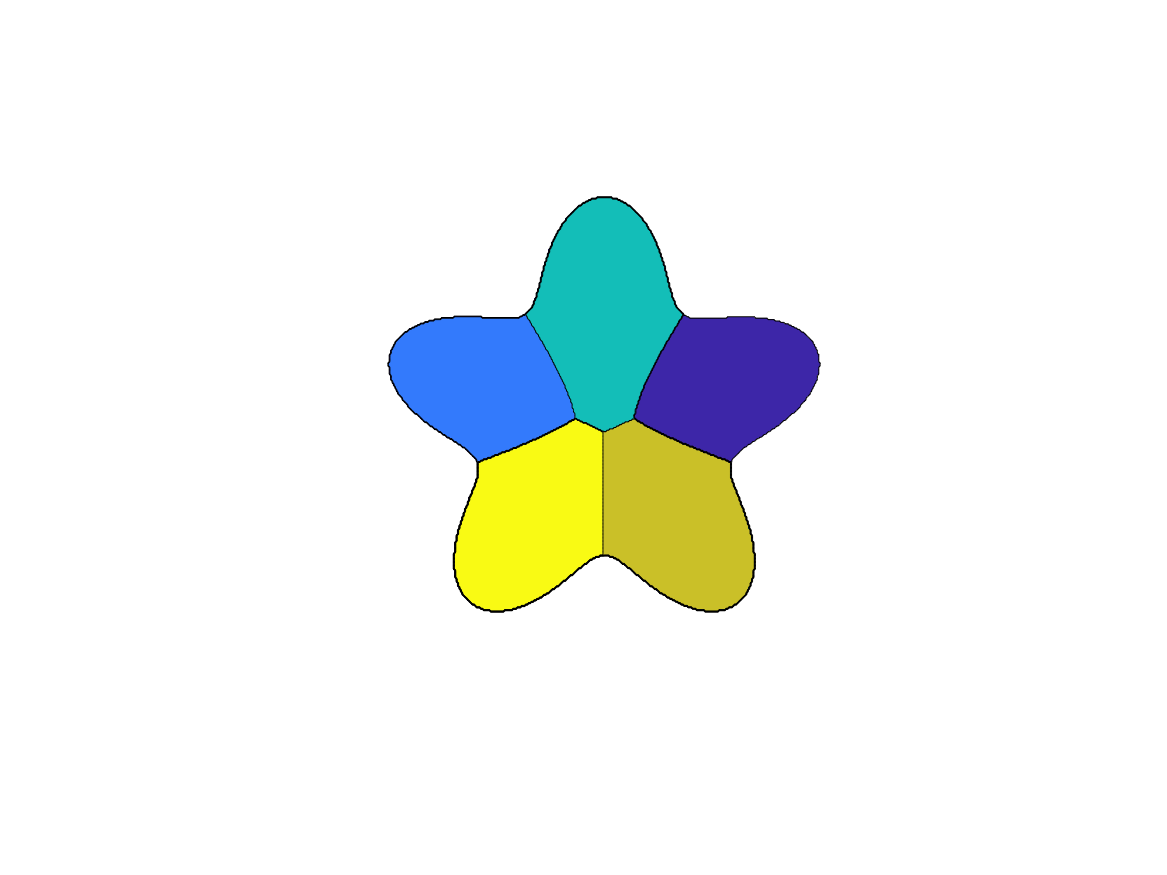}
		\includegraphics[width = 0.1\textwidth, clip, trim = 6.5cm 4cm 5.5cm 3cm]{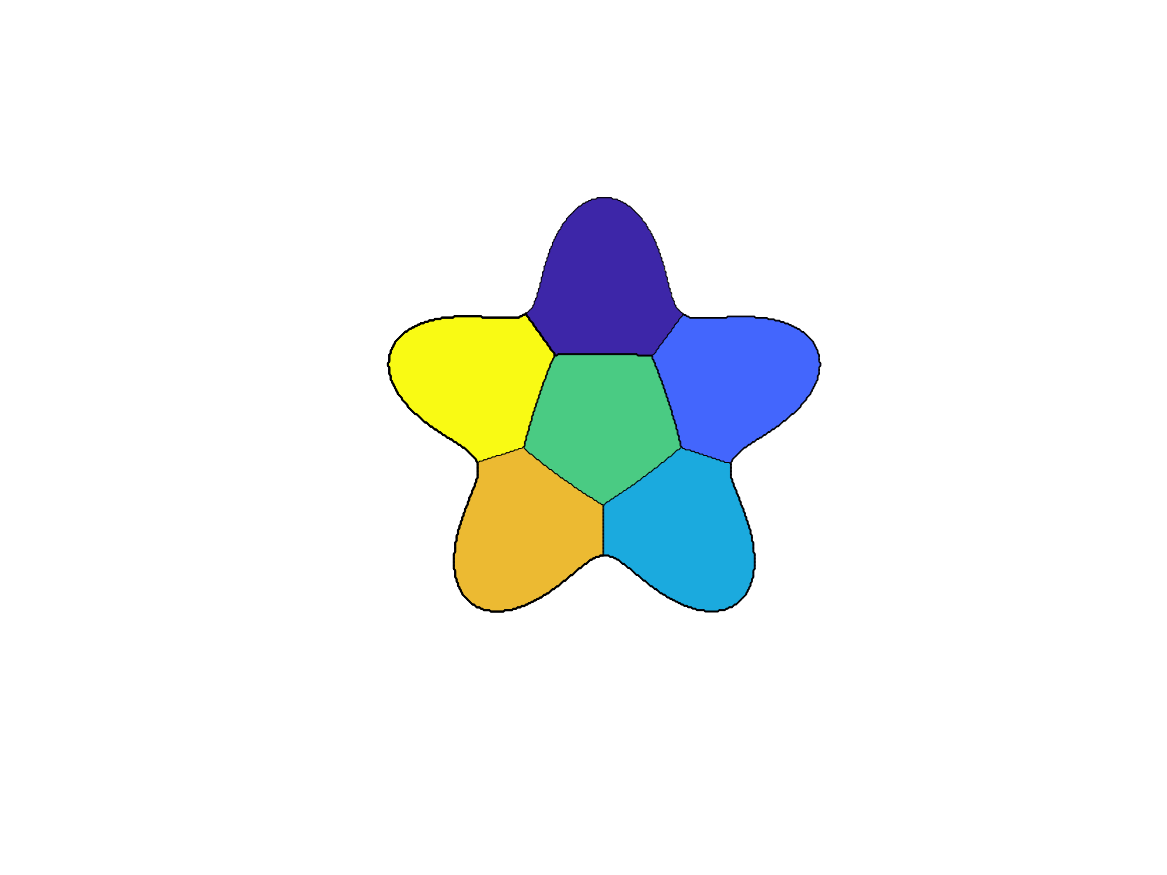}
		\includegraphics[width = 0.1\textwidth, clip, trim = 6.5cm 4cm 5.5cm 3cm]{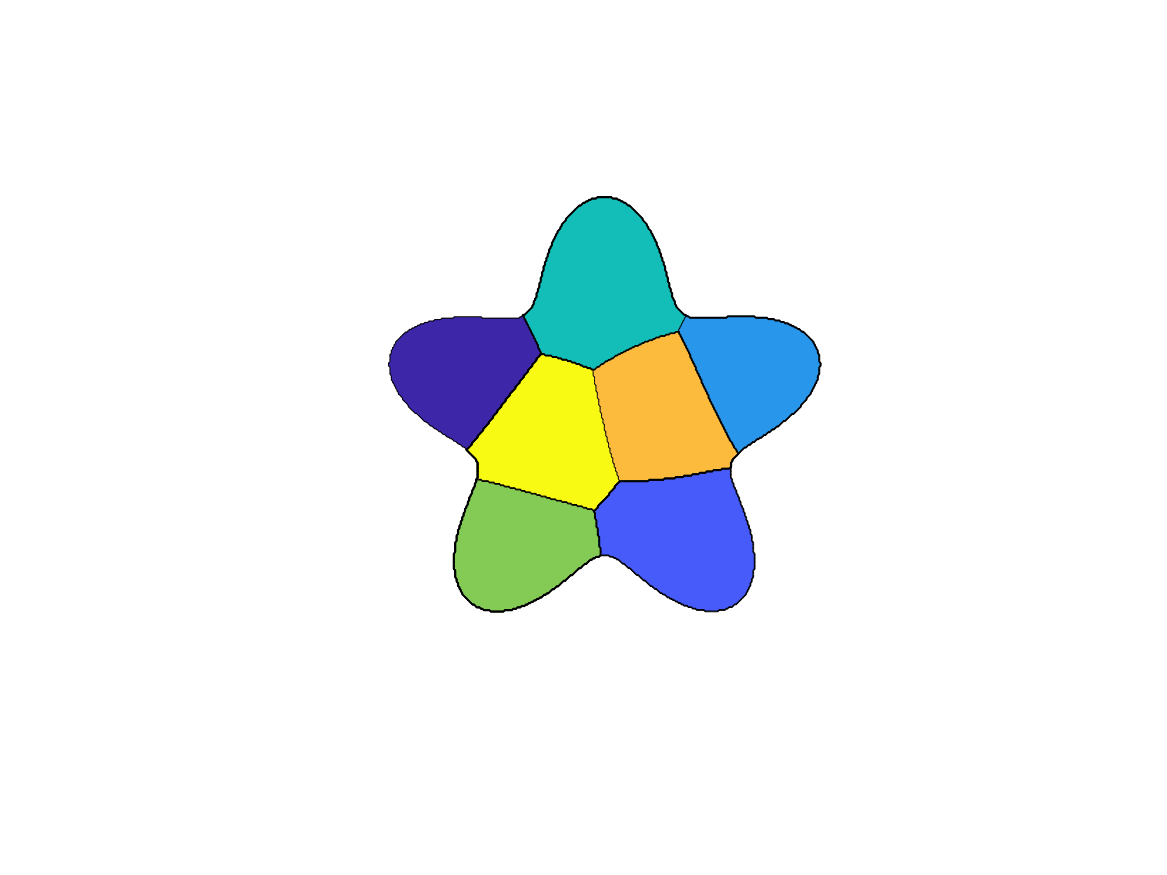}
		\includegraphics[width = 0.1\textwidth, clip, trim = 6.5cm 4cm 5.5cm 3cm]{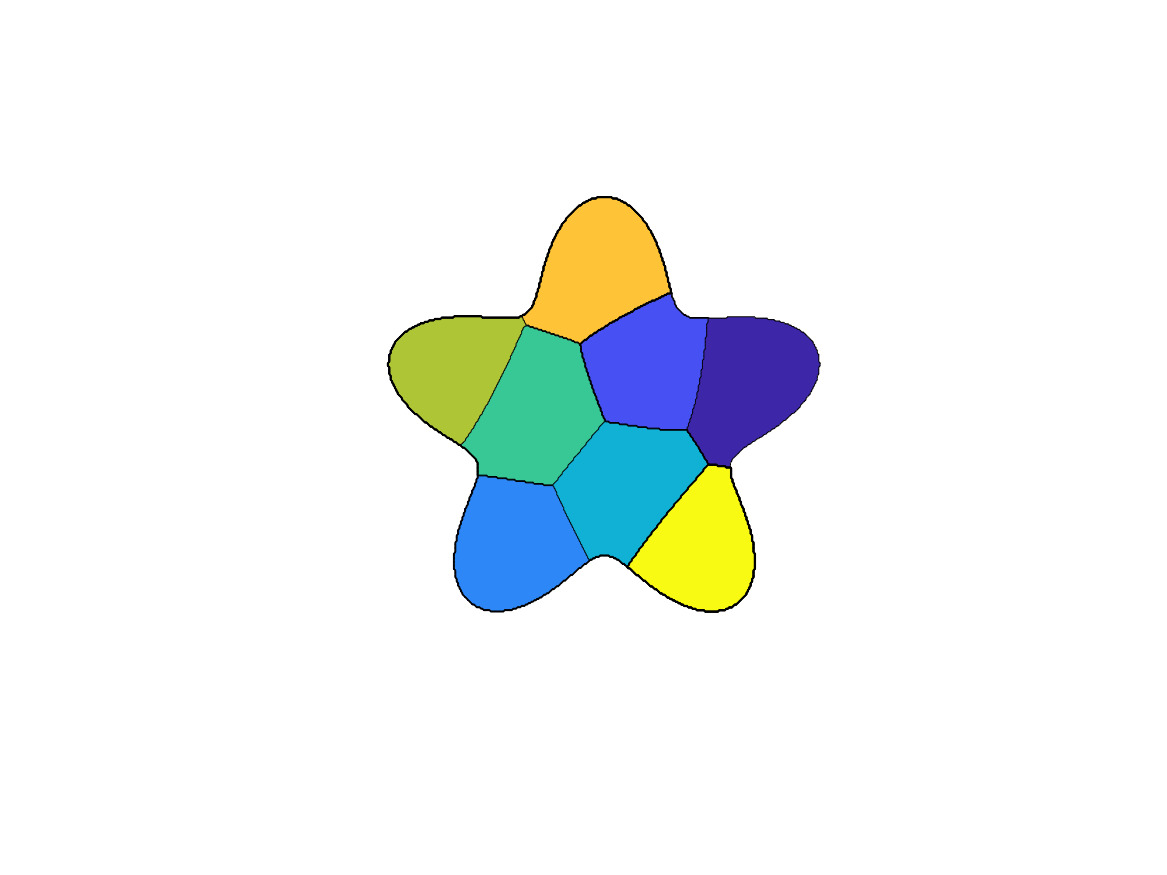}
		\includegraphics[width = 0.1\textwidth, clip, trim = 6.5cm 4cm 5.5cm 3cm]{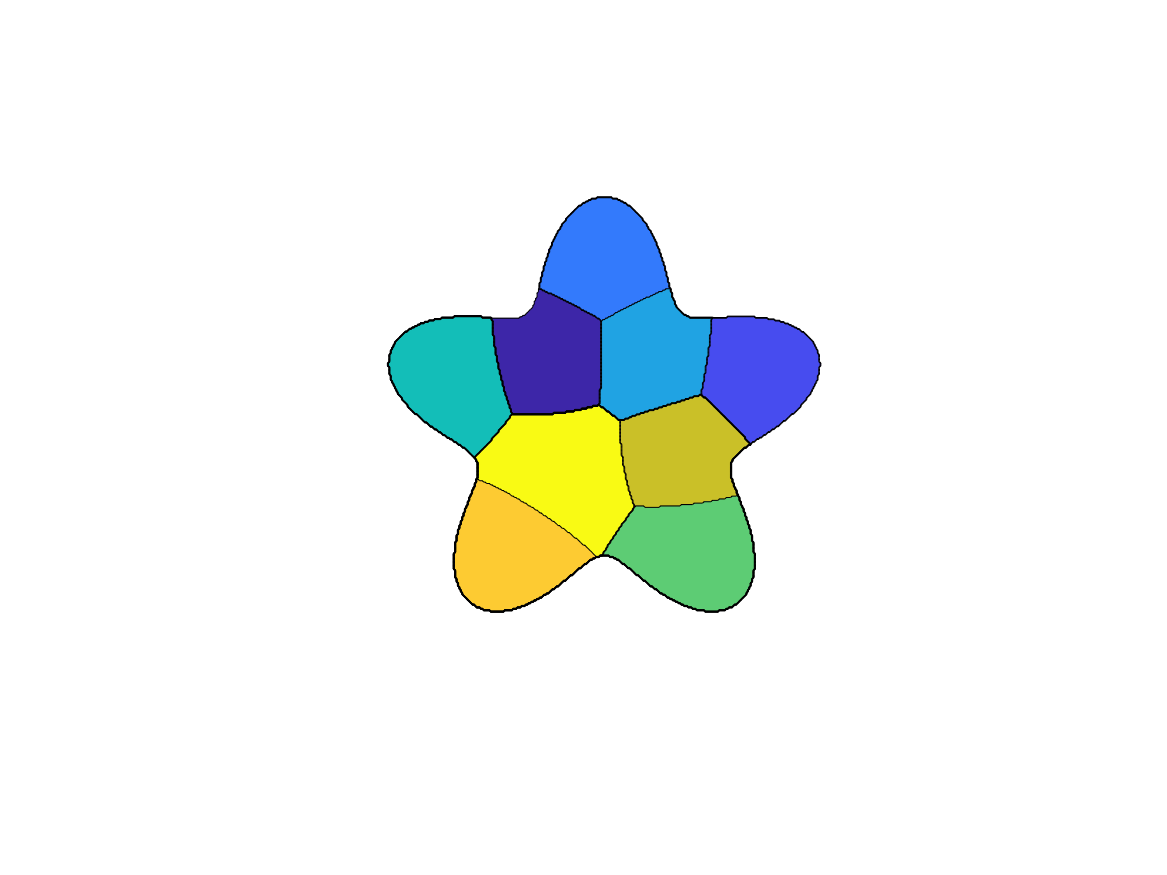}
		\includegraphics[width = 0.1\textwidth, clip, trim = 6.5cm 4cm 5.5cm 3cm]{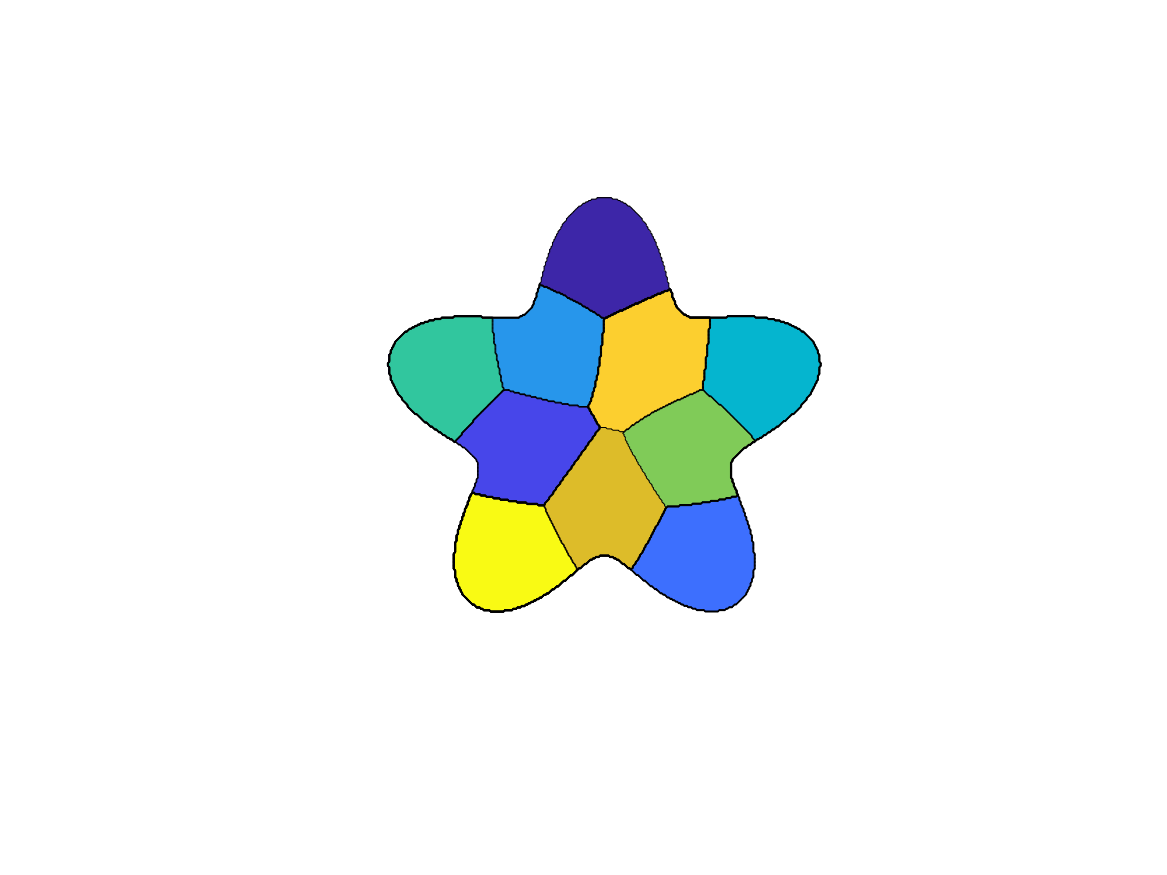}
		\smallskip
		\includegraphics[width = 0.1\textwidth, clip, trim =3cm 1.3cm 3cm 1.3cm]{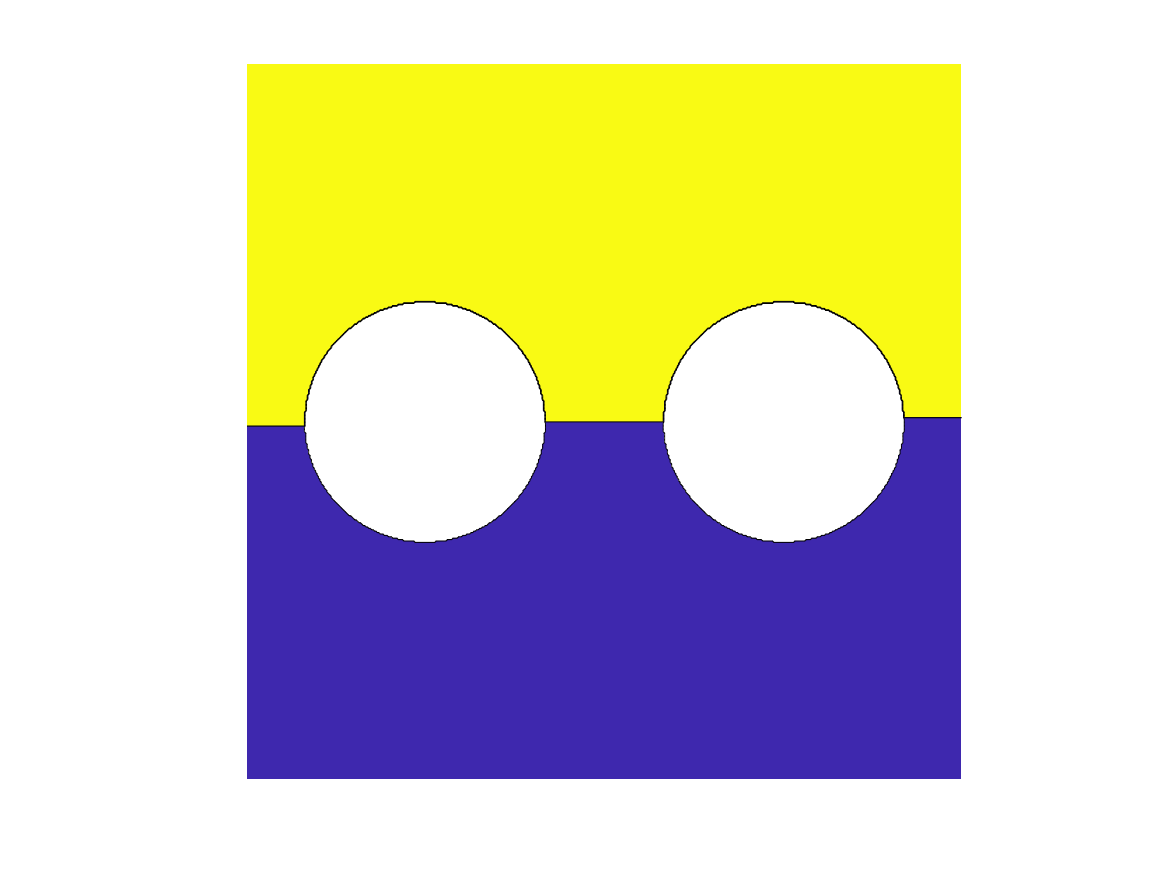}
		\includegraphics[width = 0.1\textwidth, clip, trim =3cm 1.3cm 3cm 1.3cm]{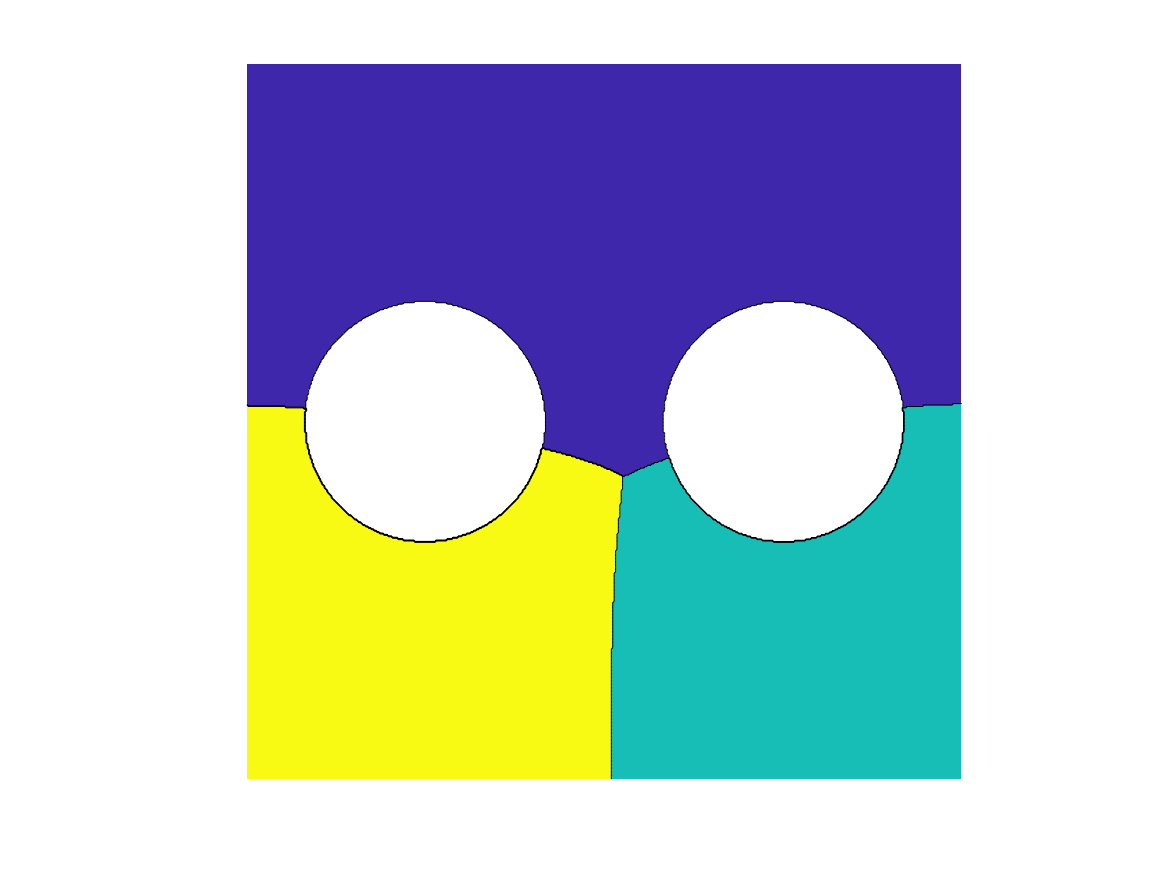}
		\includegraphics[width = 0.1\textwidth, clip, trim =3cm 1.3cm 3cm 1.3cm]{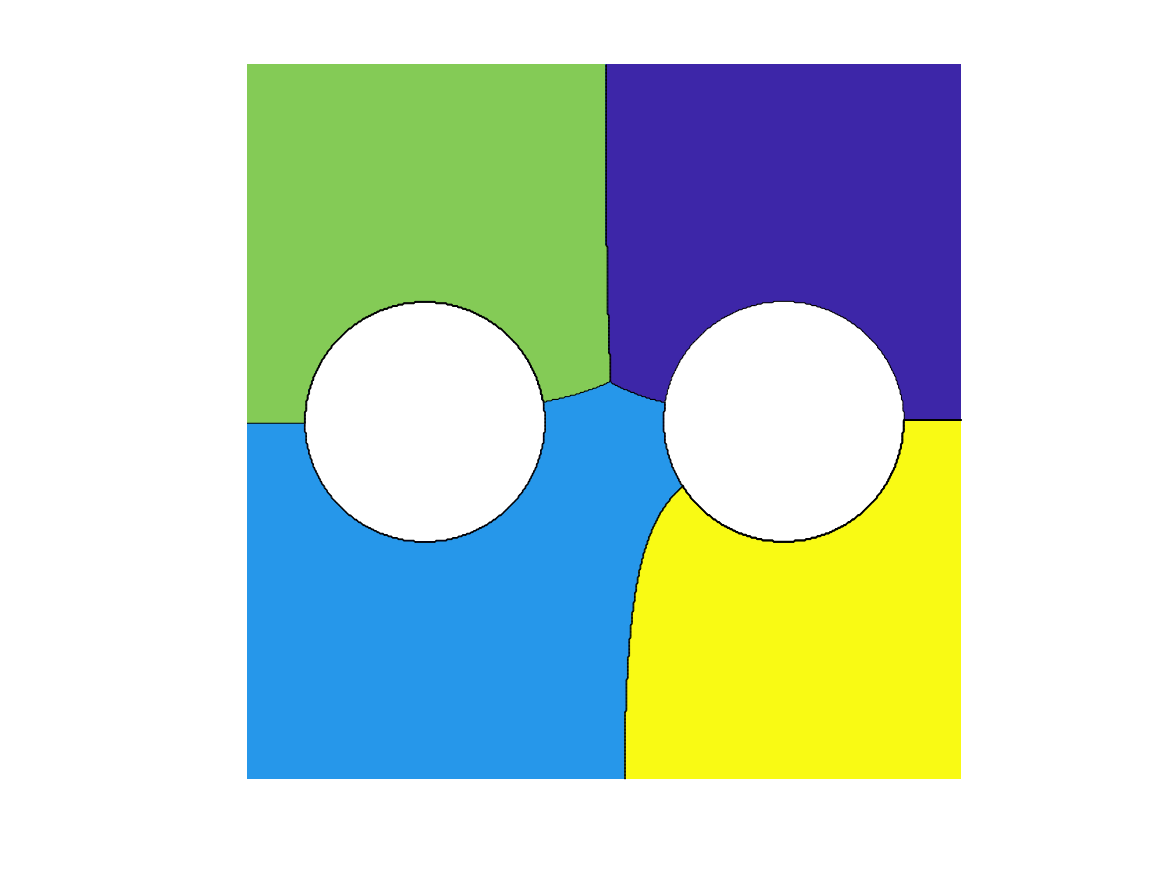}
		\includegraphics[width = 0.1\textwidth, clip, trim =3cm 1.3cm 3cm 1.3cm]{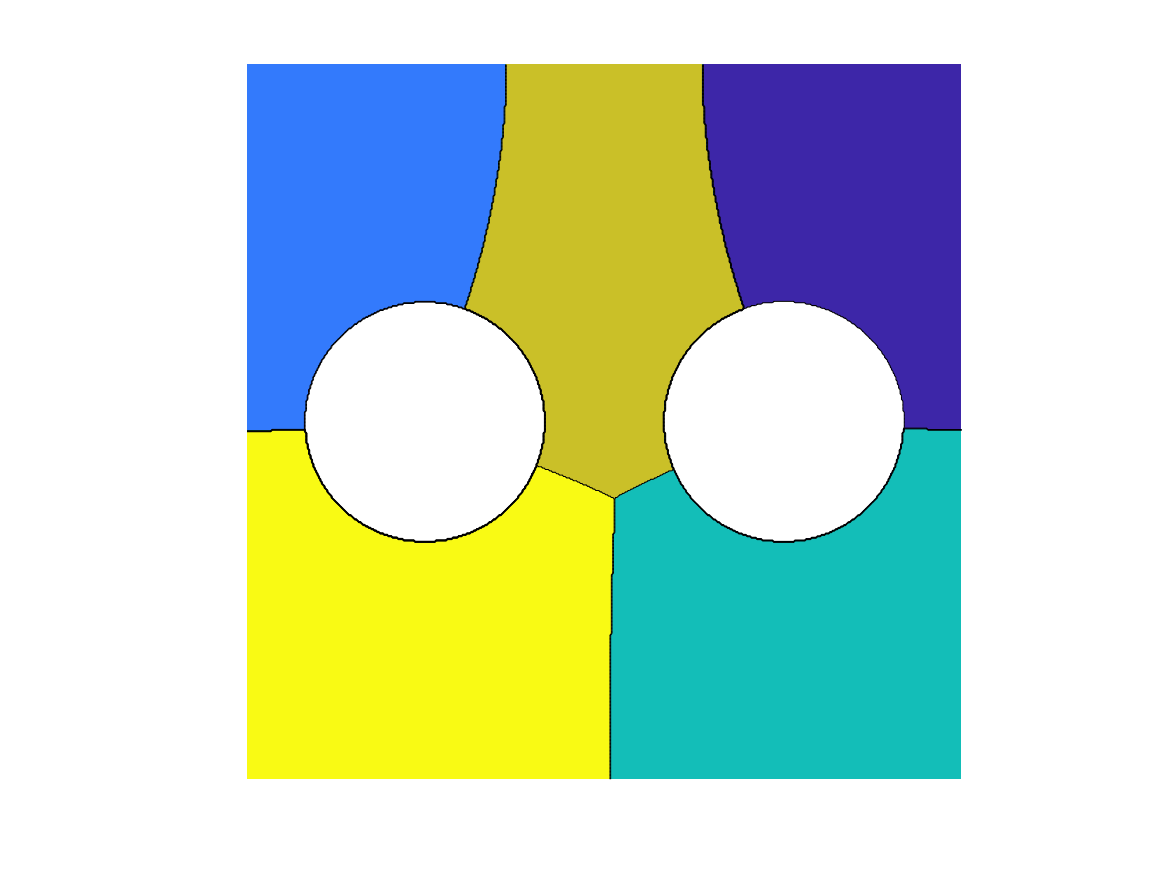}
		\includegraphics[width = 0.1\textwidth, clip, trim =3cm 1.3cm 3cm 1.3cm]{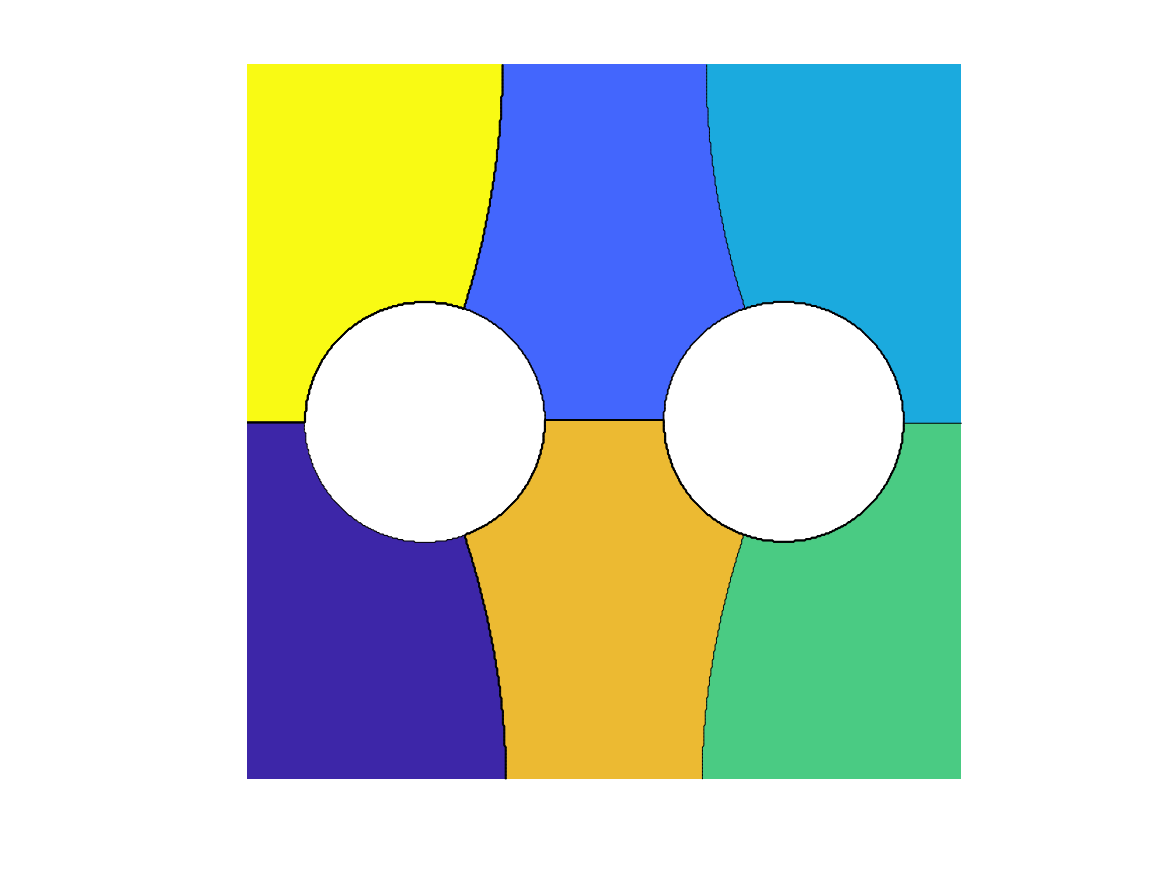}
		\includegraphics[width = 0.1\textwidth, clip, trim =3cm 1.3cm 3cm 1.3cm]{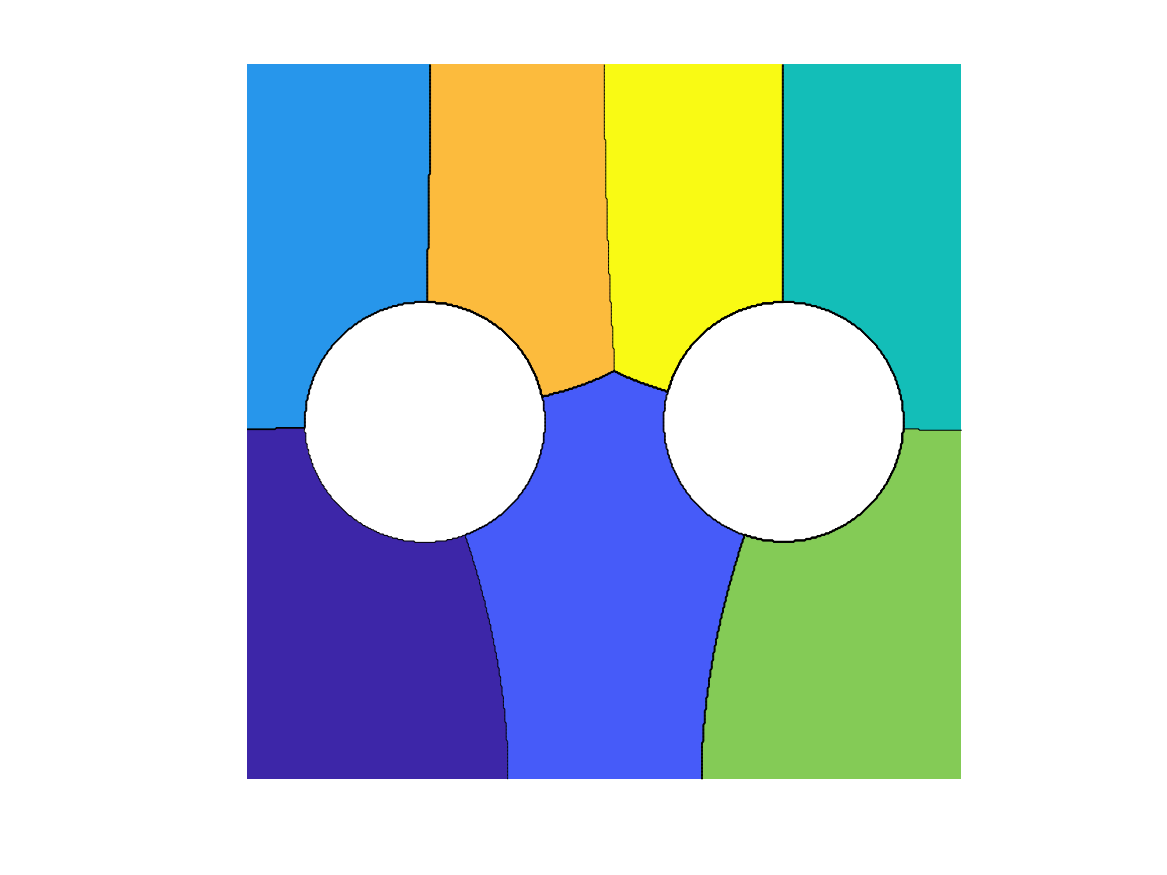}
		\includegraphics[width = 0.1\textwidth, clip, trim =3cm 1.3cm 3cm 1.3cm]{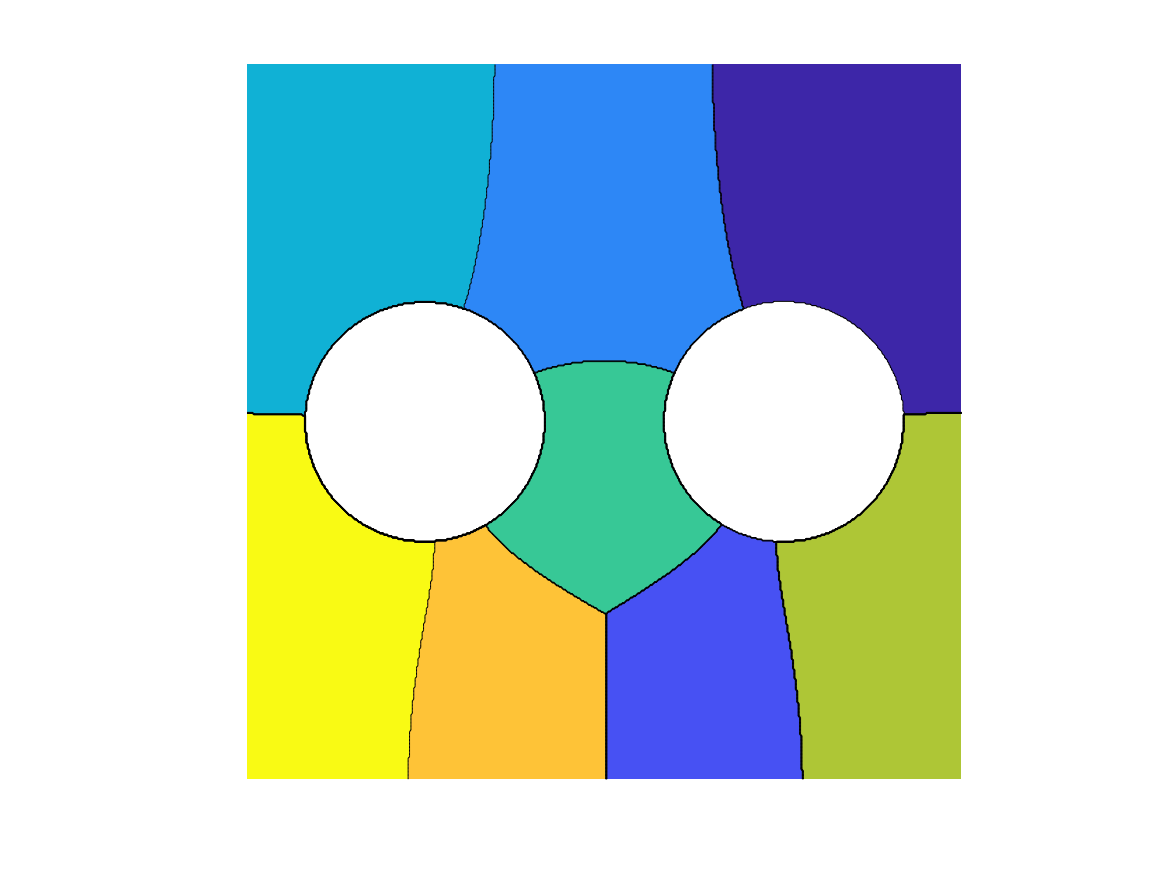}
		\includegraphics[width = 0.1\textwidth, clip, trim =3cm 1.3cm 3cm 1.3cm]{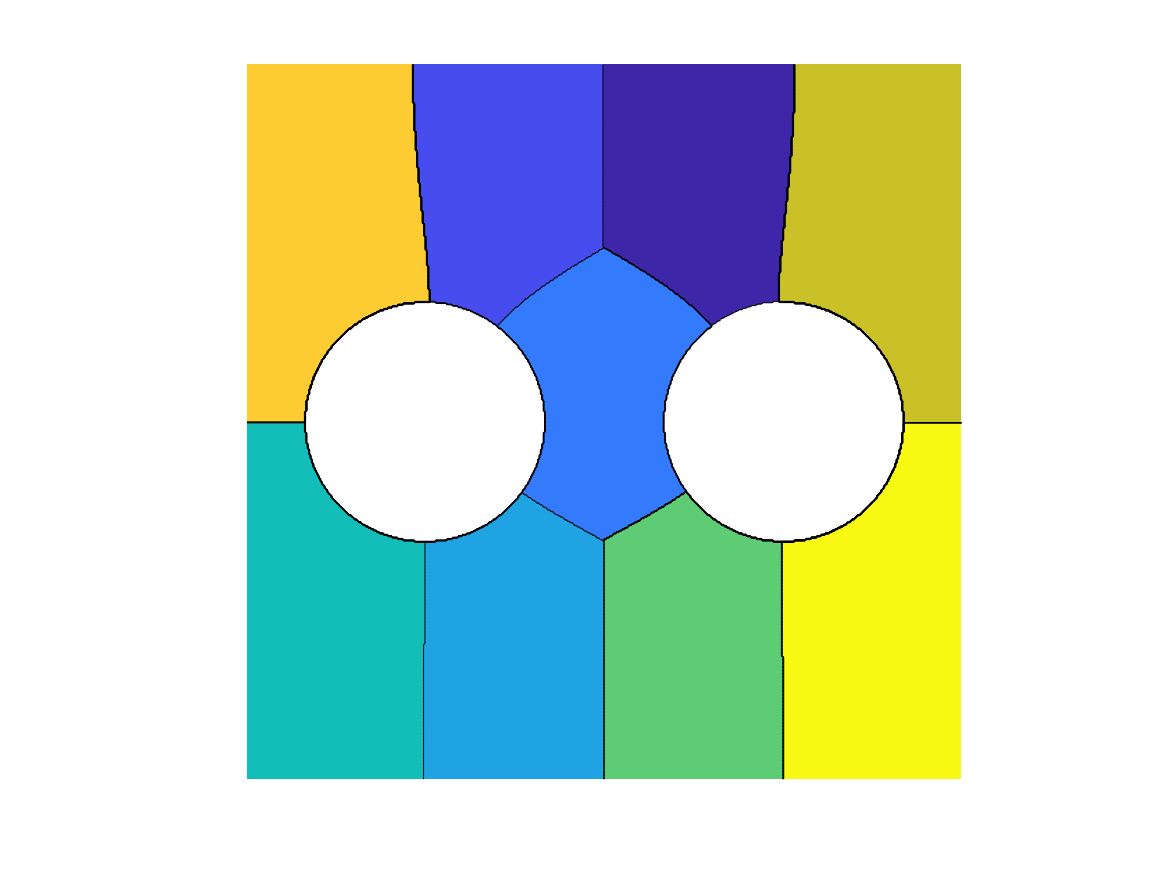}
		\includegraphics[width = 0.1\textwidth, clip, trim =3cm 1.3cm 3cm 1.3cm]{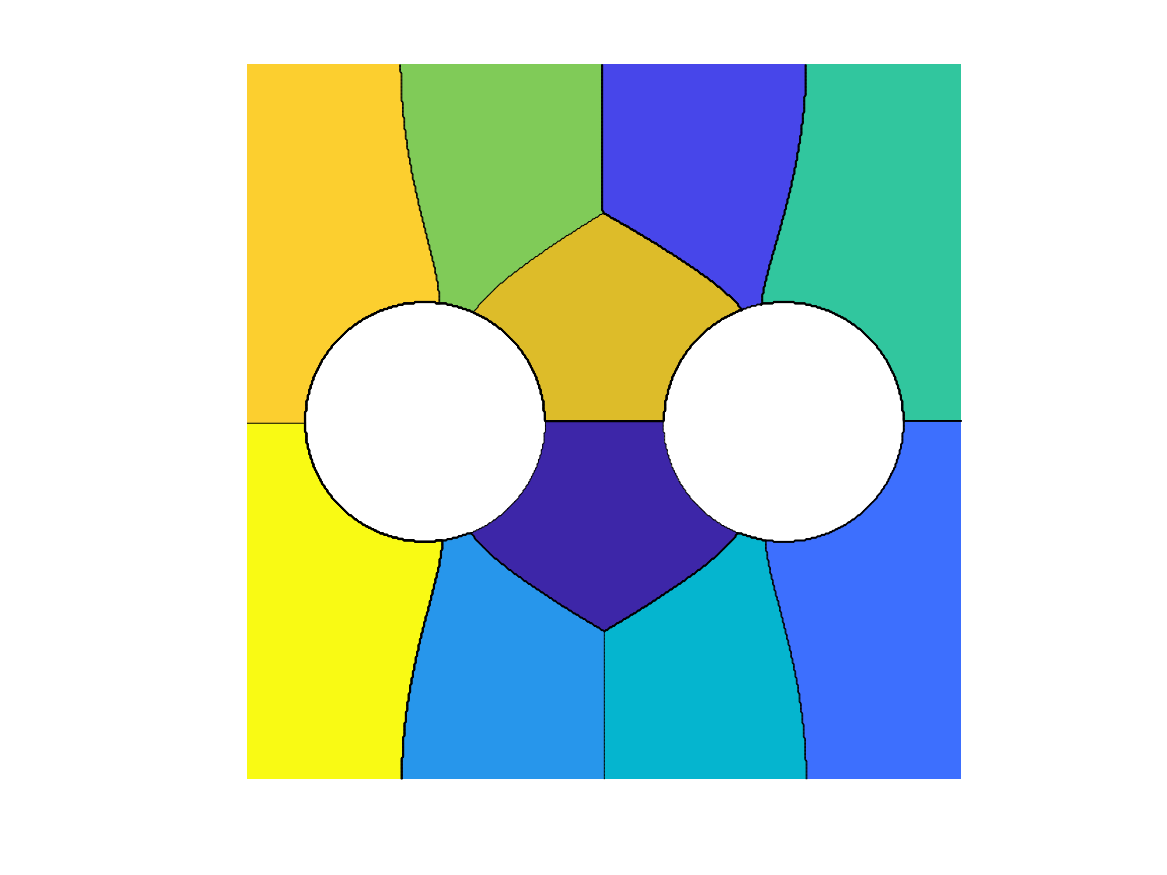}
		\medskip
		\caption{Computed partition by Algorithm~\ref{Alg:4step} from 2 to 10 in different domains: Regular triangle, Regular pentagon, Disk, Five-fold star and Square with two holes. }\label{fig:5AbiDAlg1Alg1}
	\end{figure}
	\begin{figure}[ht]
		\centering
\begin{table}[H]
    \centering
    \renewcommand{\arraystretch}{1.5}
    \begin{tabular}{|m{2cm}|m{2.2cm}|m{2.2cm}|m{2cm}|m{2cm}|}
        \hline
        \diagbox[innerwidth=2cm]{Algorithm}{Domain} &  \centering Three-fold star & \centering  Regular octagon & \centering  Sector &  {\centering \quad \, Ellipse} \\
        \hline
				\centering Initial condition& 
				\centering  \includegraphics[width=0.12\textwidth, clip, trim=6.5cm 3.5cm 5.5cm 3cm]{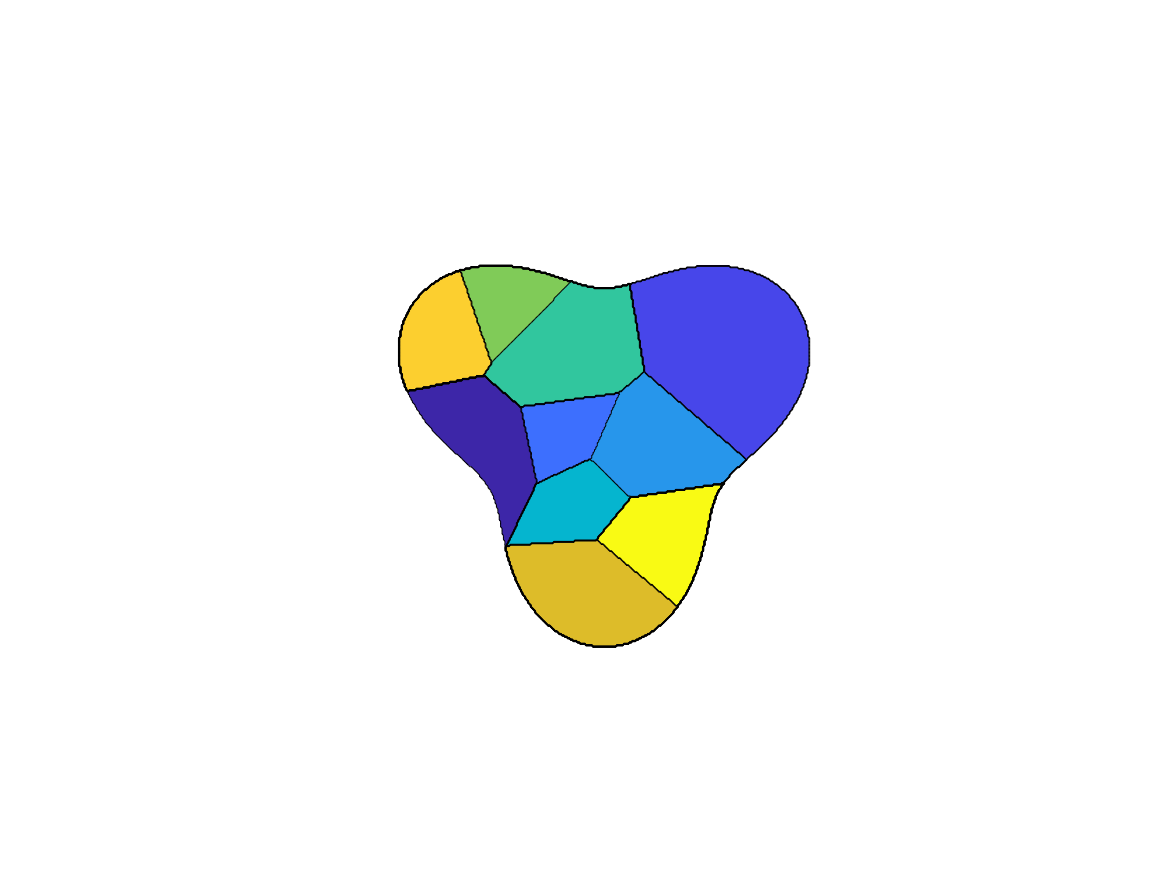} &
				\centering  \includegraphics[width=0.12\textwidth, clip, trim=6cm 4.5cm 6.5cm 3cm]{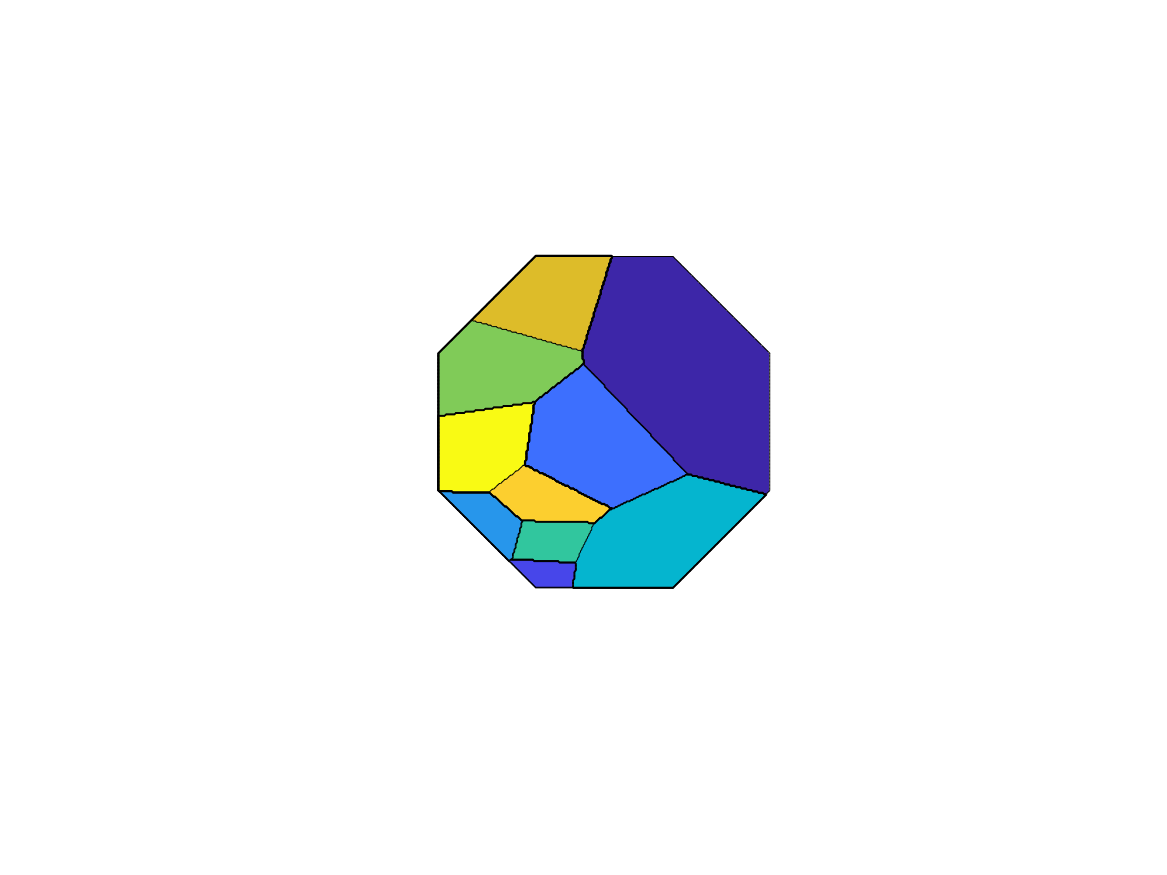} &
				\centering \includegraphics[width=0.12\textwidth, clip, trim=3.5cm 1cm 5.5cm 5cm]{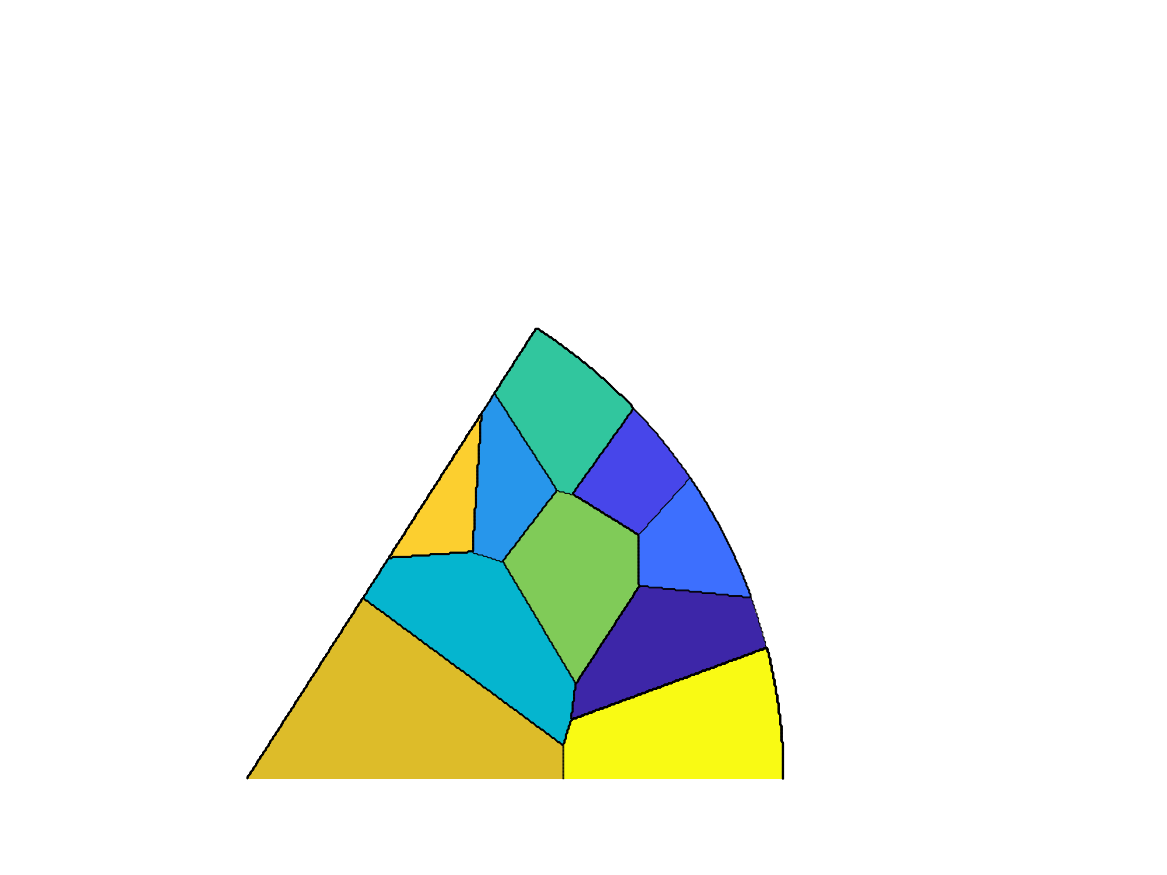} &
				{\centering  \includegraphics[width=0.12\textwidth, clip, trim=4cm 4.5cm 4cm 4cm]{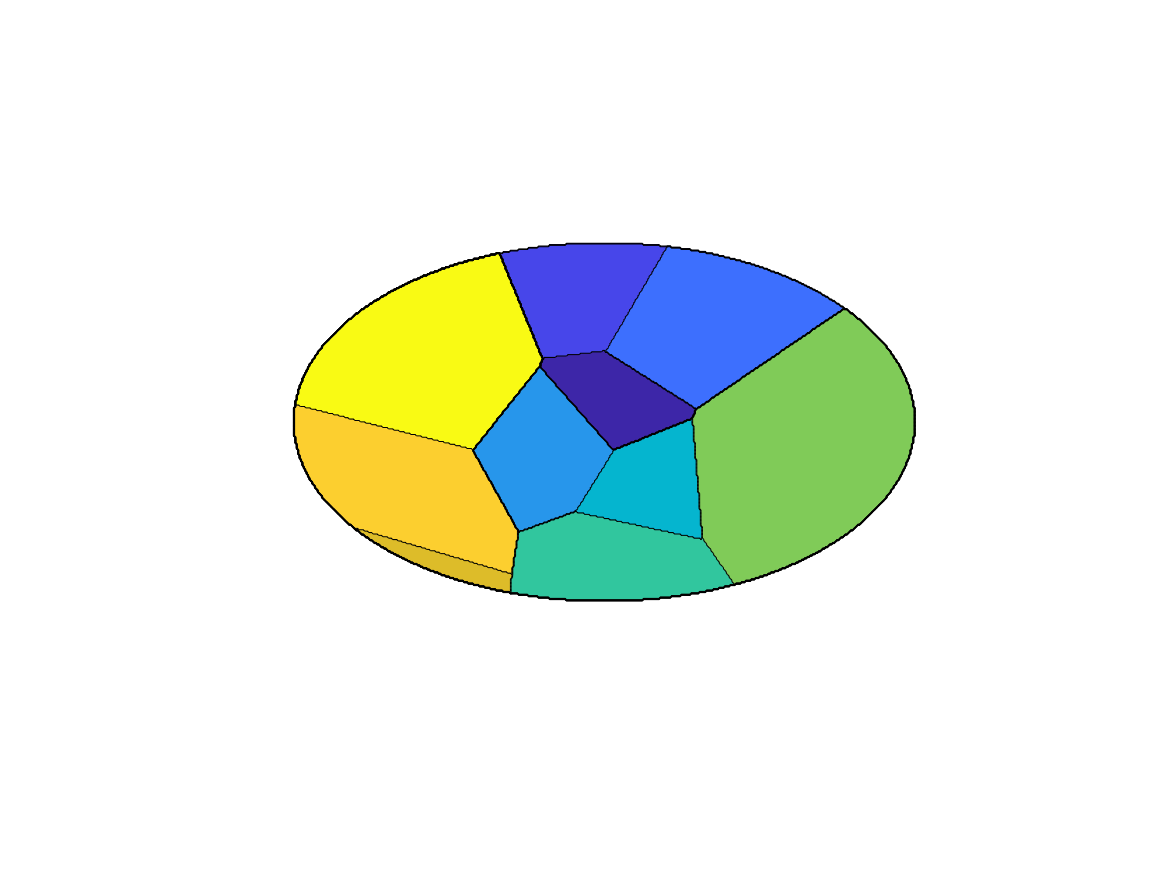}} \\
				\hline
				\centering   Algorithm~\ref{Alg:4step} & 
				\centering  \includegraphics[width=0.12\textwidth, clip, trim=6.5cm 3.5cm 5.5cm 3cm]{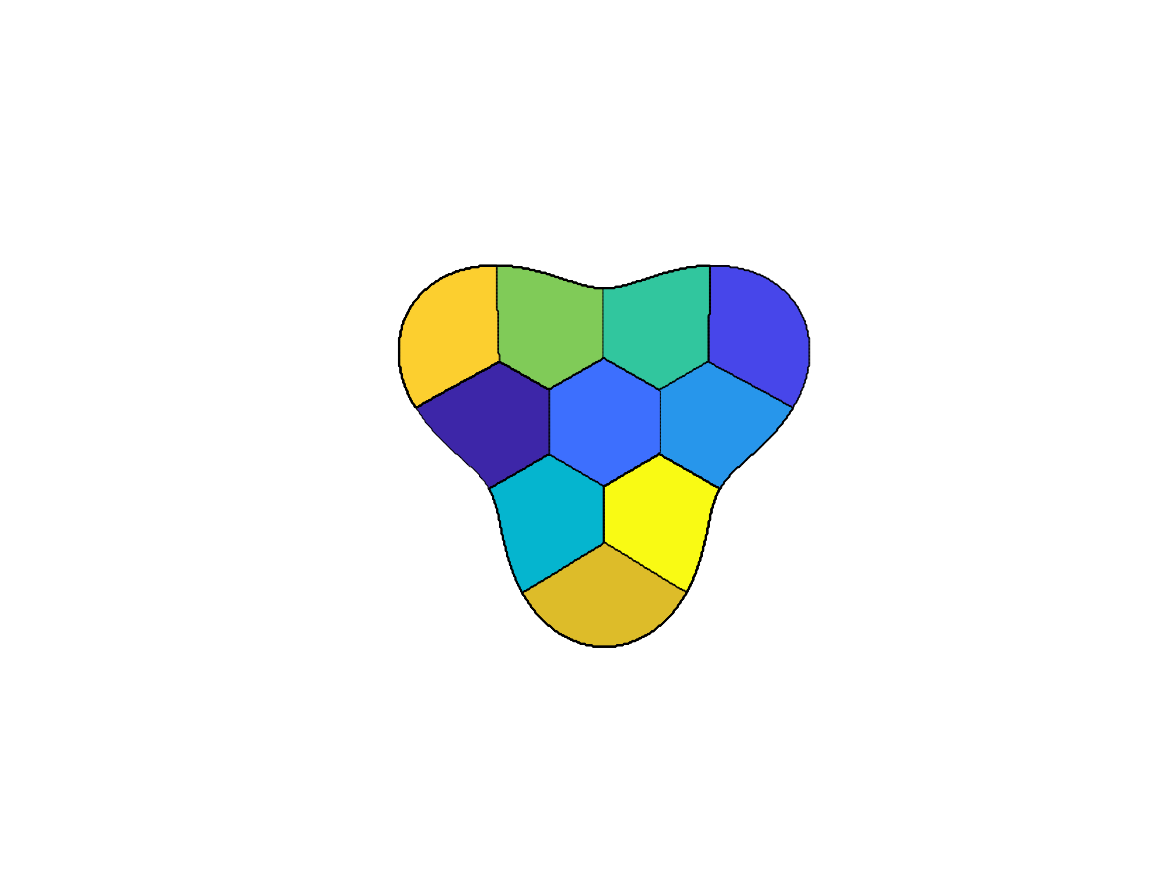} &
				\centering \includegraphics[width=0.12\textwidth, clip, trim=6cm 4.5cm 6.5cm 3cm]{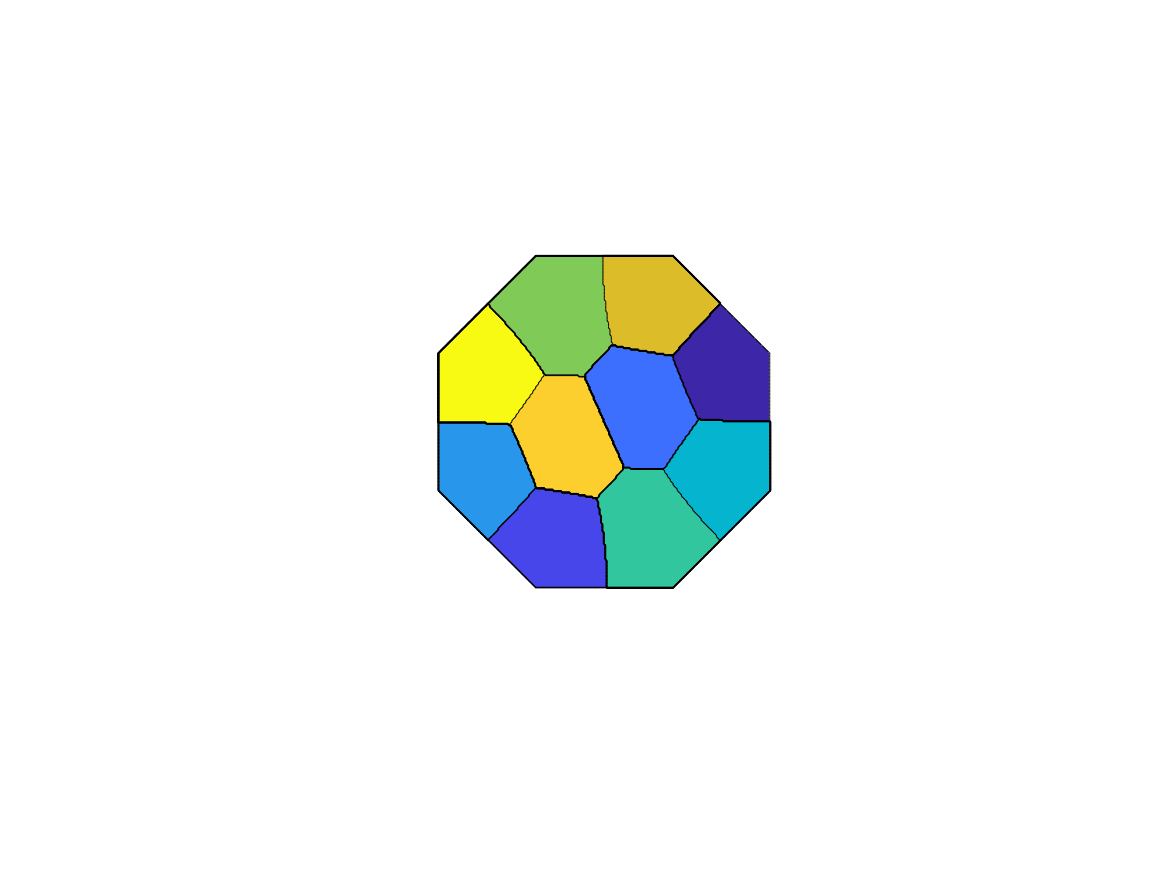} &
				\centering \includegraphics[width=0.12\textwidth, clip, trim=3.5cm 1cm 5.5cm 5cm]{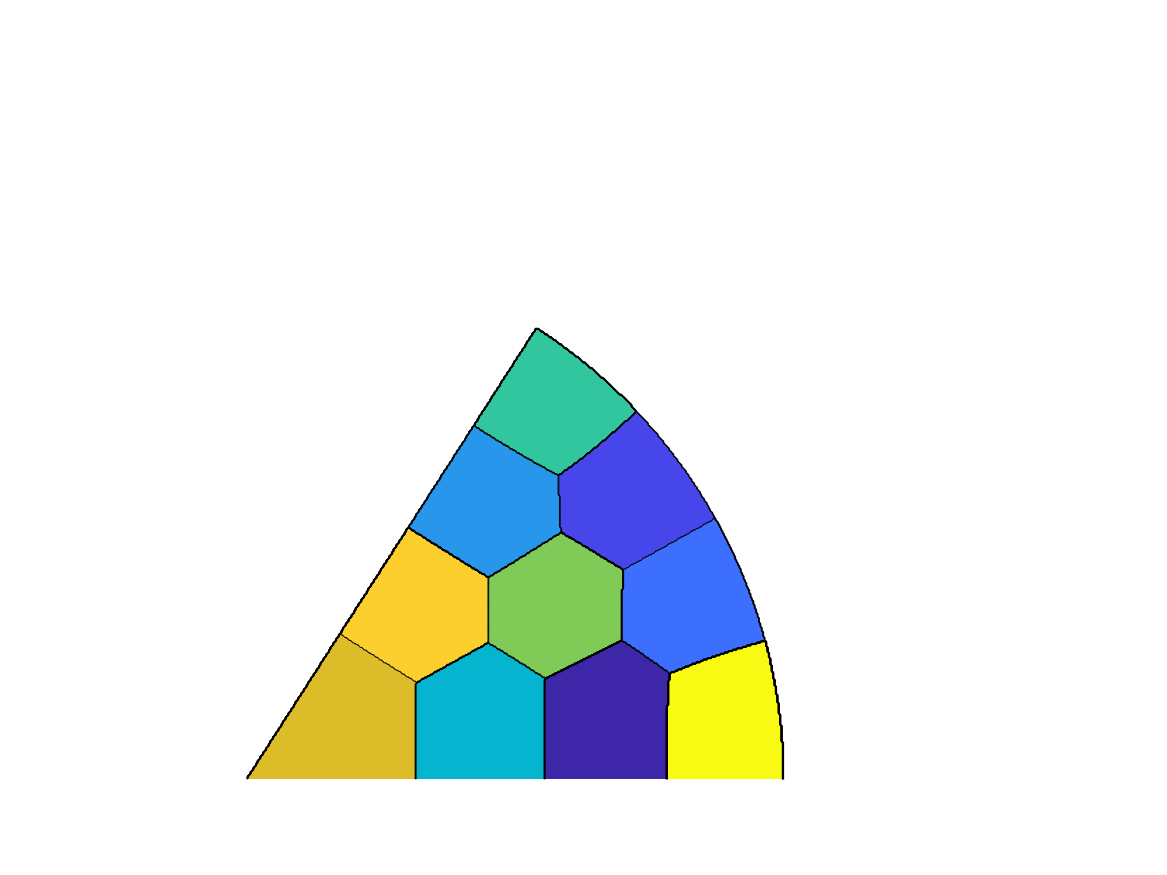} &
				\includegraphics[width=0.12\textwidth, clip, trim=4cm 4.5cm 4cm 4cm]{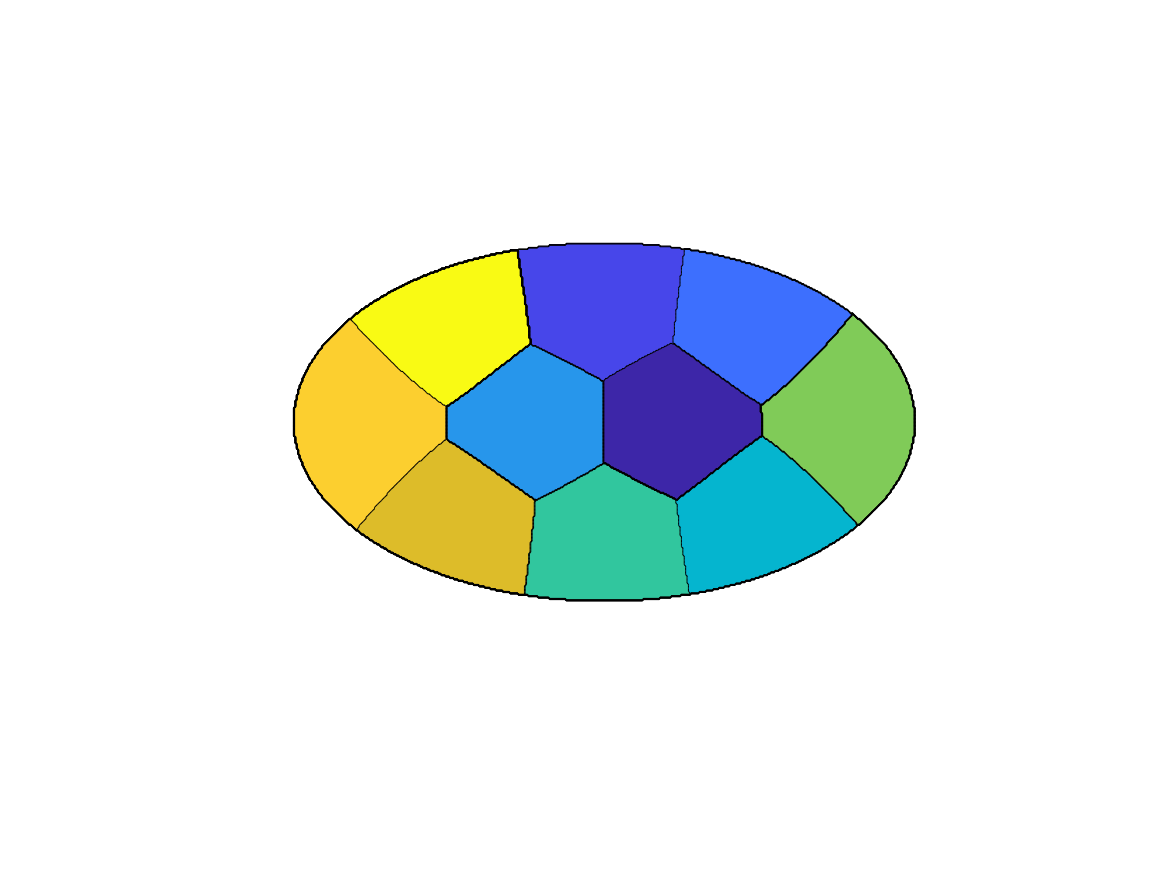} \\
				\hline
				\centering  Algorithm~\ref{Alg:3step_Type1} & 
				\centering  \includegraphics[width=0.12\textwidth, clip, trim=6.5cm 3.5cm 5.5cm 3cm]{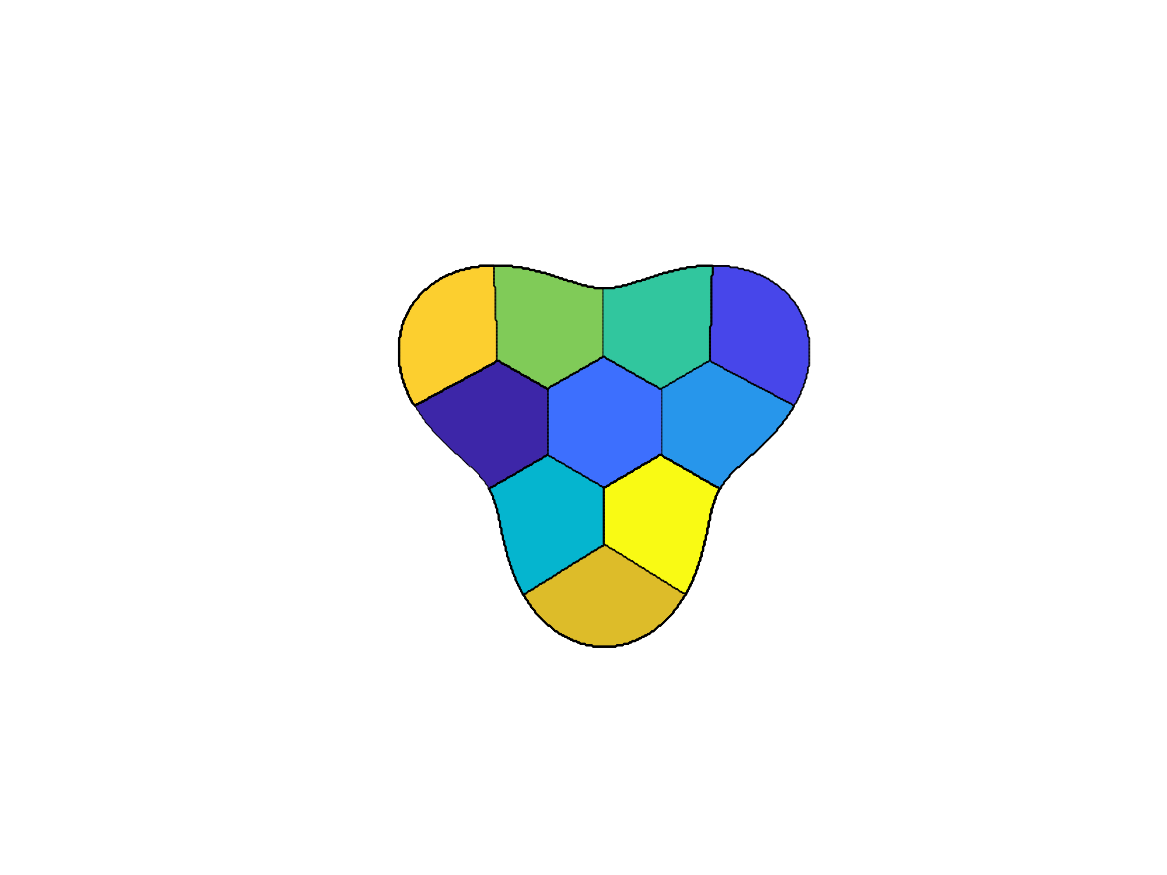} &
				\centering  \includegraphics[width=0.12\textwidth, clip, trim=6cm 4.5cm 6.5cm 3cm]{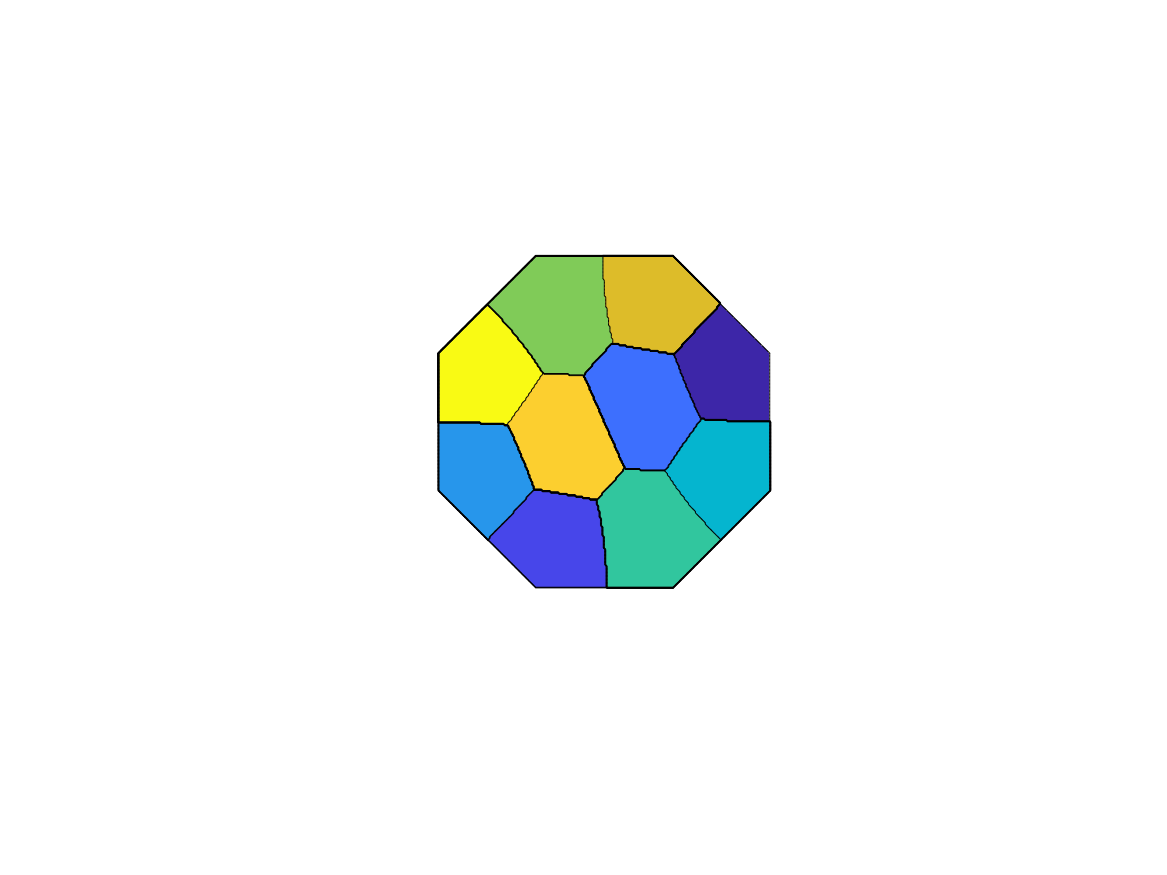} &
				\centering  \includegraphics[width=0.12\textwidth, clip, trim=3.5cm 1cm 5.5cm 5cm]{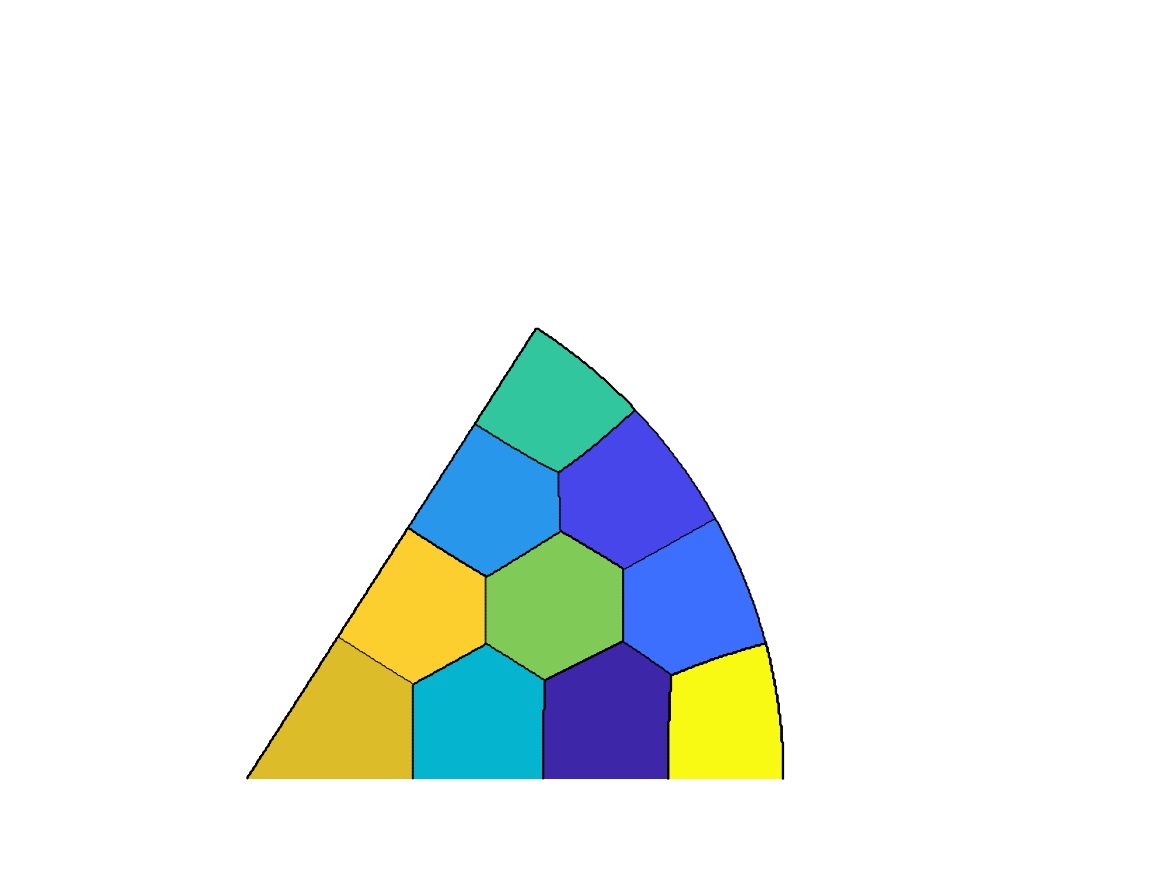} &
				\includegraphics[width=0.12\textwidth, clip, trim=4cm 4.5cm 4cm 4cm]{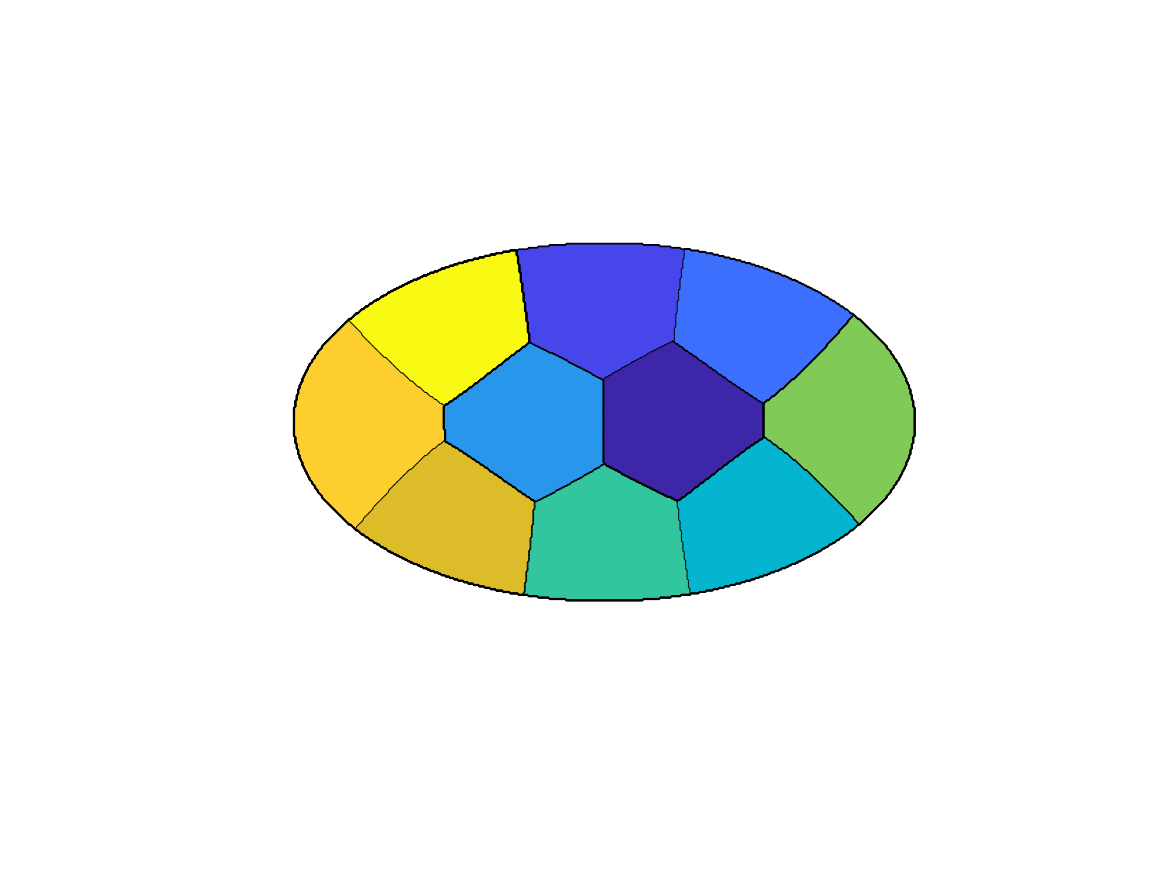} \\            
				\hline
				\centering Algorithm~\ref{Alg:3step_Type2} & 
				\centering  \includegraphics[width=0.12\textwidth, clip, trim=6.5cm 3.5cm 5.5cm 3cm]{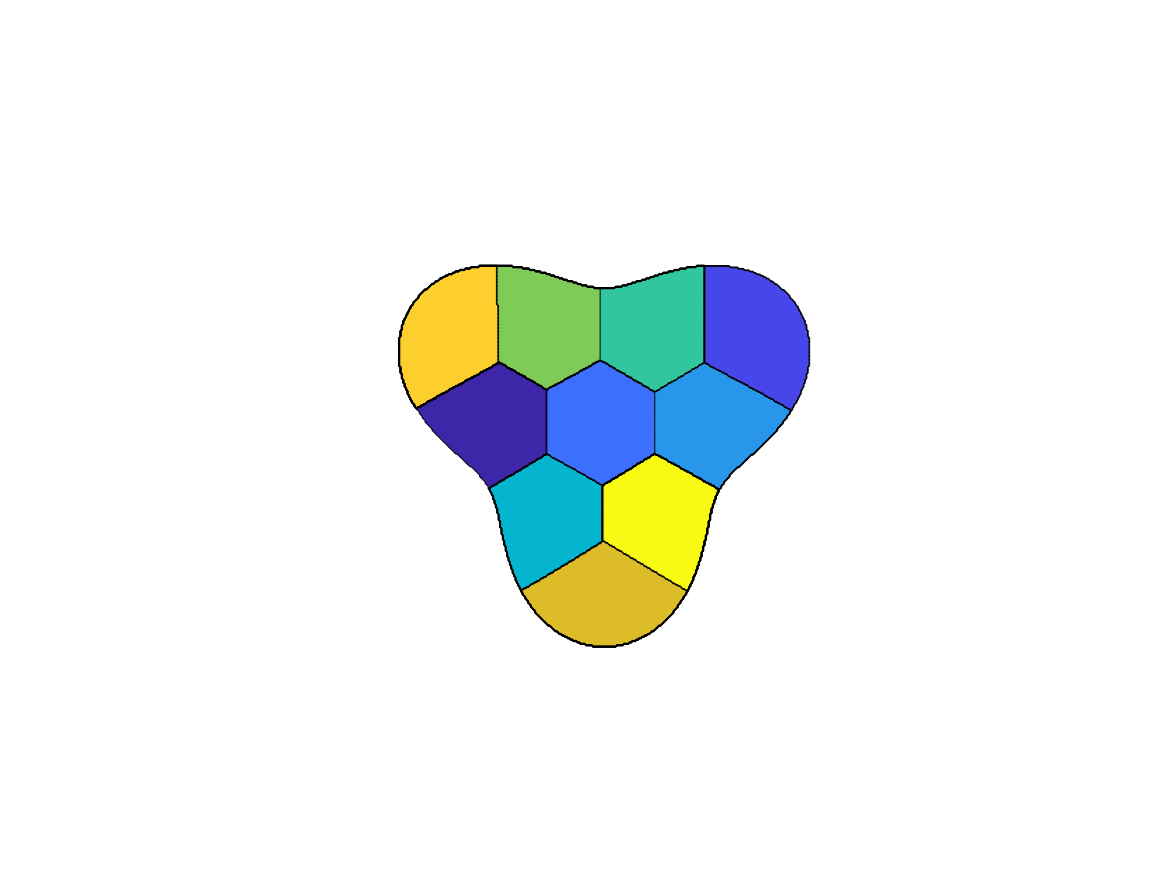} &
				\centering  \includegraphics[width=0.12\textwidth, clip, trim=6cm 4.5cm 6.5cm 3cm]{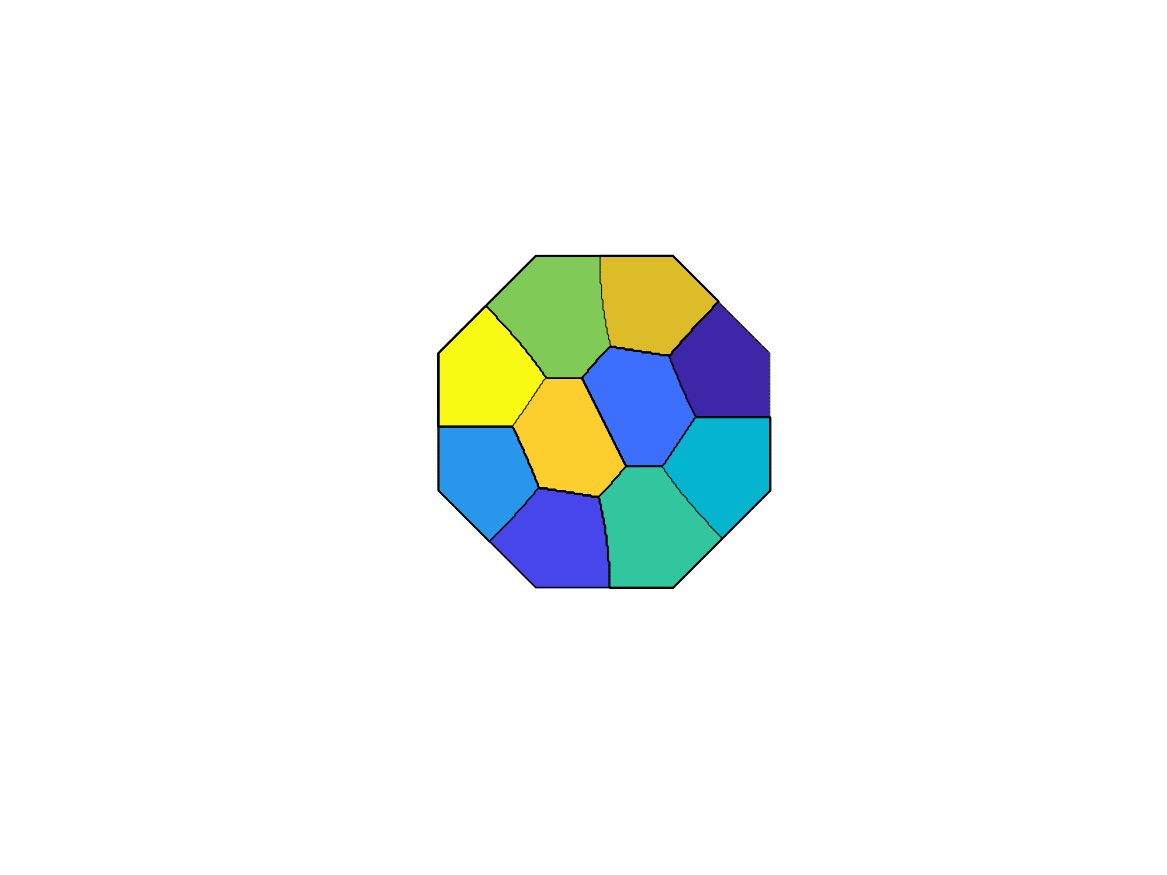} &
				\centering  \includegraphics[width=0.12\textwidth, clip, trim=3.5cm 1cm 5.5cm 5cm]{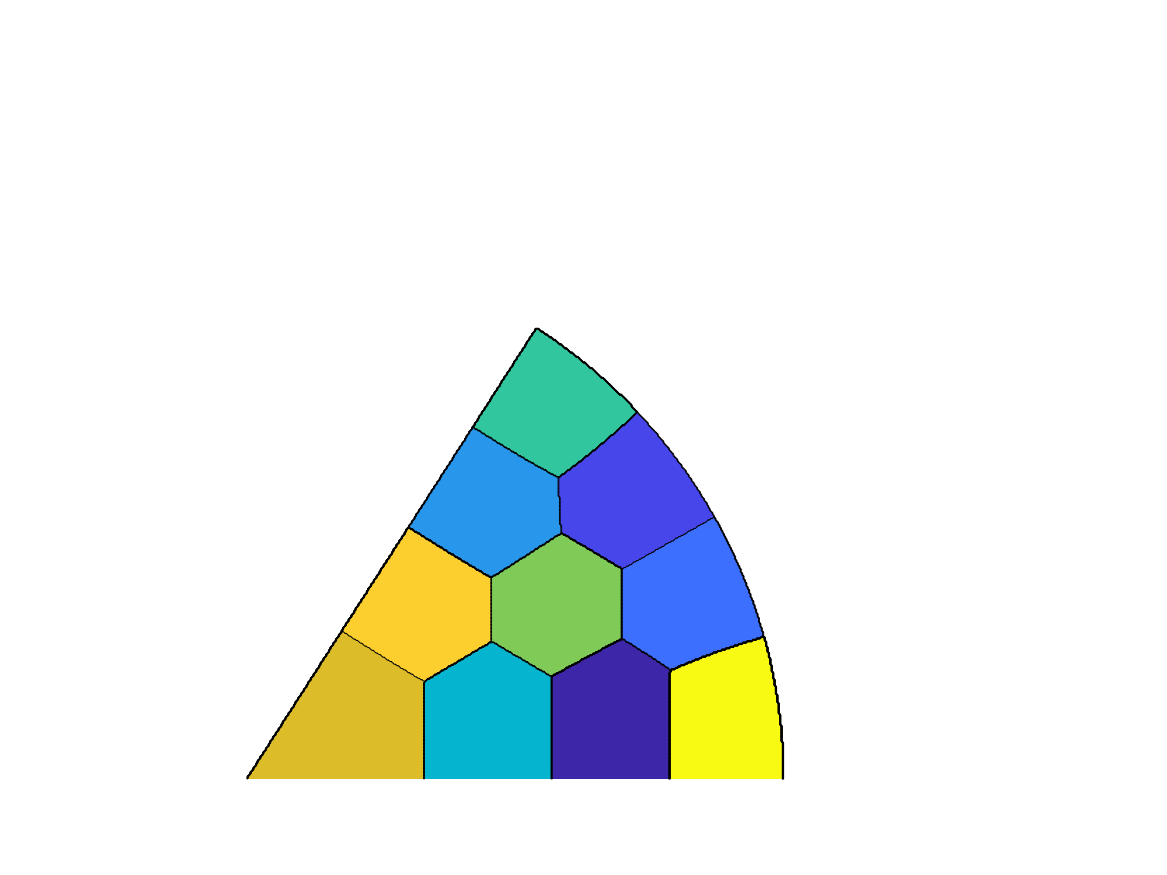} &
				\includegraphics[width=0.12\textwidth, clip, trim=4cm 4.5cm 4cm 4cm]{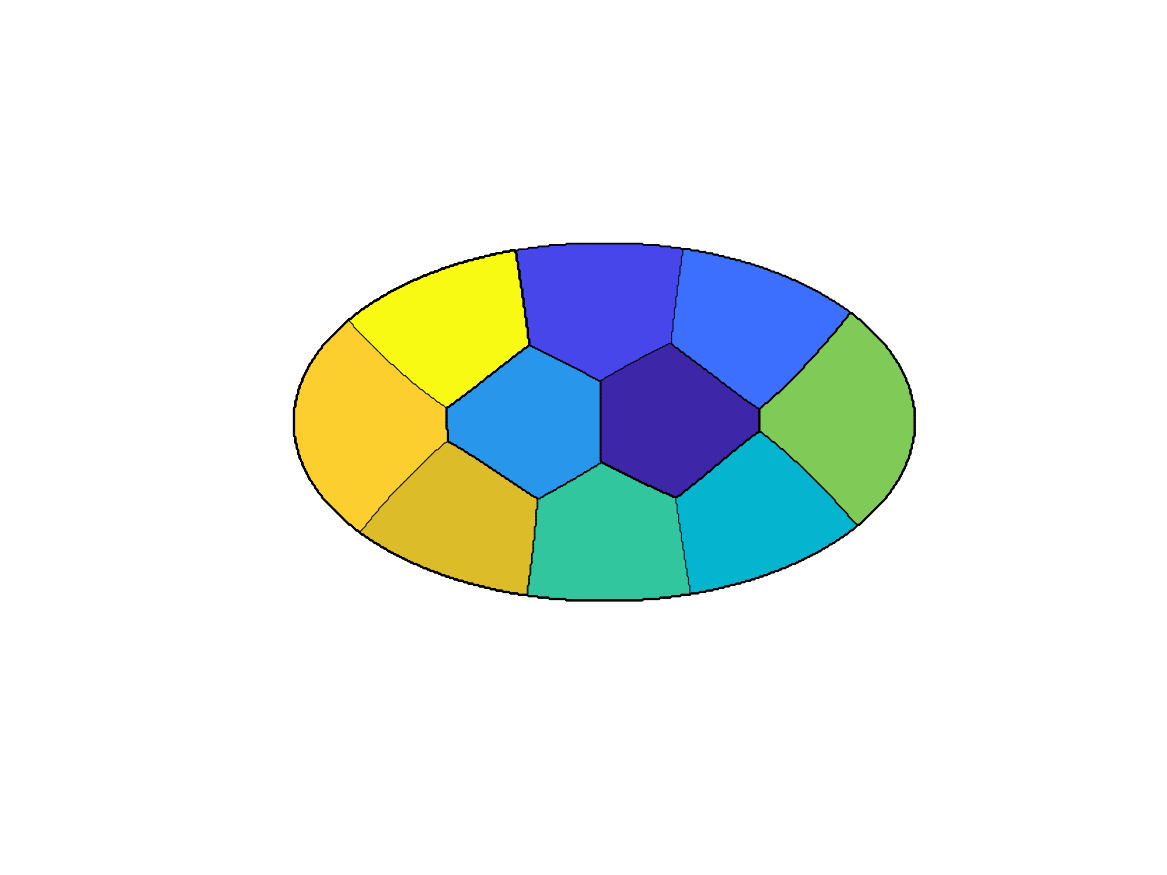} \\
				\hline
			\end{tabular}
		\end{table}
		\caption{Results of different algorithms for the 10-partition problem across various domains.}\label{fig:AbiDAlg1Alg2Alg3}
	\end{figure}

	\section{Concluding remarks}\label{sec:con}
	
	In this paper,  we proposed  numerical schemes for the optimal partition problem which is a minimization problem with multiple constraints. The key difficulty of this problem is how to  develop numerical schemes which can  satisfy  the constraint of orthogonality exactly for different parts and also admit the properties of $L^2$ norm conserving and positivity-preserving. The key idea  of our approach is rewrite the optimal partition problem into a gradient flow in which the physical constraints are enforced exactly by Lagrange multipliers. The key question is how to solve the gradient flow efficiently with low computational cost.
	
	Taking advantage of recently proposed Lagrange multiplier approach in \cite{ChSh22,ChSh_II22}, we construct two classes of constraint-preserving schemes by using the operator-splitting approach. The first class of  numerical schemes can be  linear which only need to solve Poisson equation and satisfy the physical constraints exactly at each time step. The other class of schemes can satisfy a discrete energy dissipative law which need to solve Poisson equation plus a nonlinear algebraic equation at each time step. 
	
	Numerical experiments indicate that our new schemes are very effective and accurate for the optimal partition problem that we tested. We need to mention that our numerical approach can also be generalized to other constrained minimization problems. 
	
\section*{Acknowledgement}

Q. Cheng is partially supported by National Natural Science Foundation of China (Grant No. 12301522). D. Wang is partially supported by National Natural Science Foundation of China (Grant No. 12101524), Guangdong Basic and Applied Basic Research Foundation (Grant No. 2023A1515012199), Shenzhen Science and Technology Innovation Program (Grant No. JCYJ20220530143803007, RCYX20221008092843046), Guangdong Provincial Key Laboratory of Mathematical Foundations for Artificial Intelligence (2023B1212010001), and Hetao Shenzhen-Hong Kong Science and Technology Innovation Cooperation Zone Project (No.HZQSWS-KCCYB-2024016).

  \section*{References}
	\bibliographystyle{elsarticle-harv}

\end{document}